\documentclass{ws-ijgmmp}

\usepackage{xcolor}
\usepackage[verbose,hypertexnames=false]{hyperref}
\hypersetup{colorlinks=false,allbordercolors=blue,pdfborderstyle={/S/U/W 1}}
\usepackage{cancel}

\begin{document}

\markboth{Rattanasak Hama, Tiberiu Harko and Sorin V. Sabau}
{Instructions for Typing Manuscripts (From  dual connections to gravitational applications $\ldots$)}

%
\catchline{}{}{}{}{}
%

\title{From  dual connections to gravitational field equations - the curvature and Einstein tensors of the $\alpha$ - connection of a quasi-statistical manifold
}

\author{Rattanasak Hama}

\address{Faculty of Science and Industrial Technology,\\
Prince of Songkla University, Surat Thani Campus,\\
Surat Thani, 84000, Thailand\\
\email{rattanasak.h@psu.ac.th}
}

\author{Tiberiu Harko}

\address{Department of Physics, Babe\c s -Bolyai University, Kog\u alniceanu Street 1,\\
400084 Cluj-Napoca, Romania\\
Astronomical Observatory, 19 Cire\c silor Street, \\
400487 Cluj-Napoca, Romania\\
\email{tiberiu.harko@aira.astro.ro}
}

\author{Sorin V. Sabau}

\address{School of Biological Sciences, Department of Biology,\\
Tokai University, Sapporo 005-8600, Japan\\
Graduate School of Science and Technology, Physical and Mathematical Sciences,\\
Tokai University, Sapporo 005-8600, Japan\\
\email{sorin@tokai.ac.jp}
}
\maketitle

\begin{history}
\received{(Day Month Year)}
\revised{(Day Month Year)}
\accepted{(Day Month Year)}
\end{history}

\begin{abstract}
We present a detailed review of the mathematical foundations of the theory of the statistical and quasi-statistical manifolds, which recently have found many applications in general relativity, quantum mechanics, and mathematical statistics. In particular, we fully develop, in a rigorous and coherent way, the formulas and concepts necessary for the understanding of the mathematical basis of the statistical and quasi-statistical manifolds, including the properties of the dual connections and of the equiaffine connections. For each geometrical structure the explicit expressions of the curvatures and the Einstein tensors are explicitly obtained.  As possible applications to the field of gravitational theories  we explicitly compute the curvatures of a family of $\alpha$-connections {$\nabla^{(\alpha)}:=\frac{1+\alpha}{2}\nabla +\frac{1-\alpha}{2}\nabla ^{*}$, where $\nabla :=\nabla ^{(1)}$ and $\nabla ^{*}:=\nabla ^{(-1)}$ } are the dual connections of a quasi-statistical manifold $M$.  The Einstein vacuum field equations are also written down, and the physical relevance of the obtained results for gravity and cosmology is briefly discussed.
\end{abstract}

\keywords{Dual connections; $\alpha$-connection; Quasi-statistical manifolds; Curvature; Einstein equations.}

\ccode{Mathematics Subject Classification 2020: -}

\tableofcontents

\section{Introduction}

The information geometry is an important field of mathematics and applied sciences. In its initial formulation information geometry represents   a straightforward  application of the basic principles and results of differential geometry to statistics. The fundamental concept of information geometry is the notion of Statistical Manifold \cite{Amari}, which  was proposed initially by Lauritzen \cite{Laur}, and later systematically formalized by Kurose \cite{Kurose} and Noguchi \cite{Noguchi} as a general geometric theory developed in the framework of affine differential geometry. This geometric method is also called information geometry, and its applications in various fields of mathematical sciences, engineering and natural sciences has led to important advances in the corresponding fields  \cite{Caticha, Nielsen}.

One of the novel aspects introduced in the study of Statistical Manifolds is  the consideration of a geometry which contains, together with the metric, two affine connections which  are dual to each other \cite{Amari}. If the two connections are  torsionless, then a totally symmetric tensor, the cubic tensor does also exist \cite{Amari1}. 

The cubic tensor fully characterizes the two connections, and determines
their deviations from the Levi-Civita connection. 

In a more formal approach one can define a Statistical Manifold $(M,\nabla,h)$ as a (semi-)Riemannian manifold $(M,h)$, on which a torsionless affine connection $\nabla$ is defined,  and with $\nabla h$ totally symmetric \cite{Matsuoze}. For a Statistical Manifold a pair of mutually dual affine connections can be naturally introduced.  

If  $\nabla ^{*}$ is the dual connection of $\nabla $ with respect to $h$,then  it turns out that the triplet $(M,\nabla ^{*},h)$ is also a statistical manifold. In this case $(M,\nabla ^{*},h)$ is called the dual statistical manifold of $(M,\nabla,h)$. 

Essentially,  in their initial formulation, Statistical Manifolds represented an application of differential geometry for the study of stochastic processes. We will now briefly review, following the approach presented in \cite{Obata1} and \cite{Obata}, respectively, the main steps involved in the geometrization of statistical processes. 

Let's assume that on a differential manifold $S$ a function $f$ and a tangent vector $A$ are defined. Then, according to the standard definitions in differential geometry,  the directional derivative $\nabla _Af$ of $f$ toward $A$ is defined as $A (f)$.  The linearity and the Leibnitz condition for functions, as well as for vector fields, is satisfied by the derivative. 

In a statistical manifold $S$, the natural derivative of the vector field $8$ toward the tangent vector $A$, $\nabla _{A}B$ is introduced through the inner product $g\left(\nabla _{A}B,C\right)$, where
$ C$ is an arbitrary tangent vector. The function $g(\nabla _A B,C)$ is uniquely defined according to \cite{Obata}
$
g(\nabla _A B,C)=E\left[\left(ABl+\frac{1-\alpha}{2}AlBl\right)Cl\right]$,
where $x$ is a random variable, $l(x,\theta)=\ln p(x,\theta)$, and $p (x, \theta)$ is a probability density function, parameterized by the $n$-dimensional parameter $\theta=(\theta ^1, . . . , \theta ^n)$. Moreover,  $\alpha$ is a free parameter indicating  that the function $g$ is independent on the parametrization $\theta$. It is important to note that $g$  is invariant under the set of transformations of the random variable $x$. 

 One can now introduce the $\alpha$-connection coefficients $\Gamma _{ijk}$, defined according to \cite{Obata1, Obata}
\begin{equation}
    \Gamma _{ijk}=g\left(\nabla _{\partial _k}\partial _j,\partial _i\right)=E\left[\left(\partial _j\partial _kl+\frac{1-\alpha}{2}\partial _jl\partial _kl\right)\partial _il\right],
\end{equation}
where $E[x]$ denotes the expectation value of the random variable $x$.   We denote in the following the quantities corresponding to a given value of $\alpha$ by a superscript, as $\nabla ^{(\alpha)}$, $\Gamma _{ijk}^{(\alpha)}$ etc. For $\alpha =0$, the $\alpha$-connection coefficients take the limiting form of the Levi-Civita connection coefficients
\begin{equation}
\Gamma _{ijk}^{(0)}=\frac{1}{2}\left(\partial _kg_{ij}+\partial _jg_{ik}-\partial _ig_{jk}\right).
\end{equation}

Generally, the $\alpha$-connection can be represented as \cite{Obata1, Obata}
\begin{equation}
\Gamma _{ijk}=\Gamma _{ijk}^{(1)}+\frac{1-\alpha}{2}T_{ijk},
\end{equation}
where
\begin{equation}
T_{ijk}=E\left(\partial _il\partial _jl\partial _kl\right)=-E\left(\partial _i\partial_j\partial _k l\right)
-E\Bigg[\left(\partial _il\right)\left(\partial _j\partial _kl\right)+\left(\partial _jl\right)\left(\partial _k\partial _il\right)
+\left(\partial _kl\right)\left(\partial _i\partial _jl\right)\Bigg].
\end{equation}

It is a standard result in differential geometry that if three vector fields $A$, $B$, and $C$ are given,  $\nabla _A\nabla _B C$ does not commute with  $\nabla _B\nabla _AC$ \cite{Nakahara}. The noncommutativity degree of 
 is described by the curvature tensor $R$, a vector valued function $R(A,B,C)$, defined in the usual way according to
\begin{equation}
    R(A,B,C)=R(A,B)C\equiv \left[\nabla _A,\nabla _B\right]C-\nabla _{[A,B]}C.
\end{equation}
From its definition it immediately follows that the curvature tensor $R(A,B,C)$ is a trilinear function of $A$, $B$, and $C$.
If $R(A,B,C)\equiv 0$, the manifold $S$ is  flat. 

The components of the curvature tensor are obtained in coordinates according to the definition \cite{Nakahara}
\begin{equation}
    R\left(\partial _k, \partial _l, \partial _j \right)=R\left(\partial _k,\partial _l\right)\partial _j =\left[\nabla _{\partial _k},\nabla _{\partial _l}\right]\partial _j=R_{jkl}^i\partial _i,
\end{equation}
where
\begin{equation}
R_{jkl}^{i}=\partial _{k}\Gamma _{jl}^{i}-\partial _{l}\Gamma
_{jk}^{i}+\Gamma _{mk}^{i}\Gamma _{jl}^{m}-\Gamma _{ml}^{i}\Gamma _{jk}^{m}.
\end{equation}

Hence, within the framework of the theory of Statistical Manifolds, a full geometrization of the stochastic processes can be obtained \cite{Obata1, Obata}. Moreover, the geodesics, and the totally geodesic submanifolds of a Statistical Manifold, in the presence of an affine connection, are the natural extensions of the affine subspaces and straight lines in the Euclidean space \cite{Balan}. 

The concepts and methods inspired by the Statistical Manifold concept have found many applications, ranging from machine learning and data analysis \cite{Learning, Learning1} to thermodynamics \cite{Therm,Therm1,Therm2} and solitons \cite{Sol}. The concept of quantum statistical manifold was introduced in \cite{quantum}, by considering instead of a manifold of strictly positive density matrices  a manifold of faithful quantum states on the C$^{}$-algebra of bounded linear operators, under the assumption that the underlying Hilbert space is finite-dimensional. 

The definition of the Statistical Manifold as described above assumes that the affine connection is torsionless. However, to describe geometric structures on quantum state spaces,  Kurose \cite{Kurose1} considered statistical manifolds admitting torsion. This type of geometrical structures are called quasi-statistical manifolds. From a physical point of view,  for the description of quantum effects within the formalism of statistical manifolds, the inclusion of torsion may be necessary due to the non-commutativity of quantum mechanics.

The concept of quasi-semi-Weyl structure was introduced in \cite{Blaga}, and further investigated in \cite{Blaga1}. A symmetry similar to the mirror symmetry of string theory was considered in \cite{Zhang1}, by considering the parametrization of the statistical manifolds  as affine coordinates with respect to a flat connection $\nabla$, and considering its $g$-conjugate connection $\nabla ^{*}$, which is curvature-free but having torsion. A pseudo-Weitzenb\"{o}ck connection for the manifold of parametric statistical models was constructed in \cite{Zhang2}, which can be considered as a statistical manifold with torsion. For the investigation of other interesting properties of the $\alpha$-connections see \cite{Blaga2}. 

On a statistical manifold $\mathcal{M}$ with a pair of conjugate connections $\nabla  \equiv \nabla ^{(1)}$ and $\nabla ^{*} \equiv \nabla^{(-1)}$, the family of $\alpha$ connections $\nabla ^{(\alpha)}$  is given by $\nabla ^{(\alpha)}:=\frac{1+\alpha}{2}\nabla +\frac{1-\alpha}{2}\nabla ^{*}$. The expression of the curvature $R^{(\alpha)}$ for $\nabla ^{(\alpha)}$ in terms of the curvatures of  $\nabla$ and $\nabla ^{*}$ was obtained in \cite{Zhang3}. 

The concept of a torsion dual connection was developed in \cite{Iosifidis1}, where it was shown that for the torsion dual manifolds, flatness of one connection does not necessary implies the flatness of the other connection.  Moreover, in this case the curvature tensor of the latter is given by a specific divergence. A self-consistent definition of the mutual curvature tensor of the two connections was also provided, and the notion of a curvature dual connection was defined. 

The mutual curvature scalar as defined in \cite{Iosifidis1} was used to construct a biconnection gravitational theory in \cite{Iosifidis2}. The geometric framework of the theory consists of one metric and two affine connections, defined in a metric-affine gravity geometry. By coupling the two connections  with matter, it turns out that the geometry of the resulting theory is that of a statistical manifold. 

A length-preserving biconnection gravitational theory,  which extended general relativity by using the mutual curvature tensor as the fundamental object describing gravity was introduced in \cite{Csillag}. The two connections used to build up the theory are the Schr\"{o}dinger connection, and its dual. The dual of a non-metric Schr\"{o}dinger connection has torsion, even if the Schr\"{o}dinger connection  does not have. An important implications of this result is that the pair $(M,g,\nabla ^{*})$ is a quasi-statistical manifold. 

The gravitational field equations have been postulated as having the form of the standard Einstein equations, with the Ricci tensor and scalar replaced with the mutual curvature tensor and scalar, resulting in additional torsion-dependent terms. These new terms can be interpreted as describing an effective, geometric type dark energy. Two distinct cosmological models based on this theory were investigated,  one with conserved matter, and another one in which the effective dark energy and pressure terms  are related by a linear equation of state. The considered length-preserving biconnection gravity models fit well the observational data, and also reproduce well the predictions of the standard $\Lambda$CDM model at  redshifts $z<3$. 

Gravitational theories in the presence of biconnection geometric structures have also been considered in \cite{B1,B2,B3,B4,B5}. 

The results of \cite{Iosifidis1} and \cite{Csillag} did show that the generalization of the concept of the torsionless statistical manifold into the concept of quasi-statistical manifold with torsion may have important physical and cosmological implications. Hence, the detailed investigations of the mathematical properties of the quasi-statistical manifolds, involving the presence of torsion and nonmetricity, may have not only a mathematical relevance, but could also open some new perspectives in physics.  

It is the goal of the present paper to investigate and to obtain the full curvature expressions of the quasi-statistical manifolds, which involves the presence of the torsion. From the point of view of the possible physical applications the Einstein tensor plays a fundamental role. We obtain the expression of the Einstein tensor on quasi-statistical manifolds, which also includes the corrections coming from the torsion. Moreover, in order to increase the clarity and consistency of the present work, we also introduce and develop the basic differential geometric concepts (dual connection, equiaffine manifolds, statistical manifolds) that are necessary for the in depth understanding of the properties of Statistical Manifolds, and related concepts.  

The present paper is organized as follows. The basic definitions and properties of the dual connections are presented in Section~\ref{sect1}.
 The curvature tensor, its symmetry properties, the Bianchi identities, as well as the basic definition of the Einstein tensor and its fundamental properties are discussed in detail. The definitions and the basic properties of the equiaffine connections are introduced in Section~\ref{sect2}. The differential geometric properties of the Statistical Manifolds are reviewed in a systematic way in Section~\ref{sect3}. A detailed description of the geometric properties of the quasi-statistical manifolds is provided in Section~\ref{sect_Quasi_stat}. In particular a detailed discussion of the curvature properties and of the Einstein tensor is given.  The differential geometric properties of the $\alpha$-connections and   
the explicit calculation of the components of the curvature tensor for the $\alpha$-connection of the quasi-statistical manifold is presented in Section~\ref{sect_alpha}, and their Einstein tensors are explicitly obtained.  The divergence of the Einstein tensor is also considered, and its expression is explicitly calculated in the coordinate representation. Finally, we discuss and conclude our results in Section~\ref{sect6}.  The explicit details of the calculations of the geometric quantities, and the proof of some results are presented in the Appendix~\ref{App}. 

\section{Dual connections}\label{sect1}

We begin our presentation of the basic mathematical properties of the statistical and quasi-statistical manifolds with a detailed discussion of the concept of dual connection, and of its properties. Essentially, the dual connections represent the central novel mathematical element that allows the development of the mathematical concepts of statistical and quasi-statistical manifolds.    

\subsection{Definitions and basic properties}

Let $(M,g)$ be a (pseudo-)Riemannian manifold. Then it is known that there exists a unique linear connection on $M$ subject to the following conditions
\begin{equation}\label{eq_L1}
\widehat{\nabla} \text{ is } g\text{-metrical, i.e. }(\widehat{\nabla}_Xg)(Y,Z)=0,
\end{equation}
and
\begin{equation}\label{eq_L2}
\widehat{\nabla}\text{ is torsion free, i.e. }\widehat{T}(X,Y)=0,
\end{equation}
where $\widehat{T}(X,Y):=\widehat{\nabla}_XY-\widehat{\nabla}_YX-[X,Y]$. This is the well-known {\it Levi-Civita connection} of $(M,g)$.

Locally,  using $(x^i)$ as local coordinates on $M$ and $\{\frac{\partial}{\partial x^i}\}$ as canonical coordinates on $TM$, conditions above read
\begin{equation}\label{eq_L1'}
\widehat{C}_{kij}:=g_{ij\widehat{|}k}=\dfrac{\partial g_{ij}}{\partial x^k}-g_{mi}\widehat{\Gamma}^m_{\ jk}-g_{mj}\widehat{\Gamma}^m_{\ ik}=0,
\end{equation}
\begin{equation}\label{eq_L2'}
\widehat{T}^i_{\ kj}:=\widehat{\Gamma}^i_{\ jk}-\widehat{\Gamma}^i_{\ kj}=0,
\end{equation}
where $\widehat{\Gamma}^i_{\ jk}$ are the local coefficients of $\widehat{\nabla}$ given by
$$
\widehat{\nabla}_{\dfrac{\partial}{\partial x^j}}\frac{\partial}{\partial x^i}=\widehat{\Gamma}^k_{\ ij}\frac{\partial}{\partial x^k}.
$$

An equivalent formulation uses coframes. Indeed, the dual of the canonical frame $\{\frac{\partial}{\partial x^i}\}$ is the coframe $\{dx^i\}$.

Then, the Levi-Civita connection is given by the connection forms $\{\widehat{\omega}_i^{\ j}\}$ subject to the conditions
\begin{equation}\label{eq_L1''}
Q_{ij}:=dg_{ij}-g_{kj}\widehat{\omega}_i^{\ k}-g_{ik}\widehat{\omega}_j^{\ k}=0
\end{equation}
\begin{equation}\label{eq_L2''}
\Theta^i:=d(dx^i)-dx^j\wedge \widehat{\omega}_j^{\ i}=-dx^j\wedge \widehat{\omega}_j^{\ i}=0.
\end{equation}

Locally, we have
$$
\widehat{\omega}_j^{\ i}=\widehat{\Gamma}^i_{\ jk}dx^k.
$$

Moreover, we can consider a $g$-orthonormal frame $\{e_a\}$ in $TM$, with the dual coframe $\{\omega^a\}$. Then the Levi-Civita connection of $(M,g)$ has the connection forms $\{\widehat{\omega}_a^{\ b}\}$ subject to the conditions
\begin{equation}\label{eq_L1'''}
\widehat{\omega}_{ab}+\widehat{\omega}_{ba}=0.
\end{equation}
\begin{equation}\label{eq_L2'''}
d\omega^a-\omega^b\wedge\widehat{\omega}_b^{\ a}=0.
\end{equation}

Observe that there exists a pair of matrices $\{u_a^{\ i}\}$ and $\{v^a_{\ i}\}$ on $M$, inverse each other, such that
\begin{equation*}
\begin{split}
e_a=u_a^{\ i}\frac{\delta}{\delta x^i},\;
\omega^a=v^a_{\ i}dx^i.
\end{split}
\end{equation*}

Then
$$
\widehat{\omega}_b^{\ a}=(du_b^{\ i})v^a_{\ i}+u_b^{\ j}\widehat{\omega}_j^{\ i}v^a_{\ i},
$$
or, equivalently
$$
\widehat{\omega}_j^{\ i}=(dv^a_{\ j})u_a^{\ i}+v^b_{\ j}\widehat{\omega}_b^{\ a}u_a^{\ i}
$$
shows the relation between the connection forms in different frames. Here, indices $i,j,k,\dots$ indicate the canonical framing, while $a,b,c,\dots$ indicate the $g$-orthonormal frames.

Let us consider arbitrary affine connection $\nabla$ on the (pseudo-)Riemannian manifold  $(M,g)$ which is not metrical nor torsion free, i.e.
\begin{equation}\label{eq_A1}
C(X,Y,Z):=(\nabla_Xg)(Y,Z)=X(g(Y,Z))-g(\nabla_XY,Z)-g(Y,\nabla_XZ)\neq 0.
\end{equation}

Observe that 
\begin{equation}\label{C commut in last 2}
C(X,Y,Z)=C(X,Z,Y).
\end{equation}

Locally, in the canonical basis, these tensors have the components
\begin{equation}\label{eq_A1'}
C\left(\frac{\partial}{\partial x^k},\frac{\partial}{\partial x^i},\frac{\partial}{\partial x^j}\right)=
C_{kij}:=g_{ij|k}=\dfrac{\partial g_{ij}}{\partial x^k}-g_{mi}\Gamma^m_{\ jk}-g_{mj}\Gamma^m_{\ ik}\neq 0.
\end{equation}

Clearly, \eqref{C commut in last 2} reads $C_{kij}=C_{kji}$.

We recall that the torsion tensor of an affine connection $\nabla$ is defined as
\begin{equation}\label{eq_A2}
T(X,Y)=\nabla_XY-\nabla_YX-[X,Y].
\end{equation}

In canonical frames, if we denote
$$
T\left(\frac{\partial}{\partial x^k},\frac{\partial}{\partial x^l}\right)=T^i_{\ kl}\frac{\partial}{\partial x^i},
$$
then
\begin{equation}\label{eq_T2}
T^i_{\ kl}=\Gamma^i_{\ lk}-\Gamma^i_{\ kl}
\end{equation}
are the local coefficients of the torsion tensor, a geometric quantity showing the non-symmetry of the connection coefficients $\Gamma$.

Using canonical coframes we can define the torsion 2-form by
\begin{equation}\label{eq_T3}
\Theta^i:=-dx^k\wedge\omega_k^{\ i}
\end{equation}
and again, a simple computation shows that
\begin{equation}\label{eq_T4}
\Theta^i=\frac{1}{2}T^i_{\ kl}dx^k\wedge dx^l.
\end{equation}

In canonical coframes, if $\{\omega_j^{\ i}\}$ is the connection form of $\nabla$, then the conditions \eqref{eq_A1} and \eqref{eq_A2} read

\begin{equation}\label{eq_A1''}
Q_{ij}:=dg_{ij}-g_{kj}\omega_i^{\ k}-g_{ik}\omega_j^{\ k}\neq 0,
\end{equation}
and
\begin{equation}\label{eq_A2''}
\Theta^i:=-dx^j\wedge \omega_j^{\ i}\neq 0,
\end{equation}
respectively. Here $\{Q_{ij}\}$ and $\{\Theta^i\}$ are the non-metricity and torsion forms of $\nabla$, respectively. Obviously we have $Q_{ij}=C_{kij}dx^k$, and $Q_{ij}=Q_{ji}$.

Likewise, in terms of $g$-orthonomal coframes, we get

\begin{equation}\label{eq_A1'''}
Q_{ab}:={-(\omega_{ab}+\omega_{ba})}\neq 0,
\end{equation}
\begin{equation}\label{eq_A2'''}
\Theta^a:=d\omega^a-\omega^b\wedge\omega_b^{\ a}\neq 0.
\end{equation}

We recall the following well-known result.
\begin{proposition}\label{diff of 2 conn}
If $\nabla$ and $\nabla'$ are two affine connections on a manifold $M$, then there exists a (0,2)-tensor $h$ such that
$$
\nabla_XY-\nabla'_XY=h(X,Y),
$$
for any $X,Y\in\mathcal{X}(M)$.
\end{proposition}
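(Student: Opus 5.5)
The plan is to define $h(X,Y):=\nabla_XY-\nabla'_XY$ for $X,Y\in\mathcal{X}(M)$ and verify that it is $C^\infty(M)$-linear in each argument. By the standard tensoriality lemma, this makes $h$ a tensor field. Note that $h$ takes values in vector fields, so strictly it is a $(1,2)$-tensor; lowering the index with $g$ gives the $(0,3)$-tensor $g(h(X,Y),Z)$. I will read the statement's "(0,2)-tensor" as a vector-valued bilinear tensor field, i.e. a $TM$-valued $(0,2)$-tensor.

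The first step is additivity. Since both connections are $\mathbb{R}$-bilinear, $h$ is clearly additive in $X$ and in $Y$.

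The second step is function-linearity in $X$. For $f\in C^\infty(M)$, the axiom $\nabla_{fX}Y=f\nabla_XY$, which holds for both connections, gives $h(fX,Y)=fh(X,Y)$ directly.

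The third step is function-linearity in $Y$, and it is the only place anything happens. By the Leibniz rule,
$$\nabla_X(fY)-\nabla'_X(fY)=X(f)Y+f\nabla_XY-X(f)Y-f\nabla'_XY=fh(X,Y).$$
The derivative terms $X(f)Y$ cancel because they do not depend on the connection.

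Finally, I invoke the usual fact that a $C^\infty(M)$-bilinear map $\mathcal{X}(M)\times\mathcal{X}(M)\to\mathcal{X}(M)$ is pointwise, hence a tensor field. In local coordinates this reads $h(\partial_k,\partial_j)=(\Gamma^i_{\ jk}-\Gamma'^i_{\ jk})\partial_i$, and under a change of coordinates the inhomogeneous second-derivative terms in the transformation law of the $\Gamma$'s cancel. This gives an alternative, purely coordinate, verification.

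There is no real obstacle. The only point needing care is the cancellation of $X(f)Y$ in the third step, together with the remark about the type of $h$.
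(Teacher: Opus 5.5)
Your proof is correct. The paper gives no proof of its own; it recalls the result as well known, and your check of $C^\infty(M)$-bilinearity followed by the tensoriality lemma is the standard argument behind it. Your remark on the type is also right: $h$ is vector-valued, i.e.\ a $(1,2)$-tensor, which is how the paper later uses the difference tensor $K^k_{\ ij}=\overset{\ast}{\Gamma}\,^k_{\ ij}-\Gamma^k_{\ ij}$.
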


By comparing formulas \eqref{eq_L1} and \eqref{eq_A1} above it is natural to make a transformation of $\nabla$ by means of $C$, i.e. we define a new connection $\overset{\ast}{\nabla}$ which will absorb the non-metricity of $\nabla$. Indeed, relation \eqref{eq_A1} can be written as
\begin{equation}\label{eq_D1}
X(g(Y,Z))-g(\nabla_XY,Z)-g(Y,\overset{\ast}{\nabla}_XZ)=0,
\end{equation}
where a new connection $\overset{\ast}{\nabla}$ can be defined by
\begin{equation}\label{eq_Rel1}
g(Y,\overset{\ast}{\nabla}_XZ)=g(Y,\nabla_XZ)+C(X,Y,Z).
\end{equation}

Observe that indeed, $\overset{\ast}{\nabla}$ is an affine connection due to Proposition \ref{diff of 2 conn}.

Locally, using canonical frames $\{\frac{\partial}{\partial x^i}\}$ we get
$$
g\left(\frac{\partial}{\partial x^j},\overset{\ast}{\nabla}_{\dfrac{\partial}{\partial x^i}}\frac{\partial}{\partial x^k}\right)=g\left(\frac{\partial}{\partial x^j},\nabla_{\dfrac{\partial}{\partial x^i}}\frac{\partial}{\partial x^k}\right)+C\left(\frac{\partial}{\partial x^i},\frac{\partial}{\partial x^j},\frac{\partial}{\partial x^k}\right),
$$
i.e.
$$
g_{mj}\overset{\ast}{\Gamma}\, ^m_{\ ki}=g_{mj}\Gamma^m_{\ ki}+C_{ijk},
$$
where we use
\begin{equation}\label{nabla* coeff}
\overset{\ast}{\nabla}_{\dfrac{\partial}{\partial x^i}}\frac{\partial}{\partial x^k}=\overset{\ast}{\Gamma}\, ^m_{\ ki}\frac{\partial}{\partial x^m}.
\end{equation}

Hence, we get
\begin{equation}\label{eq_Rel2}
\overset{\ast}{\Gamma}\, ^m_{\ ki}=\Gamma^m_{\ ki}+g^{mj}C_{ijk}.
\end{equation}

This formula can be regarded as the definition of the local coefficients of the connection $\overset{\ast}{\nabla}$ called the {\it dual connection} of $\nabla$.

By multiplying this formula with $dx^i$ it follows
$$
\overset{\ast}{\Gamma}\, ^m_{\ ki}dx^i=\Gamma^m_{\ ki}dx^i+g^{mj}C_{ijk}dx^i,
$$
i.e.
\begin{equation}\label{eq_Rel3}
\overset{\ast}{\omega}\,_k^{\ m}=\omega_k^{\ m}+g^{mj}C_{ijk}dx^i\\
=\omega_k^{\ m}+g^{mj}Q_{jk}\\
=\omega_k^{\ m}+Q^m_{\ k},
\end{equation}
where
$$
Q_{jk}=C_{ijk}dx^i,\ g^{mj}Q_{jk}=Q^m_{\ k},
$$
is the relation between the connection forms $\overset{\ast}{\omega}$ and $\omega$ of the dual connection $\overset{\ast}{\nabla}$ and $\nabla$, respectively. Using $g^{mj}Q_{jk}=Q^m_{\ k}$, it results $\overset{\ast}{\omega}\,_k^{\ m}=\omega_k^{\ m}+Q^m_{\ k}$. In this case, formulas \eqref{eq_A1'} and \eqref{eq_A1''} read
\begin{equation}\label{eq_D1'}
\frac{\partial g_{ij}}{\partial x^k}-g_{mi}\Gamma^m_{\ jk}-g_{mj}\overset{\ast}{\Gamma}\,^m_{\ ik}=0
\end{equation}
and
\begin{equation}\label{eq_D1''}
dg_{ij}-g_{kj}\omega_i^{\ k}-g_{ik}\overset{\ast}{\omega}\,_j^{\ k}=0,
\end{equation}
respectively.

Likewise, in terms of $g$-orthonormal coframes, the relation between the connection forms $\{\overset{\ast}{\omega}\,_b^{\ a}\}$ and $\{\omega_b^{\ a}\}$ read
\begin{equation}\label{eq_Rel4}
\overset{\ast}{\omega}\,_b^{\ a}=\omega_b^{\ a}+Q^a_{\ b},
\end{equation}
or
$$
\overset{\ast}{\omega}\,_{ba}=\omega_{ba}+Q_{ab},
$$
where
$$
Q_b^{\ a}=u_b^{\ j}Q^i_{\ j}v^a_{\ i}=u_b^{\ j}\left(g^{ik}Q_{kj}\right)v^a_{\ i}
$$

In this case, relation \eqref{eq_A1'''} reads
\begin{equation}\label{eq_D1'''}
\omega_{ab}+\overset{\ast}{\omega}\,_{ba}=0.
\end{equation}

\bigskip
\fbox{\parbox{10cm}
{\it 
We conclude that the dual connection $\overset{\ast}{\nabla}$ is obtained by absorbing the non-metricity 
tensor $C$ (or $Q$) into $\nabla$. This is the geometrical meaning of dual connections.
}
}
\subsection{The difference of the dual connections}

Next, we can define the difference of the dual connections $\nabla$ and $\overset{\ast}{\nabla}$ and observe that this difference is actually given by the non-metricity tensor $C$.

Indeed, if we put
\begin{equation}\label{eq_Diff1}
K_XY=K(X,Y):=\overset{\ast}{\nabla}_XY-\nabla_XY,
\end{equation}
then \eqref{eq_Rel1} implies
\begin{equation}\label{eq_Rel5},
g(Y,K(X,Z))=C(X,Y,Z)
\end{equation}
and
$$
g(Z,K(X,Y))=C(X,Z,Y)=C(X,Y,Z),
$$
i.e.
$$
g(Y,K(X,Z))=g(Z,K(X,Y))=C(X,Y,Z).
$$

Locally, in canonical frames
$$
K^k_{\ ij}=\overset{\ast}{\Gamma}\,^k_{\ ij}-\Gamma^k_{\ ij},
$$
where
$$
K_{\dfrac{\partial}{\partial x^j}}\frac{\partial}{\partial x^i}=K\left(\frac{\partial}{\partial x^j},\frac{\partial}{\partial x^i}\right)=K^k_{\ ij}\frac{\partial}{\partial x^k}.
$$

From \eqref{eq_Rel2} we have
$$
K^m_{\ ki}=g^{mj}C_{ijk}.
$$

If we denote the connection forms difference by
$$
k_j^{\ m}:=\overset{\ast}{\omega}\,_j^{\ m}-\omega_j^{\ m},
$$
then the 1-forms $\{k_j^{\ m}\}$ read
$$
k_j^{\ m}=g^{mk}C_{ikj}dx^i=g^{mk}Q_{kj}=Q^m_{\ j},
$$
and using $g$-orthonormal coframes
$$
k_b^{\ a}=Q_b^{\ a}.
$$

\begin{lemma}\label{lem_D1}
If $\nabla$ and $\overset{\ast}{\nabla}$ and the dual connections on a (pseudo-)Riemannian manifold $(M,g)$ then
\begin{enumerate}[(i)]
\item equation \eqref{eq_D1} is equivalent to
$$
X(g(Y,Z))-g(\overset{\ast}{\nabla}_XY,Z)-g(Y,\nabla_XZ)=0,
$$
\item equation \eqref{eq_D1'} is equivalent to
$$
\frac{\partial g_{ij}}{\partial x^k}-g_{mi}\overset{\ast}{\Gamma}\,^m_{\ jk}-g_{mj}\Gamma^m_{\ ik}=0,
$$
\item equation \eqref{eq_D1''} is equivalent to
$$
dg_{ij}-g_{kj}\overset{\ast}\omega_i^{\ k}-g_{ik}{\omega}\,_j^{\ k}=0.
$$
\end{enumerate}
\end{lemma}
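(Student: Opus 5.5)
The whole lemma rests on one observation: the metric $g$ is symmetric, so the defining identity \eqref{eq_D1} can be read with the roles of the two slots of $g$ swapped. I will prove (i) intrinsically and then get (ii) and (iii) by evaluating the same identity in canonical frames and coframes.

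\textbf{Proof of (i).} Equation \eqref{eq_D1} holds for \emph{all} vector fields $X,Y,Z$, so I may replace $(Y,Z)$ by $(Z,Y)$ in it. This gives
$$
X(g(Z,Y))-g(\nabla_XZ,Y)-g(Z,\overset{\ast}{\nabla}_XY)=0 .
$$
Using $g(Z,Y)=g(Y,Z)$, $g(\nabla_XZ,Y)=g(Y,\nabla_XZ)$ and $g(Z,\overset{\ast}{\nabla}_XY)=g(\overset{\ast}{\nabla}_XY,Z)$, this is exactly the identity claimed in (i). The same swap applied to (i) gives back \eqref{eq_D1}, so the two statements are equivalent.

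As a check against the definition, I can also argue directly from \eqref{eq_Rel1} and the symmetry \eqref{C commut in last 2}. Swapping $Y$ and $Z$ in \eqref{eq_Rel1} gives
$$
g(\overset{\ast}{\nabla}_XY,Z)=g(\nabla_XY,Z)+C(X,Z,Y)=g(\nabla_XY,Z)+C(X,Y,Z).
$$
Substituting this into the definition \eqref{eq_A1} of $C$ yields (i) again.

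\textbf{Proof of (ii).} I put $X=\partial/\partial x^k$, $Y=\partial/\partial x^i$, $Z=\partial/\partial x^j$ in (i). The conventions \eqref{nabla* coeff} give $\overset{\ast}{\nabla}_{\partial_k}\partial_i=\overset{\ast}{\Gamma}{}^m_{\ ik}\partial_m$ and $\nabla_{\partial_k}\partial_j=\Gamma^m_{\ jk}\partial_m$. Hence (i) becomes
$$
\partial_kg_{ij}-g_{mj}\overset{\ast}{\Gamma}{}^m_{\ ik}-g_{im}\Gamma^m_{\ jk}=0 .
$$
This is \eqref{eq_D1'} with the indices $i$ and $j$ interchanged. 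Since $g_{ij}=g_{ji}$, the interchange turns it into exactly the displayed formula in (ii).

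Alternatively, (ii) can be checked directly from \eqref{eq_Rel2} and $C_{kij}=C_{kji}$. Substituting $\overset{\ast}{\Gamma}{}^m_{\ jk}=\Gamma^m_{\ jk}+g^{ml}C_{klj}$ gives
$$
g_{mi}\overset{\ast}{\Gamma}{}^m_{\ jk}=g_{mi}\Gamma^m_{\ jk}+C_{kij}.
$$
The left-hand side of (ii) then becomes $\partial_kg_{ij}-g_{mi}\Gamma^m_{\ jk}-g_{mj}\Gamma^m_{\ ik}-C_{kij}$. By \eqref{eq_A1'} the first three terms equal $C_{kij}$, so the whole expression is zero.

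\textbf{Proof of (iii).} I contract (ii) with $dx^k$. Using $\omega_j^{\ m}=\Gamma^m_{\ jk}dx^k$ and $\overset{\ast}{\omega}_i^{\ m}=\overset{\ast}{\Gamma}{}^m_{\ ik}dx^k$, this gives
$$
dg_{ij}-g_{mi}\overset{\ast}{\omega}_j^{\ m}-g_{mj}\omega_i^{\ m}=0 .
$$
Interchanging $i$ and $j$ and using the symmetry of $g$ puts this in the form $dg_{ij}-g_{kj}\overset{\ast}{\omega}_i^{\ k}-g_{ik}\omega_j^{\ k}=0$, which is (iii). The converse direction holds because each step is reversible: a relation between $1$-forms holds if and only if it holds componentwise.

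\textbf{Main obstacle.} The difficulty is purely bookkeeping. The index order in $\Gamma^m_{\ jk}$ (derivative index last) must match \eqref{nabla* coeff}, and (ii)–(iii) must be correctly matched with \eqref{eq_D1'} and \eqref{eq_D1''} under the $i\leftrightarrow j$ relabeling. To guard against a slip, I will run the direct verification via \eqref{eq_Rel2} given under (ii) alongside the relabeling argument.
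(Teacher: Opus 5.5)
Your proof is correct, and for (i) it takes a more elementary route than the paper. The paper renames the arguments of \eqref{eq_D1} to $Z,X,Y$ and substitutes $\nabla_ZX=\overset{\ast}{\nabla}_ZX-K(Z,X)$ and $\overset{\ast}{\nabla}_ZY=\nabla_ZY+K(Z,Y)$. Via \eqref{eq_Rel5} it turns the two leftover terms into $C(Z,Y,X)-C(Z,X,Y)$, which vanishes by the symmetry of $C$ in its last two slots. Parts (ii) and (iii) are left as ``the same way''. You instead note that \eqref{eq_D1} holds for all $Y,Z$, so swapping them and using only $g(Y,Z)=g(Z,Y)$ gives (i) at once. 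Your route shows that (i) is a purely formal rewriting of \eqref{eq_D1}, valid for any two connections and needing nothing beyond the symmetry of $g$; in other words, $g$-duality is automatically a symmetric relation between $\nabla$ and $\overset{\ast}{\nabla}$, and $K$, $C$ and \eqref{eq_Rel5} are not needed. The paper's route expresses the same fact as the cancellation $C(Z,Y,X)=C(Z,X,Y)$, and that symmetry of $C$ is itself inherited from the symmetry of $g$. Your ``check via \eqref{eq_Rel1}'' is essentially the paper's argument.

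There is one bookkeeping slip in (ii), of exactly the kind you flagged as the main obstacle. Substituting $X=\partial/\partial x^k$, $Y=\partial/\partial x^i$, $Z=\partial/\partial x^j$ into (i) gives \eqref{eq_D1'} verbatim, not \eqref{eq_D1'} with $i$ and $j$ interchanged. Its terms $g_{im}\Gamma^m_{\ jk}$ and $g_{mj}\overset{\ast}{\Gamma}{}^m_{\ ik}$ match those of \eqref{eq_D1'} one for one. It is the displayed formula in (ii) that is \eqref{eq_D1'} with $i\leftrightarrow j$; it is in fact the direct coordinate form of \eqref{eq_D1}. Because all these equations are imposed for all $i,j,k$, the relabeling is an equivalence, so your conclusion stands and only that sentence needs fixing.

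Similarly, in (iii) your intermediate equation $dg_{ij}-g_{mi}\overset{\ast}{\omega}{}_j^{\ m}-g_{mj}\omega_i^{\ m}=0$ is exactly \eqref{eq_D1''}. Say so explicitly, since that is what links (iii) to \eqref{eq_D1''} as the statement requires, rather than only to (ii). Finally, your alternative check of (ii) via \eqref{eq_Rel2} shows that (ii) holds for the dual connection; it does not prove an equivalence. That is fine as a cross-check but cannot replace the relabeling argument.
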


\begin{proof}

The duality condition \eqref{eq_D1} gives
\begin{equation*}
\begin{split}
0&=Z(g(X,Y))-g(\nabla_ZX,Y)-g(X,\overset{\ast}{\nabla}_ZY)\\
&=Z(g(X,Y))-g(\overset{\ast}{\nabla}_ZX-K(Z,X),Y)-g(X,\nabla_ZY+K(Z,Y))\\
&=Z(g(X,Y))-g(\overset{\ast}{\nabla}_ZX,Y)-g(X,\nabla_ZY)
+g(Y,K(Z,X))-g(X,K(Z,Y))\\
&=Z(g(X,Y))-g(\overset{\ast}{\nabla}_ZX,Y)-g(X,\nabla_ZY)
+C(Z,Y,X)-C(Z,X,Y)\\
&=Z(g(X,Y))-g(\overset{\ast}{\nabla}_ZX,Y)-g(X,\nabla_ZY),
\end{split}
\end{equation*}
where we have used \eqref{eq_Rel5} to eliminate $K$ and the symmetry of $C$ in the last two terms (see \eqref{eq_A1}). Hence (i) is proved. 

The other two equivalences can be shown in the same way.
\end{proof}

We will show the relation between the difference tensor $K$ and the torsions of the dual connections.

\begin{proposition}\label{prop:K diff vs T diff}
If $\nabla$ and $\overset{\ast}{\nabla}$ are dual connections on a (pseudo-)Riemannian manifold $(M,g)$, with torsions $T$ and $\overset{\ast}{T}$, then
\begin{equation}
K(X,Y)-K(Y,X)=\overset{\ast}{T}(X,Y)-T(X,Y),
\end{equation}
for any $X,Y\in \mathcal{X}(M)$.
\end{proposition}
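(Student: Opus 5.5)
The plan is to argue directly from the definitions. We expand both torsions using \eqref{eq_A2} and then substitute the definition \eqref{eq_Diff1} of the difference tensor $K$. No duality property beyond \eqref{eq_Diff1} should be needed. The identity is really a statement about any two affine connections and their difference tensor.

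First, for arbitrary $X,Y\in\mathcal{X}(M)$, we write
$$
\overset{\ast}{T}(X,Y)=\overset{\ast}{\nabla}_XY-\overset{\ast}{\nabla}_YX-[X,Y],\qquad T(X,Y)=\nabla_XY-\nabla_YX-[X,Y].
$$
Subtracting, the Lie bracket terms cancel, which leaves
$$
\overset{\ast}{T}(X,Y)-T(X,Y)=\bigl(\overset{\ast}{\nabla}_XY-\nabla_XY\bigr)-\bigl(\overset{\ast}{\nabla}_YX-\nabla_YX\bigr).
$$
By \eqref{eq_Diff1} the first bracket is $K(X,Y)$ and the second is $K(Y,X)$, which gives the claim. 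Proposition \ref{diff of 2 conn} guarantees that $K$ is tensorial, so both sides are tensors and the identity holds pointwise.

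As a consistency check, we can verify the result in canonical frames. We have $K^k_{\ ij}=\overset{\ast}{\Gamma}\,^k_{\ ij}-\Gamma^k_{\ ij}$, and by \eqref{eq_T2} we have $T^i_{\ kl}=\Gamma^i_{\ lk}-\Gamma^i_{\ kl}$, together with its starred analogue. Taking $X=\partial_j$ and $Y=\partial_i$, so that $K(X,Y)$ has components $K^k_{\ ij}$, the antisymmetrization $K^k_{\ ij}-K^k_{\ ji}$ matches $\overset{\ast}{T}\,^k_{\ ji}-T^k_{\ ji}$ directly.

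There is no real obstacle here. The only point needing care is the index convention: $\nabla_{\partial_j}\partial_i=\Gamma^k_{\ ij}\partial_k$ places the differentiating direction in the last slot, while $K(\partial_j,\partial_i)=K^k_{\ ij}\partial_k$ reverses the order. I would keep the main argument coordinate-free to avoid sign slips, and use the local form only as a check.
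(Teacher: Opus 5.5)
Your proposal is correct and follows essentially the same argument as the paper. Both expand the torsions via \eqref{eq_A2}, let the $[X,Y]$ terms cancel, and regroup via \eqref{eq_Diff1}; you run the computation from the torsion side, the paper from the $K$ side. Your local check $K^k_{\ ij}-K^k_{\ ji}=\overset{\ast}{T}\,^k_{\ ji}-T^k_{\ ji}$ is also consistent with the paper's index conventions.
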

\begin{proof}
From \eqref{eq_Diff1}, we get 
\begin{equation*}
\begin{split}
K(X,Y)-K(Y,X)&=(\overset{\ast}{\nabla}_XY-\nabla_XY)-(\overset{\ast}{\nabla}_YX-\nabla_YX)\\
&=(\overset{\ast}{\nabla}_XY-\overset{\ast}{\nabla}_YX-[X,Y])-(\nabla_XY-\nabla_YX-[X,Y]),
\end{split}
\end{equation*}
hence the desired formula follows.
\end{proof}

Taking into account \eqref{eq_Rel5} and its variants, we get
\begin{corollary}
If $\nabla$ and $\overset{\ast}{\nabla}$ are dual connections on a (pseudo-)Riemannian manifold $(M,g)$, with torsions $T$ and $\overset{\ast}{T}$, then
\begin{enumerate}[(i)]
\item $g(Y,\overset{\ast}{T}(X,Z)-T(X,Z))=C(X,Z,Y)-C(Z,X,Y)$,
\item $C$ is totally symmetric if and only if $\overset{\ast}{T}(X,Y)=T(X,Y)$,
\end{enumerate}
for any $X,Y,Z\in \mathcal{X}(M)$.
\end{corollary}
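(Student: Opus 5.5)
Both parts follow by pairing Proposition \ref{prop:K diff vs T diff} with $g$ and then using \eqref{eq_Rel5} to trade each $K$ term for a component of the non-metricity $C$.

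For (i), fix $X,Y,Z$. By Proposition \ref{prop:K diff vs T diff} applied to the pair $(X,Z)$,
$$
g\bigl(Y,\overset{\ast}{T}(X,Z)-T(X,Z)\bigr)=g(Y,K(X,Z))-g(Y,K(Z,X)).
$$
Relation \eqref{eq_Rel5} gives $g(Y,K(X,Z))=C(X,Y,Z)$, and with $X$ and $Z$ interchanged it gives $g(Y,K(Z,X))=C(Z,Y,X)$. Since $C$ is symmetric in its last two slots by \eqref{C commut in last 2}, we have $C(X,Y,Z)=C(X,Z,Y)$ and $C(Z,Y,X)=C(Z,X,Y)$. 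Substituting these yields $C(X,Z,Y)-C(Z,X,Y)$, which is (i).

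For (ii), suppose first that $C$ is totally symmetric. Then the right-hand side of (i) vanishes for all $X,Y,Z$. Since $g$ is nondegenerate and $Y$ is arbitrary, this gives $\overset{\ast}{T}(X,Z)=T(X,Z)$.

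Conversely, suppose $\overset{\ast}{T}=T$. Then (i) gives $C(X,Z,Y)=C(Z,X,Y)$ for all $X,Y,Z$, so $C$ is symmetric in its first two arguments. Combined with symmetry in the last two arguments \eqref{C commut in last 2}, $C$ is invariant under the transpositions $(12)$ and $(23)$. These two transpositions generate the symmetric group $S_3$, so $C$ is totally symmetric.

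There is no real obstacle here. The only thing to watch is the argument ordering in \eqref{eq_Rel5}, which is written $g(Y,K(X,Z))=C(X,Y,Z)$ with the differentiating direction first. Swapping $X$ and $Z$ in that relation must be done with that convention in mind, and the last-two-slot symmetry of $C$ is then needed to put the result into the stated form.
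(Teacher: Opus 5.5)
Your proof is correct and matches the paper's route. You pair Proposition \ref{prop:K diff vs T diff} with $g$, convert each $K$ term via \eqref{eq_Rel5} and the last-two-slot symmetry of $C$, and conclude with nondegeneracy of $g$; this is the same computation the paper records as \eqref{eq_*2n} in the proof of Theorem \ref{thm_2}. Your $S_3$-generation argument for the converse in (ii) simply makes explicit what the paper assumes when it identifies total symmetry with $C(X,Y,Z)=C(Y,X,Z)$.
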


\begin{proposition}
If $\nabla$ and $\overset{\ast}{\nabla}$ are dual connections on a (pseudo-)Riemannian manifold $(M,g)$ then
$$
C(X,Y,Z)+\overset{\ast}{C}(X,Y,Z)=0,
$$
where
$C$ and $\overset{\ast}{C}$ are the non-metricity tensors of $\nabla$ and $\overset{\ast}{\nabla}$, respectively.
\end{proposition}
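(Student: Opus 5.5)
The plan is to compute $\overset{\ast}{C}$ directly from its definition, as in \eqref{eq_A1} with $\nabla$ replaced by $\overset{\ast}{\nabla}$, and then remove every covariant derivative with the duality relations. By definition
$$
\overset{\ast}{C}(X,Y,Z)=X(g(Y,Z))-g(\overset{\ast}{\nabla}_XY,Z)-g(Y,\overset{\ast}{\nabla}_XZ).
$$
First I will express $X(g(Y,Z))$ through the duality condition \eqref{eq_D1}, namely $X(g(Y,Z))=g(\nabla_XY,Z)+g(Y,\overset{\ast}{\nabla}_XZ)$. Substituting this, the terms $g(Y,\overset{\ast}{\nabla}_XZ)$ cancel, and we are left with
$$
\overset{\ast}{C}(X,Y,Z)=g(\nabla_XY-\overset{\ast}{\nabla}_XY,Z)=-g(Z,K(X,Y)),
$$
where $K$ is the difference tensor \eqref{eq_Diff1}.

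Next I will apply \eqref{eq_Rel5} together with the symmetry \eqref{C commut in last 2}. These give $g(Z,K(X,Y))=C(X,Z,Y)=C(X,Y,Z)$, so $\overset{\ast}{C}(X,Y,Z)=-C(X,Y,Z)$, which is the claim.

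As a cross-check I will also work in local coordinates. Using Lemma~\ref{lem_D1}(ii), $\partial_k g_{ij}=g_{mi}\overset{\ast}{\Gamma}{}^m_{\ jk}+g_{mj}\Gamma^m_{\ ik}$. Then
$$
\overset{\ast}{C}_{kij}=\partial_kg_{ij}-g_{mi}\overset{\ast}{\Gamma}{}^m_{\ jk}-g_{mj}\overset{\ast}{\Gamma}{}^m_{\ ik}=g_{mj}\bigl(\Gamma^m_{\ ik}-\overset{\ast}{\Gamma}{}^m_{\ ik}\bigr)=-C_{kji}=-C_{kij},
$$
where the last steps use \eqref{eq_Rel2}.

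There is no real obstacle. The only point needing care is index and argument order: one must use the symmetry of $C$ in its last two slots (and of $K$ via \eqref{eq_Rel5}) to match $C(X,Z,Y)$ with $C(X,Y,Z)$, and keep the convention $\overset{\ast}{\Gamma}{}^m_{\ ki}$ with the differentiation index last.
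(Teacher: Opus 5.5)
Your argument is correct and is essentially the paper's: both compute $\overset{\ast}{C}$ directly from its definition and remove the $\overset{\ast}{\nabla}$ terms with the duality relation. The only difference is the bookkeeping: the paper substitutes both $g(\overset{\ast}{\nabla}_XY,Z)$ and $g(Y,\overset{\ast}{\nabla}_XZ)$ using \eqref{eq_D1} and Lemma~\ref{lem_D1}(i), whereas you substitute $X(g(Y,Z))$ once and then finish with \eqref{eq_Rel5} and the symmetry of $C$ in its last two slots (and your coordinate cross-check via Lemma~\ref{lem_D1}(ii) and \eqref{eq_Rel2} is also correct).
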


\begin{proof}
We start with definition of $\overset{\ast}{C}$, i.e.
\begin{equation*}
\begin{split}
\overset{\ast}{C}(X,Y,Z)&:=(\overset{\ast}{\nabla}_Xg)(Y,Z)=X(g(Y,Z))-g(\overset{\ast}{\nabla}_XY,Z)-g(Y,\overset{\ast}{\nabla}_XZ)\\
&=X(g(Y,Z))-X(g(Y,Z))+g(Y,\nabla_XZ)-X(g(Y,Z))+g(\nabla_XY,Z)\\
&=-X(g(Y,Z))+g(\nabla_XY,Z)+g(Y,\nabla_XZ)=-C(X,Y,Z),
\end{split}
\end{equation*}
where we have used the duality condition in the form \eqref{eq_D1} and (i) from Lemma \eqref{lem_D1}.

\end{proof}

We recall that difference of two affine connections is a tensor, and that the sum of two affine connections is an affine connection. It is therefore natural to consider the {\it average connection} of the connections $\nabla$ and $\overset{\ast}{\nabla}$, namely
\begin{equation}\label{eq_Av1}
\overset{(0)}{\nabla}_XY:=\frac{1}{2}\left(\nabla_XY+\overset{\ast}{\nabla}_XY\right).
\end{equation}

Then, we have
\begin{proposition}\label{prop_A8}
The average connection of two dual connections on a (pseudo-)Riemannian manifold $(M,g)$ is $g$-metrical, i.e.
$$
(\overset{(0)}{\nabla}_Xg)(Y,Z)=0.
$$
\end{proposition}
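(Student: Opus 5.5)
We compute $(\overset{(0)}{\nabla}_Xg)(Y,Z)$ straight from the definition in \eqref{eq_A1}, replacing $\overset{(0)}{\nabla}$ by its expression \eqref{eq_Av1}. Since $g$ is bilinear, the average splits cleanly:
\begin{equation*}
(\overset{(0)}{\nabla}_Xg)(Y,Z)=X(g(Y,Z))-\tfrac{1}{2}\left[g(\nabla_XY,Z)+g(\overset{\ast}{\nabla}_XY,Z)\right]-\tfrac{1}{2}\left[g(Y,\nabla_XZ)+g(Y,\overset{\ast}{\nabla}_XZ)\right].
\end{equation*}
We then write $X(g(Y,Z))$ as the sum of two halves and group the terms as
\begin{equation*}
\tfrac12\left[X(g(Y,Z))-g(\nabla_XY,Z)-g(Y,\overset{\ast}{\nabla}_XZ)\right]+\tfrac12\left[X(g(Y,Z))-g(\overset{\ast}{\nabla}_XY,Z)-g(Y,\nabla_XZ)\right].
\end{equation*}

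The first bracket vanishes by the duality condition \eqref{eq_D1}. The second bracket vanishes by Lemma \ref{lem_D1}(i). So the whole expression is zero.

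As an alternative, we can use the previous proposition. Covariant differentiation of $g$ is affine in the connection, so $(\overset{(0)}{\nabla}_Xg)=\frac12(C+\overset{\ast}{C})(X,\cdot,\cdot)$, and this is zero because $C+\overset{\ast}{C}=0$. The "affine in the connection" step is checked by the same expansion as above.

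None of these steps is hard. The only point needing care is pairing the terms correctly. In each bracket, one slot must carry $\nabla$ and the other $\overset{\ast}{\nabla}$. For the second bracket, $\overset{\ast}{\nabla}$ sits in the first slot, so it needs the swapped form of duality; this is exactly why Lemma \ref{lem_D1}(i) is needed there. That lemma in turn relies on the symmetry \eqref{C commut in last 2}.
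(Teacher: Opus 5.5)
Your proof is correct and is the paper's argument: expand via \eqref{eq_Av1}, split $X(g(Y,Z))$ into two halves, and kill the two brackets with \eqref{eq_D1} and Lemma \ref{lem_D1}(i) respectively. Your alternative via $C+\overset{\ast}{C}=0$ is also valid; it is essentially the $\alpha=0$ case of the paper's later computation $\overset{(\alpha)}{\nabla}_X g=\alpha\, C(X,\cdot,\cdot)$.
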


\begin{proof}
By definition, we have

\begin{equation*}
\begin{split}
(\overset{(0)}{\nabla}_Xg)(Y,Z)&=X(g(Y,Z))-g(\overset{(0)}{\nabla}_XY,Z)-g(Y,\overset{(0)}{\nabla}_XZ)\\
&=X(g(Y,Z))-\frac{1}{2}\left[
g(\nabla_XY,Z)+g(\overset{\ast}{\nabla}_XY,Z)
\right]\\
&-\frac{1}{2}\left[
g(Y,\nabla_XZ)+g(Y,\overset{\ast}{\nabla}_XZ)
\right]\\
&=\frac{1}{2}\left\{
X(g(Y,Z))-g(\nabla_XY,Z)-g(Y,\overset{\ast}{\nabla}_XZ)
\right\}\\
&+\frac{1}{2}\left\{
X(g(Y,Z))-g(\overset{\ast}{\nabla}_XY,Z)-g(Y,\nabla_XZ)
\right\}=0
\end{split}
\end{equation*}
by taking into account of \eqref{eq_D1} and (i) of Lemma \ref{lem_D1}.

\end{proof}

\begin{remark}
Observe that since we haven't asked any condition on the torsion free of $\nabla$ and $\overset{\ast}{\nabla}$, there is no reason for $\overset{(0)}{\nabla}$ to be torsion free, i.e. at this moment $\overset{(0)}{\nabla}$ it is not the Levi-Civita connection of $(M,g)$.
\end{remark}

From relation \eqref{eq_Diff1} and \eqref{eq_Av1} we get
$$
\nabla_XY=\overset{(0)}{\nabla}_XY-\frac{1}{2}K(X,Y),
$$
and therefore 
$$
g(\nabla_XY,Z)=g(\overset{(0)}{\nabla}_XY,Z)-\frac{1}{2}g(K(X,Y),Z)
=g(\overset{(0)}{\nabla}_XY,Z)-\frac{1}{2}C(X,Y,Z),
$$
where we use \eqref{eq_Rel5}.

Likewise, using that 
$$
\overset{\ast}{\nabla}_XY=\overset{(0)}{\nabla}_XY+\frac{1}{2}K(X,Y),
$$
it results 
$$
g(\overset{\ast}{\nabla}_XY,Z)=g(\overset{(0)}{\nabla}_XY,Z)+\frac{1}{2}g(K(X,Y),Z)
=g(\overset{(0)}{\nabla}_XY,Z)+\frac{1}{2}C(X,Y,Z).
$$

Therefore, we obtain 
\begin{proposition}\label{prop: recover nablas}
Let $(M,g)$ be a (pseudo-) Riemannian manifold endowed with an affine connection $\overset{(0)}{\nabla}$ and a (0,3)-tensor field $C$. Then the affine connections $\nabla$ and $\overset{\ast}{\nabla}$ on $M$ defined by the following relations
\begin{equation}\label{recover nabla}
g(\nabla_XY,Z)=g(\overset{(0)}{\nabla}_XY,Z)-\frac{1}{2}C(X,Y,Z),
\end{equation}
\begin{equation}\label{recover nabla*}
g(\overset{\ast}{\nabla}_XY,Z)
=g(\overset{(0)}{\nabla}_XY,Z)+\frac{1}{2}C(X,Y,Z),
\end{equation}
have the following properties.
\begin{enumerate}
\item If $\overset{(0)}{\nabla}$ is $g$-metrical and $C(X,Y,Z)=C(X,Z,Y)$, then 
$\nabla$ and $\overset{\ast}{\nabla}$ are $g$-dual connections.
\item If $\overset{(0)}{\nabla}$ is $g$-metrical and $C(X,Y,Z)=C(X,Z,Y)$, then $(\nabla_Xg)(Y,Z)=C(X,Y,Z)$ and $(\overset{\ast}{\nabla}_Xg)(Y,Z)=-C(X,Y,Z)$. 
\item The following relations between the torsions $T$,  $\overset{\ast}{T}$ and $\overset{(0)}{T}$
 of $\nabla$, $\overset{\ast}{\nabla}$ and $\overset{(0)}{\nabla}$ hold good
 \begin{equation}
 \begin{split}
 g(T(X,Y),Z)&=g(\overset{(0)}{T}(X,Y),Z)-\frac{1}{2}\left\{C(X,Y,Z)-C(Y,X,Z)
 \right\},\\
 g(\overset{\ast}{T}(X,Y),Z)&=g(\overset{(0)}{T}(X,Y),Z)+\frac{1}{2}\left\{C(X,Y,Z)-C(Y,X,Z)
 \right\}.
 \end{split}
 \end{equation}
 \end{enumerate}
\end{proposition}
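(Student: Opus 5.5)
The plan is a direct verification from the defining relations \eqref{recover nabla} and \eqref{recover nabla*}. First I note that these relations really do define affine connections. By nondegeneracy of $g$, the map $Z\mapsto C(X,Y,Z)$ determines a unique vector field $L(X,Y)$ with $g(L(X,Y),Z)=C(X,Y,Z)$, and $L$ is a $(1,2)$-tensor. Then $\nabla=\overset{(0)}{\nabla}-\frac12 L$ and $\overset{\ast}{\nabla}=\overset{(0)}{\nabla}+\frac12 L$. Adding a tensor to a connection gives a connection, which is the converse direction of Proposition \ref{diff of 2 conn}. In particular $\overset{\ast}{\nabla}_XY-\nabla_XY=L(X,Y)$, so $g(\overset{\ast}{\nabla}_XY-\nabla_XY,Z)=C(X,Y,Z)$.

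For (1) I will expand the duality expression $X(g(Y,Z))-g(\nabla_XY,Z)-g(Y,\overset{\ast}{\nabla}_XZ)$ by substituting the two defining relations. The second slot needs the form $g(\overset{\ast}{\nabla}_XZ,Y)=g(\overset{(0)}{\nabla}_XZ,Y)+\frac12 C(X,Z,Y)$. The expression becomes
$$(\overset{(0)}{\nabla}_Xg)(Y,Z)+\tfrac12 C(X,Y,Z)-\tfrac12 C(X,Z,Y).$$
The first term vanishes because $\overset{(0)}{\nabla}$ is metrical, and the remaining two cancel by the assumed symmetry $C(X,Y,Z)=C(X,Z,Y)$. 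This is exactly the duality condition \eqref{eq_D1}.

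For (2) the same substitution into $(\nabla_Xg)(Y,Z)=X(g(Y,Z))-g(\nabla_XY,Z)-g(Y,\nabla_XZ)$ gives
$$(\overset{(0)}{\nabla}_Xg)(Y,Z)+\tfrac12 C(X,Y,Z)+\tfrac12 C(X,Z,Y)=C(X,Y,Z),$$
again using metricity and the symmetry in the last two slots. The computation for $\overset{\ast}{\nabla}$ is identical with the signs reversed and yields $-C$. Alternatively, this part follows from (1) together with the earlier proposition $C+\overset{\ast}{C}=0$.

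For (3) I will write $g(T(X,Y),Z)=g(\nabla_XY,Z)-g(\nabla_YX,Z)-g([X,Y],Z)$ and apply \eqref{recover nabla} to the first two terms. The $\overset{(0)}{\nabla}$ terms recombine with $-g([X,Y],Z)$ into $g(\overset{(0)}{T}(X,Y),Z)$, leaving $-\frac12\{C(X,Y,Z)-C(Y,X,Z)\}$. The computation for $\overset{\ast}{T}$ is analogous and gives the opposite sign. Part (3) needs no hypotheses.

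I don't expect a real obstacle here. The only point needing care is keeping the slot order of $C$ consistent: the relations are written as $g(\cdot,Z)$ while the duality condition places $\overset{\ast}{\nabla}_XZ$ in the second argument of $g$. This mismatch is exactly where the symmetry hypothesis $C(X,Y,Z)=C(X,Z,Y)$ gets used.
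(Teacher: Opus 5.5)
Your proposal is correct and follows the paper's proof essentially line by line. Like the paper, you substitute the defining relations directly into the duality expression, into $(\nabla_X g)(Y,Z)$, and into $g(T(X,Y),Z)$, using metricity of $\overset{(0)}{\nabla}$ and the symmetry of $C$ in its last two slots exactly where the paper does. Your justification that the relations define affine connections is a cleaner version of the point the paper states only loosely: you raise the last index of $C$ to a $(1,2)$-tensor $L$ and add $\mp\frac{1}{2}L$ to $\overset{(0)}{\nabla}$.
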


\begin{proof}
Before proving any properties, let us observe that $\nabla$ and $\overset{\ast}{\nabla}$ defined above are indeed affine connections. Indeed, since the difference of any two affine connection is a tensor field, then it is clear that they are well defined.

1. Using \eqref{recover nabla} and \eqref{recover nabla*}, we get
\begin{equation*}
\begin{split}
&X(g(Y,Z))-g(\nabla_XY,Z)-g(Y,\overset{\ast}{\nabla}_XZ) \\
&=
X(g(Y,Z))-g(\overset{(0)}{\nabla}_XY,Z)+\frac{1}{2}C(X,Y,Z)
-g(Y,\overset{(0)}{\nabla}_XZ)-\frac{1}{2}C(X,Z,Y)=0.
\end{split}
\end{equation*}
due to the $g$-metricity of $\overset{(0)}{\nabla}$ and symmetry of $C$ in the 2-nd and 3-rd components. Therefore $\nabla$ and $\overset{\ast}{\nabla}$ are $g$-dual connections.

2. We compute 
\begin{equation*}
\begin{split}
(\nabla_Xg)(Y,Z)&=X(g(Y,Z))-g(\nabla_XY,Z)-g(Y,\nabla_XZ)\\
&=X(g(Y,Z))-g(\overset{(0)}{\nabla}_XY,Z)+\frac{1}{2}C(X,Y,Z)
-g(Y,\overset{(0)}{\nabla}_XZ)+\frac{1}{2}C(X,Z,Y)\\
&=C(X,Y,Z),
\end{split}
\end{equation*}
where we have used the $g$-metricity of $\overset{(0)}{\nabla}$ and the symmetry of $C$ in the 2-nd and 3-rd components. 

The relation $(\overset{\ast}{\nabla}_Xg)(Y,Z)=-C(X,Y,Z)$ can be proved in an identical manner. 

3. Indeed, we have
\begin{equation*}
\begin{split}
g(T(X,Y),Z)&=g(\nabla_XY-\nabla_YX-[X,Y],Z)\\
&=g(\overset{(0)}{\nabla}_XY-\overset{(0)}{\nabla}_YX-[X,Y],Z)-\frac{1}{2}C(X,Y,Z)+\frac{1}{2}C(Y,X,Z)
\end{split}
\end{equation*}
and the relation follows. The second relation can be proved in an identical manner. 

\end{proof}

\begin{remark}
In the case $\overset{(0)}{\nabla}$ is the Levi-Civita connection of $g$ and $C$ is totally symmetric, the connections  $\nabla$ and $\overset{\ast}{\nabla}$ are both torsion free affine connections.
\end{remark}


\subsection{Curvature tensors}

Recall that the curvature $R$ of an affine connection $\nabla$ is defined by
\begin{equation}\label{eq_curv1}
R(X,Y)Z=\nabla_X\nabla_YZ-\nabla_Y\nabla_XZ-\nabla_{[X,Y]}Z.
\end{equation}

In local coordinates if we denote the coefficients
\begin{equation}\label{R tensors def}
R\left(\frac{\partial}{\partial x^k},\frac{\partial}{\partial x^l}\right)\frac{\partial}{\partial x^i}=R_{i\ kl}^{\ j}\frac{\partial}{\partial x^j},
\end{equation}
then a straightforward computation shows that
\begin{equation}\label{eq_Rel10}
R_{i\ kl}^{\ j}=\frac{\partial\Gamma^j_{\ il}}{\partial x^k}-\frac{\partial\Gamma^j_{\ ik}}{\partial x^l}+\Gamma^h_{\ il}\Gamma^j_{\ hk}-\Gamma^h_{\ ik}\Gamma^j_{\ hl},
\end{equation}
i.e.
$$
R=R_{i\ kl}^{\ j}\frac{\partial}{\partial x^j}\otimes dx^i \otimes dx^k \otimes dx^l.
$$

On the other hand, in terms of canonical coframes the curvature 2-form of $\nabla$ is defined as
$$
\Omega_i^{\ j}=d\omega_i^{\ j}-\omega_i^{\ h}\wedge\omega_h^{\ j},
$$
where $\{\omega_i^{\ j}\}$ is the connection form of $\nabla$. By using that $\omega_i^{\ j}=\Gamma^j_{\ ik}dx^k$ it follows
$$
\Omega_i^{\ j}=\frac{1}{2}R^{\ j}_{i\ kl}dx^k\wedge dx^l,
$$
where $R^{\ j}_{i\ kl}$ is given in \eqref{eq_Rel10}. If $R^{\ j}_{i\ kl}\equiv 0$ the affine connection $\nabla$ is called {\it flat}.

Using $g$-orthonormal coframes, we have
$$
\Omega_b^{\ a}=d\omega_b^{\ a}-\omega_b^{\ c}\wedge \omega_c^{\ a}.
$$

\begin{theorem}\label{thm_A10}
Let $\nabla$ be an affine connection on the (pseudo-)Riemannian manifold $(M,g)$. Then the following properties of the curvature tensor $R(X,Y)Z$ hold good.
\begin{enumerate}[(i)]
\item $R(X,Y)Z+R(Y,X)Z=0$;
\item if $\nabla$ is $g$-metrical, then $g(R(Z,V)Y,X)+g(R(Z,V)X,Y)=0$.
\end{enumerate}
\end{theorem}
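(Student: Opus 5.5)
Both claims follow directly from the definition \eqref{eq_curv1} of $R$, without passing to coordinates.

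For (i), we interchange $X$ and $Y$ in \eqref{eq_curv1}. This gives
$$
R(Y,X)Z=\nabla_Y\nabla_XZ-\nabla_X\nabla_YZ-\nabla_{[Y,X]}Z.
$$
Since $[Y,X]=-[X,Y]$ and $\nabla_W Z$ is $\mathbb{R}$-linear (indeed $C^\infty$-linear) in $W$, we have $\nabla_{[Y,X]}Z=-\nabla_{[X,Y]}Z$. Adding this to $R(X,Y)Z$, the three pairs of terms cancel, so $R(X,Y)Z+R(Y,X)Z=0$. No assumption on metricity or torsion is needed. Equivalently, in coordinates, \eqref{eq_Rel10} is visibly antisymmetric in $k,l$.

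For (ii), we use that $g$-metricity means
$$
W(g(A,B))=g(\nabla_WA,B)+g(A,\nabla_WB)\qquad\text{for all } W,A,B.
$$

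\textbf{Step 1.} Apply this with $W=V$, $A=Y$, $B=X$, and then differentiate the result along $Z$, expanding each term by metricity again:
$$
ZV(g(Y,X))=g(\nabla_Z\nabla_VY,X)+g(\nabla_VY,\nabla_ZX)+g(\nabla_ZY,\nabla_VX)+g(Y,\nabla_Z\nabla_VX).
$$

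\textbf{Step 2.} Write the analogous expansion for $VZ(g(Y,X))$. The mixed terms $g(\nabla_VY,\nabla_ZX)+g(\nabla_ZY,\nabla_VX)$ are symmetric under $Z\leftrightarrow V$, so they cancel in the difference:
$$
[Z,V](g(Y,X))=g(\nabla_Z\nabla_VY-\nabla_V\nabla_ZY,X)+g(Y,\nabla_Z\nabla_VX-\nabla_V\nabla_ZX).
$$

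\textbf{Step 3.} Apply metricity once more to the left side, with $W=[Z,V]$:
$$
[Z,V](g(Y,X))=g(\nabla_{[Z,V]}Y,X)+g(Y,\nabla_{[Z,V]}X).
$$

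\textbf{Step 4.} Subtract Step 3 from Step 2. By \eqref{eq_curv1} the remaining terms combine to
$$
0=g(R(Z,V)Y,X)+g(Y,R(Z,V)X).
$$
Since $g$ is symmetric, $g(Y,R(Z,V)X)=g(R(Z,V)X,Y)$, which gives (ii).

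The only point needing care is the bookkeeping in Step 2: one must verify that the mixed first-order terms really cancel. They do, because $g$ is symmetric and the two mixed terms together are invariant under interchanging $Z$ and $V$.

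A coordinate alternative for (ii) is to use the Ricci identity $g_{ij|k|l}-g_{ij|l|k}=-g_{mj}R_{i\ kl}^{\ m}-g_{im}R_{j\ kl}^{\ m}$ plus torsion terms. With $C_{kij}=g_{ij|k}=0$ the left side and the torsion terms vanish, giving the same conclusion. However, the coordinate-free computation above avoids the torsion terms entirely, since the torsion never enters.
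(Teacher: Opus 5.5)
Your proof is correct, and part (i) is exactly the paper's one-line argument.

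For (ii) the mechanism is the same as the paper's: the Leibniz rule for $g$ together with the bracket term. The difference is that the paper runs the computation for an arbitrary connection. Starting from $g(R(X,Y)Z,W)$, it transfers the covariant derivatives from $Z$ onto $W$ using $X(g(A,B))=g(\nabla_XA,B)+g(A,\nabla_XB)+C(X,A,B)$, and it carries the non-metricity tensor $C$ throughout. This yields the general identity \eqref{eq_P4}, which expresses $g(R(X,Y)Z,W)+g(R(X,Y)W,Z)$ through $X(C(Y,Z,W))$, $Y(C(X,Z,W))$, $C([X,Y],Z,W)$ and terms of the form $C(\cdot,\cdot,\nabla\cdot)$. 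Statement (ii) is then the special case $C\equiv 0$.

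By imposing metricity from the outset, your version reduces the bookkeeping to a few transparent lines. What it gives up is \eqref{eq_P4} itself, which the paper later cites to explain why $R_{iskj}$ is not skew in its first two indices for the non-metric connections $\nabla$ and $\overset{\ast}{\nabla}$. Your argument would recover it with little extra work. Keeping the $C$-terms amounts to the identity $g(R(X,Y)Z,W)+g(R(X,Y)W,Z)=-((\nabla_X\nabla_Y-\nabla_Y\nabla_X-\nabla_{[X,Y]})g)(Z,W)$ with $(\nabla_Xg)(A,B)=C(X,A,B)$.

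One small correction to your commentary: the cancellation of the mixed terms in Step 2 does not use the symmetry of $g$. Both expansions contain literally the same two terms, $g(\nabla_VY,\nabla_ZX)$ and $g(\nabla_ZY,\nabla_VX)$. Symmetry of $g$ enters only in the final line of Step 4.
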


\begin{proof}
\begin{enumerate}[(i)]
\item Follows immediately from definition \eqref{eq_curv1}.
\item Without assuming that $g$ is metrical, we write
\begin{equation}\label{eq_P0}
\begin{split}
g(R(X,Y)Z,W)&=g(\nabla_X\nabla_YZ-\nabla_Y\nabla_XZ-\nabla_{[X,Y]}Z,W)\\
&=g(\nabla_X\nabla_YZ,W)-g(\nabla_Y\nabla_XZ,W)-g(\nabla_{[X,Y]}Z,W).
\end{split}
\end{equation}

By using \eqref{eq_A1}, observe that
$$
g(\nabla_X\nabla_YZ,W)=X(g(\nabla_YZ,W))-g(\nabla_YZ,\nabla_XW)
-C(X,\nabla_YZ,W),
$$
and
$$
g(\nabla_YZ,\nabla_XW)=Y(g(Z,\nabla_XW))-g(Z,\nabla_Y\nabla_XW)
-C(Y,Z,\nabla_XW).
$$

Hence, we get
\begin{equation}\label{eq_P1}
\begin{split}
g(\nabla_X\nabla_YZ,W)&=X(g(\nabla_YZ,W))-Y(g(Z,\nabla_XW))
+g(Z,\nabla_Y\nabla_XW)\\
&+C(Y,Z,\nabla_XW)-C(X,\nabla_YZ,W).
\end{split}
\end{equation}

By interchanging $X$ and $Y$, we obtain
\begin{equation}\label{eq_P2}
\begin{split}
g(\nabla_Y\nabla_XZ,W)&=Y(g(\nabla_XZ,W))-X(g(Z,\nabla_YW))
+g(Z,\nabla_X\nabla_YW)\\
&+C(X,Z,\nabla_YW)-C(Y,\nabla_XZ,W).
\end{split}
\end{equation}

Likewise, from \eqref{eq_A1} we get
\begin{equation}\label{eq_P3}
\begin{split}
g(\nabla_{[X,Y]}Z,W)&=[X,Y](g(Z,W))-g(Z,\nabla_{[X,Y]}W)-C([X,Y],Z,W)\\
&=X(Y(g(Z,W)))-Y(X(g(Z,W)))-g(Z,\nabla_{[X,Y]}W)\\
&-C([X,Y],Z,W).
\end{split}
\end{equation}

By substituting \eqref{eq_P1}, \eqref{eq_P2}, \eqref{eq_P3} in \eqref{eq_P0}, it follows
\begin{equation*}
\begin{split}
g(R(X,Y)Z,W)&=\underline{X(g(\nabla_YZ,W))}-\underline{Y(g(Z,\nabla_XW))}\\
&+g(Z,\nabla_Y\nabla_XW)+C(Y,Z,\nabla_XW)-C(X,\nabla_YZ,W)\\
&-\underline{Y(g(\nabla_XZ,W))}+\underline{X(g(Z,\nabla_YW))}-g(Z,\nabla_X\nabla_YW)\\
&-C(X,Z,\nabla_YW)+C(Y,\nabla_XZ,W)\\
&-X(Y(g(Z,W)))+Y(X(g(Z,W)))+g(Z,\nabla_{[X,Y]}W)+C([X,Y],Z,W).
\end{split}
\end{equation*}

If we use again \eqref{eq_A1} for the underlined terms, we get
\begin{equation*}
\begin{split}
g(R(X,Y)Z,W)&=X\{
\underset{\text{\textcircled{\tiny 1}}}{\underline{Y(g(Z,W))}}
-g(Z,\nabla_YW)-C(Y,Z,W)
\}\\
&+Y\{
-\underset{\text{\textcircled{\tiny 2}}}{\underline{X(g(Z,W))}}
+g(W,\nabla_XZ)+C(X,Z,W)
\}\\
&+\underset{\text{\textcircled{\tiny 3}}}{\underline{g(\nabla_Y\nabla_XW,Z)}}
+C(Y,Z,\nabla_XW)-C(X,W,\nabla_YZ)\\
&+Y\{-X(g(Z,W))+g(Z,\nabla_XW)+C(X,Z,W)\}\\
&+X\{Y(g(Z,W))-g(W,\nabla_YZ)-C(Y,Z,W)\}\\
&-\underset{\text{\textcircled{\tiny 3}}}{\underline{g(\nabla_X\nabla_YW,Z)}}
-C(X,Z,\nabla_YW)+C(Y,W,\nabla_XZ)\\
&-\underset{\text{\textcircled{\tiny 1}}}{\underline{X(Y(g(Z,W)))}}
+\underset{\text{\textcircled{\tiny 2}}}{\underline{Y(X(g(Z,W)))}}
+\underset{\text{\textcircled{\tiny 3}}}{\underline{g(Z,\nabla_{[X,Y]W})}}\\
&+C([X,Y],Z,W).
\end{split}
\end{equation*}

By observing that the underlined terms \textcircled{\footnotesize 1}, \textcircled{\footnotesize 2} vanish and that \textcircled{\footnotesize 3} equals $-g(R(X,Y)W,Z)$ we obtain
\begin{equation*}
\begin{split}
&g(R(X,Y)Z,W)\\
&=-g(R(X,Y)W,Z)\\
&+X\{
\underset{\text{\textcircled{\tiny 1}}}{\underline{
-g(\nabla_YW,Z)-C(Y,Z,W)+Y(g(Z,W))-g(W,\nabla_YZ)
}}
-C(Y,Z,W)
\}\\
&+Y\{
\underset{\text{\textcircled{\tiny 2}}}{\underline{
g(\nabla_XZ,W)-C(X,Z,W)-X(g(Z,W))+g(Z,\nabla_XW)
}}
-C(Y,Z,W)
\}\\
&+C(Y,Z,\nabla_XW)-C(X,W,\nabla_YZ)-C(X,Z,\nabla_YW)\\
&+C(Y,W,\nabla_XZ)+C([X,Y],Z,W).
\end{split}
\end{equation*}

Observe that the underlined terms \textcircled{\footnotesize 1} and \textcircled{\footnotesize 2} vanish by \eqref{eq_A1}, hence we get
\begin{equation}\label{eq_P4}
\begin{split}
g(R(X,Y)Z,W)&=-g(R(X,Y)W,Z)
-X(C(Y,Z,W))+Y(C(X,Z,W))\\
&+C(Y,Z,\nabla_XW)-C(X,W,\nabla_YZ)-C(X,Z,\nabla_YW)\\
&+C(Y,W,\nabla_XZ)+C([X,Y],Z,W).
\end{split}
\end{equation}

In the case $g$-metrical, i.e. $C\equiv 0$ the conclusion follows immediately.
\end{enumerate}
\end{proof}

The tensor 
$$
R(X,Y,Z,V):=g(R(Z,V)Y,X),
$$
is called the {\it Riemann-Christoffel curvature tensor} of $\nabla$, hence (i), (ii) in Theorem \ref{thm_A10} can be also written as
\begin{enumerate}
\item[(i)$'$] $R(X,Y,Z,V)+R(X,Y,V,Z)=0$
\item[(ii)$'$] if $\nabla$ is $g$-metrical, then $R(X,Y,Z,V)+R(Y,X,Z,V)=0$.
\end{enumerate}

Let us recall here the following Bianchi identities for $\nabla$. 
\begin{enumerate}[(i)]
\item {\bf The First Bianchi identity}:
\begin{equation}\label{1-st Bianchi}
\begin{split}
&(\nabla_XR)(Y,Z)+(\nabla_YR)(Z,X)+(\nabla_ZR)(X,Y)\\
&=-R(T(X,Y),Z)-R(T(Y,Z),X)-R(T(Z,X),Y).
\end{split}
\end{equation}
\item {\bf The Second Bianchi identity}:
\begin{equation}\label{2-nd Bianchi}
\begin{split}
&(\nabla_XT)(Y,Z)+(\nabla_YT)(Z,X)+(\nabla_ZT)(X,Y)\\
&=R(X,Y)Z+R(Y,Z)X+R(Z,X)Y\\
&-T(T(X,Y),Z))-T(T(Y,Z),X))-T(T(Z,X),Y)).
\end{split}
\end{equation}
\end{enumerate}

For the proof see any comprehensive textbook on Riemannian geometry. 

Moreover, if $\nabla$ is torsion free then the First and Second Bianchi identities read
\begin{enumerate}
\item[(iii)$'$] $(\nabla_XR)(Y,Z)+(\nabla_YR)(Z,X)+(\nabla_ZR)(X,Y)=0$,
\item[(iv)$'$] $R(X,Y)Z+R(Y,Z)X+R(Z,X)Y=0$,
\end{enumerate}
respectively.

Since $\nabla$ and $\overset{\ast}{\nabla}$ are not $g$-metrical connections, the skew symetry condition (ii') above in the first two positions of $R(X,Y,Z,W)$ and $\overset{\ast}{R}(X,Y,Z,W)$ do not hold. However, the following combined relation can be proved.

\begin{theorem}\label{thm: R and R* 1-st 2 ind}
If $\nabla$ and $\overset{\ast}{\nabla}$ are dual connections on a (pseudo-)Riemannian manifold $(M,g)$, then
\begin{equation}\label{eq_12.1}
g(R(X,Y)Z,W)+g(\overset{\ast}{R}(X,Y)W,Z)=0,
\end{equation}
where $R$ and $\overset{\ast}{R}$ are the curvature tensor of $\nabla$ and $\overset{\ast}{\nabla}$, respectively.
\end{theorem}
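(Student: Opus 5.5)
The plan is to use only the duality condition \eqref{eq_D1} and its swapped form from Lemma \ref{lem_D1}(i). No torsion-freeness or total symmetry of $C$ is needed. The identity then reduces to commuting second derivatives of the function $g(Z,W)$.

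I start from the expression
$$
g(\nabla_X\nabla_YZ,W).
$$
Apply \eqref{eq_D1} with the vector fields $\nabla_YZ$ and $W$ to get
$$
g(\nabla_X\nabla_YZ,W)=X(g(\nabla_YZ,W))-g(\nabla_YZ,\overset{\ast}{\nabla}_XW).
$$
Apply \eqref{eq_D1} again to the first term, with $Z$ and $W$:
$$
g(\nabla_YZ,W)=Y(g(Z,W))-g(Z,\overset{\ast}{\nabla}_YW).
$$
Then apply it a third time to the last term, now with $Z$ and $\overset{\ast}{\nabla}_XW$:
$$
g(\nabla_YZ,\overset{\ast}{\nabla}_XW)=Y(g(Z,\overset{\ast}{\nabla}_XW))-g(Z,\overset{\ast}{\nabla}_Y\overset{\ast}{\nabla}_XW).
$$
Combining these gives
$$
g(\nabla_X\nabla_YZ,W)=XY(g(Z,W))-X(g(Z,\overset{\ast}{\nabla}_YW))-Y(g(Z,\overset{\ast}{\nabla}_XW))+g(Z,\overset{\ast}{\nabla}_Y\overset{\ast}{\nabla}_XW).
$$
The middle two terms are symmetric in $X$ and $Y$. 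So when I antisymmetrize in $(X,Y)$, they cancel and only
$$
[X,Y](g(Z,W))+g(Z,\overset{\ast}{\nabla}_Y\overset{\ast}{\nabla}_XW-\overset{\ast}{\nabla}_X\overset{\ast}{\nabla}_YW)
$$
survives.

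Next I handle the bracket term. Applying \eqref{eq_D1} once more gives
$$
g(\nabla_{[X,Y]}Z,W)=[X,Y](g(Z,W))-g(Z,\overset{\ast}{\nabla}_{[X,Y]}W).
$$
Subtracting this from the antisymmetrized expression cancels the $[X,Y](g(Z,W))$ terms. What remains is
$$
g(R(X,Y)Z,W)=-g\big(Z,\overset{\ast}{\nabla}_X\overset{\ast}{\nabla}_YW-\overset{\ast}{\nabla}_Y\overset{\ast}{\nabla}_XW-\overset{\ast}{\nabla}_{[X,Y]}W\big)=-g(\overset{\ast}{R}(X,Y)W,Z),
$$
by \eqref{eq_curv1} applied to $\overset{\ast}{\nabla}$, which is \eqref{eq_12.1}.

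The only delicate point is bookkeeping. At each step I must check which connection ends up acting on which slot, and I must use \eqref{eq_D1} in exactly the form written, with $\nabla$ in the first slot and $\overset{\ast}{\nabla}$ in the second, so that all the starred derivatives land on $W$. I must also confirm that the cross terms really are symmetric in $X$ and $Y$. An alternative route is to start from \eqref{eq_P4} and substitute $C$ via \eqref{eq_Rel5}, but that is much messier, so I will use the direct argument above. If a local check is wanted, the same computation can be done with the connection forms, using \eqref{eq_D1''}: write $dg_{ij}=g_{kj}\omega_i^{\ k}+g_{ik}\overset{\ast}{\omega}\,_j^{\ k}$, apply $d$, and use $d^2=0$. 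This gives $g_{kj}\Omega_i^{\ k}+g_{ik}\overset{\ast}{\Omega}\,_j^{\ k}=0$, which is the same identity in coordinates.
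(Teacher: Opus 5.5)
Your proof is correct and follows essentially the same route as the paper. The paper also expands $g(\nabla_X\nabla_YZ,W)$, $g(\nabla_Y\nabla_XZ,W)$ and $g(\nabla_{[X,Y]}Z,W)$ by repeated use of \eqref{eq_D1}, so that every starred derivative lands on $W$, and then cancels the second-derivative and cross terms to leave $-g(\overset{\ast}{R}(X,Y)W,Z)$. Your connection-form check via $d(dg_{ij})=0$, which yields $g_{kj}\Omega_i^{\ k}+g_{ik}\overset{\ast}{\Omega}\,_j^{\ k}=0$, is also valid, though not needed.
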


\begin{proof}
Starting with
\begin{equation}\label{eq_thm_A12_0}
g(R(X,Y)Z,W)=g(\nabla_X\nabla_YZ,W)-g(\nabla_Y\nabla_XZ,W)-g(\nabla_{[X,Y]}Z,W),
\end{equation}
we compute the terms in the RHS separately using the duality condition \eqref{eq_D1}.

The first term reads:
\begin{equation}\label{eq_thm_A12_1}
\begin{split}
&g(\nabla_X\nabla_YZ,W)\\
&=X(g(\nabla_YZ,W))-g(\nabla_YZ,\overset{\ast}{\nabla}_XW)\\
&=X(Y(g(Z,W))-g(Z,\overset{\ast}{\nabla}_YW))-Y(g(Z,\overset{\ast}{\nabla}_XW))+g(Z,\overset{\ast}{\nabla}_Y\overset{\ast}{\nabla}_XW)\\
&=X(Y(g(Z,W)))-X(g(Z,\overset{\ast}{\nabla}_YW))-Y(g(Z,\overset{\ast}{\nabla}_XW))+g(Z,\overset{\ast}{\nabla}_Y\overset{\ast}{\nabla}_XW).
\end{split}
\end{equation}

The second term reads
\begin{equation}\label{eq_thm_A12_2}
\begin{split}
&g(\nabla_Y\nabla_XZ,W)\\
&=Y(X(g(Z,W)))-Y(g(Z,\overset{\ast}{\nabla}_XW))-X(g(Z,\overset{\ast}{\nabla}_YW))+g(Z,\overset{\ast}{\nabla}_X\overset{\ast}{\nabla}_YW).
\end{split}
\end{equation}

Finally, the third term reads
\begin{equation}\label{eq_thm_A12_3}
\begin{split}
g(\nabla_{[X,Y]}Z,W)&=[X,Y]g(Z,W)-g(Z,\overset{\ast}{\nabla}_{[X,Y]}W)\\
&=X(Y(g(Z,W)))-Y(X(g(Z,W)))-g(Z,\overset{\ast}{\nabla}_{[X,Y]}W).
\end{split}
\end{equation}

Substituting now \eqref{eq_thm_A12_1}, \eqref{eq_thm_A12_2}, \eqref{eq_thm_A12_3} in \eqref{eq_thm_A12_0} we get
\begin{equation*}
\begin{split}
g(R(X,Y)Z,W)&=
\cancel{\underset{\text{\textcircled{\tiny 1}}}{\underline{X(Y(g(Z,W)))}}}
-\cancel{\underset{\text{\textcircled{\tiny 4}}}{\underline{X(g(Z,\overset{\ast}{\nabla_YW}))}}}\\
&-\cancel{\underset{\text{\textcircled{\tiny 3}}}{\underline{Y(g(Z,\overset{\ast}{\nabla_XW}))}}}
+g(Z,\overset{\ast}{\nabla}_Y\overset{\ast}{\nabla}_XW)\\
&-\cancel{\underset{\text{\textcircled{\tiny 2}}}{\underline{Y(X(g(Z,W)))}}}
+\cancel{\underset{\text{\textcircled{\tiny 3}}}{\underline{Y(g(Z,\overset{\ast}{\nabla_XW}))}}}
+\cancel{\underset{\text{\textcircled{\tiny 4}}}{\underline{X(g(Z,\overset{\ast}{\nabla_YW}))}}}
-g(Z,\overset{\ast}{\nabla}_X\overset{\ast}{\nabla}_YW)\\
&-\cancel{\underset{\text{\textcircled{\tiny 1}}}{\underline{X(Y(g(Z,W)))}}}
+\cancel{\underset{\text{\textcircled{\tiny 2}}}{\underline{Y(X(g(Z,W)))}}}
+g(Z,\overset{\ast}{\nabla}_{[X,Y]}W)\\
&=-g(\overset{\ast}{\nabla}_X\overset{\ast}{\nabla}_YW-\overset{\ast}{\nabla}_Y\overset{\ast}{\nabla}_XW-\overset{\ast}{\nabla}_{[X,Y]}W,Z)\\
&=-g(\overset{\ast}{R}(X,Y)W,Z).
\end{split}
\end{equation*}

\end{proof}

We observe that \eqref{eq_12.1} can be written as 
$$
R(W,Z,X,Y)+\overset{\ast}{R}(Z,W,X,Y)=0.
$$

The relation between the curvature tensors $R$, $\overset{\ast}{R}$ and $\overset{(0)}{R}$ of the dual connections $\nabla$, $\overset{\ast}{\nabla}$ and average connection $\overset{(0)}{\nabla}$, respectively can be summarized in the following theorem.

\begin{theorem}\label{thm_C1}
Let $\nabla$, $\overset{\ast}{\nabla}$ be $g$-dual connections on the (pseudo-)Riemannian manifold $(M,g)$ and let $\overset{(0)}{\nabla}$ be the average connection. Then
\begin{enumerate}[(i)]
\item 
\begin{equation*}
\begin{split}
& R(X,Y)Z\\
&=\overset{(0)}{R}(X,Y)Z-\frac{1}{2}
\left\{
(\overset{(0)}{\nabla}_XK)(Y,Z)-(\overset{(0)}{\nabla}_YK)(X,Z)-\frac{1}{2}[K_X,K_Y]Z
\right\}
-\frac{1}{2}K(\overset{(0)}{T}(X,Y),Z),\\
&=\overset{(0)}{R}(X,Y)Z-\frac{1}{2}\left\{
(\nabla_XK)(Y,Z)-(\nabla_YK)(X,Z)+\frac{1}{2}[K_X,K_Y]Z
\right\}-\frac{1}{2}K(T(X,Y),Z);
\end{split}
\end{equation*}
\item
\begin{equation*}
\begin{split}
&\overset{\ast}{R}(X,Y)Z\\
&=\overset{(0)}{R}(X,Y)Z
+\frac{1}{2}
\left\{
(\overset{(0)}{\nabla}_XK)(Y,Z)-(\overset{(0)}{\nabla}_YK)(X,Z)+\frac{1}{2}[K_X,K_Y]Z
\right\}
+\frac{1}{2}K(\overset{(0)}{T}(X,Y),Z)\\
&=\overset{(0)}{R}(X,Y)Z
+\frac{1}{2}\left\{
(\nabla_XK)(Y,Z)-(\nabla_YK)(X,Z)+\frac{3}{2}[K_X,K_Y]Z\right\}
+\frac{1}{2}K(T(X,Y),Z).
\end{split}
\end{equation*}

Moreover, we have
\item 
\begin{equation*}
\begin{split}
&R(X,Y)Z-\overset{\ast}{R}(X,Y)Z\\
&=-\left\{
(\overset{(0)}{\nabla}_XK)(Y,Z)-(\overset{(0)}{\nabla}_YK)(X,Z)
\right\}-K(\overset{(0)}{T}(X,Y),Z)\\
&=-\left\{
(\nabla_XK)(Y,Z)-(\nabla_YK)(X,Z)+[K_X,K_Y]Z
\right\}-K(T(X,Y),Z)
\end{split}
\end{equation*}
and
\item
$$
R(X,Y)Z+\overset{\ast}{R}(X,Y)Z=2\overset{(0)}{R}(X,Y)Z+\frac{1}{2}[K_X,K_Y]Z.
$$
\end{enumerate}
\end{theorem}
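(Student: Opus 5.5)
The whole statement reduces to a single general perturbation formula for curvature, which I would prove first as a lemma. Let $D$ be any affine connection with torsion $T_D$, let $A$ be a $(1,2)$-tensor field, and write $A_XZ=A(X,Z)$. Put $D'_XY:=D_XY+A(X,Y)$. Then I claim
$$
R'(X,Y)Z=R_D(X,Y)Z+(D_XA)(Y,Z)-(D_YA)(X,Z)+[A_X,A_Y]Z+A(T_D(X,Y),Z).
$$
To prove it, expand $D'_XD'_YZ=D_X(D_YZ+A(Y,Z))+A(X,D_YZ)+A_XA_YZ$. Rewrite $D_X(A(Y,Z))$ as $(D_XA)(Y,Z)+A(D_XY,Z)+A(Y,D_XZ)$. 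Then antisymmetrize in $X,Y$ and subtract $D'_{[X,Y]}Z=D_{[X,Y]}Z+A([X,Y],Z)$.

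In this expansion the mixed terms $A(Y,D_XZ)$ and $A(X,D_YZ)$ cancel pairwise. The surviving first-slot terms combine into $A(D_XY-D_YX-[X,Y],Z)=A(T_D(X,Y),Z)$, which produces the torsion correction.

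With the lemma in hand, each identity follows from a suitable choice of $D$ and $A$. By \eqref{eq_Diff1} and \eqref{eq_Av1} we have
$$
\nabla=\overset{(0)}{\nabla}-\tfrac12K,\qquad \overset{\ast}{\nabla}=\overset{(0)}{\nabla}+\tfrac12K,\qquad \overset{\ast}{\nabla}=\nabla+K.
$$
\begin{enumerate}[(a)]
\item The first form of (i) comes from $D=\overset{(0)}{\nabla}$ and $A=-\tfrac12K$. The commutator term is $\tfrac14[K_X,K_Y]$, which equals $-\tfrac12\cdot(-\tfrac12)[K_X,K_Y]$ as written in the statement.
\item The second form of (i) comes from $D=\nabla$ and $A=\tfrac12K$. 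This expresses $\overset{(0)}{R}$ in terms of $R$, and solving for $R$ gives the stated formula with $+\tfrac12[K_X,K_Y]$ inside the braces and $-\tfrac12K(T(X,Y),Z)$ outside.
\item The first form of (ii) comes from $D=\overset{(0)}{\nabla}$ and $A=\tfrac12K$.
\item For the second form of (ii), take $D=\nabla$ and $A=K$. This gives
$$
\overset{\ast}{R}=R+(\nabla_XK)(Y,Z)-(\nabla_YK)(X,Z)+[K_X,K_Y]Z+K(T(X,Y),Z).
$$
Substituting the second form of (i) for $R$ then yields the coefficient $\tfrac12+\tfrac14\cdot\ldots$; precisely, the commutator coefficient becomes $-\tfrac14+1=\tfrac34=\tfrac12\cdot\tfrac32$, as stated.
\end{enumerate}

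Items (iii) and (iv) follow by subtracting and adding the two forms of (i) and (ii) that are expressed in the same connection. In the difference the $\tfrac14[K_X,K_Y]$ terms cancel when $\overset{(0)}{\nabla}$ is used. When $\nabla$ is used, the commutator coefficients combine as $-\tfrac14-\tfrac34=-1$. In the sum, the $\overset{(0)}{\nabla}K$ terms and the torsion terms cancel, leaving $2\overset{(0)}{R}+\tfrac12[K_X,K_Y]Z$.

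The main obstacle is purely bookkeeping, not conceptual. I need to keep consistent conventions throughout: $K_XZ=K(X,Z)$ with $X$ the differentiating slot, $(D_XK)(Y,Z)=D_X(K(Y,Z))-K(D_XY,Z)-K(Y,D_XZ)$, and the torsion entering only through the first argument of $A$. I would double-check the lemma in local coordinates against \eqref{eq_Rel10} before using it, since a sign slip there would propagate into all four items.
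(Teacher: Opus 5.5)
Your proof is correct. The perturbation lemma holds as stated, and all four applications reproduce the statement's coefficients. With $D=\overset{(0)}{\nabla}$, $A=\mp\frac12K$ you get the first forms of (i) and (ii) directly. With $D=\nabla$, $A=\frac12K$ you get the second form of (i). With $D=\nabla$, $A=K$ you get $\overset{\ast}{R}-R$, which gives the second forms of (ii) and (iii) with the stated $\frac34$ and $1$.

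The paper's first step is your lemma specialised to $D=\overset{(0)}{\nabla}$: it expands $R$ directly using $\nabla=\overset{(0)}{\nabla}-\frac12K$. For the $\nabla$-forms, however, it does not re-expand around $\nabla$. Instead it proves the conversion identity
\[
(\overset{(0)}{\nabla}_XK)(Y,Z)=(\nabla_XK)(Y,Z)+\frac{1}{2}[K_X,K_Y]Z-\frac{1}{2}K(K(X,Y),Z)
\]
and substitutes it into the $\overset{(0)}{\nabla}$-forms. It then trades $\overset{(0)}{T}$ for $T$ using $K(X,Y)-K(Y,X)=\overset{\ast}{T}(X,Y)-T(X,Y)$ (Proposition \ref{prop:K diff vs T diff}) together with $\overset{(0)}{T}=\frac{1}{2}(T+\overset{\ast}{T})$.

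Your route avoids that torsion bookkeeping, because the torsion of whichever base connection you perturb appears automatically as $A(T_D(X,Y),Z)$. It also makes plain that $g$-duality plays no role: the identities hold for any two affine connections with $K=\overset{\ast}{\nabla}-\nabla$. The paper's route, in exchange, gives an explicit relation between $\overset{(0)}{\nabla}K$ and $\nabla K$, which shows directly why the two forms of each item agree. One small point: the phrase in your step (d) about ``the coefficient $\frac12+\frac14\cdot\ldots$'' is garbled, though the final count $-\frac14+1=\frac34$ is right.
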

\begin{proof}
See \ref{App1}
\end{proof}
\begin{corollary}\label{rem_C2}
In local coordinates, the formulas in Theorem \ref{thm_C1} read
\begin{enumerate}[(i)]
\item
\begin{equation*}
\begin{split}
R^{\ k}_{m\ ji}&=\overset{(0)}{R}\,^{\ k}_{m\ ji}-\frac{1}{2}(K^{\ k}_{m\ i\overset{(0)}{|}j}-K^{\ k}_{m\ j\overset{(0)}{|}i})+\frac{1}{4}(K^l_{\ mi}K^k_{\ lj}-K^l_{\ mj}K^k_{\ li})
-\frac{1}{2}\overset{(0)}{T}\,^l_{\ ji}K^k_{\ ml}\\
&=\overset{(0)}{R}\,^{\ k}_{m\ ji}-\frac{1}{2}(K^{\ k}_{m\ i|j}-K^{\ k}_{m\ j|i})-\frac{1}{4}(K^l_{\ mi}K^k_{\ lj}-K^l_{\ mj}K^k_{\ li})
-\frac{1}{2}T^l_{\ ji}K^k_{\ ml}
\end{split}
\end{equation*}
\item 
\begin{equation*}
\begin{split}
\overset{\ast}{R}\,^{\ k}_{m\ ji}&=\overset{(0)}{R}\,^{\ k}_{m\ ji}+\frac{1}{2}(K^{\ k}_{m\ i\overset{(0)}{|}j}-K^{\ k}_{m\ j\overset{(0)}{|}i})+\frac{1}{4}(K^l_{\ mi}K^k_{\ lj}-K^l_{\ mj}K^k_{\ li})
+\frac{1}{2}\overset{(0)}{T}\,^l_{\ ji}K^k_{\ ml}\\
&=\overset{(0)}{R}\,^{\ k}_{m\ ji}+\frac{1}{2}(K^{\ k}_{m\ i|j}-K^{\ k}_{m\ j|i})+\frac{3}{4}(K^l_{\ mi}K^k_{\ lj}-K^l_{\ mj}K^k_{\ li})
+\frac{1}{2}T^l_{\ ji}K^k_{\ ml}
\end{split}
\end{equation*}
\item 
\begin{equation*}
\begin{split}
R^{\ k}_{m\ ji}-\overset{\ast}{R}\,^{\ k}_{m\ ji}&=-(K^{\ k}_{m\ i\overset{(0)}{|}j}-K^{\ k}_{m\ j\overset{(0)}{|}i})-\overset{(0)}{T}\,^l_{\ ji}K^k_{\ ml}\\
&=-(K^{\ k}_{m\ i|j}-K^{\ k}_{m\ j|i})-(K^{l}_{\ mi}K^{k}_{\ lj}-K^{l}_{\ mj}K^k_{\ li})-T^l_{\ ji}K^k_{\ ml}
\end{split}
\end{equation*}
\item 
\begin{equation*}
\begin{split}
R^{\ k}_{m\ ji}+\overset{\ast}{R}\,^{\ k}_{m\ ji}=2\overset{(0)}{R}\,^{\ k}_{m\ ji}
+\frac{1}{2}(K^l_{\ mi}K^k_{\ lj}-K^l_{\ mj}K^k_{\ li}).
\end{split}
\end{equation*}
\end{enumerate}
\end{corollary}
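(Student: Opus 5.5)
This is a direct translation of Theorem \ref{thm_C1} into components, so the plan is to evaluate each identity of that theorem on coordinate fields. We take $X=\frac{\partial}{\partial x^j}$, $Y=\frac{\partial}{\partial x^i}$, $Z=\frac{\partial}{\partial x^m}$ and read off the $\frac{\partial}{\partial x^k}$-component. With this choice $[X,Y]=0$, and \eqref{R tensors def} gives the left side as $R^{\ k}_{m\ ji}$.

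Next we translate each tensorial term on the right of Theorem \ref{thm_C1}.
\begin{enumerate}[(a)]
\item By the convention $K(\partial_j,\partial_i)=K^k_{\ ij}\partial_k$, we have $K(Y,Z)=K(\partial_i,\partial_m)=K^k_{\ mi}\partial_k$.
\item Hence $(\overset{(0)}{\nabla}_XK)(Y,Z)$ is, by definition of the covariant derivative of a $(1,2)$-tensor, the component $K^{\ k}_{m\ i\overset{(0)}{|}j}$. Likewise $(\overset{(0)}{\nabla}_YK)(X,Z)$ gives $K^{\ k}_{m\ j\overset{(0)}{|}i}$, and the same holds with $\nabla$ in place of $\overset{(0)}{\nabla}$, giving the plain $|$ derivatives.
\item For the commutator we compute $K_XK_YZ=K(\partial_j,K^l_{\ mi}\partial_l)=K^l_{\ mi}K^k_{\ lj}\partial_k$, and similarly for $K_YK_XZ$. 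So $[K_X,K_Y]Z$ has component $K^l_{\ mi}K^k_{\ lj}-K^l_{\ mj}K^k_{\ li}$.
\item For the torsion term we use \eqref{eq_T2}: $T(\partial_j,\partial_i)=T^l_{\ ji}\partial_l$. Therefore $K(T(X,Y),Z)=T^l_{\ ji}K^k_{\ ml}\partial_k$, and the same holds for $\overset{(0)}{T}$.
\end{enumerate}

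Substituting (a)–(d) into (i)–(iv) of Theorem \ref{thm_C1}, multiplying out the coefficients $\frac12$ and $\frac12\cdot\frac12=\frac14$, yields the stated formulas. There is one exception, addressed below: the sign in front of the quadratic $\frac14$ term in the $\overset{(0)}{\nabla}$-version of (i). Items (iii) and (iv) also follow directly by subtracting and adding the component forms of (i) and (ii).

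The main obstacle is bookkeeping of index order and sign conventions. The paper writes $\Gamma^k_{\ ij}$ with the differentiating direction last, and $K^k_{\ ij}$ likewise, so one must check carefully that $(\nabla_{\partial_j}K)(\partial_i,\partial_m)$ really corresponds to the index string $K^{\ k}_{m\ i|j}$. One must also check that the sign of the quadratic term matches. In (i), Theorem \ref{thm_C1} gives $-\frac12\cdot(-\frac12)[K_X,K_Y]=+\frac14[K_X,K_Y]$ in the $\overset{(0)}{\nabla}$ form and $-\frac14$ in the $\nabla$ form, consistent with the corollary. In (ii) it gives $+\frac14$ and $+\frac34$. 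I would verify each of these coefficient signs line by line; any discrepancy would be traced back to the appendix proof of Theorem \ref{thm_C1}.
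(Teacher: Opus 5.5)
Your proposal is correct and is essentially the paper's own argument. The appendix proof also substitutes $X=\partial/\partial x^j$, $Y=\partial/\partial x^i$, $Z=\partial/\partial x^m$ into Theorem~\ref{thm_C1} and reads off the $\partial/\partial x^k$-components, and your items (a)--(d) spell out the index translations the paper leaves implicit. The sign ``exception'' you announce for the $\overset{(0)}{\nabla}$-form of (i) is not actually one: as you then check yourself, $-\frac12\cdot(-\frac12)=+\frac14$ agrees with the stated coefficient.
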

\begin{proof}
See \ref{App1}
\end{proof}

If we define the {\it Ricci curvature tensors} 
as $R_{mi}:=R^{\ j}_{m\ ji}$,
then we can use Corollary \ref{rem_C2} to obtain the following result.

\begin{proposition}\label{prop_C3}
Let $\nabla$ and $\overset{\ast}{\nabla}$ be $g$-dual connections on the (pseudo-)Riemannian manifold $(M,g)$ and let $\overset{(0)}{\nabla}$ be the average connection. Then
\begin{enumerate}[(i)]
\item 
\begin{equation*}
\begin{split}
R_{mi}&=\overset{(0)}{R}_{mi}-\frac{1}{2}(K^{\ j}_{m\ i\overset{(0)}{|}j}-K^{\ j}_{m\ j\overset{(0)}{|}i})+\frac{1}{4}(K^l_{\ mi}K^j_{\ lj}-K^l_{\ mj}K^j_{\ li})
-\frac{1}{2}\overset{(0)}{T}\,^l_{\ ji}K^j_{\ ml}\\
&=\overset{(0)}{R}_{mi}-\frac{1}{2}(K^{\ j}_{m\ i|j}-K^{\ j}_{m\ j|i})-\frac{1}{4}(K^l_{\ mi}K^j_{\ lj}-K^l_{\ mj}K^j_{\ li})
-\frac{1}{2}T^l_{\ ji}K^j_{\ ml},
\end{split}
\end{equation*}
\item 
\begin{equation*}
\begin{split}
\overset{\ast}{R}_{mi}&=\overset{(0)}{R}_{mi}+\frac{1}{2}(K^{\ j}_{m\ i\overset{(0)}{|}j}-K^{\ j}_{m\ j\overset{(0)}{|}i})+\frac{1}{4}(K^l_{\ mi}K^j_{\ lj}-K^l_{\ mj}K^j_{\ li})
+\frac{1}{2}\overset{(0)}{T}\,^l_{\ ji}K^j_{\ ml}\\
&=\overset{(0)}{R}_{mi}+\frac{1}{2}(K^{\ j}_{m\ i|j}-K^{\ j}_{m\ j|i})+\frac{3}{4}(K^l_{\ mi}K^j_{\ lj}-K^l_{\ mj}K^j_{\ li})
+\frac{1}{2}T^l_{\ ji}K^j_{\ ml},
\end{split}
\end{equation*}
\item 
\begin{equation*}
\begin{split}
R_{mi}-\overset{\ast}{R}_{mi}&=-(K^{\ j}_{m\ i\overset{(0)}{|}j}-K^{\ j}_{m\ j\overset{(0)}{|}i})-\overset{(0)}{T}\,^l_{\ ji}K^j_{\ ml}\\
&=-(K^{\ j}_{m\ i|j}-K^{\ j}_{m\ j|i})-(K^{l}_{\ mi}K^{j}_{\ lj}-K^{l}_{\ mj}K^j_{\ li})-T^l_{\ ji}K^j_{\ ml},
\end{split}
\end{equation*}
\item 
\begin{equation*}
\begin{split}
R_{mi}+\overset{\ast}{R}_{mi}=2\overset{(0)}{R}_{mi}+\frac{1}{2}(K^l_{\ mi}K^j_{\ lj}-K^l_{\ mj}K^j_{\ li}).
\end{split}
\end{equation*}
\end{enumerate}
\end{proposition}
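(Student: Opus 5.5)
The plan is to obtain Proposition \ref{prop_C3} directly from Corollary \ref{rem_C2} by contracting the upper index $k$ with the lower index $j$, using the definition $R_{mi}:=R^{\ j}_{m\ ji}$. The same contraction is applied to $\overset{\ast}{R}$ and $\overset{(0)}{R}$. Each of the four items (i)--(iv) of the Proposition is the contraction of the corresponding item of Corollary \ref{rem_C2}. I will do this separately for both displayed forms: the one written with the $\overset{(0)}{\nabla}$-covariant derivative and $\overset{(0)}{T}$, and the one written with the $\nabla$-covariant derivative and $T$.

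The key steps, in order, are as follows.

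First, set $k=j$ and sum. This is legitimate because contraction is a tensorial operation, so it commutes with taking linear combinations of tensor components. Under this contraction:
\begin{itemize}
\item the curvature terms become $R_{mi}$, $\overset{\ast}{R}_{mi}$ and $\overset{(0)}{R}_{mi}$;
\item the term $K^{\ k}_{m\ i\overset{(0)}{|}j}-K^{\ k}_{m\ j\overset{(0)}{|}i}$ becomes $K^{\ j}_{m\ i\overset{(0)}{|}j}-K^{\ j}_{m\ j\overset{(0)}{|}i}$;
\item the quadratic terms $K^l_{\ mi}K^k_{\ lj}-K^l_{\ mj}K^k_{\ li}$ become $K^l_{\ mi}K^j_{\ lj}-K^l_{\ mj}K^j_{\ li}$;
\item the torsion terms $\overset{(0)}{T}\,^l_{\ ji}K^k_{\ ml}$ and $T^l_{\ ji}K^k_{\ ml}$ become $\overset{(0)}{T}\,^l_{\ ji}K^j_{\ ml}$ and $T^l_{\ ji}K^j_{\ ml}$.
\end{itemize}
The coefficients $\pm\frac12$, $\pm\frac14$, $\frac34$, $-1$ and $\frac12$ carry over unchanged.

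Second, check that contraction commutes with covariant differentiation. This is what justifies writing $K^{\ j}_{m\ j|i}$ for the contraction of $K^{\ k}_{m\ j|i}$. Covariant derivatives of tensors, with respect to any linear connection (here $\nabla$ or $\overset{(0)}{\nabla}$), commute with contractions.

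The main point needing care is exactly this second step. I must make sure that the index positions in $K^{\ k}_{m\ i|j}$, as used in Corollary \ref{rem_C2}, are those of a genuine $(1,2)$-tensor. The contraction over $k$ and $j$ then needs to involve the differentiation index in the first difference term and a tensor index in the second. Since contraction and covariant differentiation commute, no extra connection-coefficient terms arise, even though the connections have torsion.

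Once this is confirmed, the four formulas follow verbatim. As a consistency check, I would verify that (iii) equals (i) minus (ii) and that (iv) equals (i) plus (ii).
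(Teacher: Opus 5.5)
Your proposal is correct and follows the paper's own route: the paper obtains Proposition~\ref{prop_C3} by contracting each item of Corollary~\ref{rem_C2} over $k=j$, using $R_{mi}:=R^{\ j}_{m\ ji}$, exactly as you do. Your remark that contraction commutes with covariant differentiation for any linear connection, with or without torsion, is the right justification for reading $K^{\ j}_{m\ j|i}$ unambiguously, a point the paper leaves implicit.
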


\begin{remark}
In the formulas above, the terms $K^{\ j}_{m\ i|j}$ can be regarded as the {\it divergence} of $K$, while $K^j_{\ mj}$ will be called the {\it trace on the right} of $K$, since $K^j_{\ mj}$ is not symmetric in the lower indices in the general case.

More formally, 
\begin{itemize}
\item The {\it right trace} will be denoted as 
\begin{equation*}
(K^k_{\ ik})(X)=Tr_1(K)(X)=Tr(Y\mapsto K(X,Y)),
\end{equation*}
in other words, we fix the first input vector field to be $X$, this gives a linear map from $TM\to TM$ defined by $Y\mapsto K(X,Y)$.
Then take the trace of this resulting linear map.

Locally, we have 
\begin{equation*}
K^k_{\ ik}X^i=\sum_kg\left(  \frac{\partial}{\partial x^k},K(X,\frac{\partial}{\partial x^k}) \right).
\end{equation*}
\item The {\it left trace} will be denoted as 
\begin{equation*}
(K^k_{\ ki})(X)=Tr_2(K)(X)=Tr(Y\mapsto K(Y,X)),
\end{equation*}
in other words, we fix the first input vector field to be $X$, this gives a linear map from $TM\to TM$ defined by $Y\mapsto K(Y,X)$.
Then take the trace of this resulting linear map.

Locally, we have 
\begin{equation*}
K^k_{\ ik}X^i=\sum_kg\left(  \frac{\partial}{\partial x^k},K(\frac{\partial}{\partial x^k},X) \right).
\end{equation*}
\item The {\it $\nabla$-divergence} of $K$ will be denoted as
\begin{equation*}
Div_\nabla(K)(X,Y)=\sum_j
g\left(\frac{\partial}{\partial x^j},(\nabla_\frac{\partial}{\partial x^j} K)(X,Y)\right).
\end{equation*}

Locally, we have 
\begin{equation*}
Div_\nabla(K)_{ik}=K^{\ j}_{i\ k|j}.
\end{equation*}
\end{itemize}
\end{remark}

\begin{corollary}
Let $\nabla$ and $\overset{\ast}{\nabla}$ be $g$-dual connections on the (pseudo-)Riemannian manifold $(M,g)$ and let $\overset{(0)}{\nabla}$ be the average connection. If the right trace of $K$ vanish, i.e. $Tr_1(K)=0$, then
\begin{enumerate}[(i)]
\item 
\begin{equation*}
\begin{split}
R_{mi}&=\overset{(0)}{R}_{mi}-\frac{1}{2}K^{\ j}_{m\ i\overset{(0)}{|}j}-\frac{1}{4}K^l_{\ mj}K^j_{\ li}-\frac{1}{2}\overset{(0)}{T}\,^l_{\ ji}K^j_{\ ml}\\
&=\overset{(0)}{R}_{mi}-\frac{1}{2}K^{\ j}_{m\ i|j}+\frac{1}{4}K^l_{\ mj}K^j_{\ li}
-\frac{1}{2}T^l_{\ ji}K^j_{\ ml}
\end{split}
\end{equation*}
\item 
\begin{equation*}
\begin{split}
\overset{\ast}{R}_{mi}&=\overset{(0)}{R}_{mi}+\frac{1}{2}K^{\ j}_{m\ i\overset{(0)}{|}j}-\frac{1}{4}K^l_{\ mj}K^j_{\ li}
+\frac{1}{2}\overset{(0)}{T}\,^l_{\ ji}K^j_{\ ml}\\
&=\overset{(0)}{R}_{mi}+\frac{1}{2}K^{\ j}_{m\ i|j}-\frac{3}{4}K^l_{\ mj}K^j_{\ li}
+\frac{1}{2}T^l_{\ ji}K^j_{\ ml},
\end{split}
\end{equation*}
\item 
\begin{equation*}
\begin{split}
R_{mi}-\overset{\ast}{R}_{mi}&=-K^{\ j}_{m\ i\overset{(0)}{|}j}-\overset{(0)}{T}\,^l_{\ ji}K^j_{\ ml}\\
&=-K^{\ j}_{m\ i|j}+K^{l}_{\ mj}K^j_{\ li}-T^l_{\ ji}K^j_{\ ml},
\end{split}
\end{equation*}
\item 
\begin{equation*}
\begin{split}
R_{mi}+\overset{\ast}{R}_{mi}=2\overset{(0)}{R}_{mi}-\frac{1}{2}K^l_{\ mj}K^j_{\ li}.
\end{split}
\end{equation*}
\end{enumerate}
\end{corollary}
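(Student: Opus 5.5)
The plan is to specialize Proposition \ref{prop_C3} directly. The only new input is the hypothesis $Tr_1(K)=0$, read in components as $K^j_{\ lj}=0$ for every $l$ (the trace over the second lower slot, with the first lower index free). So the first step is to fix the index convention. By the Remark, the right trace is $K^k_{\ ik}X^i$, and this is exactly the contraction appearing in the terms $K^l_{\ mi}K^j_{\ lj}$ and $K^{\ j}_{m\ j}$ of Proposition \ref{prop_C3}. The hypothesis will therefore be used in the form $K^j_{\ lj}\equiv 0$ identically on $M$.

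Next, in each of the four identities of Proposition \ref{prop_C3}, I would carry out two simplifications.

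\emph{First, the quadratic terms.} The product $K^l_{\ mi}K^j_{\ lj}$ vanishes pointwise. Hence $\frac14(K^l_{\ mi}K^j_{\ lj}-K^l_{\ mj}K^j_{\ li})$ reduces to $-\frac14K^l_{\ mj}K^j_{\ li}$, and the analogous reduction holds for the coefficients $-\frac14$, $\frac34$, $-1$ and $\frac12$. This accounts for the sign changes visible in the corollary, for example $-\frac14(\cdots)$ becoming $+\frac14K^l_{\ mj}K^j_{\ li}$.

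\emph{Second, the divergence-type terms.} I must show that $K^{\ j}_{m\ j|i}$ and $K^{\ j}_{m\ j\overset{(0)}{|}i}$ vanish. Since covariant differentiation commutes with contraction, $K^{\ j}_{m\ j|i}=(K^j_{\ mj})_{|i}$, which is the covariant derivative of the 1-form $Tr(K)$. This form is identically zero, so its derivative with respect to any connection ($\nabla$ or $\overset{(0)}{\nabla}$) vanishes.

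The main obstacle is purely notational. I need to check that the contracted indices in $K^{\ j}_{m\ j|i}$ are the same slots as in the hypothesis, i.e. that the $j$ in $K^{\ j}_{m\ j}$ pairs the upper index with the second lower index of $K^j_{\ mj}$, rather than with the left trace $K^j_{\ jm}$. I would verify this against the definition $K^k_{\ ij}\partial_k=K(\partial_j,\partial_i)$ and the notation of Corollary \ref{rem_C2}. If the conventions turn out to pair the other slot in one of the terms, I would instead invoke the symmetry $g(Y,K(X,Z))=g(Z,K(X,Y))$ from \eqref{eq_Rel5} to move the index, since $K_{\cdot\,mi}$ lowered is symmetric in the appropriate pair.

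Once this bookkeeping is settled, substituting into (i)–(iv) of Proposition \ref{prop_C3} yields the four stated formulas line by line, with the torsion terms unchanged.
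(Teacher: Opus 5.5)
Your argument is the same as the paper's. The corollary is Proposition~\ref{prop_C3} with $K^j_{\ lj}=0$ substituted; the paper's Remark explicitly calls $K^j_{\ mj}$ the trace on the right. This kills the products $K^l_{\ mi}K^j_{\ lj}$, and it kills the terms $K^{\ j}_{m\ j|i}$ and $K^{\ j}_{m\ j\overset{(0)}{|}i}$ because covariant differentiation commutes with contraction.

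Drop your fallback, though. Relation \eqref{eq_Rel5} only makes $g_{kl}K^l_{\ mi}$ symmetric in $k,m$, i.e.\ in the last two slots of $C$. That cannot trade $K^j_{\ mj}$ for $K^j_{\ jm}$, which would need $K(X,Y)=K(Y,X)$, i.e.\ $T=\overset{\ast}{T}$ by Theorem~\ref{thm_2}. So the hypothesis must simply be taken in the index form $K^j_{\ lj}=0$, as the paper does.
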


\begin{corollary}\label{cor_C4}
Let $\nabla$ and $\overset{\ast}{\nabla}$ be $g$-dual connections on the (pseudo-)Riemannian manifold $(M,g)$ and let $\overset{(0)}{\nabla}$ be the average connection. 
Then, we have
\begin{enumerate}
\item 
\begin{equation*}
\begin{split}
R_{mi}-R_{im}&=\overset{(0)}{R}_{mi}-\overset{(0)}{R}_{im}\\
&-\frac{1}{2}\left\{(K^j_{\ mi}-K^j_{\ im})_{\overset{(0)}{|}j}-K^{\ j}_{m\ j\overset{(0)}{|}i}+K^{\ j}_{i\ j\overset{(0)}{|}m}\right\}\\
&+\frac{1}{4}\left\{(K^l_{\ mi}-K^l_{\ im})K^j_{\ lj}-K^l_{\ mj}K^j_{\ li}+K^l_{\ ij}K^j_{\ lm}\right\}\\
&-\frac{1}{2}\left\{\overset{(0)}{T}\,^l_{\ ji}K^j_{\ ml}-\overset{(0)}{T}\,^l_{\ jm}K^j_{\ il}\right\},\\
\end{split}
\end{equation*}
\item 
\begin{equation*}
\begin{split}
\overset{\ast}{R}_{mi}-\overset{\ast}{R}_{im}&=\overset{(0)}{R}_{mi}-\overset{(0)}{R}_{im}\\
&+\frac{1}{2}\left\{(K^j_{\ mi}-K^j_{\ im})_{\overset{(0)}{|}j}-K^{\ j}_{m\ j\overset{(0)}{|}i}+K^{\ j}_{i\ j\overset{(0)}{|}m}\right\}\\
&+\frac{1}{4}\left\{(K^l_{\ mi}-K^l_{\ im})K^j_{\ lj}-K^l_{\ mj}K^j_{\ li}+K^l_{\ ij}K^j_{\ lm}\right\}\\
&+\frac{1}{2}\left\{\overset{(0)}{T}\,^l_{\ ji}K^j_{\ ml}-\overset{(0)}{T}\,^l_{\ jm}K^j_{\ il}\right\},\\
\end{split}
\end{equation*}
\item 
\begin{equation*}
\begin{split}
R_{mi}+R_{im}&=\overset{(0)}{R}_{mi}+\overset{(0)}{R}_{im}\\
&-\frac{1}{2}\left\{(K^j_{\ mi}+K^j_{\ im})_{\overset{(0)}{|}j}-(K^{\ j}_{m\ j\overset{(0)}{|}i}+K^{\ j}_{i\ j\overset{(0)}{|}m})\right\}\\
&+\frac{1}{4}\left\{(K^l_{\ mi}+K^l_{\ im})K^j_{\ lj}-(K^l_{\ mj}K^j_{\ li}+K^l_{\ ij}K^j_{\ lm})\right\}\\
&-\frac{1}{2}\left\{\overset{(0)}{T}\,^l_{\ ji}K^j_{\ ml}+\overset{(0)}{T}\,^l_{\ jm}K^j_{\ il}\right\},\\
\end{split}
\end{equation*}
\item 
\begin{equation*}
\begin{split}
\overset{\ast}{R}_{mi}+\overset{\ast}{R}_{im}&=\overset{(0)}{R}_{mi}+\overset{(0)}{R}_{im}\\
&+\frac{1}{2}\left\{(K^j_{\ mi}+K^j_{\ im})_{\overset{(0)}{|}j}-(K^{\ j}_{m\ j\overset{(0)}{|}i}+K^{\ j}_{i\ j\overset{(0)}{|}m})\right\}\\
&+\frac{1}{4}\left\{(K^l_{\ mi}+K^k_{\ im})K^j_{\ lj}-(K^l_{\ mj}K^j_{\ li}+K^l_{\ ij}K^j_{\ lm})\right\}\\
&+\frac{1}{2}\left\{\overset{(0)}{T}\,^l_{\ ji}K^j_{\ ml}+\overset{(0)}{T}\,^l_{\ jm}K^j_{\ il}\right\}.\\
\end{split}
\end{equation*}
\end{enumerate}
\end{corollary}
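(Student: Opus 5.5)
The plan is to obtain all four identities of Corollary \ref{cor_C4} directly from Proposition \ref{prop_C3}, using only the first expression there, written with the average connection $\overset{(0)}{\nabla}$. For each of $\nabla$ and $\overset{\ast}{\nabla}$ we write the Ricci formula (i), resp.\ (ii), once with the index pair $(m,i)$ and once with the pair $(i,m)$. Then we subtract (for items 1 and 2) or add (for items 3 and 4). No new geometric input is needed; the whole argument is careful index bookkeeping.

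We treat each group of terms separately.

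\emph{Divergence terms.} From (i) of Proposition \ref{prop_C3}, the divergence part of $R_{mi}$ is $-\frac{1}{2}(K^{\ j}_{m\ i\overset{(0)}{|}j}-K^{\ j}_{m\ j\overset{(0)}{|}i})$. For $R_{im}$ the same part is $-\frac12(K^{\ j}_{i\ m\overset{(0)}{|}j}-K^{\ j}_{i\ j\overset{(0)}{|}m})$.
\begin{itemize}
\item The first pieces must be rewritten in the notation of the statement. By the local formula $K^m_{\ ki}=g^{mj}C_{ijk}$ and the index placement used in Proposition \ref{prop_C3}, $K^{\ j}_{m\ i}$ stands for the component $K^j_{\ mi}$, and $K^{\ j}_{i\ m}$ for $K^j_{\ im}$.
\item Taking the difference then gives $-\frac12\{(K^j_{\ mi}-K^j_{\ im})_{\overset{(0)}{|}j}-K^{\ j}_{m\ j\overset{(0)}{|}i}+K^{\ j}_{i\ j\overset{(0)}{|}m}\}$, as in item 1.
\item Taking the sum gives the corresponding bracket of item 3.
\end{itemize}

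\emph{Quadratic terms.} The quadratic part of $R_{mi}$ is $\frac14(K^l_{\ mi}K^j_{\ lj}-K^l_{\ mj}K^j_{\ li})$. Swapping $m\leftrightarrow i$ and subtracting yields $\frac14\{(K^l_{\ mi}-K^l_{\ im})K^j_{\ lj}-K^l_{\ mj}K^j_{\ li}+K^l_{\ ij}K^j_{\ lm}\}$. Adding instead yields the bracket with the signs of item 3.

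\emph{Torsion terms.} These behave in the same way: $-\frac12\overset{(0)}{T}\,^l_{\ ji}K^j_{\ ml}$ combines with $\mp(-\frac12)\overset{(0)}{T}\,^l_{\ jm}K^j_{\ il}$.

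Items 2 and 4 follow identically from (ii) of Proposition \ref{prop_C3}. There the divergence and torsion terms carry the opposite sign, while the quadratic coefficient is still $+\frac14$. This is why the $\frac14$-brackets in items 2 and 4 have the same sign as in items 1 and 3.

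The main obstacle is notational rather than conceptual. One must keep consistent the identification of $K^{\ j}_{m\ i}$ with $K^j_{\ mi}$ (which slot is differentiated and which is traced), and avoid silently using any symmetry of $K$. In the quasi-statistical setting $K$ is not symmetric in its lower indices, so $K^j_{\ mi}-K^j_{\ im}$ need not vanish. I would check this identification first against Corollary \ref{rem_C2} by contracting $k=j$.

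A small typo in the stated item 4 should also be noted: $K^k_{\ im}$ there should read $K^l_{\ im}$. It comes from relabeling the dummy index $l$.
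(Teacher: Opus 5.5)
Your proposal is correct and follows essentially the paper's route. The corollary comes from writing the $\overset{(0)}{\nabla}$-forms of (i) and (ii) in Proposition~\ref{prop_C3} for the index pairs $(m,i)$ and $(i,m)$ and then subtracting or adding, which is exactly how the paper justifies the analogous Corollary~\ref{cor_Q4}. Your identification $K^{\ j}_{m\ i}\equiv K^j_{\ mi}$ is right, and so is your remark that $K^k_{\ im}$ in item 4 should read $K^l_{\ im}$.
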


\begin{corollary}\label{cor_C4_1}
Let $\nabla$ and $\overset{\ast}{\nabla}$ be $g$-dual connections on the (pseudo-)Riemannian manifold $(M,g)$ and let $\overset{(0)}{\nabla}$ be the average connection. If the right trace of $K$ vanish, i.e. $Tr_1(K)=0$, then we have
\begin{enumerate}
\item 
\begin{equation*}
\begin{split}
R_{mi}-R_{im}&=\overset{(0)}{R}_{mi}-\overset{(0)}{R}_{im}\\
&-\frac{1}{2}\left\{(K^j_{\ mi}-K^j_{\ im})_{\overset{(0)}{|}j}
\right\}
+\frac{1}{4}\left\{
K^l_{\ ij}K^j_{\ lm}-K^l_{\ ji}K^j_{\ ml}
\right\}\\
&-\frac{1}{2}\left\{\overset{(0)}{T}\,^l_{\ ji}K^j_{\ ml}-\overset{(0)}{T}\,^l_{\ jm}K^j_{\ il}\right\},\\
\end{split}
\end{equation*}
\item 
\begin{equation*}
\begin{split}
\overset{\ast}{R}_{mi}-\overset{\ast}{R}_{im}&=\overset{(0)}{R}_{mi}-\overset{(0)}{R}_{im}\\
&+\frac{1}{2}\left\{(K^j_{\ mi}-K^j_{\ im})_{\overset{(0)}{|}j}
\right\}+\frac{1}{4}\left\{
K^l_{\ ij}K^j_{\ lm}-K^l_{\ ji}K^j_{\ ml}\right\}\\
&+\frac{1}{2}\left\{\overset{(0)}{T}\,^l_{\ ji}K^j_{\ ml}-\overset{(0)}{T}\,^l_{\ jm}K^j_{\ il}\right\},\\
\end{split}
\end{equation*}
\item 
\begin{equation*}
\begin{split}
R_{mi}+R_{im}&=\overset{(0)}{R}_{mi}+\overset{(0)}{R}_{im}\\
&-\frac{1}{2}\left\{(K^j_{\ mi}+K^j_{\ im})_{\overset{(0)}{|}j}
\right\}-\frac{1}{4}\left\{
K^l_{\ ij}K^j_{\ lm}+K^l_{\ ji}K^j_{\ ml}\right\}\\
&-\frac{1}{2}\left\{\overset{(0)}{T}\,^l_{\ ji}K^j_{\ ml}+\overset{(0)}{T}\,^l_{\ jm}K^j_{\ il}\right\},\\
\end{split}
\end{equation*}
\item 
\begin{equation*}
\begin{split}
\overset{\ast}{R}_{mi}+\overset{\ast}{R}_{im}&=\overset{(0)}{R}_{mi}+\overset{(0)}{R}_{im}\\
&+\frac{1}{2}(K^j_{\ mi}+K^j_{\ im})_{\overset{(0)}{|}j}
-\frac{1}{4}
(K^l_{\ ij}K^j_{\ lm}+K^l_{\ ji}K^j_{\ ml})\\
&+\frac{1}{2}\left\{\overset{(0)}{T}\,^l_{\ ji}K^j_{\ ml}+\overset{(0)}{T}\,^l_{\ jm}K^j_{\ il}\right\}.\\
\end{split}
\end{equation*}
\end{enumerate}
\end{corollary}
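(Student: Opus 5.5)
The plan is to specialize Corollary \ref{cor_C4} directly, term by term, under the hypothesis $Tr_1(K)=0$. In local coordinates this hypothesis says $K^j_{\ lj}=0$ for every index $l$. The index-placed quantity $K^{\ j}_{m\ j}$ appearing in Corollary \ref{cor_C4} is the same right trace with the free lower index $m$, so it also vanishes identically.

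The first step disposes of the derivative terms $K^{\ j}_{m\ j\overset{(0)}{|}i}$ and $K^{\ j}_{i\ j\overset{(0)}{|}m}$. The point to justify is that covariant differentiation with respect to $\overset{(0)}{\nabla}$ commutes with contraction. Hence $K^{\ j}_{m\ j\overset{(0)}{|}i}$ is the $\overset{(0)}{\nabla}$-derivative of the $1$-form $Tr_1(K)$. That $1$-form is identically zero, so its covariant derivative is zero; this holds regardless of the torsion of $\overset{(0)}{\nabla}$. Consequently, in each of the four formulas of Corollary \ref{cor_C4} only the divergence terms $(K^j_{\ mi}\mp K^j_{\ im})_{\overset{(0)}{|}j}$ survive among the derivative terms. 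The torsion terms $\overset{(0)}{T}\,^l_{\ ji}K^j_{\ ml}$ are untouched by the hypothesis and are carried over verbatim.

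The second step handles the quadratic terms. In each bracket, the pieces $(K^l_{\ mi}\mp K^l_{\ im})K^j_{\ lj}$ vanish because of the factor $K^j_{\ lj}=0$. What remains is $-K^l_{\ mj}K^j_{\ li}+K^l_{\ ij}K^j_{\ lm}$ in (1) and (2), and $-(K^l_{\ mj}K^j_{\ li}+K^l_{\ ij}K^j_{\ lm})$ in (3) and (4). To reach the stated form, I will rename the dummy indices $l\leftrightarrow j$ in $K^l_{\ mj}K^j_{\ li}$, which gives $K^j_{\ ml}K^l_{\ ji}=K^l_{\ ji}K^j_{\ ml}$. Substituting this yields exactly the brackets $\frac14\{K^l_{\ ij}K^j_{\ lm}-K^l_{\ ji}K^j_{\ ml}\}$ and $-\frac14\{K^l_{\ ij}K^j_{\ lm}+K^l_{\ ji}K^j_{\ ml}\}$ claimed in the statement.

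As a cross-check, items (1) and (3) can also be obtained by (anti)symmetrizing in $m,i$ the first line of part (i) of the preceding corollary, which was already specialized to $Tr_1(K)=0$; items (2) and (4) follow likewise from its part (ii). There is no real obstacle here, only bookkeeping. The one point needing care is the commutation of contraction with $\overset{(0)}{\nabla}$-differentiation. A second point is the sign pattern of the quadratic terms in part (4) of Corollary \ref{cor_C4}, where there is an evident typo $K^k_{\ im}$ for $K^l_{\ im}$; that term is killed by the vanishing trace anyway.
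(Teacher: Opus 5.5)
Your proposal is correct and follows the paper's route. The statement is read off from Corollary~\ref{cor_C4} by setting the right trace $K^j_{\ mj}$ to zero, together with its $\overset{(0)}{\nabla}$-derivatives (covariant differentiation commutes with contraction), and then relabelling $l\leftrightarrow j$ in $K^l_{\ mj}K^j_{\ li}$. Your cross-check via (anti)symmetrizing the preceding corollary is also correct, and the typo $K^k_{\ im}$ in part (4) of Corollary~\ref{cor_C4} is indeed harmless here.
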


\begin{corollary}\label{cor_C4_2}
Let $\nabla$ and $\overset{\ast}{\nabla}$ be $g$-dual connections on the (pseudo-)Riemannian manifold $(M,g)$ and let $\overset{(0)}{\nabla}$ be the average connection. If  the right trace and the $\overset{(0)}{\nabla}$-divergence of $K$ and vanish, i.e. $Tr_1(K)=0$ and $Div_{\overset{(0)}{\nabla}}(K)=0$, then we have
\begin{enumerate}
\item 
\begin{equation*}
\begin{split}
R_{mi}-R_{im}&=\overset{(0)}{R}_{mi}-\overset{(0)}{R}_{im}\\
&
+
\frac{1}{4}\left\{
K^l_{\ ij}K^j_{\ lm}-K^l_{\ ji}K^j_{\ ml}
\right\}-\frac{1}{2}\left\{\overset{(0)}{T}\,^l_{\ ji}K^j_{\ ml}-\overset{(0)}{T}\,^l_{\ jm}K^j_{\ il}\right\}\\
\end{split}
\end{equation*}
\item 
\begin{equation*}
\begin{split}
\overset{\ast}{R}_{mi}-\overset{\ast}{R}_{im}&=\overset{(0)}{R}_{mi}-\overset{(0)}{R}_{im}\\
&
+\frac{1}{4}\left\{
K^l_{\ ij}K^j_{\ lm}-K^l_{\ ji}K^j_{\ ml}\right\}+\frac{1}{2}\left\{\overset{(0)}{T}\,^l_{\ ji}K^j_{\ ml}-\overset{(0)}{T}\,^l_{\ jm}K^j_{\ il}\right\}\\
\end{split}
\end{equation*}
\item 
\begin{equation*}
\begin{split}
R_{mi}+R_{im}&=\overset{(0)}{R}_{mi}+\overset{(0)}{R}_{im}\\
&
-\frac{1}{4}\left\{
K^l_{\ ij}K^j_{\ lm}+K^l_{\ ji}K^j_{\ ml}\right\}-\frac{1}{2}\left\{\overset{(0)}{T}\,^l_{\ ji}K^j_{\ ml}+\overset{(0)}{T}\,^l_{\ jm}K^j_{\ il}\right\},\\
\end{split}
\end{equation*}
\item 
\begin{equation*}
\begin{split}
\overset{\ast}{R}_{mi}+\overset{\ast}{R}_{im}&=\overset{(0)}{R}_{mi}+\overset{(0)}{R}_{im}\\
&
-\frac{1}{4}
(K^l_{\ ij}K^j_{\ lm}+K^l_{\ ji}K^j_{\ ml})+\frac{1}{2}\left\{\overset{(0)}{T}\,^l_{\ ji}K^j_{\ ml}+\overset{(0)}{T}\,^l_{\ jm}K^j_{\ il}\right\}.\\
\end{split}
\end{equation*}
\end{enumerate}
\end{corollary}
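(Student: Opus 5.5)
The plan is to obtain Corollary \ref{cor_C4_2} as a direct specialization of Corollary \ref{cor_C4_1}, which already assumes $Tr_1(K)=0$ and gives the symmetric and antisymmetric parts of $R_{mi}$ and $\overset{\ast}{R}_{mi}$. The only new hypothesis is $Div_{\overset{(0)}{\nabla}}(K)=0$. So all that remains is to show that the divergence terms $(K^j_{\ mi}\mp K^j_{\ im})_{\overset{(0)}{|}j}$ appearing in the four formulas of Corollary \ref{cor_C4_1} vanish. The quadratic $K$ terms and the torsion terms are unaffected and are copied verbatim.

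First I will make the index bookkeeping precise. By the Remark following Proposition \ref{prop_C3}, the $\overset{(0)}{\nabla}$-divergence is $Div_{\overset{(0)}{\nabla}}(K)_{ik}=K^{\ j}_{i\ k\overset{(0)}{|}j}$. The terms in Corollary \ref{cor_C4_1} are of the form $K^j_{\ mi\overset{(0)}{|}j}$, with the upper index contracted against the derivative index.

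The point needing care is identifying $K^j_{\ mi\overset{(0)}{|}j}$ with a component of $Div_{\overset{(0)}{\nabla}}(K)$. I will use the identity $g(Y,K(X,Z))=g(Z,K(X,Y))=C(X,Y,Z)$, which gives $K^m_{\ ki}=g^{mj}C_{ijk}$. Lowering the upper index of $K$ therefore gives the symmetric tensor $C_{ijk}=C_{ikj}$. Since $\overset{(0)}{\nabla}$ is $g$-metrical (Proposition \ref{prop_A8}), raising and lowering commute with $\overset{(0)}{|}$. Hence $K^j_{\ mi\overset{(0)}{|}j}$ and $K^{\ j}_{m\ i\overset{(0)}{|}j}$ are the same divergence, up to the index positioning convention used in the paper.

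Under the hypothesis, then, $K^j_{\ mi\overset{(0)}{|}j}=0$ for all $m,i$, and likewise with $m$ and $i$ interchanged. This holds whether the sum or the difference of the two appears.

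Substituting these vanishing terms into items (1)–(4) of Corollary \ref{cor_C4_1} yields exactly the four stated formulas. The only potential obstacle is this index-position identification. If the paper's convention for $K^{\ j}_{m\ i}$ differed from $K^j_{\ mi}$ by a nontrivial transposition, I would instead read the hypothesis $Div_{\overset{(0)}{\nabla}}(K)=0$ as the vanishing of exactly the contraction $K^j_{\ mi\overset{(0)}{|}j}$ that occurs in Corollary \ref{cor_C4_1}. That is evidently the intended meaning.
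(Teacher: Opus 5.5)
Your proposal is correct and follows the same route as the paper: Corollary~\ref{cor_C4_2} is Corollary~\ref{cor_C4_1} with the divergence terms $(K^j_{\ mi}\mp K^j_{\ im})_{\overset{(0)}{|}j}$ deleted, because $Div_{\overset{(0)}{\nabla}}(K)=0$ kills $K^{\ j}_{m\ i\overset{(0)}{|}j}$ for all $m,i$. Your index-placement check is not actually needed, since the paper uses $K^{\ j}_{m\ i}$ and $K^j_{\ mi}$ for the same components; your argument for it is nonetheless valid, namely the self-adjointness of $K_X$ from $g(Y,K(X,Z))=g(Z,K(X,Y))$ combined with the $g$-metricity of $\overset{(0)}{\nabla}$.
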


\bigskip

We will study now the symmetry of the Ricci curvature tensors for an affine connection $\nabla$ with torsion $T$ using the Second Bianchi identity. Indeed, if we take $X,Y,Z$ to be $\dfrac{\partial}{\partial x^i},\dfrac{\partial}{\partial x^j},\dfrac{\partial}{\partial x^k}$, respectively, then
\begin{equation}\label{eq_B1}
\begin{split}
&R(X,Y)Z+R(Y,Z)X+R(Z,X)Y\\
&=R\left(\frac{\partial}{\partial x^i},\frac{\partial}{\partial x^j}\right)\frac{\partial}{\partial x^k}
+R\left(\frac{\partial}{\partial x^j},\frac{\partial}{\partial x^k}\right)\frac{\partial}{\partial x^i}
+R\left(\frac{\partial}{\partial x^k},\frac{\partial}{\partial x^i}\right)\frac{\partial}{\partial x^j}\\
&=\left(R^{\ l}_{k\ ij}+R^{\ l}_{i\ jk}+R^{\ l}_{j\ ki}\right)\frac{\partial}{\partial x^l}.
\end{split}
\end{equation}

Likewise
\begin{equation}\label{eq_B2}
\begin{split}
&T(T(X,Y)Z)+T(T(Y,Z),X)+T(T(Z,X),Y)\\
&=T\left(\left(\frac{\partial}{\partial x^i},\frac{\partial}{\partial x^j}\right),\frac{\partial}{\partial x^k}\right)
+T\left(\left(\frac{\partial}{\partial x^j},\frac{\partial}{\partial x^k}\right),\frac{\partial}{\partial x^i}\right)
+T\left(\left(\frac{\partial}{\partial x^k},\frac{\partial}{\partial x^i}\right),\frac{\partial}{\partial x^j}\right)\\
&=\left(T^m_{\ ij}T^l_{\ mk}+T^m_{\ jk}T^l_{\ mi}+T^m_{\ ki}T^l_{\ mj}\right)\frac{\partial}{\partial x^l}.
\end{split}
\end{equation}

Finally, we compute
\begin{equation}\label{eq_B3}
\begin{split}
&(\nabla_XT)(Y,Z)+(\nabla_YT)(Z,X)+(\nabla_ZT)(X,Y)\\
&=\left(\nabla_{\dfrac{\partial}{\partial x^i}}T\right)\left(\frac{\partial}{\partial x^j},\frac{\partial}{\partial x^k}\right)
+\left(\nabla_{\dfrac{\partial}{\partial x^j}}T\right)\left(\frac{\partial}{\partial x^k},\frac{\partial}{\partial x^i}\right)
+\left(\nabla_{\dfrac{\partial}{\partial x^k}}T\right)\left(\frac{\partial}{\partial x^i},\frac{\partial}{\partial x^j}\right)\\
&=\left(\frac{\partial T^l_{\ jk}}{\partial x^i}+\Gamma^l_{\ mi}T^m_{\ jk}-\Gamma^m_{\ ji}T^l_{\ mk}-\Gamma^m_{\ ki}T^l_{\ jm}\right)\frac{\partial}{\partial x^l}\\
&+\left(\frac{\partial T^l_{\ ki}}{\partial x^j}+\Gamma^l_{\ mj}T^m_{\ ki}-\Gamma^m_{\ kj}T^l_{\ mi}-\Gamma^m_{\ ij}T^l_{\ km}\right)\frac{\partial}{\partial x^l}\\
&+\left(\frac{\partial T^l_{\ ij}}{\partial x^k}+\Gamma^l_{\ mk}T^m_{\ ij}-\Gamma^m_{\ ik}T^l_{\ mj}-\Gamma^m_{\ jk}T^l_{\ im}\right)\frac{\partial}{\partial x^l}.
\end{split}
\end{equation}

Let us observe that the properties of the curvature tensor $R^{\ i}_{j\ kl}$ give the following alternative form of the trace $l=i$ of \eqref{eq_B1}.

\begin{equation*}
\begin{split}
R^{\ i}_{k\ ij}+R^{\ i}_{i\ jk}+R^{\ i}_{j\ ki}&=
R_{kj}+g^{is}R_{isjk}-R^{\ i}_{j\ ik}
=R_{kj}-R_{jk}+g^{is}R_{isjk},
\end{split}
\end{equation*}
where we use property (i) in Theorem \ref{thm_A10}, namely $R^{\ i}_{j\ ki}+R^{\ i}_{j\ ik}=0$. Observe that since $\nabla$ is not metrical, the skew symmetry in the first two indices of $R_{iskj}$ does not hold, see formula \eqref{eq_P4}.

It follows
\begin{equation}\label{eq_Ricc1}
\begin{split}
R_{kj}-R_{jk}+g^{is}R_{isjk}=(T^m_{\ ij}T^i_{\ mk}+T^m_{\ jk}T^i_{\ mi}+T^m_{\ ki}T^i_{\ mj})
+T^{\ i}_{j\ k|i}+T^{\ i}_{k\ i|j}+T^{\ i}_{i\ j|k},
\end{split}
\end{equation}
where we take the trace in \eqref{eq_B2} and \eqref{eq_B3}. Obviously, the last 3 terms can be further computed using last equality in \eqref{eq_B3}, but we do not need that degree of detail in the present exposition. 

In the case $\nabla$ is torsion free, we get
\begin{equation}\label{eq_Ricc2}
R_{kj}-R_{jk}+g^{is}R_{isjk}=0.
\end{equation}

We recall that the $g$-dual connections $\nabla$ and $\overset{\ast}{\nabla}$ are called {\it curvature conjugate symmetric connection} if they have the same curvature tensor, i.e.
\begin{equation}\label{eq_13.10.1}
R(X,Y)Z=\overset{\ast}{R}(X,Y)Z
\end{equation}
for any $X,Y,Z\in\mathcal{X}(M)$. Locally, this means
\begin{equation*}
R^{\ i}_{j\ kl}=(\overset{\ast}{R}\,^{\ i}_{j\ kl})^{\ast}.
\end{equation*}

From Theorem \ref{thm: R and R* 1-st 2 ind}, we obtain

\begin{corollary}\label{cor_13.10}
Let $\nabla$ and $\overset{\ast}{\nabla}$ be $g$-dual connections on a (pseudo-)Riemannian manifold $(M,g)$. If $\nabla$ and $\overset{\ast}{\nabla}$ are curvature conjugate symmetric connections, then 
\begin{enumerate}[(i)]
\item
$g(R(X,Y)W,Z)+g(R(X,Y)Z,W)=0$,
\item $g(\overset{\ast}{R}(X,Y)W,Z)+g(\overset{\ast}{R}(X,Y)Z,W)=0$. 
\end{enumerate}
\end{corollary}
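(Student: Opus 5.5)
The plan is to derive both items directly from Theorem \ref{thm: R and R* 1-st 2 ind}, using the hypothesis \eqref{eq_13.10.1} to replace one curvature tensor by the other. No local computation is needed. The argument works pointwise for arbitrary vector fields $X,Y,Z,W\in\mathcal{X}(M)$.

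For (i), we start from \eqref{eq_12.1}:
$$
g(R(X,Y)Z,W)+g(\overset{\ast}{R}(X,Y)W,Z)=0 .
$$
Since $\nabla$ and $\overset{\ast}{\nabla}$ are curvature conjugate symmetric, we have $\overset{\ast}{R}(X,Y)W=R(X,Y)W$. Substituting this into the second term gives
$$
g(R(X,Y)Z,W)+g(R(X,Y)W,Z)=0 ,
$$
which is (i) after reordering the two summands.

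For (ii), we use \eqref{eq_12.1} again, this time replacing $R(X,Y)Z$ by $\overset{\ast}{R}(X,Y)Z$ in the first term. This yields
$$
g(\overset{\ast}{R}(X,Y)Z,W)+g(\overset{\ast}{R}(X,Y)W,Z)=0 .
$$
Alternatively, (ii) follows from (i) directly, since $R=\overset{\ast}{R}$ identically.

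There is no real obstacle here. The one point to check is that \eqref{eq_12.1} was proved for arbitrary $Z,W$, with no torsion or metricity assumptions, using only the duality condition \eqref{eq_D1}. This holds by the proof of Theorem \ref{thm: R and R* 1-st 2 ind}, so the substitution is legitimate in full generality. Note also that (i) and (ii) are exactly the skew-symmetry property (ii) of Theorem \ref{thm_A10}. This identity usually requires $g$-metricity, but here it is recovered for the non-metrical connections $\nabla$ and $\overset{\ast}{\nabla}$ under the curvature symmetry hypothesis alone.
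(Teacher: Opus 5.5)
Your proof is correct and follows essentially the same route as the paper: both combine \eqref{eq_12.1} from Theorem~\ref{thm: R and R* 1-st 2 ind} with the hypothesis $R=\overset{\ast}{R}$. The paper phrases this as adding or subtracting the identities $g(R(X,Y)W,Z)-g(\overset{\ast}{R}(X,Y)W,Z)=0$ and $g(R(X,Y)Z,W)-g(\overset{\ast}{R}(X,Y)Z,W)=0$ to \eqref{eq_12.1}, which is the same as your direct substitution.
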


\begin{proof}
Indeed, if $\nabla$ and $\overset{\ast}{\nabla}$ are curvature conjugate symmetric, then formula \eqref{eq_13.10.1} implies $g(R(X,Y)Z,W)-g(\overset{\ast}{R}(X,Y)Z,W)=0$.

On the other hand, taking into account of \eqref{eq_12.1}, by subtracting theses formulas, we get
$$
g(\overset{\ast}{R}(X,Y)W,Z)+g(\overset{\ast}{R}(X,Y)Z,W)=0.
$$

Likewise, if we write \eqref{eq_13.10.1} as $R(X,Y)W=\overset{\ast}{R}(X,Y)W$, we get $g(R(X,Y)W,Z)-g(\overset{\ast}{R}(X,Y)W,Z)=0$ and by adding with \eqref{eq_12.1} it follows $g(R(X,Y)W,Z)+g(R(X,Y)Z,W)=0$.
\end{proof}

Locally, Corollary \ref{cor_13.10} states that if $\nabla$ and $\overset{\ast}{\nabla}$ are curvature conjugate symmetric, then
\begin{enumerate}
\item $
R_{iskj}+R_{sikj}=0,
$
\item $
\overset{\ast}{R}_{iskj}+\overset{\ast}{R}_{sikj}=0.
$
\end{enumerate}

Therefore, if we return to \eqref{eq_Ricc2} it follows.
\begin{proposition}
Let $\nabla$ and $\overset{\ast}{\nabla}$ be dual connection on the (pseudo-)Riemannian manifold $(M,g)$.

If $\nabla$ is torsion free and $\nabla$, $\overset{\ast}{\nabla}$ are curvature conjugate symmetric, then the Ricci curvature tensor $R(X,Y)$ is symmetric.
\end{proposition}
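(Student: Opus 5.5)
The plan is to combine the traced First Bianchi identity \eqref{eq_Ricc2}, valid because $\nabla$ is torsion free, with the local form of Corollary \ref{cor_13.10}. The only remaining term that prevents symmetry of the Ricci tensor is $g^{is}R_{isjk}$. We will show it vanishes using the skew-symmetry in the first two indices, which holds because $\nabla$ and $\overset{\ast}{\nabla}$ are curvature conjugate symmetric.

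\textbf{Step 1.} Since $T\equiv 0$, the right-hand side of \eqref{eq_Ricc1} vanishes. This gives \eqref{eq_Ricc2}:
\begin{equation*}
R_{kj}-R_{jk}+g^{is}R_{isjk}=0 .
\end{equation*}

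\textbf{Step 2.} By Corollary \ref{cor_13.10}(i), curvature conjugate symmetry gives $g(R(X,Y)W,Z)+g(R(X,Y)Z,W)=0$. In local form this reads $R_{iskj}+R_{sikj}=0$, i.e. $R_{isjk}$ is skew in $(i,s)$ for every fixed $(j,k)$.

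Here one must check index placement against the paper's convention $R(X,Y,Z,V)=g(R(Z,V)Y,X)$ and the definition of $R_{isjk}$ as $g_{il}R^{\ l}_{s\ jk}$ in \eqref{eq_Ricc1}. With $X=\partial_j$, $Y=\partial_k$, $Z=\partial_s$, $W=\partial_i$, the corollary gives
\begin{equation*}
g_{il}R^{\ l}_{s\ jk}+g_{sl}R^{\ l}_{i\ jk}=0,
\end{equation*}
which is exactly the required skew-symmetry of the lowered tensor in its first two slots.

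\textbf{Step 3.} Contract with $g^{is}$. Since $g^{is}$ is symmetric and $R_{isjk}$ is skew in $(i,s)$, we get
\begin{equation*}
g^{is}R_{isjk}=-g^{si}R_{sijk}=-g^{is}R_{isjk},
\end{equation*}
so $g^{is}R_{isjk}=0$. Substituting into Step 1 yields $R_{kj}=R_{jk}$. Hence the Ricci tensor, defined as $R_{mi}=R^{\ j}_{m\ ji}$, is symmetric.

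\textbf{Main obstacle.} The only delicate point is the bookkeeping in Step 2. The term $g^{is}R_{isjk}$ in \eqref{eq_Ricc1} arose from $R^{\ i}_{i\ jk}$, the trace over the upper index and the first lower index. We must make sure that this trace is exactly the contraction of the pair on which Corollary \ref{cor_13.10} provides skew-symmetry. That pair is the vector argument and the $g$-partner, not the two-form slots, where skewness is automatic by Theorem \ref{thm_A10}(i) and would not help. I would verify this by writing $R^{\ i}_{i\ jk}=g^{is}g(R(\partial_j,\partial_k)\partial_i,\partial_s)$ directly and applying Corollary \ref{cor_13.10}(i) to it. Written this way, it is clear that this is the trace of the endomorphism $R(\partial_j,\partial_k)$, which is $g$-skew by the corollary and therefore traceless.
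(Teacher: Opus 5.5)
Your proof is correct and is essentially the paper's own argument: the paper likewise combines the traced identity \eqref{eq_Ricc2} for torsion-free $\nabla$ with the local form $R_{iskj}+R_{sikj}=0$ of Corollary \ref{cor_13.10}, so that $g^{is}R_{isjk}=0$ and hence $R_{kj}=R_{jk}$. Your observation that this contraction is the trace of the $g$-skew endomorphism $R(\partial_j,\partial_k)$, and therefore vanishes, is a clean way to confirm the index bookkeeping that the paper leaves implicit.
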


\section{Equiaffine connections}\label{sect2}

Let us consider the dual connections $\nabla$ and $\overset{\ast}{\nabla}$ on the (pseudo-)Riemannian manifold $(M,g)$.

\subsection{Definition of the equiaffine connection}

Recall that
\begin{equation}\label{eq_11.1}
\omega_g:=\sqrt{|g|}dx^1\wedge\ldots\wedge dx^n
\end{equation}
is the volume form of $(M,g)$, where $|g|$ is the absolute value of the determinant of $g$.

If $X=X^i\frac{\partial}{\partial x^i}$ is a vector field on $M$, then a straightforward computation gives
\begin{equation}\label{eq_11.2}
\begin{split}
\nabla_X\omega_g&=\nabla_X(\sqrt{|g|}dx^1\wedge\ldots\wedge dx^n)\\
&=\nabla_X(\sqrt{|g|})dx^1\wedge\ldots\wedge dx^n+\sqrt{|g|}\nabla_X(dx^1\wedge\ldots\wedge dx^n)\\
&=X^i\frac{\partial\sqrt{|g|}}{\partial x^i}dx^1\wedge\ldots\wedge dx^n+\sqrt{|g|}X^i\nabla_{\dfrac{\partial}{\partial x^i}}(dx^1\wedge\ldots\wedge dx^n).
\end{split}
\end{equation}

Let us recall that
\begin{equation}\label{eq_11.3}
\frac{\partial\sqrt{|g|}}{\partial x^i}=\frac{1}{2}\sqrt{|g|}g^{pq}\frac{\partial g_{pq}}{\partial x^i},
\end{equation}
(see \cite{BCS} for the Riemannian case and \cite{N} for the (pseudo-)Riemannian case).

Next, we  will show 
\begin{lemma}\label{lem: 2.4}
If $\nabla$ is an affine connection on $M$ with local coefficients $\Gamma$, then 
\begin{equation}\label{eq_11.4}
\nabla_{\dfrac{\partial}{\partial x^i}}(dx^1\wedge\ldots\wedge dx^n)=-(Tr_2\Gamma_i) dx^1\wedge\ldots\wedge dx^n,
\end{equation}
where $Tr_2\Gamma_i=\sum_{k=1}^n\Gamma^k_{\ ki}$ is the left trace of $\Gamma$.
\end{lemma}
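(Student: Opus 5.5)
The plan is to reduce everything to the action of $\nabla$ on the coframe $\{dx^k\}$ and then use the Leibniz rule for the extension of $\nabla$ to forms. First I determine $\nabla_{\partial/\partial x^i}dx^k$. The covariant derivative commutes with contractions, so I differentiate the identity $dx^k\left(\frac{\partial}{\partial x^j}\right)=\delta^k_j$ along $\frac{\partial}{\partial x^i}$:
\begin{equation*}
0=\frac{\partial}{\partial x^i}\left(\delta^k_j\right)=\left(\nabla_{\frac{\partial}{\partial x^i}}dx^k\right)\left(\frac{\partial}{\partial x^j}\right)+dx^k\left(\nabla_{\frac{\partial}{\partial x^i}}\frac{\partial}{\partial x^j}\right).
\end{equation*}
With the convention $\nabla_{\frac{\partial}{\partial x^i}}\frac{\partial}{\partial x^j}=\Gamma^k_{\ ji}\frac{\partial}{\partial x^k}$ used throughout the paper, this gives
\begin{equation*}
\nabla_{\frac{\partial}{\partial x^i}}dx^k=-\Gamma^k_{\ ji}\,dx^j .
\end{equation*}
The order of the lower indices needs care, since $\Gamma$ is not assumed symmetric. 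The differentiating direction sits in the last slot, and this is what eventually produces the left trace $\Gamma^k_{\ ki}$ rather than $\Gamma^k_{\ ik}$.

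Next, $\nabla_X$ acts as a derivation on tensor products, and it commutes with antisymmetrization. Hence it acts as a derivation on wedge products:
\begin{equation*}
\nabla_{\frac{\partial}{\partial x^i}}(dx^1\wedge\ldots\wedge dx^n)=\sum_{k=1}^n dx^1\wedge\ldots\wedge\left(\nabla_{\frac{\partial}{\partial x^i}}dx^k\right)\wedge\ldots\wedge dx^n .
\end{equation*}
In the $k$-th summand I substitute $-\Gamma^k_{\ ji}dx^j$. Every term with $j\neq k$ contains a repeated $dx^j$ and therefore vanishes. Only the term $j=k$ survives, contributing $-\Gamma^k_{\ ki}\,dx^1\wedge\ldots\wedge dx^n$. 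Summing over $k$ gives $-(Tr_2\Gamma_i)\,dx^1\wedge\ldots\wedge dx^n$, which is \eqref{eq_11.4}.

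No step is really an obstacle. The only delicate points are two matters of justification rather than computation. The first is to state clearly that $\nabla$ is extended to forms by requiring commutation with contractions together with the Leibniz rule. The second is to keep the index order of $\Gamma$ consistent with the paper's convention, so that the trace comes out as the left trace, as claimed.
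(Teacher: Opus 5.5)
Your argument is correct and is essentially the paper's computation in dual form. The paper evaluates the $n$-form on the coordinate frame, using $(\nabla_X\omega)(Y_1,\ldots,Y_n)=X(\omega(Y_1,\ldots,Y_n))-\sum_j\omega(Y_1,\ldots,\nabla_XY_j,\ldots,Y_n)$ with $\omega(\partial/\partial x^1,\ldots,\partial/\partial x^n)=1$, so that alternation leaves only the diagonal coefficients $\Gamma^j_{\ ji}$; you instead move the Leibniz rule onto the coframe via $\nabla_{\partial/\partial x^i}dx^k=-\Gamma^k_{\ ji}dx^j$ and kill the off-diagonal terms by repeated wedge factors, which requires justifying that $\nabla_X$ is a derivation of $\wedge$, a step the paper avoids by using only that an $n$-form is determined by its value on a single frame.
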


Indeed, we compute
\begin{equation*}
\begin{split}
&\left(\nabla_{\dfrac{\partial}{\partial x^i}}dx^1\wedge\ldots\wedge dx^n\right)\left(\frac{\partial}{\partial x^1},\ldots,\frac{\partial}{\partial x^n}\right)\\
&=\nabla_{\dfrac{\partial}{\partial x^i}}\left(dx^1\wedge\ldots\wedge dx^n\left(\frac{\partial}{\partial x^1},\ldots\frac{\partial}{\partial x^n}\right)\right)-\sum_{j=1}^ndx^1\wedge\ldots\wedge dx^n\left(\dfrac{\partial}{\partial x^1},\ldots,\nabla_{\dfrac{\partial}{\partial x^i}}\dfrac{\partial}{\partial x^j},\ldots,\dfrac{\partial}{\partial x^n}\right).
\end{split}
\end{equation*}

Taking into account that
\begin{equation*}
\begin{split}
dx^1\wedge\ldots\wedge dx^n\left(\frac{\partial}{\partial x^1},\ldots,\frac{\partial}{\partial x^n}\right)
&=
\begin{vmatrix}
dx^1\left(\dfrac{\partial}{\partial x^1}\right) & dx^1\left(\dfrac{\partial}{\partial x^2}\right) & \ldots & dx^1\left(\dfrac{\partial}{\partial x^n}\right)\\
dx^2\left(\dfrac{\partial}{\partial x^1}\right) & dx^2\left(\dfrac{\partial}{\partial x^2}\right) & \ldots & dx^2\left(\dfrac{\partial}{\partial x^n}\right)\\
\vdots & \vdots & \ddots & \vdots \\
dx^n\left(\dfrac{\partial}{\partial x^1}\right) & dx^n\left(\dfrac{\partial}{\partial x^2}\right) & \ldots & dx^n\left(\dfrac{\partial}{\partial x^n}\right)\\
\end{vmatrix}\\
&=\det E_n=1,
\end{split}
\end{equation*}
it results that
$$
\nabla_{\dfrac{\partial}{\partial x^i}}\left(dx^1\wedge\ldots\wedge dx^n\left(\frac{\partial}{\partial x^1,\ldots,\frac{\partial}{\partial x^n}}\right)\right)=0.
$$

Next, we compute
\begin{equation*}
\begin{split}
&-\sum_{j=1}^ndx^1\wedge\ldots\wedge dx^n\left(\frac{\partial}{\partial x^1},\ldots,\nabla_{\dfrac{\partial}{\partial x^i}}\frac{\partial}{\partial x^j},\ldots,\frac{\partial}{\partial x^n}\right)\\
&-\Bigg\{
dx^1\wedge\ldots\wedge dx^n\left(\nabla_{\dfrac{\partial}{\partial x^i}}\frac{\partial}{\partial x^1},\ldots,\frac{\partial}{\partial x^n}\right)
+dx^1\wedge\ldots\wedge dx^n\left(
\frac{\partial}{\partial x^1},\nabla_{\dfrac{\partial}{\partial x^i}}\frac{\partial}{\partial x^2},\ldots,\frac{\partial}{\partial x^n}
\right)\\
&+\ldots+dx^1\wedge\ldots\wedge dx^n\left(
\frac{\partial}{\partial x^1},\ldots,\nabla_{\dfrac{\partial}{\partial x^i}}\frac{\partial}{\partial x^n}
\right)
\Bigg\}\\
&=-\Bigg\{
\Gamma^k_{\ 1i}dx^1\wedge\ldots\wedge dx^n\left(\frac{\partial}{\partial x^k},\ldots,\frac{\partial}{\partial x^n}\right)+\Gamma^k_{\ 2i}dx^1\wedge\ldots\wedge dx^n\left(\frac{\partial}{\partial x^1},\frac{\partial}{\partial x^k},\ldots,\frac{\partial}{\partial x^n}\right)\\
&+\ldots+\Gamma^k_{\ ni}dx^1\wedge\ldots\wedge dx^n\left(\frac{\partial}{\partial x^1},\ldots,\frac{\partial}{\partial x^k}\right)
\Bigg\}=-\sum_{k=1}^n\Gamma^k_{\ ki}=-Tr_2\Gamma_i,
\end{split}
\end{equation*}
and therefore \eqref{eq_11.4} is proved.

If we return to \eqref{eq_11.2}, we get
\begin{equation*}
\begin{split}
  \nabla_X\omega_g&=X^i\frac{\partial\sqrt{|g|}}{\partial x^i}dx^1\wedge\ldots\wedge dx^n+
                    \sqrt{|g|}
                    X^i\nabla_{\dfrac{\partial}{\partial x^i}}(dx^1\wedge\ldots\wedge dx^n)\\
&=X^i\left(\frac{\partial\sqrt{|g|} }{\partial x^i}-\sqrt{|g|}\sum_{k=1}^n\Gamma^k_{\ ki}\right)dx^1\wedge\ldots\wedge dx^n.
\end{split}
\end{equation*}

It results
\begin{equation}
  \begin{split}
  \nabla_X\omega_g&=\frac{1}{\sqrt{|g|}}\left[X(\sqrt{|g|})-\sqrt{|g|} \cdot Tr_2(\Gamma)(X)\right]\cdot \omega_g\\
    &=\left[X(\log \sqrt{|g|})-Tr_2(\Gamma)(X)\right]\cdot \omega_g.
      \end{split}
\end{equation}

Locally, we have
\begin{equation}\label{eq_11.5}
\begin{split}
\nabla_X\omega_g&=\frac{1}{2}\sqrt{|g|}g^{pq}X^i\left(\frac{\partial}{\partial x^i}g_{pq}\right)dx^1\wedge\ldots\wedge dx^n\\
&-\sqrt{|g|}X^i\left(Tr_2\Gamma_i\right)dx^1\wedge\ldots\wedge dx^n\\
&=\sqrt{|g|}X^i\left\{
\frac{1}{2}g^{pq}\frac{\partial g_{pq}}{\partial x^i}-Tr_2\Gamma_i
\right\}dx^1\wedge\ldots\wedge dx^n.
\end{split}
\end{equation}

Let us observe that the quantity in parenthesis can be written as 
\begin{equation}\label{eq_11.6}
\frac{1}{2}g^{pq}\frac{\partial g_{pq}}{\partial x^i}-\sum_{k=1}^n\Gamma^k_{\ ki}=\frac{1}{2}g^{pq}\left(\nabla_{\dfrac{\partial}{\partial x^i}}g\right)\left(\frac{\partial}{\partial x^p},\frac{\partial}{\partial x^q}\right).
\end{equation}

Indeed, if we start from the right hand side, we get
\begin{equation*}
\begin{split}
\frac{1}{2}g^{pq}\left(\nabla_{\dfrac{\partial}{\partial x^i}}g\right)\left(\frac{\partial}{\partial x^p},\frac{\partial}{\partial x^q}\right)
&=\frac{1}{2}g^{pq}\left\{
\frac{\partial g_{pq}}{\partial x^i}-g_{mp}\Gamma^m_{\ qi}-g_{mq}\Gamma^m_{\ pi}
\right\}\\
&=\frac{1}{2}g^{pq}\frac{\partial g_{pq}}{\partial x^i}-\frac{1}{2}g^{pq}\left\{
g_{mp}\Gamma^m_{\ qi}+g_{mq}\Gamma^m_{\ pi}
\right\}
\end{split}
\end{equation*}
and by paying attention to the summation indices, we obtain \eqref{eq_11.6} immediately. 

By substituting this relation, \eqref{eq_11.5} becomes
\begin{equation*}
\begin{split}
\nabla_X\omega_g&=\sqrt{|g|}X^i\frac{1}{2}g^{pq}\left(\nabla_{\dfrac{\partial}{\partial x^i}}g\right)\left(\frac{\partial}{\partial x^p},\frac{\partial}{\partial x^q}\right)dx^1\wedge\ldots\wedge dx^n\\
&=\frac{1}{2}g^{pq}(\nabla_Xg)\left(\frac{\partial}{\partial x^p},\frac{\partial}{\partial x^q}\right)\omega_g=\frac{1}{2}trace_g(\nabla_Xg)(\cdot,\cdot)\omega_g,
\end{split}
\end{equation*}
where we use the notation
$$
trace_g(\nabla_Xg)(\cdot,\cdot):=g^{pq}(\nabla_Xg)\left(\frac{\partial}{\partial x^p},\frac{\partial}{\partial x^q}\right)=X^ig^{pq}(g_{pq|i}).
$$

In conclusion, we obtain 
\begin{proposition}
The covariant derivative of the volume form $\omega_g$ with respect to the affine connection $\nabla$ is 
$$
\nabla_X\omega_g=\frac{1}{2}trace_g(\nabla_Xg)(\cdot,\cdot)\omega_g.
$$
\end{proposition}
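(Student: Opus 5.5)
The plan is to work entirely in a local chart and reduce the statement to the computation already carried out above. Write $X=X^i\frac{\partial}{\partial x^i}$ and $\omega_g=\sqrt{|g|}\,dx^1\wedge\ldots\wedge dx^n$. First I will apply the Leibniz rule for $\nabla_X$ acting on the product of the function $\sqrt{|g|}$ with the $n$-form $dx^1\wedge\ldots\wedge dx^n$, exactly as in \eqref{eq_11.2}. This gives $X(\sqrt{|g|})\,dx^1\wedge\ldots\wedge dx^n+\sqrt{|g|}\,X^i\nabla_{\frac{\partial}{\partial x^i}}(dx^1\wedge\ldots\wedge dx^n)$, where the $C^\infty(M)$-linearity of $\nabla$ in the lower slot lets us pull out $X^i$.

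Second, I will evaluate the two pieces separately. For the first piece, I use the derivative of the determinant, \eqref{eq_11.3}, to write $X(\sqrt{|g|})=\frac12\sqrt{|g|}\,X^i g^{pq}\partial_i g_{pq}$. For the second piece, I invoke Lemma \ref{lem: 2.4}, which gives $-\sqrt{|g|}X^i\,Tr_2\Gamma_i\,dx^1\wedge\ldots\wedge dx^n$. The lemma itself follows by evaluating the derivative of the $n$-form on the coordinate frame: the constant value $1$ differentiates to zero, and replacing the $j$-th slot by $\Gamma^k_{\ ji}\partial_k$ survives only for $k=j$. Combining the two pieces yields \eqref{eq_11.5}, in which $\omega_g$ is multiplied by $X^i\bigl(\tfrac12 g^{pq}\partial_i g_{pq}-\Gamma^k_{\ ki}\bigr)$.

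Third, I will identify this bracket with $\frac12 g^{pq}C_{ipq}$, as claimed in \eqref{eq_11.6}. The non-metricity components are \eqref{eq_A1'}, $C_{ipq}=\partial_i g_{pq}-g_{mp}\Gamma^m_{\ qi}-g_{mq}\Gamma^m_{\ pi}$. Contracting with $g^{pq}$, the term $g^{pq}g_{mp}\Gamma^m_{\ qi}$ becomes $\delta^q_m\Gamma^m_{\ qi}=\Gamma^q_{\ qi}$. By symmetry of $g^{pq}$ the other term gives the same left trace, so half the sum is exactly $Tr_2\Gamma_i$. Multiplying by $X^i$ produces $\frac12 g^{pq}(\nabla_Xg)(\partial_p,\partial_q)=\frac12 trace_g(\nabla_Xg)$, which is the statement.

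The only delicate point is index bookkeeping in this contraction. One must check that the trace in Lemma \ref{lem: 2.4} is over the first lower index, $\Gamma^k_{\ ki}$, and that both contractions of $C_{ipq}$ produce that same trace rather than the right trace $\Gamma^k_{\ ik}$. This matters because $\nabla$ is allowed to have torsion. I will verify it using the convention $\nabla_{\partial_j}\partial_i=\Gamma^k_{\ ij}\partial_k$, which places the differentiating direction in the last slot. With this convention both $Tr_2\Gamma$ and the two $g$-contractions of $C$ involve $\Gamma^{\cdot}_{\ \cdot i}$ with the direction $i$ last, so the traces match. Finally, I will note that the resulting expression is coordinate-independent, since both sides are tensorial in $X$.
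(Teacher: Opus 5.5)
Your proposal is correct and follows the paper's argument essentially step for step: you apply the Leibniz rule to $\sqrt{|g|}\,dx^1\wedge\cdots\wedge dx^n$, use the determinant-derivative formula and the lemma giving $-\Gamma^k_{\ ki}$, and then contract to get $\frac12 g^{pq}C_{ipq}=\frac12 g^{pq}\partial_i g_{pq}-\Gamma^k_{\ ki}$. Your index check is also right: with $\nabla_{\partial_j}\partial_i=\Gamma^k_{\ ij}\partial_k$, both $g$-contractions of $C_{ipq}$ give the same left trace $\Gamma^q_{\ qi}$, so no torsion term appears.
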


Next, let us observe that using \eqref{eq_A1}, this relation can be written in terms of the cubic tensor $C$ as follows
$$
\nabla_X\omega_g=\frac{1}{2}g^{pq}C\left(X,\frac{\partial}{\partial x^p},\frac{\partial}{\partial x^q}\right)\omega_g
$$
and using \eqref{eq_Rel5} we get
\begin{equation*}
\begin{split}
\nabla_X\omega_g&=\frac{1}{2}g^{pq}g\left(K\left(X,\frac{\partial}{\partial x^p}\right),\frac{\partial}{\partial x^q}\right)\omega_g\\
&=\frac{1}{2}g^{pq}g\left(K_X\frac{\partial}{\partial x^p},\frac{\partial}{\partial x^q}\right)\omega_g=\frac{1}{2}Tr_2(K)(X)\omega_g,
\end{split}
\end{equation*}
where we use the notation $Tr_2$ for the left trace, i.e. 
$$
Tr_2(K)(X)=g^{pq}g\left(K_X\frac{\partial}{\partial x^p},\frac{\partial}{\partial x^q}\right)=
X^i g^{pq}g\left(K_{\frac{\partial}{\partial x^i}}\frac{\partial}{\partial x^p},\frac{\partial}{\partial x^q}\right)=X^iK^k_{\ ki}.
$$

It results that 
\begin{equation}\label{nabla Tr2 K}
\nabla_X\omega_g=\frac{1}{2}Tr_2(K)(X)\omega_g.
\end{equation}

\begin{remark}
Taking into account formula \eqref{eq_Rel5}, it is easy to see that 
\begin{equation*}
trace_g(\nabla_Xg)(\cdot,\cdot)=\frac{1}{2}Tr_2(K)(X)
\end{equation*}
which motives the relation above in a more direct way. 
\end{remark}
\begin{theorem}\label{thm_E1}
  If $\nabla$ is an affine connection on a (pseudo-)Riemannian manifold $(M,g)$, then
  the following conditions are equivalent
  \begin{enumerate}[(i)]
  \item $\nabla_X\omega_g=0$;
  \item $Tr_2(\Gamma)(X)=X(\log \sqrt{|g|})$;
  \item $Tr_2(K)(X)=0$,
    \end{enumerate}
     for any tangent vector $X\in \mathcal{X}(M)$. 
\end{theorem}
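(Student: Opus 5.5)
The plan is to read off all three equivalences from the two expressions for $\nabla_X\omega_g$ already obtained just before the statement. Since $\omega_g$ is a nowhere vanishing $n$-form, any equation of the form $\nabla_X\omega_g=f(X)\,\omega_g$ gives $\nabla_X\omega_g=0$ if and only if the scalar $f(X)=0$. Everything then reduces to identifying the scalar factor in the two available formulas.

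\emph{Step 1: (i)$\Leftrightarrow$(ii).} From \eqref{eq_11.2} together with Lemma \ref{lem: 2.4}, we have
$$
\nabla_X\omega_g=\left[X(\log\sqrt{|g|})-Tr_2(\Gamma)(X)\right]\omega_g .
$$
So $\nabla_X\omega_g=0$ holds exactly when $Tr_2(\Gamma)(X)=X(\log\sqrt{|g|})$. The argument is pointwise, and $X$ is arbitrary, so the equivalence holds for all $X\in\mathcal{X}(M)$.

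\emph{Step 2: (i)$\Leftrightarrow$(iii).} Formula \eqref{nabla Tr2 K} gives $\nabla_X\omega_g=\frac12 Tr_2(K)(X)\,\omega_g$. Hence $\nabla_X\omega_g=0$ if and only if $Tr_2(K)(X)=0$.

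\emph{Step 3: direct check of (ii)$\Leftrightarrow$(iii).} As a consistency check, I would verify this equivalence directly in coordinates. Contracting \eqref{eq_A1'} with $g^{ij}$ gives $g^{ij}C_{kij}=g^{ij}\partial_k g_{ij}-2\Gamma^m_{\ mk}$. Using \eqref{eq_11.3}, this becomes $2\left(\partial_k\log\sqrt{|g|}-Tr_2\Gamma_k\right)$. By \eqref{eq_Rel5} the left side is the trace of $K$, so
$$
Tr_2(K)_k=2\left(\partial_k\log\sqrt{|g|}-Tr_2\Gamma_k\right).
$$

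\emph{Main obstacle.} The only real difficulty is a bookkeeping one. I must make sure that the trace of $K$ produced by contracting $C$ with $g^{pq}$ is precisely the left trace $Tr_2(K)(X)=X^iK^k_{\ ki}$, and not the right trace $Tr_1$. I must also reconcile the numerical factor with the earlier Remark, which writes $\frac12$, whereas the computation above yields a factor $2$ relative to $\frac12 g^{pq}C$. None of these constants affects the vanishing conditions, so the equivalences hold regardless. To pin down the index placement, I would recheck $K^m_{\ ki}=g^{mj}C_{ijk}$ and contract over $m=k$, using the symmetry $C_{ijk}=C_{ikj}$.
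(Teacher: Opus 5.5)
Your proposal is correct and follows the paper's own route: the theorem is read off from the two expressions $\nabla_X\omega_g=[X(\log\sqrt{|g|})-Tr_2(\Gamma)(X)]\,\omega_g$ and $\nabla_X\omega_g=\frac{1}{2}Tr_2(K)(X)\,\omega_g$ derived just before it, using that $\omega_g$ is nowhere vanishing. Your Step 3 check is also right, and the factor discrepancy you flag is a slip in the paper's Remark: the correct identity is $trace_g(\nabla_Xg)(\cdot,\cdot)=Tr_2(K)(X)$, i.e. $g^{ij}C_{kij}=K^j_{\ jk}$, and it does not affect the vanishing conditions.
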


In particular, by taking $X=\dfrac{\partial}{\partial x^i}$ it follows that $\nabla_{\dfrac{\partial}{\partial x^i}}\omega_g=0$ is equivalent to 
\begin{equation*}
\begin{split}
g^{pq}g\left(K\left(\frac{\partial}{\partial x^i},\frac{\partial}{\partial x^p}\right),\frac{\partial}{\partial x^q}\right)
&=g^{pq}g\left(K^k_{\ pi}\frac{\partial}{\partial x^k},\frac{\partial}{\partial x^q}\right)\\
&=g^{pq}g_{kq}K^k_{\ pi}=K^k_{\ ki}=Tr_2K_i.
\end{split}
\end{equation*}

We obtain
\begin{corollary}\label{cor_E2}
  If $\nabla$ is an affine connection on a (pseudo-)Riemannian manifold $(M,g)$, then
   the following conditions are equivalent
  \begin{enumerate}[(i)]
  \item $\nabla_{\dfrac{\partial}{\partial x^i}}\omega_g=0$;
  \item $Tr_2(\Gamma)_i=\Gamma^k_{\ ki}=\dfrac{\partial \log \sqrt{|g|}}{\partial x^i}$;
  \item $Tr_2(K)_i=K^k_{\ ki}=0.$
    \end{enumerate}
\end{corollary}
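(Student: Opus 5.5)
The plan is to specialize Theorem~\ref{thm_E1} to the coordinate vector fields and then read off the three conditions componentwise. Every quantity in the statement is $C^\infty(M)$-linear in $X$, and the $\frac{\partial}{\partial x^i}$ form a local frame. So the corollary is just the component form of Theorem~\ref{thm_E1}. The only real task is to check that each condition of the theorem, evaluated at $X=\frac{\partial}{\partial x^i}$, becomes the corresponding item (i)--(iii).

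First, the formula derived just before Theorem~\ref{thm_E1} is
$$
\nabla_X\omega_g=\left[X(\log\sqrt{|g|})-Tr_2(\Gamma)(X)\right]\omega_g .
$$
Taking $X=\frac{\partial}{\partial x^i}$ and using $X^j=\delta^j_i$ gives
$$
\nabla_{\frac{\partial}{\partial x^i}}\omega_g=\left(\frac{\partial \log\sqrt{|g|}}{\partial x^i}-\Gamma^k_{\ ki}\right)\omega_g .
$$
The form $\omega_g$ is nowhere vanishing. Hence (i) holds if and only if the scalar factor vanishes, which is exactly (ii).

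Second, formula \eqref{nabla Tr2 K} with $X=\frac{\partial}{\partial x^i}$ gives $\nabla_{\frac{\partial}{\partial x^i}}\omega_g=\frac12 Tr_2(K)_i\,\omega_g$. Here $Tr_2(K)_i=g^{pq}g_{kq}K^k_{\ pi}=K^k_{\ ki}$, as computed in the display just above the statement. The same nonvanishing argument then gives (i)$\Leftrightarrow$(iii).

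The only point needing care is consistency of the index convention for the left trace. One must check that $Tr_2\Gamma_i=\Gamma^k_{\ ki}$ from Lemma~\ref{lem: 2.4} and $Tr_2(K)_i=K^k_{\ ki}$ use the same slot: the contracted index sits in the first lower position, paired with $\nabla_{\partial_i}\partial_k=\Gamma^m_{\ ki}\partial_m$. This pairing also yields $K^k_{\ ki}=\overset{\ast}{\Gamma}\,^k_{\ ki}-\Gamma^k_{\ ki}$.

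As a consistency check of (ii)$\Leftrightarrow$(iii), one can compute directly from \eqref{eq_Rel2}: $K^k_{\ ki}=g^{kj}C_{ijk}$. Then \eqref{eq_A1'} and \eqref{eq_11.3} give $g^{kj}C_{ijk}=2\left(\partial_i\log\sqrt{|g|}-\Gamma^k_{\ ki}\right)$. This reproduces the factor $\frac12$ in \eqref{nabla Tr2 K} and confirms the equivalence independently.
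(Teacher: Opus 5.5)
Your proposal is correct and follows the paper's route. The paper likewise obtains the corollary by setting $X=\frac{\partial}{\partial x^i}$ in Theorem~\ref{thm_E1}, that is, in $\nabla_X\omega_g=\bigl[X(\log\sqrt{|g|})-Tr_2(\Gamma)(X)\bigr]\omega_g$ and \eqref{nabla Tr2 K}, and checking that $g^{pq}g_{kq}K^k_{\ pi}=K^k_{\ ki}$; your direct computation $K^k_{\ ki}=g^{kj}C_{ijk}=2\bigl(\partial_i\log\sqrt{|g|}-\Gamma^k_{\ ki}\bigr)$ is a sound independent confirmation of the factor $\frac{1}{2}$ in \eqref{nabla Tr2 K}.
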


Let us recall that an affine connection $\nabla$ is called {\it locally equiaffine} if there exists a volume form $\omega$ at any point of $M$ such that $\nabla_X\omega=0$, for any $X\in\mathcal{X}(M)$.

In the case $\omega$ is globally defined on $M$ such that $\nabla_X\omega=0$, then $\nabla$ is called {\it equiaffine}.

In particular, if $\nabla$ is equiaffine connection with the associated volume form $\omega_g$, where $\omega_g$ is the volume form of $g$, i.e. $\nabla_X\omega_g=0$, then clearly the left trace $Tr_2K_i=0$, in this sense $\nabla$ is called {\it trace-free}.

However, observe that in general $K$ is not symmetric. More generally, we can define
$$
\omega:=\lambda(x)dx^1\wedge\ldots\wedge dx^n,
$$
where $\lambda(x)$ is a no-vanishing positive smooth function on the manifold $M$. Then, in the same manner as above, we have
\begin{equation*}
\begin{split}
\nabla_X\omega&=X^i\frac{\partial\lambda}{\partial x^i}dx^1\wedge\ldots\wedge dx^n+\lambda(x)X^i\nabla_{\dfrac{\partial}{\partial x^i}}(dx^1\wedge\ldots\wedge dx^n)\\
&=X^i\left(\frac{\partial\lambda}{\partial x^i}-\lambda\sum_{k=1}^n\Gamma^k_{\ ki}\right)dx^1\wedge\ldots\wedge dx
\end{split}
\end{equation*}

Therefore, we can write 
\begin{equation*}
\nabla_X\omega=\frac{1}{\sqrt{|g|}}\left[X(\lambda)-\lambda \cdot Tr_2(\Gamma)(X)\right]\cdot \omega_g.
\end{equation*}

Moreover, taking into account that $\omega_g=\dfrac{\sqrt{|g|}}{\lambda}$, formula \eqref{nabla Tr2 K} implies
\begin{equation*}
  \nabla_X\left( \frac{\sqrt{|g|}}{\lambda} \omega \right)=\frac{1}{2}Tr_2(K)(X)\frac{\sqrt{|g|}}{\lambda}\omega,
\end{equation*}
and by a straighforward computation, we obtain
\begin{equation*}
  \nabla_X\omega=\left[ \frac{1}{2}Tr_2(K)(X)-X\left(\log \frac{\sqrt{|g|}}{\lambda}
      \right)
    \right]\omega.
\end{equation*}

Summarizing, we obtain 
\begin{theorem}
If $\nabla$ is an affine connection on the (pseudo)-Riemannian manifold $(M,g)$ and let $\omega:=\lambda(x)dx^1\wedge\ldots\wedge dx^n,
$ be an arbitrary volume form on $M$, 
where $\lambda(x)$ is a no-vanishing positive smooth function. 

Then the following conditions are equivalent
\begin{enumerate}[(i)]
\item $\nabla$ is an equiaffine connection, i.e.
  $\nabla_X\omega=0$;
\item $Tr_2(K)(X)=2X\left(\log \frac{\sqrt{|g|}}{\lambda}\right)$;
  \item $X(\log\lambda(x))= Tr_2(\Gamma)(X)$,
  \end{enumerate}
for any vector field $X\in \mathcal{X}(M)$.
\end{theorem}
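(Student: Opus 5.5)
The plan is to compute $\nabla_X\omega$ directly and read off the three equivalences from a single formula.

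First, $\omega=\lambda\,dx^1\wedge\ldots\wedge dx^n$. By the Leibniz rule and Lemma \ref{lem: 2.4},
$$
\nabla_X\omega=\bigl[X(\lambda)-\lambda\,Tr_2(\Gamma)(X)\bigr]dx^1\wedge\ldots\wedge dx^n
=\bigl[X(\log\lambda)-Tr_2(\Gamma)(X)\bigr]\omega .
$$
Since $\lambda>0$ and $\omega$ is nowhere vanishing, $\nabla_X\omega=0$ for all $X$ holds exactly when $X(\log\lambda)=Tr_2(\Gamma)(X)$ for all $X$. This gives (i)$\Leftrightarrow$(iii).

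Second, to bring in $K$, I write $\omega=f\,\omega_g$ with $f:=\lambda/\sqrt{|g|}$. Note the paper's displayed relation ``$\omega_g=\sqrt{|g|}/\lambda$'' should be read as $\omega_g=\frac{\sqrt{|g|}}{\lambda}\omega$. Applying the Leibniz rule and \eqref{nabla Tr2 K},
$$
\nabla_X\omega=X(f)\,\omega_g+\tfrac12 f\,Tr_2(K)(X)\,\omega_g=\Bigl[\tfrac12Tr_2(K)(X)-X\Bigl(\log\tfrac{\sqrt{|g|}}{\lambda}\Bigr)\Bigr]\omega ,
$$
using $X(\log f)=-X(\log(\sqrt{|g|}/\lambda))$. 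Vanishing of the bracket for every $X$ is precisely (ii), so (i)$\Leftrightarrow$(ii).

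As a consistency check, the two expressions for the coefficient of $\omega$ agree. This is equivalent to $Tr_2(\Gamma)(X)=X(\log\sqrt{|g|})-\tfrac12 Tr_2(K)(X)$, which is the content of \eqref{eq_11.6} combined with \eqref{eq_Rel5}. One could therefore prove (ii)$\Leftrightarrow$(iii) directly from that identity instead.

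The only delicate step is the factor bookkeeping in \eqref{nabla Tr2 K}. The remark in the paper writes $trace_g(\nabla_Xg)=\tfrac12Tr_2(K)(X)$, which conflicts with the earlier proposition. I would recheck that $\tfrac12 g^{pq}C(X,\partial_p,\partial_q)=\tfrac12 Tr_2(K)(X)$ via $g(K(X,\partial_p),\partial_q)=C(X,\partial_q,\partial_p)$. If the normalization of $Tr_2(K)$ turned out different, the factor $2$ in (ii) would have to be adjusted accordingly. The rest is routine.
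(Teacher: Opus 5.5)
Your proof is correct and is essentially the paper's argument: you compute $\nabla_X\omega$ once via Lemma~\ref{lem: 2.4} to get (i)$\Leftrightarrow$(iii), and once by writing $\omega=\frac{\lambda}{\sqrt{|g|}}\,\omega_g$ and using \eqref{nabla Tr2 K} to get (i)$\Leftrightarrow$(ii), correctly reading the paper's ``$\omega_g=\sqrt{|g|}/\lambda$'' as $\omega_g=\frac{\sqrt{|g|}}{\lambda}\,\omega$. The factor check you flag does go through: $g(K_X\partial_p,\partial_q)=C(X,\partial_q,\partial_p)=C(X,\partial_p,\partial_q)$ gives $g^{pq}C(X,\partial_p,\partial_q)=Tr_2(K)(X)=X^iK^k_{\ ki}$, so the $\tfrac12$ in the paper's remark is a slip and the factor $2$ in (ii) stands.
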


By taking $X=\dfrac{\partial}{\partial x^i}$ we obtain
\begin{corollary}
If $\nabla$ is an affine connection on the (pseudo)-Riemannian manifold $(M,g)$ and let $\omega:=\lambda(x)dx^1\wedge\ldots\wedge dx^n,
$ be an arbitrary volume form on $M$, 
where $\lambda(x)$ is a no-vanishing positive smooth function. 

Then the following conditions are equivalent
\begin{enumerate}[(i)]
\item $\nabla_{\dfrac{\partial}{\partial x^i}}\omega=0$;
  \item  $Tr_2(K)_i=2\frac{\partial}{\partial x^i}\left(\log \frac{\sqrt{|g|}}{\lambda}\right)$;
\item $\frac{\partial}{\partial x^i}\log\lambda(x)=\sum_{k=1}^n\Gamma^k_{\ ki}=Tr_2\Gamma_i.$
  \end{enumerate}
  \end{corollary}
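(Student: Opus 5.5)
The statement is the coordinate specialization of the preceding theorem. I would evaluate each of its conditions at $X=\dfrac{\partial}{\partial x^i}$, for which $X^j=\delta^j_i$.

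\emph{Condition (i).} I would use the local expansion already obtained,
\begin{equation*}
\nabla_{\dfrac{\partial}{\partial x^i}}\omega=\left(\frac{\partial\lambda}{\partial x^i}-\lambda\sum_{k=1}^n\Gamma^k_{\ ki}\right)dx^1\wedge\ldots\wedge dx^n ,
\end{equation*}
which rests on Lemma \ref{lem: 2.4}.

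\emph{(i)$\Leftrightarrow$(iii).} Since $\lambda>0$, the expression above vanishes exactly when $\partial_i\log\lambda=\Gamma^k_{\ ki}=Tr_2\Gamma_i$.

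\emph{(i)$\Leftrightarrow$(ii).} I would write $\omega=\frac{\lambda}{\sqrt{|g|}}\,\omega_g$. By the Leibniz rule and \eqref{nabla Tr2 K},
\begin{equation*}
\nabla_{\dfrac{\partial}{\partial x^i}}\omega=\left[\frac12 Tr_2(K)_i-\frac{\partial}{\partial x^i}\log\frac{\sqrt{|g|}}{\lambda}\right]\omega .
\end{equation*}
The volume form $\omega$ is nowhere zero, so $\nabla_{\partial_i}\omega=0$ holds exactly when $Tr_2(K)_i=2\,\partial_i\log\frac{\sqrt{|g|}}{\lambda}$.

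\emph{Consistency check.} Using \eqref{eq_11.6}, $Tr_2(K)_i=g^{pq}C_{ipq}=g^{pq}\partial_i g_{pq}-2\Gamma^k_{\ ki}=2\left(\partial_i\log\sqrt{|g|}-\Gamma^k_{\ ki}\right)$. Substituting this, (ii) becomes $\partial_i\log\lambda=\Gamma^k_{\ ki}$, which is exactly (iii).

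\emph{Main obstacle.} The only delicate point is the sign and factor bookkeeping in the relation between $Tr_2K$ and $g^{pq}C_{ipq}$. In particular, the paper's earlier remark carries a stray factor $\frac12$ that should be disregarded, and the check above confirms the factor $2$ in (ii).
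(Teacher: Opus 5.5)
Your proposal is correct and follows the paper's own route: the corollary is the preceding theorem evaluated at $X=\partial/\partial x^i$. The paper's derivation of that theorem uses the local expansion from Lemma~\ref{lem: 2.4} for (i)$\Leftrightarrow$(iii), and the rewriting $\omega=\frac{\lambda}{\sqrt{|g|}}\,\omega_g$ together with \eqref{nabla Tr2 K} for (i)$\Leftrightarrow$(ii), exactly as you do. Your side remark is also right: since $g^{pq}C(X,\partial_p,\partial_q)=Tr_2(K)(X)$, the factor $\frac12$ in the paper's remark relating $trace_g(\nabla_Xg)$ and $Tr_2(K)(X)$ is spurious, and your direct check independently confirms the factor $2$ in (ii).
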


\begin{remark}
In the case $\nabla$ has non-vanishing torsion $T$, then the condition (iii) above can be written in the equivalent form
\begin{equation}\label{equiaffine Gamma right trace cond}
\frac{\partial}{\partial x^i}\log \lambda(x)=\sum_{k=1}^n\Gamma^k_{\ ik}+\sum_{k=1}^n T^k_{\ ik}.
\end{equation}
\end{remark}

\subsection{The Ricci curvature tensors}

Let us turn now our attention to Ricci curvature tensors. Using \eqref{eq_Rel10} we have
$$
R^{\ k}_{i\ kj}=\frac{\partial\Gamma^k_{\ ij}}{\partial x^k}-\frac{\partial\Gamma^k_{\ ik}}{\partial x^j}+\Gamma^h_{\ ij}\Gamma^k_{\ hk}-\Gamma^h_{\ ik}\Gamma^k_{\ hj},
$$
hence
\begin{equation*}
\begin{split}
R_{ij}-R_{ji}&=R^{\ k}_{i\ kj}-R^{\ k}_{j\ ki}\\
&=\frac{\partial\Gamma^k_{\ ij}}{\partial x^k}-\frac{\partial\Gamma^k_{\ ik}}{\partial x^j}+\Gamma^h_{\ ij}\Gamma^k_{\ hk}-\Gamma^h_{\ ik}\Gamma^k_{\ hj}\\
&-\frac{\partial\Gamma^k_{\ ji}}{\partial x^k}+\frac{\partial\Gamma^k_{\ jk}}{\partial x^i}+\Gamma^h_{\ ji}\Gamma^k_{\ hk}-\Gamma^k_{\ jh}\Gamma^h_{\ ki}\\
&=\frac{\partial}{\partial x^k}(\Gamma^k_{\ ij}-\Gamma^k_{\ ji})+(\Gamma^h_{\ ij}-\Gamma^h_{\ ji})\Gamma^k_{\ hk}+(\Gamma^h_{ki}\Gamma^k_{\ jh}-\Gamma^h_{\ ik}\Gamma^k_{\ hj})
+\left(\frac{\partial\Gamma^k_{\ jk}}{\partial x^i}-\frac{\partial\Gamma^k_{\ ik}}{\partial x^j}\right).
\end{split}
\end{equation*}

Let us observe that
\begin{equation*}
\begin{split}
\Gamma^h_{\ ki}\Gamma^k_{\ jh}-\Gamma^h_{\ ik}\Gamma^k_{\ hj}&=(T^h_{\ ik}+\Gamma^h_{\ ik})(T^k_{\ jh}+\Gamma^k_{\ hj})-\Gamma^h_{\ ik}\Gamma^k_{\ hj}\\
&=T^h_{\ ik}T^k_{\ hj}+\Gamma^k_{\ hj}T^h_{\ ik}+\Gamma^h_{\ ik}T^k_{\ hj},
\end{split}
\end{equation*}
hence
\begin{equation*}
\begin{split}
R_{ij}-R_{ji}&=\underset{\text{\textcircled{\tiny 1}}}{
\frac{\partial}{\partial x^k}(T^k_{\ ji})}
+\underset{\text{\textcircled{\tiny 2}}}{\Gamma^k_{\ hk}T^h_{\ ji}}+\Gamma^k_{\ hj}T^h_{\ ik}+\underset{\text{\textcircled{\tiny 3}}}{\Gamma^h_{\ ik}T^k_{\ hj}}+T^h_{\ ik}T^k_{\ hj}
+\left(\frac{\partial\Gamma^k_{\ jk}}{\partial x^i}-\frac{\partial\Gamma^k_{\ ik}}{\partial x^j}\right).
\end{split}
\end{equation*}

We recall that
\begin{equation*}
\begin{split}
\nabla_kT^l_{\ ji}&=\frac{\partial T^l_{\ ji}}{\partial x^k}+\Gamma^l_{\ hk}T^h_{\ ji}-\Gamma^h_{\ jk}T^l_{\ ki}-\Gamma^h_{\ ik}T^l_{\ jh}\\
&=\underset{\text{\textcircled{\tiny 1}}}{\frac{\partial T^l_{\ ji}}{\partial x^k}}+\underset{\text{\textcircled{\tiny 2}}}{\Gamma^l_{\ hk}T^h_{\ ji}}+\Gamma^h_{\ jk}T^l_{\ ik}+\underset{\text{\textcircled{\tiny 3}}}{\Gamma^h_{\ ik}T^l_{\ hj}},
\end{split}
\end{equation*}
where we use that $T$ is skew symmetric in the lower indices. By taking the trace $l\to k$, we obtain
\begin{equation*}
\begin{split}
R_{ij}-R_{ji}&=\nabla_kT^k_{\ ji}-\Gamma^k_{\ jh}T^h_{\ ik}
+\Gamma^k_{\ hj}T^h_{\ ik}+T^h_{\ ik}T^k_{\ hj}
+\left(\frac{\partial\Gamma^k_{\ jk}}{\partial x^i}-\frac{\partial\Gamma^k_{\ ik}}{\partial x^j}\right)\\
&=\nabla_kT^k_{\ ji}+(\Gamma^k_{\ hj}-\Gamma^k_{\ jh})T^h_{\ ik}
+T^h_{\ ik}T^k_{\ hj}
+\left(\frac{\partial\Gamma^k_{\ jk}}{\partial x^i}-\frac{\partial\Gamma^k_{\ ik}}{\partial x^j}\right)\\
&=\nabla_kT^k_{\ ji}+T^k_{\ jh}T^h_{\ ik}
+T^h_{\ ik}T^k_{\ hj}
+\left(\frac{\partial\Gamma^k_{\ jk}}{\partial x^i}-\frac{\partial\Gamma^k_{\ ik}}{\partial x^j}\right)\\
&=\nabla_kT^k_{\ ji}
+\left(\frac{\partial\Gamma^k_{\ jk}}{\partial x^i}-\frac{\partial\Gamma^k_{\ ik}}{\partial x^j}\right).
\end{split}
\end{equation*}

We obtain
\begin{theorem}\label{thm_A13.19}
If $\nabla$ is an affine connection with torsion $T$, then
$$
R_{ij}-R_{ji}=\nabla_kT^k_{\ ji}
+\left(\frac{\partial\Gamma^k_{\ jk}}{\partial x^i}-\frac{\partial\Gamma^k_{\ ik}}{\partial x^j}\right).
$$
\end{theorem}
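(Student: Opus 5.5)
We prove the identity by direct computation in a canonical frame, starting from the coordinate expression \eqref{eq_Rel10} of the curvature tensor with the convention $R_{ij}=R^{\ k}_{i\ kj}$. Contracting \eqref{eq_Rel10} gives
\[
R^{\ k}_{i\ kj}=\frac{\partial\Gamma^k_{\ ij}}{\partial x^k}-\frac{\partial\Gamma^k_{\ ik}}{\partial x^j}+\Gamma^h_{\ ij}\Gamma^k_{\ hk}-\Gamma^h_{\ ik}\Gamma^k_{\ hj}.
\]
We then write $R_{ij}-R_{ji}$ and sort the terms into three groups:
\begin{enumerate}[(a)]
\item the derivative terms $\partial_k(\Gamma^k_{\ ij}-\Gamma^k_{\ ji})$;
\item the trace derivative terms $\partial_i\Gamma^k_{\ jk}-\partial_j\Gamma^k_{\ ik}$, which are kept untouched since they appear verbatim in the statement;
\item the quadratic terms in $\Gamma$.
\end{enumerate}

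Next we convert every antisymmetric combination of connection coefficients into torsion using \eqref{eq_T2}, namely $\Gamma^k_{\ ij}-\Gamma^k_{\ ji}=T^k_{\ ji}$. Group (a) becomes $\partial_kT^k_{\ ji}$, and the first quadratic pair becomes $\Gamma^k_{\ hk}T^h_{\ ji}$. For the remaining pair, the term $\Gamma^h_{\ ki}\Gamma^k_{\ jh}$ comes from $R_{ji}$ after renaming the dummy indices $h\leftrightarrow k$. In it we substitute $\Gamma^h_{\ ki}=\Gamma^h_{\ ik}+T^h_{\ ik}$ and $\Gamma^k_{\ jh}=\Gamma^k_{\ hj}+T^k_{\ jh}$. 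The pure $\Gamma\Gamma$ parts then cancel, leaving terms linear in $T$ plus one term quadratic in $T$.

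In parallel, we expand the contracted covariant derivative of the torsion,
\[
\nabla_kT^k_{\ ji}=\partial_kT^k_{\ ji}+\Gamma^k_{\ hk}T^h_{\ ji}-\Gamma^h_{\ jk}T^k_{\ hi}-\Gamma^h_{\ ik}T^k_{\ jh}.
\]
We match its terms against the previous expression: the derivative term, the $\Gamma^k_{\ hk}T^h_{\ ji}$ term, and the term with $\Gamma^h_{\ ik}$ (after using the skew symmetry of $T$) coincide. Subtracting leaves a residual linear piece of the form $(\Gamma^k_{\ hj}-\Gamma^k_{\ jh})T^h_{\ ik}=T^k_{\ jh}T^h_{\ ik}$. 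This combines with the quadratic term to give
\[
T^h_{\ ik}\left(T^k_{\ jh}+T^k_{\ hj}\right),
\]
which vanishes by the skew symmetry of $T$ in its lower indices. What remains is exactly $\nabla_kT^k_{\ ji}+\bigl(\partial_i\Gamma^k_{\ jk}-\partial_j\Gamma^k_{\ ik}\bigr)$.

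The main obstacle is the bookkeeping rather than any idea. Two points need care. First, the slot in which $\Gamma$ acts in $\nabla_kT^l_{\ ji}$: differentiation is in the last index of $\Gamma$ under the convention $\nabla_{\partial_j}\partial_i=\Gamma^k_{\ ij}\partial_k$. Second, the signs coming from $T^i_{\ kl}=\Gamma^i_{\ lk}-\Gamma^i_{\ kl}$. I would first verify the matching of the linear-in-$T$ terms carefully, since a misplaced index there would make the $TT$ terms fail to cancel. As a sanity check, in the torsion-free case the formula must reduce to $R_{ij}-R_{ji}=\partial_i\Gamma^k_{\ jk}-\partial_j\Gamma^k_{\ ik}$, which is the classical identity.
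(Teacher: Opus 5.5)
Your proposal is correct and follows essentially the same route as the paper: expand $R_{ij}-R_{ji}$ from the contracted form of \eqref{eq_Rel10}, convert the antisymmetric parts of $\Gamma$ into torsion, match the result against the contracted $\nabla_kT^k_{\ ji}$, and cancel the leftover $T^h_{\ ik}(T^k_{\ jh}+T^k_{\ hj})$ by skew symmetry. One slip to fix, which the paper's intermediate line also contains: the substitution must read $\Gamma^k_{\ jh}=\Gamma^k_{\ hj}+T^k_{\ hj}$, not $\Gamma^k_{\ hj}+T^k_{\ jh}$; your later quadratic term $T^h_{\ ik}T^k_{\ hj}$ and your matching of the $\Gamma^h_{\ ik}$ term already implicitly use the correct version.
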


\begin{corollary}\label{cor: Ricci diff by torsion}
If $\nabla$ is an equiaffine connection with volume form 
$\omega=\lambda dx^1\wedge \dots \wedge dx^n$, $\lambda>0$, then  
$$
R_{ij}-R_{ji}=\nabla_kT^k_{\ ji}-\left(\frac{\partial T^k_{\ jk}}{\partial x^i}-\frac{\partial T^k_{\ ik}}{\partial x^j}\right).
$$
\end{corollary}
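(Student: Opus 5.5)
The plan is to substitute the equiaffine condition into Theorem \ref{thm_A13.19}, whose right-hand side contains the curl-type term
$$
\frac{\partial\Gamma^k_{\ jk}}{\partial x^i}-\frac{\partial\Gamma^k_{\ ik}}{\partial x^j}
$$
built from the right traces $\Gamma^k_{\ ik}$. The equiaffine hypothesis controls the left trace $Tr_2\Gamma_i=\Gamma^k_{\ ki}$. Condition (iii) of the last corollary of the previous subsection gives $Tr_2\Gamma_i=\partial_i\log\lambda$ locally. In the form \eqref{equiaffine Gamma right trace cond}, it expresses the right trace through $\partial_i\log\lambda$ and the torsion trace.

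Concretely, \eqref{eq_T2} gives $T^k_{\ ik}=\Gamma^k_{\ ki}-\Gamma^k_{\ ik}$. Hence
$$
\Gamma^k_{\ ik}=\frac{\partial \log\lambda}{\partial x^i}-T^k_{\ ik}.
$$
I will double check this sign against \eqref{equiaffine Gamma right trace cond}, since the stated corollary carries a minus sign in front of the torsion derivatives, and adopt whichever sign is consistent with \eqref{eq_T2}. Substituting, the curl term becomes
$$
\left(\frac{\partial^2\log\lambda}{\partial x^i\partial x^j}-\frac{\partial^2\log\lambda}{\partial x^j\partial x^i}\right)-\left(\frac{\partial T^k_{\ jk}}{\partial x^i}-\frac{\partial T^k_{\ ik}}{\partial x^j}\right).
$$

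The second-derivative terms of $\log\lambda$ cancel by the symmetry of mixed partials. This uses that $\lambda>0$ is smooth, so $\log\lambda$ is well defined and $C^2$. What remains is exactly the expression in the statement, which completes the proof.

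The only delicate point is the index and sign bookkeeping: the conventions for left versus right traces, and the sign of $T^k_{\ ik}$ coming from \eqref{eq_T2}. I would carry this out carefully, since a sign slip would flip the torsion term. Everything else is immediate.
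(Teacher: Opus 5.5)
Your proposal is correct and follows the paper's own argument. The paper likewise writes the equiaffine condition as $\partial_i\log\lambda=\Gamma^k_{\ ik}+T^k_{\ ik}$ (equation \eqref{equiaffine Gamma right trace cond}), takes the curl so that the mixed second derivatives of $\log\lambda$ drop out, and substitutes into Theorem \ref{thm_A13.19}; the sign you derived from \eqref{eq_T2}, $\Gamma^k_{\ ik}=\partial_i\log\lambda-T^k_{\ ik}$, is exactly the one in \eqref{equiaffine Gamma right trace cond}, so no adjustment is needed.
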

Indeed, from \eqref{equiaffine Gamma right trace cond} we get 
\begin{equation*}
\left(\frac{\partial\Gamma^k_{\ jk}}{\partial x^i}-\frac{\partial\Gamma^k_{\ ik}}{\partial x^j}\right)+\left(\frac{\partial T^k_{\ jk}}{\partial x^i}-\frac{\partial T^k_{\ ik}}{\partial x^j}\right)=0,
\end{equation*}
hence the conclusion follows from Theorem \ref{thm_A13.19}.

\begin{corollary}
If $\nabla$ is torsion free and $Tr_1(\Gamma_i)=\Gamma^k_{\ ik}=0$, then the Ricci curvature tensors are symmetric, i.e.
$
R_{ij}=R_{ji}.
$
\end{corollary}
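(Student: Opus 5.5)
The plan is to specialize Theorem~\ref{thm_A13.19} directly. That theorem gives, for an arbitrary affine connection with torsion $T$,
$$
R_{ij}-R_{ji}=\nabla_kT^k_{\ ji}+\left(\frac{\partial\Gamma^k_{\ jk}}{\partial x^i}-\frac{\partial\Gamma^k_{\ ik}}{\partial x^j}\right).
$$
So it suffices to show that both terms on the right hand side vanish under the two hypotheses.

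\emph{First term.} Since $\nabla$ is torsion free, $T^k_{\ ji}\equiv 0$ in every coordinate chart. Its covariant derivative is therefore also zero, so $\nabla_kT^k_{\ ji}=0$. This is immediate from the local formula for $\nabla_kT^l_{\ ji}$ recalled just before Theorem~\ref{thm_A13.19}: every term there contains a factor $T$.

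\emph{Second term.} The hypothesis $Tr_1(\Gamma)_i=\Gamma^k_{\ ik}=0$ is assumed to hold identically on the coordinate neighbourhood, and for every index $i$. Hence its partial derivatives vanish as well, so $\partial_i\Gamma^k_{\ jk}=0$ and $\partial_j\Gamma^k_{\ ik}=0$, and the bracket is zero.

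Combining the two steps gives $R_{ij}=R_{ji}$. As a consistency check, one can instead use Corollary~\ref{cor: Ricci diff by torsion}. Under torsion-freeness, relation \eqref{equiaffine Gamma right trace cond} shows that $\Gamma^k_{\ ik}=0$ means exactly that $\nabla$ is equiaffine with constant density $\lambda$. The right hand side of that corollary then vanishes for the same reason.

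The only real subtlety, and the step to state carefully, is the hypothesis itself. The condition $\Gamma^k_{\ ik}=0$ is not tensorial, so it must be read as holding identically in the chosen local chart. The conclusion $R_{ij}=R_{ji}$ is tensorial, however, so once it is established in that chart it holds in all coordinates. No further computation is needed.
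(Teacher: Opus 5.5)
Your proposal is correct and follows the paper's own route: the paper states this corollary as an immediate consequence of Theorem~\ref{thm_A13.19}, where torsion-freeness kills $\nabla_kT^k_{\ ji}$ and the identical vanishing of $\Gamma^k_{\ ik}$ on the chart kills the derivative bracket. Your remark that the non-tensorial hypothesis must hold identically in a fixed chart, while the conclusion is tensorial, is a useful clarification that the paper leaves implicit.
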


\begin{corollary}\label{cor_A13.21}
Let $\nabla$ be a torsion free affine connection. If $\nabla$ is equiaffine with volume form 
$\omega=\lambda dx^1\wedge \dots \wedge dx^n$, $\lambda>0$, then the Ricci curvature tensor are symmetric.
\end{corollary}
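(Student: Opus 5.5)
The plan is to specialize Corollary \ref{cor: Ricci diff by torsion}, or equivalently Theorem \ref{thm_A13.19}, to the torsion-free case. Since $\nabla$ is torsion free, $T^k_{\ ij}\equiv 0$, so the term $\nabla_kT^k_{\ ji}$ vanishes identically. The correction terms $\partial_i T^k_{\ jk}-\partial_j T^k_{\ ik}$ appearing in Corollary \ref{cor: Ricci diff by torsion} vanish as well. This immediately gives $R_{ij}-R_{ji}=0$.

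For transparency, I would also present the argument directly from Theorem \ref{thm_A13.19}, so as not to depend on the form of condition \eqref{equiaffine Gamma right trace cond}. With $T=0$, Theorem \ref{thm_A13.19} reduces to
\begin{equation*}
R_{ij}-R_{ji}=\frac{\partial\Gamma^k_{\ jk}}{\partial x^i}-\frac{\partial\Gamma^k_{\ ik}}{\partial x^j}.
\end{equation*}
Equiaffinity with $\omega=\lambda\,dx^1\wedge\dots\wedge dx^n$ gives, by condition (iii) of the corollary characterizing equiaffine connections, $\Gamma^k_{\ ki}=\partial_i\log\lambda$. Torsion-freeness then gives $\Gamma^k_{\ ik}=\Gamma^k_{\ ki}=\partial_i\log\lambda$. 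Substituting, the right-hand side becomes $\partial_i\partial_j\log\lambda-\partial_j\partial_i\log\lambda$, which is zero by the symmetry of second partial derivatives, since $\lambda>0$ is smooth.

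The only point needing care is the index convention: the equiaffine condition is stated in terms of the left trace $Tr_2\Gamma$, whereas Theorem \ref{thm_A13.19} involves the right trace $\Gamma^k_{\ ik}$. Torsion-freeness, via \eqref{eq_T2}, is exactly what identifies the two traces, so this step is where the hypothesis on the torsion is used twice. Once that is checked, the argument is complete, and it holds locally, hence on all of $M$.
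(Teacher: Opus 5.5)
Your proposal is correct and follows the paper's proof, which simply sets $T=0$ in Corollary \ref{cor: Ricci diff by torsion}. Your supplementary direct check via Theorem \ref{thm_A13.19} is fine too: it uses $\Gamma^k_{\ ik}=\Gamma^k_{\ ki}=\partial_i\log\lambda$ and the symmetry of second derivatives, which is just that corollary's derivation unpacked in the torsion-free case, and your care in matching the left trace to the right trace via torsion-freeness is well placed.
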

It follows directly from Corollary \ref{cor: Ricci diff by torsion}.

\begin{lemma}
Let $\nabla $ and $\bar{\nabla}$ be two equiaffine connections with
local coefficients $\Gamma $ and $\overset{\ast}{\Gamma}$ and corresponding
parallel volumes
\begin{equation*}
\omega :=\lambda (x)\cdot dx^{1}\wedge dx^{2}\wedge ...\wedge dx^{n},
\end{equation*}
\begin{equation*}
\bar{\omega}:=\bar{\lambda}(x)\cdot dx^{1}\wedge dx^{2}\wedge ...\wedge
dx^{n}.
\end{equation*}

Then the connection $a\nabla +b$ $\bar{\nabla}$ is equiaffine with parallel
volume $\lambda ^{a}(x)\cdot \bar{\lambda}^{b}(x)dx^{1}\wedge dx^{2}\wedge
...\wedge dx^{n}$.
\end{lemma}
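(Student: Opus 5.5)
The plan is to reduce everything to the local trace criterion for equiaffinity proved just above: a volume form $\omega=\lambda(x)\,dx^1\wedge\ldots\wedge dx^n$ with $\lambda>0$ is $\nabla$-parallel if and only if
$$
\frac{\partial}{\partial x^i}\log\lambda=\sum_{k=1}^n\Gamma^k_{\ ki}=Tr_2\Gamma_i ,
$$
which is condition (iii) of the last Corollary. Since this condition is linear in the connection coefficients and logarithmic in the density, it should pass to linear combinations.

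First I will make precise what $a\nabla+b\bar{\nabla}$ means. The combination $D_XY:=a\nabla_XY+b\bar{\nabla}_XY$ satisfies the Leibniz rule $D_X(fY)=X(f)Y+fD_XY$ only when $a+b=1$. This is the situation of the $\alpha$-connections, where $\frac{1+\alpha}{2}+\frac{1-\alpha}{2}=1$. I will therefore state that $a,b$ are real constants with $a+b=1$, and note that for general $a,b$ the same computation still works formally at the level of coefficients. In local coordinates the coefficients of $D$ are $a\Gamma^k_{\ ij}+b\overset{\ast}{\Gamma}\,^k_{\ ij}$, so its left trace is
$$
Tr_2(a\Gamma+b\overset{\ast}{\Gamma})_i=a\,\Gamma^k_{\ ki}+b\,\overset{\ast}{\Gamma}\,^k_{\ ki}.
$$

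Next I will use the hypotheses. Because $\nabla\omega=0$ and $\bar{\nabla}\bar{\omega}=0$, the Corollary gives $\Gamma^k_{\ ki}=\partial_i\log\lambda$ and $\overset{\ast}{\Gamma}\,^k_{\ ki}=\partial_i\log\bar{\lambda}$. Hence
$$
Tr_2(a\Gamma+b\overset{\ast}{\Gamma})_i=a\,\partial_i\log\lambda+b\,\partial_i\log\bar{\lambda}=\partial_i\log\left(\lambda^a\bar{\lambda}^b\right).
$$
The function $\lambda^a\bar{\lambda}^b$ is smooth and positive, because $\lambda,\bar{\lambda}>0$. Applying the Corollary in the reverse direction then shows that $\lambda^a\bar{\lambda}^b\,dx^1\wedge\ldots\wedge dx^n$ is $D$-parallel. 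Equivalently, one can compute $D_X$ of this form directly by the formula preceding the Theorem, namely $X(\lambda)-\lambda\,Tr_2(\Gamma)(X)$, and check that it vanishes.

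The main obstacle is global well-definedness, not the computation. The expression $\lambda^a\bar{\lambda}^b\,dx^1\wedge\ldots\wedge dx^n$ must define a single volume form independent of the chart. Under a change of coordinates, $\lambda$ and $\bar{\lambda}$ each pick up the same Jacobian factor $J$, so the product picks up $J^{a+b}$. This equals $J$ exactly when $a+b=1$, which is the same condition that makes $D$ an affine connection. So I will invoke $a+b=1$ again here, with orientation or absolute Jacobian to keep the powers meaningful, to conclude that the local forms patch into a global parallel volume form. This gives that $a\nabla+b\bar{\nabla}$ is equiaffine and not merely locally equiaffine.
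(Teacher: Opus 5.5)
Your proposal is correct and follows essentially the paper's proof. Both reduce equiaffinity to the trace condition $\partial_i\log\lambda=\Gamma^k_{\ ki}$, form the combination with coefficients $a,b$, and recognize the right-hand side as $\partial_i\log(\lambda^a\bar{\lambda}^b)$. Your extra observation is a genuine improvement: $a+b=1$ is needed both for $a\nabla+b\bar{\nabla}$ to be an affine connection and for $\lambda^a\bar{\lambda}^b\,dx^1\wedge\ldots\wedge dx^n$ to be chart-independent. The paper leaves this hypothesis implicit; it holds in its $\alpha$-connection application.
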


\begin{proof}
If $\nabla $ and $\bar{\nabla}$ be two equiaffine connections this is
equivalent to

\begin{equation*}
\frac{\partial }{\partial x^{i}}\log \lambda (x)=\Gamma _{ki}^{k},
\end{equation*}

\begin{equation*}
\frac{\partial }{\partial x^{i}}\log \bar{\lambda}(x)=\bar{\Gamma}_{ki}^{k}.
\end{equation*}

Therefore by multiplying with $a$ and $b$ and adding we find

\begin{equation*}
a\frac{\partial }{\partial x^{i}}\log \lambda (x)+b\frac{\partial }{\partial
x^{i}}\log \bar{\lambda}(x)=a\Gamma _{ki}^{k}+b\bar{\Gamma}_{ki}^{k},
\end{equation*}%
i.e.,

\begin{equation*}
\frac{\partial }{\partial x^{i}}\log \lambda ^{a}(x)\bar{\lambda}%
^{b}(x)=a\Gamma _{ki}^{k}+b\bar{\Gamma}_{ki}^{k},
\end{equation*}

and the Lemma is proved.
\end{proof}

\begin{proposition}\label{prop: equiaffine dual conn}
If $\nabla $ and $\overset{\ast}{\nabla}$ are dual affine connections
on $\left( M,g\right) $ then $\nabla $ is equiaffine with parallel volume $%
\omega =\lambda (x)\cdot dx^{1}\wedge dx^{2}\wedge ...\wedge dx^{n}$ if and
only if $\overset{\ast}{\nabla}$ is equiaffine with parallel volume $\overset{\ast}{\omega}=\frac{|g|}{\lambda }dx^{1}\wedge dx^{2}\wedge ...\wedge dx^{n}$, where $%
g=\det \left( g_{ij}\right) $.
\end{proposition}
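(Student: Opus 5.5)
The plan is to reduce both equiaffinity statements to the local trace condition of the Corollary following Theorem~\ref{thm_E1} (the version with an arbitrary volume $\lambda\, dx^1\wedge\ldots\wedge dx^n$). That corollary says $\nabla$ is equiaffine with parallel volume $\omega=\lambda\, dx^1\wedge\ldots\wedge dx^n$ if and only if
$$
\frac{\partial}{\partial x^i}\log\lambda=\Gamma^k_{\ ki},\qquad i=1,\ldots,n.
$$
Likewise, $\overset{\ast}{\nabla}$ is equiaffine with parallel volume $\overset{\ast}{\lambda}\,dx^1\wedge\ldots\wedge dx^n$ if and only if $\frac{\partial}{\partial x^i}\log\overset{\ast}{\lambda}=\overset{\ast}{\Gamma}\,^k_{\ ki}$. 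The proof therefore amounts to computing the left trace of $\overset{\ast}{\Gamma}$ in terms of that of $\Gamma$.

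The key step is an identity for $\overset{\ast}{\Gamma}\,^k_{\ ki}$. Contracting \eqref{eq_Rel2} with $m=k$ gives
$$
\overset{\ast}{\Gamma}\,^k_{\ ki}=\Gamma^k_{\ ki}+g^{kj}C_{ijk}.
$$
Next I use the computation behind \eqref{eq_11.6}. Since $C_{ipq}=\partial_i g_{pq}-g_{mp}\Gamma^m_{\ qi}-g_{mq}\Gamma^m_{\ pi}$, contracting with $g^{pq}$ gives
$$
g^{pq}C_{ipq}=g^{pq}\partial_i g_{pq}-2\Gamma^k_{\ ki}.
$$
By \eqref{eq_11.3}, $g^{pq}\partial_i g_{pq}=\partial_i\log|g|$. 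Combining these, I obtain
$$
\overset{\ast}{\Gamma}\,^k_{\ ki}+\Gamma^k_{\ ki}=\frac{\partial}{\partial x^i}\log|g|.
$$
This is the main point of the argument. The only care needed is index placement: $C$ is symmetric in its last two slots by \eqref{C commut in last 2}, and the trace must be the left trace $Tr_2$ used in Lemma~\ref{lem: 2.4}.

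With this identity the equivalence is immediate. If $\partial_i\log\lambda=\Gamma^k_{\ ki}$, then
$$
\overset{\ast}{\Gamma}\,^k_{\ ki}=\partial_i\log|g|-\partial_i\log\lambda=\partial_i\log\frac{|g|}{\lambda},
$$
so $\overset{\ast}{\nabla}$ is equiaffine with $\overset{\ast}{\lambda}=|g|/\lambda$. The converse follows by reading the same identity backwards, or equally by noting that the duality relation is symmetric (Lemma~\ref{lem_D1}), so that $\nabla$ is the dual of $\overset{\ast}{\nabla}$ and $\lambda=|g|/\overset{\ast}{\lambda}$.

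Finally, I will remark that $\frac{|g|}{\lambda}dx^1\wedge\ldots\wedge dx^n$ is a well-defined volume form. Under a coordinate change, $|g|$ picks up the square of the Jacobian determinant and $\lambda$ picks up one factor of it, so their quotient transforms like the coefficient of an $n$-form, and it is positive. Hence the local conditions on charts glue, and the statement holds globally as well as locally.
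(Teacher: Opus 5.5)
Your proposal is correct and follows essentially the paper's argument. The paper obtains the same key identity $\overset{\ast}{\Gamma}\,^k_{\ ki}=\frac{\partial}{\partial x^i}\log|g|-\Gamma^k_{\ ki}$ by contracting the coordinate duality condition with $g^{il}$ and tracing, which is the same computation as your tracing of \eqref{eq_Rel2} together with $g^{pq}C_{ipq}$, and then concludes from the trace criterion for equiaffinity exactly as you do; your explicit treatment of the converse and your check that $|g|/\lambda$ transforms as the coefficient of an $n$-form are small additions the paper leaves implicit.
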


\begin{proof}
Locally, the dual connection coefficients are written as

\begin{equation*}
\frac{\partial g_{ij}}{\partial x^{k}}-g_{mj}\Gamma _{ik}^{m}-g_{mi}\overset{\ast}{\Gamma}\,
_{jk}^{m}=0.
\end{equation*}

By multiplication with $g^{il}$ we get

\begin{equation*}
g^{il}\frac{\partial g_{ij}}{\partial x^{k}}-g^{il}g_{mj}\Gamma
_{ik}^{m}-\overset{\ast}{\Gamma}\,_{jk}^{l}=0.
\end{equation*}

We take the trace of $j$ with $l$

\begin{equation*}
g^{ij}\frac{\partial g_{ij}}{\partial x^{k}}-g^{ij}g_{mj}\Gamma
_{ik}^{m}-\overset{\ast}{\Gamma}\,_{jk}^{j}=0.
\end{equation*}

Hence

\begin{equation*}
g^{ij}\frac{\partial g_{ij}}{\partial x^{k}}-\Gamma _{ik}^{i}-\overset{\ast}{\Gamma}
\,_{jk}^{j}=0,
\end{equation*}%
and therefore

\begin{equation}
\overset{\ast}{\Gamma}\,_{jk}^{j}=g^{ij}\frac{\partial g_{ij}}{\partial x^{k}}-\Gamma
_{ik}^{i}=\frac{\partial }{\partial x^{k}}\log |g|-\Gamma _{ik}^{i},
\label{23}
\end{equation}%
where we have used the well-known formula $g^{ij}\frac{\partial g_{ij}}{%
\partial x^{k}}=\frac{\partial }{\partial x^{k}}\log |g|$ from Riemannian
geometry.

Let us assume now that $\nabla $ is equiaffine with the corresponding
parallel volume $\omega =\lambda (x)\cdot dx^{1}\wedge dx^{2}\wedge
...\wedge dx^{n}$. Then we know that

\begin{equation*}
\frac{\partial }{\partial x^{k}}\log \lambda (x)=\Gamma _{jk}^{j}.
\end{equation*}

From (\ref{23}) it follows

\begin{equation*}
\overset{\ast}{\Gamma}\,_{jk}^{j}=\frac{\partial }{\partial x^{k}}\left\{ \log |g|-\log\lambda \right\} =\frac{\partial }{\partial x^{k}}\log \frac{|g|}{\lambda }.
\end{equation*}

Hence $\overset{\ast}{\Gamma}\,_{jk}^{j}$ is equivalent with volume $\overset{\ast}{\omega}$.
\end{proof}

\begin{remark}
Observe that in general, the left and right traces of $K$ and $\Gamma$, respectively, are different. 
\end{remark}

\section{Statistical manifolds}\label{sect3}

In the present Section we review in a systematic and comprehensive way the basic mathematical properties of the statistical manifolds. 

\subsection{Definitions and basic properties}
We will consider now some relation between the torsions $T$ and $\overset{\ast}{T}$ of two $g$-dual connections $\nabla$ and $\overset{\ast}{\nabla}$.

Firstly, we give the following result. 

\begin{theorem}\label{thm_2}
If $\nabla,\overset{\ast}{\nabla}$ are $g$-dual connections on a (pseudo-)Riemannian manifold $(M,g)$, then the following are equivalent
\begin{enumerate}[(i)]
\item $\nabla$ and $\overset{\ast}{\nabla}$ have the same torsion, i.e. $T(X,Y)=\overset{\ast}{T}(X,Y)$;
\item $C$ is totally symmetric, i.e. $C(X,Y,Z)=C(Y,X,Z)$;
\item the average connection $\overset{(0)}{\nabla}$ has the same torsion as $\nabla$, i.e. $$\overset{(0)}{T}(X,Y)=T(X,Y)(=\overset{\ast}{T}(X,Y));$$
\item the difference tensor is symmetric, i.e. $K(X,Y)=K(Y,X)$.
\end{enumerate}
\end{theorem}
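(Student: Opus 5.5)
The plan is to show that all four conditions are equivalent to (iv), the symmetry of the difference tensor $K$. The tools are Proposition \ref{prop:K diff vs T diff}, the identity \eqref{eq_Rel5}, and the expressions of $\nabla$ and $\overset{\ast}{\nabla}$ in terms of $\overset{(0)}{\nabla}$ derived just before Proposition \ref{prop: recover nablas}.

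\textbf{(i)$\Leftrightarrow$(iv).} Proposition \ref{prop:K diff vs T diff} gives
$$K(X,Y)-K(Y,X)=\overset{\ast}{T}(X,Y)-T(X,Y).$$
So $T=\overset{\ast}{T}$ holds exactly when $K(X,Y)=K(Y,X)$.

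\textbf{(ii)$\Leftrightarrow$(iv).} By \eqref{eq_Rel5} and the symmetry \eqref{C commut in last 2},
$$g(K(X,Y),Z)=C(X,Z,Y)=C(X,Y,Z).$$
Hence
$$g(K(X,Y)-K(Y,X),Z)=C(X,Y,Z)-C(Y,X,Z)$$
for all $Z$. Since $g$ is nondegenerate, $K$ is symmetric if and only if $C(X,Y,Z)=C(Y,X,Z)$. This symmetry in the first two slots, together with the symmetry in the last two slots, generates all permutations of $S_3$. Therefore it is equivalent to total symmetry of $C$.

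\textbf{(iii)$\Leftrightarrow$(iv).} From $\nabla_XY=\overset{(0)}{\nabla}_XY-\frac12K(X,Y)$, I subtract the same identity with $X$ and $Y$ swapped, and also subtract $[X,Y]$. This gives
$$T(X,Y)=\overset{(0)}{T}(X,Y)-\tfrac12\bigl(K(X,Y)-K(Y,X)\bigr).$$
This is essentially item 3 of Proposition \ref{prop: recover nablas}. So $\overset{(0)}{T}=T$ exactly when $K$ is symmetric. Likewise $\overset{\ast}{T}=\overset{(0)}{T}+\frac12\bigl(K(X,Y)-K(Y,X)\bigr)$, which explains the parenthetical equality with $\overset{\ast}{T}$ in (iii).

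There is no real obstacle here. The only care needed is in the step (ii)$\Leftrightarrow$(iv): one must check the convention in \eqref{eq_Rel5} (which slot of $C$ the vector $K(X,\cdot)$ is paired against) and justify that symmetry in the first two slots, combined with the built-in symmetry in the last two, gives total symmetry. Everything else is a direct rearrangement of earlier identities.
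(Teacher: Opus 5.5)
Your proposal is correct and is essentially the paper's own argument. The two identities you rely on, $\overset{\ast}{T}(X,Y)-T(X,Y)=K(X,Y)-K(Y,X)$ and $g(K(X,Y)-K(Y,X),Z)=C(X,Y,Z)-C(Y,X,Z)$, are exactly \eqref{eq_*1n} and \eqref{eq_*2n} in the paper's proof. The only differences are cosmetic: you use (iv) rather than (i) as the hub, and you obtain (iii) from $T=\overset{(0)}{T}-\frac{1}{2}\bigl(K(X,Y)-K(Y,X)\bigr)$ instead of from $\overset{(0)}{T}=\frac{1}{2}(T+\overset{\ast}{T})$. Your explicit remark that symmetry in the first two slots, together with the built-in symmetry \eqref{C commut in last 2}, generates all of $S_3$ fills in a step the paper leaves implicit.
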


\begin{proof}
(i) $\Leftrightarrow$ (ii) If we subtract the relations in the definitions of $T$ and $\overset{\ast}{T}$ we get
\begin{equation}\label{eq_*1n}
\begin{split}
\overset{\ast}{T}(X,Y)-T(X,Y)&=(\overset{\ast}{\nabla}_XY-\nabla_XY)-(\overset{\ast}{\nabla}_YX-\nabla_YX)\\
&=K(X,Y)-K(Y,X).
\end{split}
\end{equation}

Hence
\begin{equation}\label{eq_*2n}
\begin{split}
g(\overset{\ast}{T}(X,Y)-T(X,Y),Z)&=g(K(X,Y),Z)-g(K(Y,X),Z)\\
=C(X,Z,Y)-C(Y,Z,X)&=C(X,Y,Z)-C(Y,X,Z),
\end{split}
\end{equation}
where we have used the symmetry of $C$ in the last two components and \eqref{eq_Rel5}.

Taking into account that this formula holds good for any tangent vectors $X,Y,Z$, using that $g$ is (non-degenerate pseudo-)Riemannian metric, we obtain the desired equivalence.

(i) $\Leftrightarrow$ (iii) From the definition \eqref{eq_Av1} of $\overset{(0)}{\nabla}$ we obtain
$$
\overset{(0)}{T}(X,Y)=\frac{1}{2}(T(X,Y)+\overset{\ast}{T}(X,Y)).
$$

If $T=\overset{\ast}{T}$, then obviously $\overset{(0)}{T}=T$. Conversely, if $\overset{(0)}{T}(X,Y)=T(X,Y)$, then $\dfrac{1}{2}[T(X,Y)+\overset{\ast}{T}(X,Y)]=T(X,Y)$ implies $T=\overset{\ast}{T}$, hence the equivalence is shown.

(i) $\Leftrightarrow$ (iv) The proof is obviously if we take into account relation \eqref{eq_A1} in the proof of first equivalence.
\end{proof}

This allows us to give the following definition
\begin{definition}
Let $(M,g)$ be a (pseudo-)Riemannian manifold and let $\nabla$ and $\overset{\ast}{\nabla}$ be dual $g$-connections. If $\nabla$ and $\overset{\ast}{\nabla}$ have the same torsion tensor, then $(M,g,\nabla)$ will be called a {\it pre-statistical manifold}. 

In particular, if $\nabla$ is torsion-free, then the pre-statistical manifold $(M,g,\nabla)$ is called a {\it statistical manifold}. 
\end{definition}

\begin{remark}
\begin{enumerate}[(1)]
\item This definition implies that $(M,g,\nabla)$ is pre-statistical manifold if and only if $C$ is totally symmetric. This formulation corresponds to the definition given by Kurose in \cite{K}.
\item Observe that on pre-statistical manifolds $T(X,Y)=\overset{\ast}{T}(X,Y)\neq 0$ in general.
\item On a statistical manifold $T(X,Y)=\overset{\ast}{T}(X,Y)= 0$, i.e. both connections $\nabla$ and its dual $\overset{\ast}{\nabla}$ are torsion-free.
\end{enumerate}
\end{remark}

We observe that the torsion of the average connection $\overset{(0)}{\nabla}$ is the average of $T$ and $\overset{\ast}{T}$, i.e.
$$
\overset{(0)}{T}(X,Y)=\frac{1}{2}\left\{
T(X,Y)+\overset{\ast}{T}(X,Y)
\right\}.
$$

It follows that on a pre-statistical manifold, we have
$$
\overset{(0)}{T}(X,Y)=T(X,Y)=\overset{\ast}{T}(X,Y).
$$

We also have
\begin{proposition}\label{prop_S1}
If $\nabla$ and $\overset{\ast}{\nabla}$ are $g$-dual connections, then any two of the following conditions imply the rest of them
\begin{enumerate}[(i)]
\item $\nabla$ is torsion free;
\item $\overset{\ast}{\nabla}$ is torsion free;
\item $C$ is totally symmetric (or $\overset{\ast}{C}$ is totally symmetric);
\item the average connection $\overset{(0)}{\nabla}$ coincides with the Levi-Civita connection of $g$.
\end{enumerate}
\end{proposition}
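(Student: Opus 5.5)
Let me first record the relations that tie the four conditions together. From the definition \eqref{eq_Av1} of the average connection we have $\overset{(0)}{T}=\frac{1}{2}(T+\overset{\ast}{T})$. By Proposition~\ref{prop_A8}, $\overset{(0)}{\nabla}$ is always $g$-metrical. Hence, by uniqueness of the Levi-Civita connection, condition (iv) is equivalent to $\overset{(0)}{T}=0$, that is, to $T+\overset{\ast}{T}=0$. Theorem~\ref{thm_2} gives $T=\overset{\ast}{T}$ if and only if $C$ is totally symmetric, via the identity
\[
g(\overset{\ast}{T}(X,Y)-T(X,Y),Z)=C(X,Y,Z)-C(Y,X,Z).
\]
For the parenthetical in (iii), the identity $\overset{\ast}{C}=-C$ shows that $C$ is totally symmetric exactly when $\overset{\ast}{C}$ is. Thus everything reduces to two linear relations between $T$ and $\overset{\ast}{T}$:
\[
\text{(iii)}\iff T-\overset{\ast}{T}=0,\qquad \text{(iv)}\iff T+\overset{\ast}{T}=0 ,
\]
together with (i) $T=0$ and (ii) $\overset{\ast}{T}=0$.

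The proof is then a case check over the six pairs, each an elementary linear-algebra step on the pair $(T,\overset{\ast}{T})$.
\begin{itemize}
\item (i)+(ii): $T=\overset{\ast}{T}=0$, so $T=\overset{\ast}{T}$, giving (iii), and $\overset{(0)}{T}=0$, giving (iv).
\item (i)+(iii): $\overset{\ast}{T}=T=0$.
\item (i)+(iv): $\overset{\ast}{T}=-T=0$.
\item (ii)+(iii) and (ii)+(iv): the same arguments with the roles of $T$ and $\overset{\ast}{T}$ exchanged.
\item (iii)+(iv): adding and subtracting $T-\overset{\ast}{T}=0$ and $T+\overset{\ast}{T}=0$ gives $T=\overset{\ast}{T}=0$.
\end{itemize}

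The only step needing care is (iv) $\iff \overset{(0)}{T}=0$. Here we use that $\overset{(0)}{\nabla}$ is metrical by Proposition~\ref{prop_A8}, together with the uniqueness of the torsion-free metric connection recalled in \eqref{eq_L1}--\eqref{eq_L2}. No curvature computation is needed.
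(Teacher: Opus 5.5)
Your proof is correct and follows essentially the paper's argument: the same six-case check, driven by the identity $g(\overset{\ast}{T}(X,Y)-T(X,Y),Z)=C(X,Y,Z)-C(Y,X,Z)$, the relation $2\overset{(0)}{T}=T+\overset{\ast}{T}$, and the metricity of $\overset{(0)}{\nabla}$ from Proposition~\ref{prop_A8}. Your repackaging as the two linear conditions $T\mp\overset{\ast}{T}=0$ only makes explicit what the paper leaves implicit, as does your treatment of the $\overset{\ast}{C}$ parenthetical via $\overset{\ast}{C}=-C$.
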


\begin{proof}
\boxed{\textrm{Assume (i), (ii) hold good. }}

Then (i) in Theorem \ref{thm_2} holds good, hence $C$ is totally symmetric and $\overset{(0)}{T}(X,Y)=0$. Taking now into account Proposition \ref{prop_A8} it follows $\overset{(0)}{\nabla}$ is Levi-Civita connection of $g$.

\boxed{\textrm{Assume (i), (iii) hold good.}}

Then \eqref{eq_*2n} in the proof of Theorem \ref{thm_2} implies $\overset{\ast}{T}=0$. Then $T=\overset{\ast}{T}$ implies (iv) as above.

\boxed{\textrm{Assume (i),(iv) hold good.}} 

Then obviously $\overset{\ast}{T}=0$, hence $C$ is totally symmetric by \eqref{eq_*2n}.

\boxed{\textrm{Assume (ii), (iii) hold good.}}

 Then, from \eqref{eq_*2n} it follows $T=0$ and (iv) as above.

\boxed{\textrm{Assume (ii), (iv) hold good.}}

 The obviously $T=0$, hence $C$ totally symmetric by \eqref{eq_*2n}.

\boxed{\textrm{Assume (iii), (iv) hold good.}}

 Then \eqref{eq_*2n} implies $T-\overset{\ast}{T}=0$. On the other hand (iv) implies $2\overset{(0)}{T}=T+\overset{\ast}{T}=0$, hence $T=0$ and $\overset{\ast}{T}=0$.

\end{proof}

\begin{remark}
\begin{enumerate}[(1)]
\item The notion of statistical manifold can now be given in different equivalent forms. Every two of the conditions in Proposition \ref{prop_S1} is equivalent to the fact that $(M,g,\nabla)$ is statistical manifold.
\item Obviously, from Theorem \ref{thm_2} it is clear that on a statistical manifold the difference tensor $K$ is symmetric, i.e.
$$
K(X,Y)=K(Y,X).
$$
\end{enumerate}
\end{remark}

From Proposition \ref{prop: recover nablas} we obtain immediately the following result.
\begin{proposition}
Let $(M,g)$ be a (pseudo-) Riemannian manifold, let $\overset{(0)}{\nabla}$ be its Levi-Civita connection and let $C$ be a (0,3)-tensor field on $M$. Then the affine connections $\nabla$ and $\overset{\ast}{\nabla}$ on $M$ defined by the following relations
\begin{equation}\label{recover nabla-x}
g(\nabla_XY,Z)=g(\overset{(0)}{\nabla}_XY,Z)-\frac{1}{2}C(X,Y,Z)
\end{equation}
\begin{equation}\label{recover nabla*-x}
g(\overset{\ast}{\nabla}_XY,Z)
=g(\overset{(0)}{\nabla}_XY,Z)+\frac{1}{2}C(X,Y,Z)
\end{equation}
have the following properties.
\begin{enumerate}
\item The affine connections  
$\nabla$ and $\overset{\ast}{\nabla}$ are $g$-dual.
\item We have  $(\nabla_Xg)(Y,Z)=C(X,Y,Z)$ and $(\overset{\ast}{\nabla}_Xg)(Y,Z)=-C(X,Y,Z)$. 
\item The affine connections  
$\nabla$ and $\overset{\ast}{\nabla}$ are torsion free, hence $(M,g,\nabla)$ is a statistical manifold. 
 \end{enumerate}
\end{proposition}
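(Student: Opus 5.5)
The plan is to derive this as a direct specialization of Proposition \ref{prop: recover nablas}, with $\overset{(0)}{\nabla}$ now taken to be the Levi-Civita connection of $g$. The statement only says that $C$ is a $(0,3)$-tensor field. As in the Remark following Proposition \ref{prop: recover nablas}, I will read it with $C$ \emph{totally symmetric}, which is the situation the statement is meant to cover. Items 1 and 2 only need symmetry in the last two slots, $C(X,Y,Z)=C(X,Z,Y)$. Item 3 genuinely needs symmetry in the first two slots as well: otherwise $\nabla$ acquires torsion and $(M,g,\nabla)$ is only quasi-statistical.

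First I check that $\nabla$ and $\overset{\ast}{\nabla}$ are well-defined affine connections. The right-hand sides of \eqref{recover nabla-x} and \eqref{recover nabla*-x} differ from $g(\overset{(0)}{\nabla}_XY,Z)$ by a tensorial term. Since $g$ is nondegenerate, this defines a $(1,2)$-tensor $h$ with $g(h(X,Y),Z)=\mp\frac12 C(X,Y,Z)$. Proposition \ref{diff of 2 conn}, read in reverse, then shows that $\overset{(0)}{\nabla}+h$ is an affine connection.

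For items 1 and 2, the Levi-Civita connection is $g$-metrical by \eqref{eq_L1}. Hence items 1 and 2 of Proposition \ref{prop: recover nablas} apply verbatim, using $C(X,Y,Z)=C(X,Z,Y)$. This gives $g$-duality, $(\nabla_Xg)(Y,Z)=C(X,Y,Z)$ and $(\overset{\ast}{\nabla}_Xg)(Y,Z)=-C(X,Y,Z)$. I will repeat the one-line computation only to make explicit where the symmetry enters: the terms $\frac12C(X,Y,Z)$ and $-\frac12C(X,Z,Y)$ cancel.

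For item 3, I use item 3 of Proposition \ref{prop: recover nablas} with $\overset{(0)}{T}=0$, which holds by \eqref{eq_L2}. This gives
$$
g(T(X,Y),Z)=-\tfrac12\{C(X,Y,Z)-C(Y,X,Z)\},\qquad g(\overset{\ast}{T}(X,Y),Z)=\tfrac12\{C(X,Y,Z)-C(Y,X,Z)\}.
$$
Total symmetry of $C$ makes both brackets vanish. Nondegeneracy of $g$ then yields $T=\overset{\ast}{T}=0$. Since the two torsions coincide and $\nabla$ is torsion free, the Definition in this section says $(M,g,\nabla)$ is a statistical manifold. Alternatively, I can appeal to Proposition \ref{prop_S1} with conditions (iii) and (iv).

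The only real obstacle is the hypothesis issue above: I must state clearly that symmetry of $C$ in the first two arguments is what kills the torsion. Everything else is bookkeeping inherited from Proposition \ref{prop: recover nablas}.
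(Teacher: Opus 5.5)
Your proposal is correct and takes the paper's route: the paper derives this statement immediately from Proposition~\ref{prop: recover nablas} by taking $\overset{(0)}{\nabla}$ to be the Levi-Civita connection, so that $\overset{(0)}{T}=0$ removes the torsion term in item 3. You are also right to state the hypothesis the statement omits, namely that $C$ is totally symmetric: symmetry in the last two slots gives items 1--2, and symmetry in the first two slots gives item 3, exactly as assumed in the Remark following Proposition~\ref{prop: recover nablas}.
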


\subsection{Curvature related properties}

From Theorem \ref{thm_C1}, we get
\begin{corollary}\label{cor_S2}
Let $(M,g,\nabla)$ be a statistical structure on $M$, with dual connection $\overset{\ast}{\nabla}$. Then

\begin{enumerate}[(i)]
\item 
\begin{equation*}
\begin{split}
R(X,Y)Z&=\overset{(0)}{R}(X,Y)Z-\frac{1}{2}
\left\{
(\overset{(0)}{\nabla}_XK)(Y,Z)-(\overset{(0)}{\nabla}_YK)(X,Z)-\frac{1}{2}[K_X,K_Y]Z
\right\}\\
&=\overset{(0)}{R}(X,Y)Z-\frac{1}{2}\left\{
(\nabla_XK)(Y,Z)-(\nabla_YK)(X,Z)+\frac{1}{2}[K_X,K_Y]Z
\right\},
\end{split}
\end{equation*}
\item
\begin{equation*}
\begin{split}
\overset{\ast}{R}(X,Y)Z&=\overset{(0)}{R}(X,Y)Z
+\frac{1}{2}
\left\{
(\overset{(0)}{\nabla}_XK)(Y,Z)-(\overset{(0)}{\nabla}_YK)(X,Z)+\frac{1}{2}[K_X,K_Y]Z
\right\}\\
&=\overset{(0)}{R}(X,Y)Z
+\frac{1}{2}\left\{
(\nabla_XK)(Y,Z)-(\nabla_YK)(X,Z)\right\}+\frac{3}{4}[K_X,K_Y]Z,
\end{split}
\end{equation*}
\item 
\begin{equation*}
\begin{split}
R(X,Y)Z-\overset{\ast}{R}(X,Y)Z&=-
(\overset{(0)}{\nabla}_XK)(Y,Z)+(\overset{(0)}{\nabla}_YK)(X,Z)\\
&=-
(\nabla_XK)(Y,Z)+(\nabla_YK)(X,Z)-[K_X,K_Y]Z,
\end{split}
\end{equation*}
and
\item
$$
R(X,Y)Z+\overset{\ast}{R}(X,Y)Z=2\overset{(0)}{R}(X,Y)Z+\frac{1}{2}[K_X,K_Y]Z.
$$
\end{enumerate}
\end{corollary}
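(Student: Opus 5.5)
This is a direct specialization of Theorem \ref{thm_C1} to the statistical case, so I would not redo the curvature computation. The plan is to identify which terms of Theorem \ref{thm_C1} survive when $(M,g,\nabla)$ is statistical, and to use the symmetry of $K$ to keep the expressions consistent.

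First I would record the consequences of the statistical hypothesis. Since both $\nabla$ and $\overset{\ast}{\nabla}$ are torsion free, Proposition \ref{prop_S1} gives that $C$ is totally symmetric. It also gives that $\overset{(0)}{\nabla}$ is the Levi-Civita connection of $g$, so $\overset{(0)}{T}=0$. In addition $T=\overset{\ast}{T}=0$. By Theorem \ref{thm_2}(iv), $K(X,Y)=K(Y,X)$.

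Second, I would substitute $\overset{(0)}{T}=0$ and $T=0$ into items (i)--(iv) of Theorem \ref{thm_C1}. Every term of the form $K(\overset{(0)}{T}(X,Y),Z)$ or $K(T(X,Y),Z)$ then vanishes, and the remaining brackets are exactly the ones displayed in the corollary.
\begin{itemize}
\item Item (i) follows directly.
\item Item (ii) follows as well, after distributing the factor $\frac12$ over $\frac32[K_X,K_Y]Z$ to obtain $\frac34[K_X,K_Y]Z$.
\item Items (iii) and (iv) follow by the same deletion.
\end{itemize}
Equivalently, (iii) and (iv) can be recovered by subtracting and adding (i) and (ii), which serves as a consistency check.

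The only place needing care is matching the two forms in each item, the $\overset{(0)}{\nabla}$-form and the $\nabla$-form. For this I would check the identity
\[
(\nabla_XK)(Y,Z)-(\nabla_YK)(X,Z)=(\overset{(0)}{\nabla}_XK)(Y,Z)-(\overset{(0)}{\nabla}_YK)(X,Z)-[K_X,K_Y]Z .
\]
The check expands $\nabla=\overset{(0)}{\nabla}-\frac12K$ and uses the symmetry $K(Y,X)=K(X,Y)$ together with $\overset{(0)}{T}=0$. The extra terms $-\frac12K(K_XY,Z)+\frac12K(K_YX,Z)$ cancel, and the terms $-\frac12K_XK_YZ+\frac12K_YK_XZ$ combine to $-\frac12[K_X,K_Y]Z$ in each of the two cancellation patterns. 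I expect this bookkeeping of the factors $\frac12$ and the commutator coefficients to be the main obstacle. However, it is already built into Theorem \ref{thm_C1}, so invoking that theorem and deleting the torsion terms suffices.
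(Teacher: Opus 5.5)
Your proposal is correct and matches the paper's argument: the corollary comes from setting $T=\overset{\ast}{T}=\overset{(0)}{T}=0$ in Theorem \ref{thm_C1}, and item (ii) only needs $\frac12\cdot\frac32=\frac34$. Your side check relating the $\nabla$-form to the $\overset{(0)}{\nabla}$-form is also right, with two minor corrections. It needs only the symmetry of $K$, not $\overset{(0)}{T}=0$. The leftover terms are $+\frac12K(K_XY,Z)-\frac12K(K_YX,Z)$ rather than the opposite signs, but they cancel either way.
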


Locally by taking $X=\dfrac{\partial}{\partial x^j},Y=\dfrac{\partial}{\partial x^i},Z=\dfrac{\partial}{\partial x^m}$, we get

\begin{corollary}\label{cor_S3}
On a statistical manifold $(M,g,\nabla)$ with dual connection $\overset{\ast}{\nabla}$ we have

\begin{enumerate}[(i)]
\item
\begin{equation*}
\begin{split}
R^{\ k}_{m\ ji}&=\overset{(0)}{R}\,^{\ k}_{m\ ji}-\frac{1}{2}(K^{\ k}_{m\ i\overset{(0)}{|}j}-K^{\ k}_{m\ j\overset{(0)}{|}i})+\frac{1}{4}(K^l_{\ mi}K^k_{\ lj}-K^l_{\ mj}K^k_{\ li})\\
&=\overset{(0)}{R}\,^{\ k}_{m\ ji}-\frac{1}{2}(K^{\ k}_{m\ i|j}-K^{\ k}_{m\ j|i})-\frac{1}{4}(K^l_{\ mi}K^k_{\ lj}-K^l_{\ mj}K^k_{\ li}),
\end{split}
\end{equation*}
\item 
\begin{equation*}
\begin{split}
\overset{\ast}{R}\,^{\ k}_{m\ ji}&=\overset{(0)}{R}\,^{\ k}_{m\ ji}+\frac{1}{2}(K^{\ k}_{m\ i\overset{(0)}{|}j}-K^{\ k}_{m\ j\overset{(0)}{|}i})+\frac{1}{4}(K^l_{\ mi}K^k_{\ lj}-K^l_{\ mj}K^k_{\ li})\\
&=\overset{(0)}{R}\,^{\ k}_{m\ ji}+\frac{1}{2}(K^{\ k}_{m\ i|j}-K^{\ k}_{m\ j|i})+\frac{3}{4}(K^l_{\ mi}K^k_{\ lj}-K^l_{\ mj}K^k_{\ li}),
\end{split}
\end{equation*}
\item 
\begin{equation*}
\begin{split}
R^{\ k}_{m\ ji}-\overset{\ast}{R}\,^{\ k}_{m\ ji}&=-(K^{\ k}_{m\ i\overset{(0)}{|}j}-K^{\ k}_{m\ j\overset{(0)}{|}i})\\
&=-(K^{\ k}_{m\ i|j}-K^{\ k}_{m\ j|i})-(K^{l}_{\ mi}K^{k}_{\ lj}-K^{l}_{\ mj}K^k_{\ li}),
\end{split}
\end{equation*}
\item 
\begin{equation*}
\begin{split}
R^{\ k}_{m\ ji}+\overset{\ast}{R}\,^{\ k}_{m\ ji}=2\overset{(0)}{R}\,^{\ k}_{m\ ji}
+\frac{1}{2}(K^l_{\ mi}K^k_{\ lj}-K^l_{\ mj}K^k_{\ li}).
\end{split}
\end{equation*}
\end{enumerate}
\end{corollary}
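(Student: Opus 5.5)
The plan is to obtain Corollary \ref{cor_S3} directly from Corollary \ref{rem_C2}, which already holds for arbitrary $g$-dual connections. On a statistical manifold, Proposition \ref{prop_S1} and Theorem \ref{thm_2} give $T=\overset{\ast}{T}=0$, $K$ symmetric, and $\overset{(0)}{\nabla}$ equal to the Levi-Civita connection. In particular $\overset{(0)}{T}=\frac12(T+\overset{\ast}{T})=0$.

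The argument then proceeds in three steps.

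\begin{enumerate}
\item In each of the four identities (i)--(iv) of Corollary \ref{rem_C2}, delete the torsion terms $-\frac12\overset{(0)}{T}\,^l_{\ ji}K^k_{\ ml}$, $-\frac12 T^l_{\ ji}K^k_{\ ml}$ and their analogues. What remains is exactly the right-hand sides stated in (i)--(iv).
\item As a consistency check, redo the computation from the coordinate-free Corollary \ref{cor_S2}. Take $X=\partial_j$, $Y=\partial_i$, $Z=\partial_m$ and extract the $\partial_k$-component.
\begin{itemize}
\item The covariant derivative term $(\overset{(0)}{\nabla}_{\partial_j}K)(\partial_i,\partial_m)$ has components $K^k_{\ mi\overset{(0)}{|}j}$. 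This uses the convention $K(\partial_i,\partial_m)=K^k_{\ mi}\partial_k$ with the derivative index written last.
\item The commutator term gives $[K_{\partial_j},K_{\partial_i}]\partial_m=K_{\partial_j}(K^l_{\ mi}\partial_l)-K_{\partial_i}(K^l_{\ mj}\partial_l)=(K^l_{\ mi}K^k_{\ lj}-K^l_{\ mj}K^k_{\ li})\partial_k$.
\item The curvature components are read off via \eqref{R tensors def}: $R(\partial_j,\partial_i)\partial_m=R^{\ k}_{m\ ji}\partial_k$.
\end{itemize}
\item Substitute these components into Corollary \ref{cor_S2}; the coefficients $\pm\frac14$, $\frac34$, $1$ and $\frac12$ come out as stated.
\end{enumerate}

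The only real obstacle is index bookkeeping. The placement of the derivative index and the order of the lower indices of $K$ must match Corollary \ref{rem_C2}, or signs will flip. Symmetry of $K$ (Theorem \ref{thm_2}(iv)) removes any ambiguity in the ordering of $K$'s lower indices, so the identification with Corollary \ref{rem_C2} is unambiguous.
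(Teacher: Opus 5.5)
Your proposal is correct and is essentially the paper's own argument. The paper also obtains Corollary \ref{cor_S3} by setting $T=\overset{\ast}{T}=\overset{(0)}{T}=0$ in Corollary \ref{rem_C2}. Your extra check through Corollary \ref{cor_S2} is the same specialization done before passing to components. It is consistent, including the coefficients $\pm\frac14$, $\frac34$, $1$, $\frac12$, but not needed.
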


\begin{proof}
Indeed, from Corollary \ref{rem_C2}, if $(M,g,\nabla)$ is statistical manifold, the all torsions are zero, hence the result.

\end{proof}

Moreover, we get
\begin{corollary}\label{cor_S4}
On the statistical manifold $(M,g,\nabla)$ with dual connection $\overset{\ast}{\nabla}$, we have

\begin{enumerate}[(i)]
\item 
\begin{equation*}
\begin{split}
R_{mi}&=\overset{(0)}{R}_{mi}-\frac{1}{2}(K^{\ j}_{m\ i\overset{(0)}{|}j}-K^{\ j}_{m\ j\overset{(0)}{|}i})+\frac{1}{4}(K^l_{\ mi}K^j_{\ lj}-K^l_{\ mj}K^j_{\ li})
\\
&=\overset{(0)}{R}_{mi}-\frac{1}{2}(K^{\ j}_{m\ i|j}-K^{\ j}_{m\ j|i})-\frac{1}{4}(K^l_{\ mi}K^j_{\ lj}-K^l_{\ mj}K^j_{\ li}),
\end{split}
\end{equation*}
\item 
\begin{equation*}
\begin{split}
\overset{\ast}{R}_{mi}&=\overset{(0)}{R}_{mi}+\frac{1}{2}(K^{\ j}_{m\ i\overset{(0)}{|}j}-K^{\ j}_{m\ j\overset{(0)}{|}i})+\frac{1}{4}(K^l_{\ mi}K^j_{\ lj}-K^l_{\ mj}K^j_{\ li})\\
&=\overset{(0)}{R}_{mi}+\frac{1}{2}(K^{\ j}_{m\ i|j}-K^{\ j}_{m\ j|i})+\frac{3}{4}(K^l_{\ mi}K^j_{\ lj}-K^l_{\ mj}K^j_{\ li}),
\end{split}
\end{equation*}
\item 
\begin{equation*}
\begin{split}
R_{mi}-\overset{\ast}{R}_{mi}&=-(K^{\ j}_{m\ i\overset{(0)}{|}j}-K^{\ j}_{m\ j\overset{(0)}{|}i})\\
&=-(K^{\ j}_{m\ i|j}-K^{\ j}_{m\ j|i})-(K^{l}_{\ mi}K^{j}_{\ lj}-K^{l}_{\ mj}K^j_{\ li}),
\end{split}
\end{equation*}
\item 
\begin{equation*}
\begin{split}
R_{mi}+\overset{\ast}{R}_{mi}=2\overset{(0)}{R}_{mi}+\frac{1}{2}(K^l_{\ mi}K^j_{\ lj}-K^l_{\ mj}K^j_{\ li}).
\end{split}
\end{equation*}
\end{enumerate}
\end{corollary}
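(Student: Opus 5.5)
The plan is to obtain Corollary \ref{cor_S4} by contracting the local formulas of Corollary \ref{cor_S3} over the upper index $k$ and the first lower curvature index $j$. This is the same contraction that produced Proposition \ref{prop_C3} from Corollary \ref{rem_C2}, with the Ricci tensor defined by $R_{mi}:=R^{\ j}_{m\ ji}$. Alternatively, one could specialize Proposition \ref{prop_C3} directly to the statistical case.

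For the specialization route, I use that on a statistical manifold all three torsions vanish, $T=\overset{\ast}{T}=\overset{(0)}{T}=0$, by Proposition \ref{prop_S1}. Then every term $\overset{(0)}{T}\,^l_{\ ji}K^j_{\ ml}$ and $T^l_{\ ji}K^j_{\ ml}$ in the four items of Proposition \ref{prop_C3} drops out. What remains are exactly the expressions claimed in (i)--(iv).

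To make the argument self-contained, I would also verify the contraction directly in each item of Corollary \ref{cor_S3}. Setting $k=j$ and summing is legitimate because contraction commutes with covariant differentiation. Hence $K^{\ k}_{m\ i\overset{(0)}{|}j}$ with $k=j$ becomes $K^{\ j}_{m\ i\overset{(0)}{|}j}$, and the same holds for the $\nabla$-derivatives. The quadratic terms become $K^l_{\ mi}K^j_{\ lj}-K^l_{\ mj}K^j_{\ li}$, with coefficients $\pm\frac14$ and $\frac34$ carried over unchanged.

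The only point needing care is index bookkeeping. The order of the lower indices of $K$ matters in general, but in the statistical case $K$ is symmetric (remark after Proposition \ref{prop_S1}), so there is no ambiguity. I also need the covariant derivative index to contract with the correct slot so that the second term yields $K^{\ j}_{m\ j|i}$. I expect no genuine obstacle beyond this consistency check.
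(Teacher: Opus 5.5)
Your proposal is correct and is essentially the paper's proof: the paper obtains Corollary \ref{cor_S4} directly from Proposition \ref{prop_C3} by setting $T=\overset{\ast}{T}=\overset{(0)}{T}=0$, exactly as in your specialization route. The extra contraction of Corollary \ref{cor_S3} over $k=j$ is a consistent but redundant cross-check, since Proposition \ref{prop_C3} was itself produced by that same contraction.
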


\begin{proof}
Indeed, formulas above results from Proposition \ref{prop_C3} by taking torsions equal to zero.

\end{proof}

If we denote the Ricci scalar of $\nabla$ as $R:=g^{ij}R_{ij}$ , then

\begin{corollary}\label{cor_S5}
On the statistical manifold $(M,g,\nabla)$ with dual connection $\overset{\ast}{\nabla}$ we have

\begin{enumerate}[(i)]
\item 
\begin{equation*}
\begin{split}
R_{mi}-R_{im}&=\frac{1}{2}(K^{\ j}_{m\ j\overset{(0)}{|}i}-K^{\ j}_{i\ j\overset{(0)}{|}m}),
\end{split}
\end{equation*}
\item 
\begin{equation*}
\begin{split}
\overset{\ast}{R}_{mi}-\overset{\ast}{R}_{im}&=-\frac{1}{2}(K^{\ j}_{m\ j\overset{(0)}{|}i}-K^{\ j}_{i\ j\overset{(0)}{|}m}).
\end{split}
\end{equation*}

\end{enumerate}

\end{corollary}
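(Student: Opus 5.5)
The plan is to start from the first expression in Corollary \ref{cor_S4}(i), written with the Levi-Civita derivative $\overset{(0)}{|}$, and antisymmetrize it in $(m,i)$. On a statistical manifold, Proposition \ref{prop_S1} tells us that $\overset{(0)}{\nabla}$ is the Levi-Civita connection. Hence $\overset{(0)}{R}_{mi}=\overset{(0)}{R}_{im}$, by the classical symmetry of the Riemannian Ricci tensor, or equivalently by Corollary \ref{cor_A13.21}, since the Levi-Civita connection is torsion free and equiaffine with volume $\omega_g$. Moreover $K$ is symmetric, $K^j_{\ mi}=K^j_{\ im}$, by Theorem \ref{thm_2}(iv).

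The linear (divergence) terms are handled first. The term $K^{\ j}_{m\ i\overset{(0)}{|}j}-K^{\ j}_{i\ m\overset{(0)}{|}j}$ vanishes by the symmetry of $K$. The remaining linear terms come from $-\frac12(-K^{\ j}_{m\ j\overset{(0)}{|}i})$ and its swap, and they give exactly $\frac12(K^{\ j}_{m\ j\overset{(0)}{|}i}-K^{\ j}_{i\ j\overset{(0)}{|}m})$.

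The quadratic terms come next. The contribution $\frac14(K^l_{\ mi}-K^l_{\ im})K^j_{\ lj}$ vanishes by symmetry. The other contribution is $-\frac14(K^l_{\ mj}K^j_{\ li}-K^l_{\ ij}K^j_{\ lm})$. Using the symmetry of $K$ in its lower indices and relabelling $l\leftrightarrow j$, one has $K^l_{\ ij}K^j_{\ lm}=K^j_{\ il}K^l_{\ jm}=K^l_{\ mj}K^j_{\ li}$. So this difference also vanishes, which yields (i).

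For (ii) the same computation applies to Corollary \ref{cor_S4}(ii). The divergence term appears with the opposite sign, and the quadratic terms have the same structure, so they vanish identically. This gives the stated formula with a minus sign.

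Alternatively, (ii) follows from (i) together with Corollary \ref{cor_S4}(iv). There $R_{mi}+\overset{\ast}{R}_{mi}$ equals a symmetric tensor plus quadratic terms, which were just shown to be symmetric. The antisymmetric parts of $R$ and $\overset{\ast}{R}$ are therefore opposite.

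The main obstacle I expect is the index bookkeeping in the quadratic term. I need to confirm that the symmetry of $K$ in its lower indices, rather than a metric symmetry, suffices for $K^l_{\ mj}K^j_{\ li}$ to be symmetric in $(m,i)$. I also need to check the sign conventions of the paper's trace $R_{mi}=R^{\ j}_{m\ ji}$, so that the factor $\frac12$ and the sign come out as stated.
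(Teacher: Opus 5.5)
Your proposal is correct and is essentially the paper's own argument. The paper sets the torsions to zero in the already antisymmetrized Corollary \ref{cor_C4} and uses $K^j_{\ mi}=K^j_{\ im}$, while you antisymmetrize the torsion-free formulas of Corollary \ref{cor_S4}; this is the same computation done in the other order, and your explicit check of the quadratic term (it equals $\mathrm{tr}(P_mP_i)$ with $(P_m)^l_{\ j}:=K^l_{\ mj}$, so lower-index symmetry of $K$ alone suffices) fills in the step the paper leaves implicit.
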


\begin{proof}
Indeed, taking torsions equal to zero in (i), (ii) of Corollary \ref{cor_C4}, taking into account the symmetry $K^j_{\ mi}=K^j_{\ im}$, the formulas follow.

\end{proof}

\begin{remark}
Observe that the difference of Ricci scalars can be written in the index free form
\begin{equation*}
\begin{split}
R-\overset{\ast}{R}&=trace_g\left( Div_{\overset{(0)}{\nabla}}K-\overset{(0)}{\nabla}Tr_2(K)\right)\\
& =
trace_g\left( Div_{{\nabla}}K-{\nabla}Tr_2(K)\right)-
g(trace_gK,Tr_1(K))-Tr_3(K),
\end{split}
\end{equation*}
where $Tr_3(K):=g^{mi}K^{l}_{\ mj}K^{j}_{\ li}$.
Indeed,  just multiply (iii) of Corollary \ref{cor_S4} by $g^{mi}$ and use the notations defined already.
\end{remark}

From Corollary \ref{cor_S5} (i) it results 
\begin{corollary}\label{cor_S6}
On a statistical manifold, if trace $K^j_{\ mj}=0$ then the Ricci curvature tensors are symmetric. 
\end{corollary}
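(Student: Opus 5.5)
The claim is an immediate consequence of Corollary~\ref{cor_S5}~(i), and the plan is simply to check that the hypothesis kills its right-hand side. On a statistical manifold that formula gives
\begin{equation*}
R_{mi}-R_{im}=\frac{1}{2}\left(K^{\ j}_{m\ j\overset{(0)}{|}i}-K^{\ j}_{i\ j\overset{(0)}{|}m}\right).
\end{equation*}
The right-hand side contains only $\overset{(0)}{\nabla}$-covariant derivatives of the one-form with components $K^j_{\ mj}$, where the index $m$ is lowered with $g$ as in the paper's convention. This one-form is, up to index placement, the trace $Tr_1(K)$.

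\textbf{Step 1.} The hypothesis $K^j_{\ mj}=0$ is assumed to hold identically on $M$, or at least on an open set. Its covariant derivative is then zero as well. The point to verify is that $K^{\ j}_{m\ j\overset{(0)}{|}i}$ is the component of $\overset{(0)}{\nabla}\bigl(Tr_1(K)\bigr)$, that is, that taking the trace commutes with covariant differentiation. This holds because $\overset{(0)}{\nabla}$ is a linear connection and contraction commutes with it. It also holds because $\overset{(0)}{\nabla}$ is $g$-metrical by Proposition~\ref{prop_A8}, so raising and lowering indices commutes with it; here $\overset{(0)}{\nabla}$ is in fact the Levi-Civita connection by Proposition~\ref{prop_S1}. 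Hence $K^{\ j}_{m\ j\overset{(0)}{|}i}=0$ for all $m,i$.

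\textbf{Step 2.} Substituting into Corollary~\ref{cor_S5}~(i) gives $R_{mi}=R_{im}$. By (ii) of the same corollary the same argument also gives $\overset{\ast}{R}_{mi}=\overset{\ast}{R}_{im}$, so I will state that too.

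On a statistical manifold $K$ is symmetric, so the right and left traces agree. The hypothesis can therefore be read either way, which removes any ambiguity in the phrase ``trace $K^j_{\ mj}$.'' The only subtle point is Step~1's requirement that the trace vanish identically rather than at a single point; the corollary should be read in that sense. As a cross-check, one can note that vanishing trace means $C$ is $g$-trace-free. Then $\nabla$ is equiaffine with respect to $\omega_g$ by Theorem~\ref{thm_E1}, and Corollary~\ref{cor_A13.21} gives the same conclusion independently.
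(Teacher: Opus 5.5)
Your proposal is correct and is essentially the paper's own argument: the corollary is read off directly from Corollary~\ref{cor_S5}~(i), and from (ii) for $\overset{\ast}{R}$. The only point to check is that an identically vanishing trace $K^j_{\ mj}$ has vanishing $\overset{(0)}{\nabla}$-derivative, which holds because contraction commutes with covariant differentiation. Your appeal to metricity of $\overset{(0)}{\nabla}$ is unnecessary, since no index is raised or lowered in $K^{\ j}_{m\ j}$. Your equiaffine cross-check (Theorem~\ref{thm_E1} plus Corollary~\ref{cor_A13.21}) is a valid independent confirmation for $\nabla$.
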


Also, we have 
\begin{corollary}\label{cor_S7}
On a statistical manifold $(M,g,\nabla)$ which is Ricci curvature conjugate symmetric, the Ricci curvature tensors are symmetric.
\end{corollary}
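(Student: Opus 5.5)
The plan is to combine Corollary \ref{cor_S5} with the hypothesis. Here ``Ricci curvature conjugate symmetric'' means that the Ricci tensors of $\nabla$ and $\overset{\ast}{\nabla}$ coincide, i.e. $R_{mi}=\overset{\ast}{R}_{mi}$ for all $m,i$.

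The key step is to take the difference of items (i) and (ii) of Corollary \ref{cor_S5}. We have
\begin{equation*}
(R_{mi}-R_{im})-(\overset{\ast}{R}_{mi}-\overset{\ast}{R}_{im})=K^{\ j}_{m\ j\overset{(0)}{|}i}-K^{\ j}_{i\ j\overset{(0)}{|}m}.
\end{equation*}
Under the hypothesis the left-hand side vanishes, since $R_{mi}=\overset{\ast}{R}_{mi}$ and also $R_{im}=\overset{\ast}{R}_{im}$. Hence the antisymmetrized derivative of the trace, $K^{\ j}_{m\ j\overset{(0)}{|}i}-K^{\ j}_{i\ j\overset{(0)}{|}m}$, is zero. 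Substituting this back into Corollary \ref{cor_S5} (i) gives $R_{mi}-R_{im}=0$, and item (ii) gives $\overset{\ast}{R}_{mi}=\overset{\ast}{R}_{im}$ in the same way.

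An even quicker route is available. By Corollary \ref{cor_S5}, $R_{mi}-R_{im}=-(\overset{\ast}{R}_{mi}-\overset{\ast}{R}_{im})$. The hypothesis says the two brackets are equal, so each is equal to its own negative and therefore vanishes.

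The only delicate point is interpretive rather than computational. If the intended meaning of the hypothesis were instead the full curvature condition \eqref{eq_13.10.1}, the argument still goes through, since that condition implies equality of the Ricci tensors by contraction. No further calculation is required.
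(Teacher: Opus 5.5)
Your argument is correct and is essentially the paper's: the hypothesis forces $K^{\ j}_{m\ j\overset{(0)}{|}i}-K^{\ j}_{i\ j\overset{(0)}{|}m}=0$, and Corollary \ref{cor_S5}(i) then gives $R_{mi}=R_{im}$. The paper gets that vanishing by antisymmetrizing Corollary \ref{cor_S4}(iii) in $m,i$ and using $K^j_{\ mi}=K^j_{\ im}$; this is equivalent to your subtraction of items (i) and (ii) of Corollary \ref{cor_S5}, and your shortcut $R_{mi}-R_{im}=-(\overset{\ast}{R}_{mi}-\overset{\ast}{R}_{im})$ is a valid streamlining of the same idea.
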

Indeed, observe that if $\nabla$ is Ricci curvature conjugate symmetric, then Corollary \ref{cor_S4} (iii) implies 
\begin{equation*}
0=R_{mi}-\overset{\ast}{R}_{mi}=-(K^{\ j}_{m\ i\overset{(0)}{|}j}-K^{\ j}_{m\ j\overset{(0)}{|}i}),
\end{equation*}
and interchanging $m$ and $i$, we can also write 
\begin{equation*}
0=R_{im}-\overset{\ast}{R}_{im}=-(K^{\ j}_{i\ m\overset{(0)}{|}j}-K^{\ j}_{i\ j\overset{(0)}{|}m}).
\end{equation*}
Now, subtracting these two relations and taking into account that $K$ is symmetric in the lower indices, we get 
\begin{equation*}
K^{\ j}_{m\ j\overset{(0)}{|}i}-K^{\ j}_{i\ j\overset{(0)}{|}m}=0
\end{equation*}
hence the conclusion follows from Corollary \ref{cor_S5} (i).

\begin{corollary}\label{cor_S8}
Let $(M,\nabla,g)$ be a statistical manifold. If $\nabla$ is equiaffine with associated volume form $\omega_g$, then the Ricci curvature tensors are symmetric.
\end{corollary}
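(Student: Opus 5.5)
The plan is to reduce the statement to Corollary~\ref{cor_S6} (equivalently Corollary~\ref{cor_S5} (i)) by showing that equiaffinity with respect to $\omega_g$ forces the relevant trace of $K$ to vanish.

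By Corollary~\ref{cor_E2}, the condition $\nabla_{\frac{\partial}{\partial x^i}}\omega_g=0$ for all $i$ is equivalent to the vanishing of the left trace, $Tr_2(K)_i=K^k_{\ ki}=0$. Corollaries~\ref{cor_S5} and \ref{cor_S6}, however, are phrased with the right trace $K^j_{\ mj}$. On a statistical manifold both $\nabla$ and $\overset{\ast}{\nabla}$ are torsion free, so Theorem~\ref{thm_2} (iv) gives $K(X,Y)=K(Y,X)$, i.e. $K^k_{\ ij}=K^k_{\ ji}$. Hence $K^j_{\ mj}=K^j_{\ jm}=0$, and both traces vanish.

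With $K^{\ j}_{m\ j}=0$ identically, its $\overset{(0)}{\nabla}$-covariant derivative also vanishes. Here $K^{\ j}_{m\ j}$ means $K^j_{\ mj}$; one lowers and raises with $g$, and since $\overset{(0)}{\nabla}$ is the Levi-Civita connection, index gymnastics commute with covariant differentiation. Consequently the right-hand side of Corollary~\ref{cor_S5} (i) is zero, which gives $R_{mi}=R_{im}$. Part (ii) of the same corollary then gives $\overset{\ast}{R}_{mi}=\overset{\ast}{R}_{im}$, so the Ricci tensors of both connections are symmetric.

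As an independent check, I would also cite Corollary~\ref{cor_A13.21}: $\nabla$ is torsion free and equiaffine with $\lambda=\sqrt{|g|}$, so its Ricci tensor is symmetric directly. Proposition~\ref{prop: equiaffine dual conn} shows $\overset{\ast}{\nabla}$ is also equiaffine, so the same argument applies to $\overset{\ast}{R}$.

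The main obstacle is bookkeeping. The paper's conventions for left and right traces, and for the placement of $K^{\ j}_{m\ j}$, have to be matched correctly to the index in Corollary~\ref{cor_S5}. The symmetry of $K$ on a statistical manifold is what removes this ambiguity.
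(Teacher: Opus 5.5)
Your argument is correct and is essentially the paper's own proof. Corollary~\ref{cor_E2} gives $K^k_{\ ki}=0$, the symmetry of $K$ from Theorem~\ref{thm_2}~(iv) converts this into $K^j_{\ mj}=0$, and Corollary~\ref{cor_S6} (via Corollary~\ref{cor_S5}) then gives symmetric Ricci tensors for both $\nabla$ and $\overset{\ast}{\nabla}$; your extra check through Corollary~\ref{cor_A13.21}, with $\lambda=\sqrt{|g|}$ and $\overset{\ast}{\nabla}$ equiaffine for the same volume form $|g|/\sqrt{|g|}=\sqrt{|g|}$, is also valid but not needed.
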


\begin{proof}
We know from Corollary \ref{cor_S6} that if $K^j_{\ mj}=0$, then the Ricci tensors are symmetric, so we need to show that $\nabla_X\omega_g=0$ implies this condition.

From Corollary \ref{cor_E2} we know that $\nabla_X\omega_g=0$ is equivalent to $K^k_{\ ki}=0$. On the other hand, since $(M,g,\nabla)$ is statistical manifold, $K$ is symmetric (Theorem \ref{thm_2}, iv), hence the conclusion.

\end{proof}

\subsection{The Einstein tensor}

Let us start with a classical statistical manifold $(M,g,\nabla,\overset{\ast}{\nabla})$ and consider the Einstein tensor for $\nabla$ defined as
\begin{equation}\label{eq_app1}
G_{ij}:=R_{(ij)}-\frac{1}{2}g_{ij}R,
\end{equation}
where $R_{ij}:=R^{\ k}_{i\ kj}$ is the Ricci curvature of $\nabla$,  $R_{(ij)}:=\frac{1}{2}(R_{ij}+R_{ji})$, and $R:=g^{ij}R_{ij}$ is the Ricci scalar of $\nabla$, respectively.

We start with the second Bianchi identities for $\nabla$ and use $R^{\ j}_{l\ hk}=-R^{\ j}_{l\ kh}$.

We will study in the following the divergence $\nabla\cdot G$ of the Einstein tensor. Locally, this relation means $\nabla^iG_{ij}$.

{\underline {\it Step 1. Bianchi Identities}}

Taking into account the vanishing of the torsion $T$, we consider the First Bianchi Identities \eqref{1-st Bianchi}, that is 
$$
\nabla_iR^{\ h}_{l\ jk}+\nabla_jR^{\ h}_{l\ ki}+\nabla_kR^{\ h}_{l\ ij}=0.
$$

{\underline {\it Step 2. Covariant derivative $\nabla_kR$}}

Summing $i$ with $h$ in Bianchi Identities, we get
$$
\nabla_hR^{\ h}_{l\ jk}+\nabla_jR^{\ h}_{l\ kh}+\nabla_kR^{\ h}_{l\ hj}=0.
$$

We multiply by $g^{lj}$ and by summation it results
\begin{equation}\label{eq_app2}
g^{lj}\nabla_hR^{\ h}_{l\ jk}-g^{lj}\nabla_jR_{lk}+g^{lj}\nabla_kR_{lj}=0,
\end{equation}
where we use $R^{\ h}_{i\ hj}=R_{ij}$.

\begin{remark}\label{rem: some nabla terms}
\begin{enumerate}[(i)]
\item Firstly, observe that
$$
\nabla_kR=\nabla_k(g^{lj}R_{lj})=(\nabla_kg^{lj})R_{lj}+g^{lj}\nabla_kR_{lj},
$$
gives
\begin{equation}\label{eq_app2.1}
g^{lj}\nabla_kR_{lj}=\nabla_kR-(\nabla_kg^{lj})R_{lj}.
\end{equation}
\item Next, observe that
\begin{equation}\label{eq_app2.2}
\begin{split}
g^{lj}\nabla_h(R^{\ h}_{l\ jk})&=\nabla_h(g^{lj}R^{\ h}_{l\ jk})-(\nabla_hg^{lj})R^{\ h}_{l\ jk}\\
&=\nabla_h(g^{lj}g^{hs}R_{lsjk})-(\nabla_hg^{lj})R^{\ h}_{l\  jk}\\
&=-\nabla_h(g^{lj}g^{hs}\overset{\ast}{R}_{sljk})-(\nabla_hg^{lj})R^{\ h}_{l\  jk}\\
&=-\nabla_h(g^{hs}\overset{\ast}{R}\,^{j}_{s\ jk})-(\nabla_hg^{lj})R^{\ h}_{l\  jk}\\
&=-\nabla_h(g^{hs}\overset{\ast}{R}_{sk})-(\nabla_hg^{lj})R^{\ h}_{l\  jk}\\
&=-(\nabla_hg^{hs})\overset{\ast}{R}_{sk}-g^{hs}\nabla_h\overset{\ast}{R}_{sk}-(\nabla_hg^{lj})R^{\ h}_{l\  jk},
\end{split}
\end{equation}
where we use $R_{lsjk}=-\overset{\ast}{R}_{sljk}$ (see Theorem \ref{thm: R and R* 1-st 2 ind}).
\end{enumerate}
\end{remark}

By substituting \eqref{eq_app2.1} and \eqref{eq_app2.2} in \eqref{eq_app2} we obtain
$$
-(\nabla_hg^{hs})\overset{\ast}{R}_{sk}-\nabla^h\overset{\ast}{R}_{hk}-(\nabla_hg^{lj})R^{\ h}_{l\  jk}
-g^{lj}\nabla_jR_{lk}+\nabla_kR-(\nabla_kg^{lj})R_{lj}=0.
$$
and from here
\begin{equation}\label{eq_app3}
\nabla_kR=\nabla^hR_{hk}+\nabla^h\overset{\ast}{R}_{hk}+(\nabla_hg^{lj})R^{\ h}_{l\  jk}+(\nabla_kg^{lj})R_{lj}+(\nabla_hg^{hs})\overset{\ast}{R}_{sk},
\end{equation}
where we denote $\nabla^h:=g^{hs}\nabla_s$.

\begin{remark}
Observe that in the case of Levi-Civita connection, due to its metricity, this formula is equivalent to
$$
\nabla_kR=2\nabla^hR_{hk}.
$$
\end{remark}

{\underline {\it Step 3. Covariant derivative of Einstein tensor $\nabla_kG_{ij}$}}

Next, we take the convariant derivation of \eqref{eq_app1} with respect to $\nabla$
$$
\nabla_kG_{ij}=\nabla_kR_{(ij)}-\frac{1}{2}(\nabla_kg_{ij})R-\frac{1}{2}g_{ij}\nabla_kR
$$
and substitute $\nabla_kR$ from \eqref{eq_app3}. It follows
\begin{equation}\label{eq_app4}
\begin{split}
\nabla_kG_{ij}&=\nabla_kR_{(ij)}-\frac{1}{2}(\nabla_kg_{ij})R\\
&\quad-\frac{1}{2}g_{ij}\left\{
\nabla^hR_{hk}+\nabla^h\overset{\ast}{R}_{hk}+(\nabla_hg^{ls})R^{\ h}_{l\  sk}+(\nabla_kg^{ls})R_{ls}+(\nabla_hg^{hs})\overset{\ast}{R}_{sk}
\right\}.
\end{split}
\end{equation}

{\underline {\it Step 4. Divergence of Einstein tensor $\nabla^iG_{ij}$}}

Now we multiply by $g^{pk}$ and summation:
\begin{equation*}
\begin{split}
g^{pk}\nabla_kG_{ij}&=g^{pk}\nabla_kR_{(ij)}-\frac{1}{2}g^{pk}(\nabla_kg_{ij})R\\
&\quad-\frac{1}{2}g^{pk}g_{ij}\left\{
\nabla^hR_{hk}+\nabla^h\overset{\ast}{R}_{hk}+(\nabla_hg^{ls})R^{\ h}_{l\  sk}+(\nabla_kg^{ls})R_{ls}+(\nabla_hg^{hs})\overset{\ast}{R}_{sk}
\right\}.
\end{split}
\end{equation*} 
and take the trace $p=i$
\begin{equation*}
\begin{split}
\nabla^iG_{ij}&=\nabla^iR_{(ij)}-\frac{1}{2}(\nabla^ig_{ij})R\\
&\quad-\frac{1}{2}\left\{
\nabla^iR_{ij}+\nabla^i\overset{\ast}{R}_{ij}+(\nabla_hg^{ls})R^{\ h}_{l\ sj}+(\nabla_jg^{ls})R_{ls}
+(\nabla_hg^{hs})\overset{\ast}{R}_{sj}
\right\}\\
&=-\frac{1}{2}\left\{
\nabla^i(\overset{\ast}{R}_{ij}-R_{ji})+(\nabla^ig_{ij})R+(\nabla_hg^{ls})R^{\ h}_{l\ sj}
+(\nabla_jg^{ls})R_{ls}+(\nabla_hg^{hs})\overset{\ast}{R}_{sj}
\right\}.
\end{split}
\end{equation*}

\begin{remark}
In the case $\nabla$ is Levi-Civita connection, the Ricci tensor $R_{ij}$ is symmetric, $R_{ij}=\overset{\ast}{R}_{ij}$ and $\nabla$ is metrical, hence we get the usual formula
$$
\nabla^iG_{ij}=0.
$$
\end{remark}

Let us observe now that we actually can compute the $\overset{\ast}{\nabla}$-divergence of $\overset{\ast}{G}_{ij}$ in a similar manner,
where the Einstein tensor for $\overset{\ast}{\nabla}$ defined as
\begin{equation}\label{eq_app1_*}
\overset{\ast}{G}_{ij}:=\overset{\ast}{R}_{(ij)}-\frac{1}{2}g_{ij}\overset{\ast}{R}.
\end{equation}

Indeed, the $\overset{\ast}{\nabla}$-divergence of $\overset{\ast}{G}_{ij}$
is given as
\begin{equation}\label{nabla star G star}
\overset{\ast}{\nabla}\,^{i}\overset{\ast}{G}_{ij}=
-\frac{1}{2}\left\{
\overset{\ast}{\nabla}\,^{i}(R_{ij}-\overset{\ast}{R}_{ji})+(\overset{\ast}{\nabla}\,^{i}g_{ij})\overset{\ast}{R}+
(\overset{\ast}{\nabla}_hg^{ls})\overset{\ast}{R}\,^{h}_{l\  sj}+(\overset{\ast}{\nabla}_jg^{ls})\overset{\ast}{R}_{ls}+(\overset{\ast}{\nabla}_hg^{hs})R_{sj}
\right\}.
\end{equation}

Next, in order to compute the mixed divergences of types $\nabla^i \overset{\ast}{G}_{ij}$ and $\overset{\ast}{\nabla}\,^{i}G_{ij}$, let us observe that by {subtracting} formulas \eqref{eq_app1} and \eqref{eq_app1_*}, we get 
\begin{equation}
G_{ij}-\overset{\ast}{G}_{ij}:=R_{(ij)}-\overset{\ast}{R}_{(ij)}-\frac{1}{2}g_{ij}(R-\overset{\ast}{R}),
\end{equation}
hence
\begin{equation}\label{nabla star G}
\overset{\ast}{\nabla}\,^{i} G_{ij}=\overset{\ast}{\nabla}\,^{i}  \overset{\ast}{G}_{ij}+\overset{\ast}{\nabla}\,^{i} (R_{(ij)}-\overset{\ast}{R}_{(ij)})-\frac{1}{2}\overset{\ast}{\nabla}\,^{i} \left[g_{ij}(R-\overset{\ast}{R})\right]
\end{equation}
and 
\begin{equation}\label{nabla G star}
\nabla^i \overset{\ast}{G}_{ij}=\nabla^i  G_{ij}+\nabla^i (\overset{\ast}{R}_{(ij)}-R_{(ij)})-\frac{1}{2}\nabla^{i} \left[g_{ij}(\overset{\ast}{R}-R)\right],
\end{equation}
where $R_{ij}-\overset{\ast}{R}_{ij}$ and $R-\overset{\ast}{R}$ are given in Corollaries \ref{cor_S4} and \ref{cor_S5}.

By substituting now \eqref{eq_app1_*} in the first relation above, we get 
\begin{equation*}
\begin{split}
\overset{\ast}{\nabla}\,^{i} G_{ij}&=\overset{\ast}{\nabla}\,^{i} (\overset{\ast}{R}_{(ij)}-\frac{1}{2}g_{ij}\overset{\ast}{R})+\overset{\ast}{\nabla}\,^{i} (R_{(ij)}-\overset{\ast}{R}_{(ij)})-\frac{1}{2}(\overset{\ast}{\nabla}\,^{i} g_{ij})(R-\overset{\ast}{R})-\frac{1}{2}\overset{\ast}{\nabla}_j (R-\overset{\ast}{R})\\
&=\overset{\ast}{\nabla}\,^{i} \overset{\ast}{R}_{(ij)}-\frac{1}{2}(\overset{\ast}{\nabla}\,^{i} g_{ij})\overset{\ast}{R}
-\frac{1}{2}\overset{\ast}{\nabla}_j\overset{\ast}{R}\\
&+\overset{\ast}{\nabla}\,^{i} (R_{(ij)}-\overset{\ast}{R}_{(ij)})-\frac{1}{2}(\overset{\ast}{\nabla}\,^{i} g_{ij})(R-\overset{\ast}{R})-\frac{1}{2}\overset{\ast}{\nabla}_j (R-\overset{\ast}{R})\\
&=\overset{\ast}{\nabla}\,^{i} R_{(ij)}-\frac{1}{2}(\overset{\ast}{\nabla}\,^{i} g_{ij})R
-\frac{1}{2}\overset{\ast}{\nabla}_j R.
\end{split}
\end{equation*}

An alternative form is obtained by substituting \eqref{nabla star G star}.

Likewise, by substituting now \eqref{eq_app1} in \eqref{nabla G star}, we get 
\begin{equation*}
\begin{split}
\nabla^i \overset{\ast}{G}_{ij}&=\nabla^i  (R_{(ij)}-\frac{1}{2}g_{ij}R)+\nabla^i (\overset{\ast}{R}_{(ij)}-R_{(ij)})-\frac{1}{2}(\nabla^{i} g_{ij})(\overset{\ast}{R}-R)-\frac{1}{2}\nabla_j(\overset{\ast}{R}-R)\\
&=\nabla^i  R_{(ij)}-\frac{1}{2}\nabla^i  (g_{ij})R-\frac{1}{2}\nabla_jR\\
&+\nabla^i (\overset{\ast}{R}_{(ij)}-R_{(ij)})-\frac{1}{2}(\nabla^{i} g_{ij})(\overset{\ast}{R}-R)-\frac{1}{2}\nabla_j(\overset{\ast}{R}-R)\\
&=\nabla^i \overset{\ast}{R}_{(ij)}-\frac{1}{2}(\nabla^{i} g_{ij})\overset{\ast}{R}-\frac{1}{2}\nabla_j\overset{\ast}{R}.
\end{split}
\end{equation*}

Summarizing, we obtain 

\begin{theorem}\label{thm_app1}
Let $(M,g,\nabla,\overset{\ast}{\nabla})$ be a statistical manifold and $G_{ij}:=R_{ij}-\frac{1}{2}g_{ij}R$ the Einstein tensor of $\nabla$. Then the divergences of Einstein tensors are  given by
\begin{equation*}
\begin{split}
\nabla^iG_{ij}&=-\frac{1}{2}\left\{
\nabla^i(\overset{\ast}{R}_{ij}-R_{ji})+(\nabla^ig_{ij})R+(\nabla_hg^{ls})R^{\ h}_{l\ sj}
+(\nabla_jg^{ls})R_{ls}+(\nabla_hg^{hs})\overset{\ast}{R}_{sj}
\right\}\\
\overset{\ast}{\nabla}\,^{i}\overset{\ast}{G}_{ij}&=
-\frac{1}{2}\left\{
\overset{\ast}{\nabla}\,^{i}(R_{ij}-\overset{\ast}{R}_{ji})+(\overset{\ast}{\nabla}\,^{i}g_{ij})\overset{\ast}{R}+
(\overset{\ast}{\nabla}_hg^{ls})\overset{\ast}{R}\,^{\ h}_{l\  sj}+(\overset{\ast}{\nabla}_jg^{ls})\overset{\ast}{R}_{ls}+(\overset{\ast}{\nabla}_hg^{hs})R_{sj}
\right\}\\
\overset{\ast}{\nabla}\,^{i} G_{ij}&=\overset{\ast}{\nabla}\,^{i} R_{(ij)}-\frac{1}{2}(\overset{\ast}{\nabla}\,^{i} g_{ij})R
-\frac{1}{2}\overset{\ast}{\nabla}_j R\\
\nabla^i \overset{\ast}{G}_{ij}&=\nabla^i \overset{\ast}{R}_{(ij)}-\frac{1}{2}(\nabla^{i} g_{ij})\overset{\ast}{R}-\frac{1}{2}\nabla_j\overset{\ast}{R}.
\end{split}
\end{equation*}

\end{theorem}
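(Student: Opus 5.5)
The plan follows the four-step scheme just carried out and assembles its outputs. The first formula (for $\nabla^iG_{ij}$) rests on the contracted Bianchi identity. I start from the torsion-free Bianchi identity $\nabla_iR^{\ h}_{l\ jk}+\nabla_jR^{\ h}_{l\ ki}+\nabla_kR^{\ h}_{l\ ij}=0$, contract $i=h$, and then contract with $g^{lj}$. This gives \eqref{eq_app2}. Since $\nabla$ is not metrical, every time $g^{lj}$ is moved inside a covariant derivative I record the extra term $(\nabla g^{lj})\cdot(\cdot)$, as in Remark \ref{rem: some nabla terms}.

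The crucial step is the term $g^{lj}\nabla_hR^{\ h}_{l\ jk}$. Here I will use Theorem \ref{thm: R and R* 1-st 2 ind} in the form $R_{lsjk}=-\overset{\ast}{R}_{sljk}$. Contracting $g^{lj}$ against the first slot of $\overset{\ast}{R}$ then produces the Ricci tensor of the \emph{dual} connection, namely $-g^{hs}\overset{\ast}{R}_{sk}$. This is why $\overset{\ast}{R}$ appears in the $\nabla$-divergence, and it replaces the skew-symmetry used in the Levi-Civita case.

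Solving for $\nabla_kR$ gives \eqref{eq_app3}. I then differentiate $G_{ij}=R_{(ij)}-\frac12 g_{ij}R$, keeping the term $-\frac12(\nabla_kg_{ij})R$. Raising the index with $g^{pk}$ and tracing $p=i$ uses $g^{ik}g_{ij}=\delta^k_j$. The $\nabla^iR_{(ij)}$ term then combines with $-\frac12\nabla^iR_{ij}$ to leave $-\frac12\nabla^i(\overset{\ast}{R}_{ij}-R_{ji})$, which is exactly the first displayed formula.

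The second formula follows by the same argument with the roles of $\nabla$ and $\overset{\ast}{\nabla}$ exchanged. This is legitimate because duality is symmetric (Lemma \ref{lem_D1}(i)), $\overset{\ast}{\nabla}$ is also torsion free on a statistical manifold, and Theorem \ref{thm: R and R* 1-st 2 ind} can be read with $R$ and $\overset{\ast}{R}$ swapped.

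For the mixed divergences I avoid Bianchi altogether. I write $\overset{\ast}{\nabla}\,^iG_{ij}$ as $\overset{\ast}{\nabla}\,^i\overset{\ast}{G}_{ij}$ plus $\overset{\ast}{\nabla}\,^i$ of the difference $G-\overset{\ast}{G}=R_{(ij)}-\overset{\ast}{R}_{(ij)}-\frac12g_{ij}(R-\overset{\ast}{R})$. I expand $\overset{\ast}{G}$ by its definition, using the Leibniz rule $\overset{\ast}{\nabla}\,^i(g_{ij}f)=(\overset{\ast}{\nabla}\,^ig_{ij})f+\overset{\ast}{\nabla}_jf$. The $\overset{\ast}{R}$-terms then cancel telescopically, leaving only $R$-terms. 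The fourth formula is obtained symmetrically.

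The main obstacle is bookkeeping rather than any new idea. First, one must track which slot of $R_{lsjk}$ is contracted when applying Theorem \ref{thm: R and R* 1-st 2 ind}, so that the result really is $\overset{\ast}{R}_{sk}$ and not the transpose. Second, one must keep every non-metricity term $\nabla g^{-1}$ with the correct sign when passing $g^{lj}$ and $g^{hs}$ through $\nabla$. A final consistency check is the Levi-Civita specialization: there all these terms must collapse to $\nabla^iG_{ij}=0$.
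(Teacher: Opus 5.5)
Your proposal is correct and follows the paper's own derivation step for step. You use the contracted Bianchi identity for the torsion-free $\nabla$, then the duality relation $R_{lsjk}=-\overset{\ast}{R}_{sljk}$ of Theorem~\ref{thm: R and R* 1-st 2 ind} to turn $g^{lj}R^{\ h}_{l\ jk}$ into $-g^{hs}\overset{\ast}{R}_{sk}$, then solve for $\nabla_kR$ and trace. The second formula comes from the $\nabla\leftrightarrow\overset{\ast}{\nabla}$ symmetry, and the mixed divergences from the difference $G-\overset{\ast}{G}$ (these last two are in fact just the Leibniz rule applied to the definition). You are also right to work with $G_{ij}=R_{(ij)}-\frac12 g_{ij}R$ as in \eqref{eq_app1}: the $R_{ji}$ in the first formula depends on that symmetrization, whereas the unsymmetrized definition written in the theorem statement would give $\nabla^i(\overset{\ast}{R}_{ij}-R_{ij})$ instead.
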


\begin{remark}
We point out that if necessary, by using (iii) of Corollary \ref{cor_S4} and \eqref{eq_app3}, one can actually compute $\nabla_j\overset{\ast}{R}$ and substitute in the formulas above, but the relations get longer and complicated. same for ${\overset{\ast}\nabla}_j{R}$.
\end{remark}
\begin{corollary}
  Let $(M,g,\nabla,\overset{\ast}{\nabla})$ be a statistical manifold which we assume it is Ricci conjugate symmetric. Then the following formulas hold good
  \begin{equation}
    \begin{split}
\nabla^iG_{ij}&=-\frac{1}{2}\left\{
(\nabla^ig_{ij})R+(\nabla_hg^{ls})R^{\ h}_{l\ sj}
+(\nabla_jg^{ls})R_{ls}+(\nabla_hg^{hs})R_{sj}
                \right\}\\
      \overset{\ast}{\nabla}\,^{i}\overset{\ast}{G}_{ij}&=
-\frac{1}{2}\left\{(\overset{\ast}{\nabla}\,^{i}g_{ij})\overset{\ast}{R}+
(\overset{\ast}{\nabla}_hg^{ls})\overset{\ast}{R}\,^{\ h}_{l\  sj}+(\overset{\ast}{\nabla}_jg^{ls})\overset{\ast}{R}_{ls}+(\overset{\ast}{\nabla}_hg^{hs})\overset{\ast}{R}_{sj}
                                    \right\}\\
      \overset{\ast}{\nabla}\,^{i} G_{ij}&=\overset{\ast}{\nabla}\,^{i}  \overset{\ast}{G}_{ij},\quad 
      \nabla^i \overset{\ast}{G}_{ij}=\nabla^i  G_{ij}.
      \end{split}
\end{equation}
\end{corollary}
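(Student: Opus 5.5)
The corollary follows by specializing Theorem~\ref{thm_app1}. I read ``Ricci conjugate symmetric'' as $R_{ij}=\overset{\ast}{R}_{ij}$ for all $i,j$, which also gives $R=g^{ij}R_{ij}=g^{ij}\overset{\ast}{R}_{ij}=\overset{\ast}{R}$. First I will note that on a statistical manifold with this property the Ricci tensors are symmetric. This is Corollary~\ref{cor_S7}, so $R_{ij}=R_{ji}=\overset{\ast}{R}_{ji}=\overset{\ast}{R}_{ij}$ and $R_{(ij)}=R_{ij}=\overset{\ast}{R}_{(ij)}$.

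For the first formula, substitute into the expression for $\nabla^iG_{ij}$ in Theorem~\ref{thm_app1}. The divergence term $\nabla^i(\overset{\ast}{R}_{ij}-R_{ji})$ vanishes identically, since $\overset{\ast}{R}_{ij}-R_{ji}=R_{ij}-R_{ji}=0$ as a tensor field. The last term $(\nabla_hg^{hs})\overset{\ast}{R}_{sj}$ becomes $(\nabla_hg^{hs})R_{sj}$. The remaining terms are unchanged, which gives the stated formula. The second formula is obtained the same way from the expression for $\overset{\ast}{\nabla}{}^{i}\overset{\ast}{G}_{ij}$: drop $\overset{\ast}{\nabla}{}^{i}(R_{ij}-\overset{\ast}{R}_{ji})$ and replace $R_{sj}$ by $\overset{\ast}{R}_{sj}$.

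For the two mixed identities, I will not use the last two lines of Theorem~\ref{thm_app1}. Instead I go back to \eqref{nabla star G} and \eqref{nabla G star}. Under the hypothesis, $R_{(ij)}-\overset{\ast}{R}_{(ij)}=0$ and $R-\overset{\ast}{R}=0$ identically, so the extra tensor $R_{(ij)}-\overset{\ast}{R}_{(ij)}-\frac12 g_{ij}(R-\overset{\ast}{R})$ is zero. Its divergence with respect to either connection therefore vanishes, which yields $\overset{\ast}{\nabla}{}^{i}G_{ij}=\overset{\ast}{\nabla}{}^{i}\overset{\ast}{G}_{ij}$ and $\nabla^i\overset{\ast}{G}_{ij}=\nabla^iG_{ij}$. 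More directly, $G_{ij}=\overset{\ast}{G}_{ij}$ as tensors, and the same conclusion follows.

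The only point needing care is the one already flagged: the statement of Theorem~\ref{thm_app1} contains $R_{ji}$ with transposed indices. Cancelling it against $\overset{\ast}{R}_{ij}$ needs the symmetry of the Ricci tensor, so Corollary~\ref{cor_S7} must be invoked before the substitution. Beyond that the argument is a direct substitution, and I do not expect any real obstacle.
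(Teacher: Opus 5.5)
Your proposal is correct and follows the paper's own argument. The paper likewise observes that $R_{ij}=\overset{\ast}{R}_{ij}$ forces $R=\overset{\ast}{R}$ and $G_{ij}=\overset{\ast}{G}_{ij}$, and then reads all four formulas off Theorem~\ref{thm_app1}. Your explicit appeal to Corollary~\ref{cor_S7}, used to cancel the transposed terms $\nabla^i(\overset{\ast}{R}_{ij}-R_{ji})$ and $\overset{\ast}{\nabla}\,^{i}(R_{ij}-\overset{\ast}{R}_{ji})$, supplies a step that the paper's one-line justification leaves implicit.
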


Indeed, observe that in the case of Ricci conjugate symmetric statistical manifolds, the Ricci curvature tensors and the Ricci curvature tensors of $\nabla$ and $\overset{\ast}{\nabla}$ are equal. This implies that actually, the Einstein tensors are equal $G_{ij}=\overset{\ast}{G}_{ij}$, hence the formulas above follows immediately.


\section{Quasi-statistical manifolds}\label{sect_Quasi_stat}

\subsection{Definitions and basic properties}
We have seen that a special case of dual connections are those of equal torsion (we called them pre-statistical manifolds), and that in the case when these torsions vanish, we obtain the well-known notion of statistical manifolds.

Another type is the case when only one of the torsions $T$ or $\overset{\ast}{T}$ vanish.

\begin{definition}\label{def_Q1}
Let $(M,g)$ be a pseudo-Riemannian manifold and let $\nabla$ and $\overset{\ast}{\nabla}$ be dual $g$-connections. If $\nabla$ has vanishing torsion $T=0$, then $(M,g,\nabla)$ is called a {\it quasi-statistical manifold}.
\end{definition}

\begin{theorem}\label{thm_3}
If $\nabla$ and $\overset{\ast}{\nabla}$ are $g$-dual connections on a (pseudo-)Riemannian manifold $(M,g)$, then the following are equivalent
\begin{enumerate}[(i)]
\item $T=0$ ($\overset{\ast}{T}\neq 0$),
\item $C(X,Y,Z)-C(Y,X,Z)=g(\overset{\ast}{T}(X,Y),Z)$,
\item $\overset{\ast}{T}(X,Y)=2\overset{(0)}{T}(X,Y)$,
\item $K(X,Y)-K(Y,X)=\overset{\ast}{T}(X,Y).$
\end{enumerate}
\end{theorem}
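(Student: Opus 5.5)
The plan is to show that each of (ii), (iii), (iv) is equivalent to (i). In every case the relevant identity has already been established for arbitrary $g$-dual connections. Setting $T=0$ in it produces the corresponding condition, and the converse follows by subtraction.

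\textbf{(i) $\Leftrightarrow$ (iv).} Start from Proposition \ref{prop:K diff vs T diff}:
$$K(X,Y)-K(Y,X)=\overset{\ast}{T}(X,Y)-T(X,Y).$$
If $T=0$, this is exactly (iv). Conversely, if (iv) holds, subtracting it from the identity gives $T(X,Y)=0$ for all $X,Y$.

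\textbf{(i) $\Leftrightarrow$ (ii).} Use formula \eqref{eq_*2n} from the proof of Theorem \ref{thm_2}:
$$g(\overset{\ast}{T}(X,Y)-T(X,Y),Z)=C(X,Y,Z)-C(Y,X,Z).$$
It relies only on \eqref{eq_Rel5} and the symmetry $C(X,Y,Z)=C(X,Z,Y)$, so it holds for any dual pair. With $T=0$ it becomes (ii). Conversely, (ii) combined with this identity gives $g(T(X,Y),Z)=0$ for every $Z$, and the non-degeneracy of $g$ yields $T=0$.

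\textbf{(i) $\Leftrightarrow$ (iii).} Take the definition \eqref{eq_Av1} and subtract the Lie bracket. This gives
$$\overset{(0)}{T}=\tfrac12\bigl(T+\overset{\ast}{T}\bigr),$$
which is the formula already used in the proof of Theorem \ref{thm_2}. Thus $2\overset{(0)}{T}=\overset{\ast}{T}$ holds if and only if $T=0$.

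There is no real obstacle. The only points needing care are checking that \eqref{eq_*2n} was derived without assuming total symmetry of $C$, and using non-degeneracy of $g$ in the step (ii) $\Rightarrow$ (i). The parenthetical ``$\overset{\ast}{T}\neq 0$'' in (i) is only an informal remark: under (ii) it amounts to $C$ not being totally symmetric. I will treat it as a remark and not as part of the equivalence.
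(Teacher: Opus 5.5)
Your proposal is correct and follows essentially the same route as the paper, which also obtains (i)$\Leftrightarrow$(ii) from \eqref{eq_*2n} and (i)$\Leftrightarrow$(iv) from \eqref{eq_*1n}. The only difference is that the paper pairs (iii) with (ii) ``from the definitions'', whereas you link (iii) directly to (i) via $\overset{(0)}{T}=\frac{1}{2}(T+\overset{\ast}{T})$, which is cleaner. You are also right to treat the parenthetical $\overset{\ast}{T}\neq 0$ as a remark: read as a strict condition, the theorem would fail, since a statistical manifold satisfies (ii)--(iv) but not (i).
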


\begin{proof}
(i) $\Leftrightarrow$ (ii) Follows from \eqref{eq_*2n}.

(ii) $\Leftrightarrow$ (iii) Follows from the definitions.

(i) $\Leftrightarrow$ (iv) Follows from \eqref{eq_*1n}.

\end{proof}

\begin{remark}
\begin{enumerate}[(1)]
\item It is clear that having non-vanishing torsion the average connection $\overset{(0)}{\nabla}$ of a quasi-statistical manifold $(M,g,\nabla)$ is NOT the Levi-Civita connection of $g$.

Moreover, in this case the tensor $C$ is not totally symmetric anymore. 
\item In this case, the difference tensor $K$ is not symmetric anymore.
\item The quasi-statistical manifold can also be defined by asking $\overset{\ast}{T}=0$ and $T\neq 0$, but we prefer the Definition \ref{def_Q1} because it simplifies the curvature relations in the form we give in Theorem \ref{thm_C1}.
\end{enumerate}
\end{remark}

\subsection{Curvature related properties}

\begin{theorem}\label{thm_Q1}
Let $(M,g,\nabla)$ be a quasi-statistical with dual connection $\overset{\ast}{\nabla}$. Then

\begin{enumerate}[(i)]
\item 
\begin{equation*}
\begin{split}
R(X,Y)Z&=\overset{(0)}{R}(X,Y)Z-\frac{1}{2}
\left\{
(\overset{(0)}{\nabla}_XK)(Y,Z)-(\overset{(0)}{\nabla}_YK)(X,Z)-\frac{1}{2}[K_X,K_Y]Z
\right\}\\
&-\frac{1}{2}K(\overset{(0)}{T}(X,Y),Z),\\
&=\overset{(0)}{R}(X,Y)Z-\frac{1}{2}\left\{
(\nabla_XK)(Y,Z)-(\nabla_YK)(X,Z)+\frac{1}{2}[K_X,K_Y]Z
\right\}
\end{split}
\end{equation*}
\item
\begin{equation*}
\begin{split}
\overset{\ast}{R}(X,Y)Z&=\overset{(0)}{R}(X,Y)Z
+\frac{1}{2}
\left\{
(\overset{(0)}{\nabla}_XK)(Y,Z)-(\overset{(0)}{\nabla}_YK)(X,Z)+\frac{1}{2}[K_X,K_Y]Z
\right\}\\
&+\frac{1}{2}K(\overset{(0)}{T}(X,Y),Z)\\
&=\overset{(0)}{R}(X,Y)Z
+\frac{1}{2}\left\{
(\nabla_XK)(Y,Z)-(\nabla_YK)(X,Z)+\frac{3}{2}[K_X,K_Y]Z\right\}
\end{split}
\end{equation*}

\item 
\begin{equation*}
\begin{split}
R(X,Y)Z-\overset{\ast}{R}(X,Y)Z&=-\left\{
(\overset{(0)}{\nabla}_XK)(Y,Z)-(\overset{(0)}{\nabla}_YK)(X,Z)
\right\}-K(\overset{(0)}{T}(X,Y),Z)\\
&=-\left\{
(\nabla_XK)(Y,Z)-(\nabla_YK)(X,Z)+[K_X,K_Y]Z
\right\}
\end{split}
\end{equation*}
and
\item
$$
R(X,Y)Z+\overset{\ast}{R}(X,Y)Z=2\overset{(0)}{R}(X,Y)Z+\frac{1}{2}[K_X,K_Y]Z.
$$
\end{enumerate}
\end{theorem}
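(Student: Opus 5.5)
This is a direct specialization of Theorem~\ref{thm_C1}. Since the quasi-statistical condition is $T=0$ (Definition~\ref{def_Q1}), I plan to take each of the four identities of Theorem~\ref{thm_C1} and delete the terms involving the torsion $T$ of $\nabla$. The terms involving $\overset{(0)}{T}$ stay, because by Theorem~\ref{thm_3}(iii) we have $\overset{(0)}{T}=\frac{1}{2}\overset{\ast}{T}$, which is in general nonzero. The theorem makes no assumption on the symmetry of $K$, and none should be used.

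Concretely, I proceed in order.
\begin{enumerate}
\item In (i) of Theorem~\ref{thm_C1}, the first expression (with $\overset{(0)}{\nabla}$ and $\overset{(0)}{T}$) is copied verbatim. In the second expression, the term $-\frac{1}{2}K(T(X,Y),Z)$ vanishes.
\item In (ii), the same procedure removes $+\frac{1}{2}K(T(X,Y),Z)$ from the second form.
\item In (iii), the term $-K(T(X,Y),Z)$ drops. Alternatively, (iii) is obtained by subtracting (ii) from (i), which serves as a consistency check: the $[K_X,K_Y]$ coefficients give $-\frac14-\frac34=-1$ in the $\nabla$-form and $\frac14-\frac14=0$ in the $\overset{(0)}{\nabla}$-form.
\item Statement (iv) contains no torsion at all, so it is copied unchanged. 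It is also the sum of (i) and (ii).
\end{enumerate}

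To be self-contained, I would also indicate why the two forms in (i) agree when $T=0$; this is the only real computation. I write $\nabla=\overset{(0)}{\nabla}-\frac{1}{2}K$ and expand $(\nabla_XK)(Y,Z)$ in terms of $(\overset{(0)}{\nabla}_XK)(Y,Z)$:
\begin{equation*}
(\nabla_XK)(Y,Z)=(\overset{(0)}{\nabla}_XK)(Y,Z)-\tfrac12 K_XK_YZ+\tfrac12 K(K(X,Y),Z)+\tfrac12 K_YK_XZ .
\end{equation*}
Antisymmetrizing in $X,Y$ produces $-[K_X,K_Y]Z+\frac12 K(K(X,Y)-K(Y,X),Z)$. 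By Proposition~\ref{prop:K diff vs T diff} we have $K(X,Y)-K(Y,X)=\overset{\ast}{T}-T$. Together with $\overset{\ast}{T}=2\overset{(0)}{T}-T$, the torsion contribution becomes $K(\overset{(0)}{T}(X,Y)-T(X,Y),Z)$. For $T=0$ this yields exactly the $-\frac12K(\overset{(0)}{T}(X,Y),Z)$ term in the first form of (i).

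The main (and only) obstacle is bookkeeping: getting the sign and the coefficient of the $[K_X,K_Y]$ and $K(\overset{(0)}{T},\cdot)$ terms right when converting between $\overset{(0)}{\nabla}K$ and $\nabla K$. This is why I would verify the result through the relation above rather than rely solely on the appendix formulas.
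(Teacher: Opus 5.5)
Your proposal is correct and is exactly the paper's argument: the paper proves this theorem in one line by setting $T=0$ in Theorem~\ref{thm_C1} while keeping the $\overset{(0)}{T}$ terms, which are nonzero here since $\overset{(0)}{T}=\frac{1}{2}\overset{\ast}{T}$. Your optional check of the conversion between $\overset{(0)}{\nabla}K$ and $\nabla K$ is also correct. It uses Proposition~\ref{prop:K diff vs T diff} and $\overset{(0)}{T}=\frac{1}{2}(T+\overset{\ast}{T})$, and it reproduces the appendix computation behind Theorem~\ref{thm_C1}.
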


\begin{proof}
Follows directly from Theorem \ref{thm_C1} taking $T=0$.

\end{proof}

\begin{corollary}\label{cor_Q2}
In local coordinates, the formulas in Theorem \ref{thm_Q1} read
\begin{enumerate}[(i)]
\item
\begin{equation*}
\begin{split}
R^{\ k}_{m\ ji}&=\overset{(0)}{R}\,^{\ k}_{m\ ji}-\frac{1}{2}(K^{\ k}_{m\ i\overset{(0)}{|}j}-K^{\ k}_{m\ j\overset{(0)}{|}i})+\frac{1}{4}(K^l_{\ mi}K^k_{\ lj}-K^l_{\ mj}K^k_{\ li})
-\frac{1}{2}\overset{(0)}{T}\,^l_{\ ji}K^k_{\ ml}\\
&=\overset{(0)}{R}\,^{\ k}_{m\ ji}-\frac{1}{2}(K^{\ k}_{m\ i|j}-K^{\ k}_{m\ j|i})-\frac{1}{4}(K^l_{\ mi}K^k_{\ lj}-K^l_{\ mj}K^k_{\ li})
\end{split}
\end{equation*}
\item 
\begin{equation*}
\begin{split}
\overset{\ast}{R}\,^{\ k}_{m\ ji}&=\overset{(0)}{R}\,^{\ k}_{m\ ji}+\frac{1}{2}(K^{\ k}_{m\ i\overset{(0)}{|}j}-K^{\ k}_{m\ j\overset{(0)}{|}i})+\frac{1}{4}(K^l_{\ mi}K^k_{\ lj}-K^l_{\ mj}K^k_{\ li})
+\frac{1}{2}\overset{(0)}{T}\,^l_{\ ji}K^k_{\ ml}\\
&=\overset{(0)}{R}\,^{\ k}_{m\ ji}+\frac{1}{2}(K^{\ k}_{m\ i|j}-K^{\ k}_{m\ j|i})+\frac{3}{4}(K^l_{\ mi}K^k_{\ lj}-K^l_{\ mj}K^k_{\ li})
\end{split}
\end{equation*}
\item 
\begin{equation*}
\begin{split}
R^{\ k}_{m\ ji}-\overset{\ast}{R}\,^{\ k}_{m\ ji}&=-(K^{\ k}_{m\ i\overset{(0)}{|}j}-K^{\ k}_{m\ j\overset{(0)}{|}i})-\overset{(0)}{T}\,^l_{\ ji}K^k_{\ ml}\\
&=-(K^{\ k}_{m\ i|j}-K^{\ k}_{m\ j|i})-(K^{l}_{\ mi}K^{\ k}_{\ lj}-K^{l}_{\ mj}K^k_{\ li})
\end{split}
\end{equation*}
\item 
\begin{equation*}
\begin{split}
R^{\ k}_{m\ ji}+\overset{\ast}{R}\,^{\ k}_{m\ ji}=2\overset{(0)}{R}\,^{\ k}_{m\ ji}
+\frac{1}{2}(K^l_{\ mi}K^k_{\ lj}-K^l_{\ mj}K^k_{\ li}).
\end{split}
\end{equation*}
\end{enumerate}
\end{corollary}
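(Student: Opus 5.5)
Corollary \ref{cor_Q2} is the specialization of Corollary \ref{rem_C2} to the quasi-statistical case $T=0$. The plan is to take each of the four local formulas (i)--(iv) of Corollary \ref{rem_C2} and delete the terms containing $T^l_{\ ji}$, keeping every term containing the average torsion $\overset{(0)}{T}$. By Theorem \ref{thm_3}(iii), $\overset{(0)}{T}=\frac12\overset{\ast}{T}$, which in general does not vanish on a quasi-statistical manifold.

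Concretely, I would proceed as follows.

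\begin{enumerate}
\item Recall how Corollary \ref{rem_C2} comes out of Theorem \ref{thm_C1}. Put $X=\frac{\partial}{\partial x^j}$, $Y=\frac{\partial}{\partial x^i}$, $Z=\frac{\partial}{\partial x^m}$ and use the convention \eqref{R tensors def} with the ordering $R^{\ k}_{m\ ji}$, so that $R(\partial_j,\partial_i)\partial_m=R^{\ k}_{m\ ji}\partial_k$.
\item Check each term against the index conventions:
\begin{itemize}
\item $(\overset{(0)}{\nabla}_{\partial_j}K)(\partial_i,\partial_m)$ has components $K^{\ k}_{m\ i\overset{(0)}{|}j}$, since $K(\partial_i,\partial_m)=K^k_{\ mi}\partial_k$.
\item $[K_{\partial_j},K_{\partial_i}]\partial_m$ has components $K^l_{\ mi}K^k_{\ lj}-K^l_{\ mj}K^k_{\ li}$.
\item $K(\overset{(0)}{T}(\partial_j,\partial_i),\partial_m)$ has components $\overset{(0)}{T}\,^l_{\ ji}K^k_{\ ml}$.
\end{itemize}
\item Read off the $\overset{(0)}{\nabla}$-form and the $\nabla$-form of each identity in Theorem \ref{thm_Q1}. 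These are exactly the lines of Corollary \ref{rem_C2}, with the $T$-terms absent in the second lines.
\item For (iv), no torsion terms appear in either version, so it is copied verbatim.
\end{enumerate}

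The only point needing care, which I regard as the main (mild) obstacle, is sign and index bookkeeping. The quadratic term changes sign between the $\overset{(0)}{\nabla}$- and $\nabla$-forms, with coefficients $+\frac14$ versus $-\frac14$ in (i) and $\frac14$ versus $\frac34$ in (ii). I would verify this independently by expanding $\overset{(0)}{\nabla}K=\nabla K+\frac12(\ldots)$ in coordinates. Using $\overset{(0)}{\Gamma}=\Gamma+\frac12K$, one gets
\[
K^{\ k}_{m\ i\overset{(0)}{|}j}=K^{\ k}_{m\ i|j}+\tfrac12\bigl(K^l_{\ mi}K^k_{\ lj}-K^k_{\ li}K^l_{\ mj}-K^k_{\ ml}K^l_{\ ij}\bigr).
\]
Antisymmetrizing in $i,j$ produces the commutator term plus $-\frac12 K^k_{\ ml}(K^l_{\ ij}-K^l_{\ ji})$. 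By Theorem \ref{thm_3}(iv) and (iii), this last expression equals $-\frac12 K^k_{\ ml}\overset{\ast}{T}\,^l_{\ ji}=-K^k_{\ ml}\overset{(0)}{T}\,^l_{\ ji}$ (orientation of indices to be checked against \eqref{eq_T2}).

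This cancels the explicit $\overset{(0)}{T}K$ term, which confirms the torsion-free second lines. Finally, subtracting and adding (i) and (ii) gives (iii) and (iv) and serves as a consistency check.
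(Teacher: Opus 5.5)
Your proposal is correct and takes the same route as the paper, which simply obtains the corollary from Corollary \ref{rem_C2} (equivalently, from Theorem \ref{thm_Q1} written in coordinates) by setting $T=0$ while keeping the $\overset{(0)}{T}$ terms. Your additional coordinate check is also correct: expanding via $\overset{(0)}{\Gamma}=\Gamma+\frac12K$ and using $K^l_{\ ij}-K^l_{\ ji}=\overset{\ast}{T}\,^l_{\ ji}=2\overset{(0)}{T}\,^l_{\ ji}$ confirms that the two lines of (i)--(iii) agree, a consistency the paper does not spell out.
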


\begin{proof}
Follows from directly from Theorem \ref{thm_Q1}, or from Corollary \ref{rem_C2}.

\end{proof}

\begin{proposition}\label{prop_Q3}
On the quasi-statistical manifold $(M,g,\nabla)$ we have

\begin{enumerate}[(i)]
\item 
\begin{equation*}
\begin{split}
R_{mi}&=\overset{(0)}{R}_{mi}-\frac{1}{2}(K^{\ j}_{m\ i\overset{(0)}{|}j}-K^{\ j}_{m\ j\overset{(0)}{|}i})+\frac{1}{4}(K^l_{\ mi}K^j_{\ lj}-K^l_{\ mj}K^j_{\ li})
-\frac{1}{2}\overset{(0)}{T}\,^l_{\ ji}K^j_{\ ml}\\
&=\overset{(0)}{R}_{mi}-\frac{1}{2}(K^{\ j}_{m\ i|j}-K^{\ j}_{m\ j|i})-\frac{1}{4}(K^l_{\ mi}K^j_{\ lj}-K^l_{\ mj}K^j_{\ li})
\end{split}
\end{equation*}
\item 
\begin{equation*}
\begin{split}
\overset{\ast}{R}_{mi}&=\overset{(0)}{R}_{mi}+\frac{1}{2}(K^{\ j}_{m\ i\overset{(0)}{|}j}-K^{\ j}_{m\ j\overset{(0)}{|}i})+\frac{1}{4}(K^l_{\ mi}K^j_{\ lj}-K^l_{\ mj}K^j_{\ li})
+\frac{1}{2}\overset{(0)}{T}\,^l_{\ ji}K^j_{\ ml}\\
&=\overset{(0)}{R}_{mi}+\frac{1}{2}(K^{\ j}_{m\ i|j}-K^{\ j}_{m\ j|i})+\frac{3}{4}(K^l_{\ mi}K^j_{\ lj}-K^l_{\ mj}K^j_{\ li})
\end{split}
\end{equation*}
\item 
\begin{equation*}
\begin{split}
R_{mi}-\overset{\ast}{R}_{mi}&=-(K^{\ j}_{m\ i\overset{(0)}{|}j}-K^{\ j}_{m\ j\overset{(0)}{|}i})-\overset{(0)}{T}\,^l_{\ ji}K^j_{\ ml}\\
&=-(K^{\ j}_{m\ i|j}-K^{\ j}_{m\ j|i})-(K^{l}_{\ mi}K^{j}_{\ lj}-K^{l}_{\ mj}K^j_{\ li})
\end{split}
\end{equation*}
\item 
\begin{equation*}
\begin{split}
R_{mi}+\overset{\ast}{R}_{mi}=2\overset{(0)}{R}_{mi}+\frac{1}{2}(K^l_{\ mi}K^j_{\ lj}-K^l_{\ mj}K^j_{\ li}).
\end{split}
\end{equation*}
\end{enumerate}

\end{proposition}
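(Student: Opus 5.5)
The plan is to derive Proposition \ref{prop_Q3} by contracting the local formulas of Corollary \ref{cor_Q2}, which already encode the quasi-statistical hypothesis $T=0$. Following the convention $R_{mi}:=R^{\ j}_{m\ ji}$ used in Proposition \ref{prop_C3}, we set $k=j$ in each identity of Corollary \ref{cor_Q2} and sum over $j$. The key fact is that the contraction $k=j$ commutes both with the covariant derivatives $\overset{(0)}{|}$ and $|$ and with the algebraic products of $K$ and $\overset{(0)}{T}$.

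Consider item (i). Contracting its first line turns $K^{\ k}_{m\ i\overset{(0)}{|}j}$ into the divergence $K^{\ j}_{m\ i\overset{(0)}{|}j}$, and $K^{\ k}_{m\ j\overset{(0)}{|}i}$ into $K^{\ j}_{m\ j\overset{(0)}{|}i}$. The quadratic terms become $K^l_{\ mi}K^j_{\ lj}-K^l_{\ mj}K^j_{\ li}$, and the torsion term becomes $\overset{(0)}{T}\,^l_{\ ji}K^j_{\ ml}$. These are exactly the terms in the first line of (i). The second line contracts in the same way, with no torsion term, because $T=0$ has already removed $-\frac12 T^l_{\ ji}K^k_{\ ml}$ from Corollary \ref{cor_Q2}. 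Items (ii), (iii) and (iv) follow identically from the corresponding items of Corollary \ref{cor_Q2}.

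As a cross-check, one can instead specialize Proposition \ref{prop_C3} to $T=0$. Its $\nabla$-forms then lose the $T^l_{\ ji}K^j_{\ ml}$ terms, while the $\overset{(0)}{\nabla}$-forms keep $\overset{(0)}{T}$, which in general is $\frac12\overset{\ast}{T}\neq0$ by Theorem \ref{thm_3}(iii). This reproduces exactly the stated formulas.

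The only delicate point is bookkeeping of indices: one must confirm that the contraction is taken on the upper index against the first of the two last lower indices, consistent with $R_{mi}=R^{\ j}_{m\ ji}$. One must also check that the covariant derivative of $K$ is taken before contracting, which is legitimate since covariant differentiation commutes with contraction. No other obstacle is expected.
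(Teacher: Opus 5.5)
Your proposal is correct and is essentially the paper's argument. The paper only says the result follows from Proposition~\ref{prop_C3} by setting $T=0$, which is your cross-check. Your main route contracts Corollary~\ref{cor_Q2}, the $T=0$ version of Corollary~\ref{rem_C2}, and since contracting $k=j$ and setting $T=0$ commute, the two are the same computation; your observation that the $\overset{(0)}{\nabla}$-forms keep $\overset{(0)}{T}=\frac12\overset{\ast}{T}$ is exactly the relevant point.
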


\begin{proof}
Follows directly from Proposition \ref{prop_C3}.

\end{proof}

\begin{corollary}\label{cor_Q4}
With the notations above, we have

\begin{enumerate}[(i)]
\item 
\begin{equation*}
\begin{split}
R_{mi}-R_{im}&=(\overset{(0)}{R}_{mi}-\overset{(0)}{R}_{im})\\
&-\frac{1}{2}(K^j_{\ mi}-K^j_{\ im})_{|j}+\frac{1}{2}(K^{\ j}_{m\ j|i}-K^{\ j}_{i\ j|m})\\
&-\frac{1}{4}(K^l_{\ mi}-K^k_{\ im})K^j_{\ lj}+\frac{1}{4}(K^l_{\ mj}K^j_{\ li}-K^l_{\ {jm}}K^j_{\ {il}})\\
\end{split}
\end{equation*}
\item 
\begin{equation*}
\begin{split}
\overset{\ast}{R}_{mi}-\overset{\ast}{R}_{im}&=(\overset{(0)}{R}_{mi}-\overset{(0)}{R}_{im})\\
&+\frac{1}{2}(K^j_{\ mi}-K^j_{\ im})_{|j}-\frac{1}{2}(K^{\ j}_{m\ j|i}-K^{\ j}_{i\ j|m})\\
&+\frac{3}{4}(K^l_{\ mi}-K^l_{\ im})K^j_{\ lj}-\frac{3}{4}(K^l_{\ mj}K^j_{\ li}-K^l_{\ {jm}}K^j_{\ {il}} )\\
\end{split}
\end{equation*}
\item 
\begin{equation*}
\begin{split}
R_{mi}+R_{im}&=(\overset{(0)}{R}_{mi}+\overset{(0)}{R}_{im})-\frac{1}{2}(K^j_{\ mi}+K^j_{\ im})_{|j}\\
&+\frac{1}{2}(K^{\ j}_{m\ j|i}+K^{\ j}_{i\ j|m})-\frac{1}{4}(K^l_{\ mi}+K^k_{\ im})K^j_{\ lj}\\
&+\frac{1}{4}(K^l_{\ mj}K^j_{\ li}+K^l_{\ {jm}}K^j_{\ {il}})\\
\end{split}
\end{equation*}
\item 
\begin{equation*}
\begin{split}
\overset{\ast}{R}_{mi}+\overset{\ast}{R}_{im}&=(\overset{(0)}{R}_{mi}+\overset{(0)}{R}_{im})+\frac{1}{2}(K^j_{\ mi}+K^j_{\ im})_{|j}\\
&-\frac{1}{2}(K^{\ j}_{m\ j|i}+K^{\ j}_{i\ j|m})+\frac{3}{4}(K^l_{\ mi}+K^k_{\ im})K^j_{\ lj}\\
&-\frac{3}{4}(K^l_{\ mj}K^j_{\ li}-K^l_{\ {jm}}K^j_{\ {il}}).
\end{split}
\end{equation*}
\end{enumerate}

\end{corollary}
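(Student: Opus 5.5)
The plan is to take the second (torsion-free, $\nabla$-covariant) expressions for $R_{mi}$ and $\overset{\ast}{R}_{mi}$ in Proposition \ref{prop_Q3} (i), (ii) and form the differences and sums with the roles of $m$ and $i$ interchanged. Those are the forms in which only the $\nabla$-derivative $|$ appears and no torsion terms remain, because $T=0$ on a quasi-statistical manifold. Concretely, for (i) I will write
$$
R_{mi}=\overset{(0)}{R}_{mi}-\frac{1}{2}(K^{\ j}_{m\ i|j}-K^{\ j}_{m\ j|i})-\frac{1}{4}(K^l_{\ mi}K^j_{\ lj}-K^l_{\ mj}K^j_{\ li}),
$$
write the same identity with $m\leftrightarrow i$, and subtract. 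For (iii) I will add the two. Items (ii) and (iv) follow in exactly the same way from Proposition \ref{prop_Q3} (ii), with the coefficients $+\frac{1}{2}$ and $+\frac{3}{4}$ replacing $-\frac{1}{2}$ and $-\frac{1}{4}$.

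Then I will regroup the terms. The divergence terms give $-\frac{1}{2}(K^{\ j}_{m\ i|j}\mp K^{\ j}_{i\ m|j})+\frac{1}{2}(K^{\ j}_{m\ j|i}\mp K^{\ j}_{i\ j|m})$. The first bracket is rewritten as $(K^j_{\ mi}\mp K^j_{\ im})_{|j}$, raising and lowering indices consistently with the paper's notation $K^{\ j}_{m\ i}$. Since $\nabla$ is not metrical, I will simply treat $K^{\ j}_{m\ i|j}$ notationally as the $|j$ derivative of the tensor $K^j_{\ mi}$ with the index placement fixed, as the statement implicitly does, rather than commuting $g$ through the derivative. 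The quadratic terms give $-\frac{1}{4}(K^l_{\ mi}\mp K^l_{\ im})K^j_{\ lj}$ together with $\frac{1}{4}(K^l_{\ mj}K^j_{\ li}\mp K^l_{\ ij}K^j_{\ lm})$. Relabelling the dummy indices $l\leftrightarrow j$ in the last product turns $K^l_{\ ij}K^j_{\ lm}$ into $K^j_{\ il}K^l_{\ jm}=K^l_{\ jm}K^j_{\ il}$, which is exactly the form $K^l_{\ jm}K^j_{\ il}$ appearing in the statement.

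The main obstacle is purely bookkeeping. First, the index-placement convention for the divergence terms has to be fixed so that raising/lowering is legitimate, i.e.\ one must make sure the symbol is read as a definition and not as commuting $g^{\cdot\cdot}$ past $\nabla$. Second, the signs in (iv) have to be checked carefully: adding the two copies of the $\frac{3}{4}$-term should produce a plus sign between $K^l_{\ mj}K^j_{\ li}$ and $K^l_{\ jm}K^j_{\ il}$, so I would verify this against the stated formula and, if a discrepancy appears, trust the direct computation. I will also note that no symmetry of $K$ may be used here, since by Theorem \ref{thm_3} (iv) $K(X,Y)-K(Y,X)=\overset{\ast}{T}(X,Y)\neq 0$. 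This is exactly why the antisymmetric combinations $K^j_{\ mi}-K^j_{\ im}$ survive, and they could optionally be rewritten as $\overset{\ast}{T}$-terms.
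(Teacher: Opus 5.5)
Your plan is exactly the paper's argument: the paper also just adds and subtracts the second (torsion-free, $\nabla$-derivative) forms of Proposition~\ref{prop_Q3}~(i),(ii) written for $(m,i)$ and $(i,m)$, with the relabelling $K^l_{\ ij}K^j_{\ lm}=K^l_{\ jm}K^j_{\ il}$. Your worry about raising and lowering indices is moot, since $K^{\ j}_{m\ i|j}$ is only the paper's staggered notation for $(K^j_{\ mi})_{|j}$ and no metric is involved. Your suspicion about (iv) is also correct: the direct sum gives $-\frac{3}{4}(K^l_{\ mj}K^j_{\ li}+K^l_{\ jm}K^j_{\ il})$, so the stated minus sign is a typo. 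A quick check confirms this: the left-hand side of (iv) is symmetric in $m,i$, whereas $K^l_{\ mj}K^j_{\ li}-K^l_{\ jm}K^j_{\ il}$ is antisymmetric. Likewise, $K^k_{\ im}$ in (i), (iii) and (iv) should read $K^l_{\ im}$.
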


Indeed, the formulas follow by adding and substracting the relations in Proposition \ref{prop_Q3}.

\begin{remark}
If $(M,g,\nabla)$ is a quasi-statistical manifold, then clearly, from \eqref{eq_Ricc1}, \eqref{eq_Ricc2}, we get
$$
R_{kj}-R_{jk}+g^{is}R_{isjk}=0
$$
and
\begin{equation*}
\begin{split}
\overset{\ast}{R}_{kj}-\overset{\ast}{R}_{jk}+g^{is}\overset{\ast}{R}_{isjk}=(\overset{\ast}{T}\!^m_{\ ij}\overset{\ast}{T}\!^i_{\ mk}+\overset{\ast}{T}\!^m_{\ jk}\overset{\ast}{T}\!^i_{\ mi}+\overset{\ast}{T}\!^m_{\ ki}\overset{\ast}{T}\!^i_{\ mj})
+\overset{\ast}{T}\!^{\ i}_{j\ k\overset{\ast}{|}i}+\overset{\ast}{T}\!^{\ i}_{k\ i\overset{\ast}{|}j}+\overset{\ast}{T}\!^{\ i}_{i\ j\overset{\ast}{|}k}.
\end{split}
\end{equation*}
\end{remark}

\begin{corollary}\label{cor_A25}
Let $(M,g,\nabla)$ be a quasi-statistical manifold. If the trace condition $K^k_{\ ik}=0$ holds, then
\begin{equation*}
\begin{split}
R_{mi}-R_{im}&=\overset{(0)}{R}_{mi}-\overset{(0)}{R}_{im}-\frac{1}{2}(K^j_{\ mi}-K^j_{\ im})_{|j}
+\frac{1}{4}(K^l_{\ mj}K^j_{\ li}-K^l_{\ ij}K^j_{\ lm}).
\end{split}
\end{equation*}
\end{corollary}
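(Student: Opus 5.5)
The natural route is to specialize part (i) of Corollary \ref{cor_Q4}, which already gives $R_{mi}-R_{im}$ on a quasi-statistical manifold in terms of the $\nabla$-covariant derivatives of $K$ and quadratic expressions in $K$. That formula contains three kinds of correction terms beyond $\overset{(0)}{R}_{mi}-\overset{(0)}{R}_{im}$: the divergence term $-\frac{1}{2}(K^j_{\ mi}-K^j_{\ im})_{|j}$; the derivatives of right traces $\frac{1}{2}(K^{\ j}_{m\ j|i}-K^{\ j}_{i\ j|m})$; and the quadratic terms $-\frac14 (K^l_{\ mi}-K^l_{\ im})K^j_{\ lj}+\frac14(K^l_{\ mj}K^j_{\ li}-K^l_{\ jm}K^j_{\ il})$. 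The plan is to show that the trace hypothesis $K^k_{\ ik}=0$ removes exactly the second group and the first piece of the third group, and to rewrite the remaining quadratic piece in the stated form.

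First, the hypothesis $K^k_{\ ik}=0$ holds identically on $M$. Hence its covariant derivative also vanishes, $K^{\ j}_{m\ j|i}=0$, and the second group disappears.

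Second, the factor $K^j_{\ lj}$ in the first quadratic piece is the same right trace. It is zero by hypothesis, so that term vanishes too.

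Third, the remaining quadratic piece must be matched with $\frac14(K^l_{\ mj}K^j_{\ li}-K^l_{\ ij}K^j_{\ lm})$. The first summands already agree. For the second, I will obtain $-K^l_{\ ij}K^j_{\ lm}$ by going back to Proposition \ref{prop_Q3} (i), second form, and writing it out for $R_{im}$. This gives the quadratic contribution $-\frac14(K^l_{\ im}K^j_{\ lj}-K^l_{\ ij}K^j_{\ lm})$, with $-\frac14$ because the quadratic term enters $R_{im}$ with a minus sign. Subtracting this from the corresponding expression for $R_{mi}$ produces precisely $+\frac14(K^l_{\ mj}K^j_{\ li}-K^l_{\ ij}K^j_{\ lm})$ once the right traces are set to zero.

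The main obstacle is this index bookkeeping. The printed form of Corollary \ref{cor_Q4} has the transposed-looking term $K^l_{\ jm}K^j_{\ il}$, and since $K$ is not symmetric on a quasi-statistical manifold, I cannot just swap lower indices. So I will rederive the difference directly from Proposition \ref{prop_Q3} (i) rather than trust that line. This direct derivation also confirms the sign and coefficient of the divergence term, $-\frac12(K^j_{\ mi}-K^j_{\ im})_{|j}$.
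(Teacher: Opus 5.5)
Your proposal is correct and is essentially the paper's implicit argument: specialize Corollary \ref{cor_Q4}(i), equivalently Proposition \ref{prop_Q3}(i) written for $R_{mi}$ and $R_{im}$ with $T=0$. The hypothesis $K^k_{\ ik}=0$ then kills $K^{\ j}_{m\ j|i}$, because contraction commutes with $\nabla$, and it kills the factor $K^j_{\ lj}$.

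Your detour for the last quadratic term is harmless but unnecessary. The identity $K^l_{\ jm}K^j_{\ il}=K^l_{\ ij}K^j_{\ lm}$ is just a renaming of the dummy indices $j\leftrightarrow l$ and needs no symmetry of $K$.
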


From Theorem \ref{thm_A13.19} we get:

\begin{theorem}
If $(M,g,\nabla)$ is a quasi statistical manifold, then
$$
R_{ij}-R_{ji}=\sum_{k=1}^n\left(\frac{\partial\Gamma^k_{\ jk}}{\partial x^i}-\frac{\partial\Gamma^k_{\ ik}}{\partial x^j}\right)
$$
and
$$
\overset{\ast}{R}_{ij}-\overset{\ast}{R}_{ji}=\nabla_k\overset{\ast}{T}\!^k_{\ ji}
+\sum_{k=1}^n\left(
\frac{\partial\overset{\ast}{\Gamma}\!^k_{\ jk}}{\partial x^i}-\frac{\partial\overset{\ast}{\Gamma}\!^k_{\ ik}}{\partial x^j}
\right).
$$
\end{theorem}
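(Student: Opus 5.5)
This is a direct specialization of Theorem \ref{thm_A13.19}, applied once to $\nabla$ and once to $\overset{\ast}{\nabla}$. Theorem \ref{thm_A13.19} holds for an arbitrary affine connection with torsion $T$ and Ricci tensor $R_{ij}=R^{\ k}_{i\ kj}$, namely
\begin{equation*}
R_{ij}-R_{ji}=\nabla_kT^k_{\ ji}+\left(\frac{\partial\Gamma^k_{\ jk}}{\partial x^i}-\frac{\partial\Gamma^k_{\ ik}}{\partial x^j}\right).
\end{equation*}
Neither metricity nor duality is used in it, so it applies verbatim to each of the two dual connections.

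\emph{First formula.} By Definition \ref{def_Q1}, a quasi-statistical manifold has $T=0$. Hence $T^k_{\ ji}\equiv 0$, and therefore its covariant derivative $\nabla_kT^k_{\ ji}$ vanishes identically. Substituting into Theorem \ref{thm_A13.19} leaves only the trace-derivative term, which is the first claimed identity.

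\emph{Second formula.} I would apply Theorem \ref{thm_A13.19} to $\overset{\ast}{\nabla}$, with coefficients $\overset{\ast}{\Gamma}$, torsion $\overset{\ast}{T}$ and Ricci tensor $\overset{\ast}{R}_{ij}=\overset{\ast}{R}\,^{\ k}_{i\ kj}$. This gives
\begin{equation*}
\overset{\ast}{R}_{ij}-\overset{\ast}{R}_{ji}=\overset{\ast}{\nabla}_k\overset{\ast}{T}\!^k_{\ ji}+\left(\frac{\partial\overset{\ast}{\Gamma}\!^k_{\ jk}}{\partial x^i}-\frac{\partial\overset{\ast}{\Gamma}\!^k_{\ ik}}{\partial x^j}\right).
\end{equation*}
Here $\overset{\ast}{T}\neq 0$ in general (Theorem \ref{thm_3}), so the torsion term survives.

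The only delicate point is which connection differentiates the torsion. The statement writes $\nabla_k\overset{\ast}{T}\!^k_{\ ji}$, whereas Theorem \ref{thm_A13.19} naturally produces the $\overset{\ast}{\nabla}$-divergence. I would check whether these agree. The relation $\overset{\ast}{\Gamma}=\Gamma+K$ gives
\begin{equation*}
\overset{\ast}{\nabla}_k\overset{\ast}{T}\!^k_{\ ji}-\nabla_k\overset{\ast}{T}\!^k_{\ ji}=K^k_{\ hk}\overset{\ast}{T}\!^h_{\ ji}-K^h_{\ jk}\overset{\ast}{T}\!^k_{\ hi}-K^h_{\ ik}\overset{\ast}{T}\!^k_{\ jh}.
\end{equation*}
This has no reason to vanish in general. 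So the intended reading is almost certainly the covariant derivative of the connection whose Ricci tensor is being computed, i.e.\ $\overset{\ast}{\nabla}_k$ (the paper's $\nabla$ there standing generically for ``the connection under consideration''). I would state it that way. Alternatively, one could keep $\nabla_k$ and add the three $K\overset{\ast}{T}$ correction terms above explicitly.
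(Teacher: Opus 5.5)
Your argument is correct and is exactly the paper's. The paper obtains this theorem directly from Theorem \ref{thm_A13.19}, applied to $\nabla$ (where $T=0$ removes the divergence term) and to $\overset{\ast}{\nabla}$.

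You are also right that the torsion term should read $\overset{\ast}{\nabla}_k\overset{\ast}{T}\!^k_{\ ji}$, not $\nabla_k\overset{\ast}{T}\!^k_{\ ji}$. The paper itself writes $\overset{\ast}{\nabla}_k$ when it reuses this formula in the next corollary. Moreover, the $K\overset{\ast}{T}$ correction term you computed is genuinely nonzero in general; explicit examples already exist in dimension $3$.
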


\begin{corollary}
If $(M,g,\nabla)$ is a quasi-statistical manifold and $\nabla$ is equiaffine connection, then
\begin{equation*}
\begin{split}
&R_{ij}-R_{ji}=0\qquad {\textrm and}\\
&\overset{\ast}{R}_{ij}-\overset{\ast}{R}_{ji}=\overset{\ast}{\nabla}_k\overset{\ast}{T}\!^k_{\ ji}
-\sum_{k=1}^n\left(
\frac{\partial(\overset{\ast}{T}\!^k_{\ jk})^\ast}{\partial x^i}-\frac{\partial(\overset{\ast}{T}\!^k_{\ ik})^\ast}{\partial x^j}
\right),
\end{split}
\end{equation*}
where $\overset{\ast}{T}$ is the torsion of the dual connection $\overset{\ast}{\nabla}$.
\end{corollary}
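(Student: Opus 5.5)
The plan is to apply Corollary \ref{cor: Ricci diff by torsion} (equivalently Theorem \ref{thm_A13.19} together with the equiaffine condition) separately to $\nabla$ and to $\overset{\ast}{\nabla}$. Both corollary and theorem are stated for an arbitrary affine connection with torsion, so they apply to each connection in the dual pair.

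\emph{Step 1: the connection $\nabla$.} By hypothesis $\nabla$ is equiaffine with some parallel volume $\omega=\lambda\,dx^1\wedge\dots\wedge dx^n$. Since $(M,g,\nabla)$ is quasi-statistical, $T=0$. Then \eqref{equiaffine Gamma right trace cond} reduces to $\Gamma^k_{\ ik}=\partial_i\log\lambda$. The bracket $\partial_i\Gamma^k_{\ jk}-\partial_j\Gamma^k_{\ ik}$ therefore equals $\partial_i\partial_j\log\lambda-\partial_j\partial_i\log\lambda=0$. Substituting into the first formula of the preceding theorem (or directly into Corollary \ref{cor_A13.21}) gives $R_{ij}=R_{ji}$.

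\emph{Step 2: transfer to the dual connection.} By Proposition \ref{prop: equiaffine dual conn}, $\overset{\ast}{\nabla}$ is again equiaffine, with parallel volume $\overset{\ast}{\omega}=\frac{|g|}{\lambda}\,dx^1\wedge\dots\wedge dx^n$. This is exactly what allows Corollary \ref{cor: Ricci diff by torsion} to be applied to $\overset{\ast}{\nabla}$ with its torsion $\overset{\ast}{T}$. The equiaffine condition for $\overset{\ast}{\nabla}$ in the torsion form \eqref{equiaffine Gamma right trace cond} reads
\[
\overset{\ast}{\Gamma}\,^k_{\ ik}+\overset{\ast}{T}\,^k_{\ ik}=\partial_i\log\frac{|g|}{\lambda}.
\]
Differentiating, antisymmetrizing in $i,j$, and using the equality of mixed partials replaces the $\overset{\ast}{\Gamma}$-bracket in the second formula of the preceding theorem by minus the corresponding $\overset{\ast}{T}$-bracket. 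This yields
\[
\overset{\ast}{R}_{ij}-\overset{\ast}{R}_{ji}=\overset{\ast}{\nabla}_k\overset{\ast}{T}\,^k_{\ ji}-\Big(\partial_i\overset{\ast}{T}\,^k_{\ jk}-\partial_j\overset{\ast}{T}\,^k_{\ ik}\Big),
\]
which is the stated formula.

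\emph{Main obstacle.} The only delicate point is bookkeeping rather than substance. The covariant derivative in the torsion-divergence term must be the one of the connection whose Ricci tensor is being computed, namely $\overset{\ast}{\nabla}$, since Theorem \ref{thm_A13.19} was derived with $\nabla_k$ built from that connection's own $\Gamma$. The preceding theorem's second formula is written with $\nabla_k$, and I would read it as $\overset{\ast}{\nabla}_k$, consistent with the statement being proved. I would also check that the trace conventions ($\Gamma^k_{\ ik}$ as right trace versus $\Gamma^k_{\ ki}$ as left trace) in \eqref{equiaffine Gamma right trace cond} and Proposition \ref{prop: equiaffine dual conn} match. The relation $\Gamma^k_{\ ki}=\Gamma^k_{\ ik}+T^k_{\ ik}$ converts one convention to the other, and applying it consistently is exactly what produces the torsion-trace correction term.
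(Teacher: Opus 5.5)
Your proposal is correct and follows the paper's own argument. It gets symmetry of $R_{ij}$ from Corollary \ref{cor_A13.21}, which uses that $\nabla$ is torsion-free and equiaffine. It then uses Proposition \ref{prop: equiaffine dual conn} to make $\overset{\ast}{\nabla}$ equiaffine with volume density $|g|/\lambda$, and applies Corollary \ref{cor: Ricci diff by torsion} to $\overset{\ast}{\nabla}$. Your reading of the divergence term as $\overset{\ast}{\nabla}_k\overset{\ast}{T}\,^k_{\ ji}$ matches how the paper restates Theorem \ref{thm_A13.19} for the dual connection.
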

Indeed, since $\nabla$ is torsion free and equiaffine, from Corollary \ref{cor_A13.21} it results that the Ricci tensors are symmetric. 

For the dual connection $\overset{\ast}{\nabla}$, firstly, observe that Theorem \ref{thm_A13.19} reads
\begin{equation*}
\overset{\ast}{R}_{ij}-\overset{\ast}{R}_{ji}=\overset{\ast}{\nabla}_k\overset{\ast}{T}\!^k_{\ ji}
+\sum_{k=1}^n\left(
\frac{\partial\overset{\ast}{\Gamma}\!^k_{\ jk}}{\partial x^i}-\frac{\partial\overset{\ast}{\Gamma}\!^k_{\ ik}}{\partial x^j}
\right)
\end{equation*}

Next, since $\nabla$ is equiaffine, from Proposition \ref{prop: equiaffine dual conn} it results that $\overset{\ast}{\nabla}$ is also equiaffine with volume form $\overset{\ast}{\omega}$. Hence the Corollary \ref{cor: Ricci diff by torsion} implies the conclusion.

\subsection{The Einstein tensor}

The same study of Einstein tensors and their divergence that we studied in the case of statistical manifolds can be extended to the case of quasi-statistical manifolds.

Let us start with a quasi-statistical manifold $(M,g,\nabla,\overset{\ast}{\nabla})$ and consider the Einstein tensor for $\nabla$ given in the same way as \eqref{eq_app1}.

Since $\nabla$ is torsion free, the same divergence formula as in the case of statistical manifolds holds good. That is, first divergence formula for $\nabla^iG_{ij}$ in Theorem \ref{thm_app1} holds good in the case of quasi-statistical manifolds as well. 

The situation becomes more complicated for the dual connection $\overset{\ast}{\nabla}$ which has non-vanishing torsion $\overset{\ast}{T}$.

We consider the Einstein tensor for $\overset{\ast}{\nabla}$ given as in \eqref{eq_app1_*}.

We will study now the divergence $\overset{\ast}{\nabla}\,^{i}\overset{\ast}{G}_{ij}$.

{\it \underline {Step 1. Bianchi Identities}}

In the case of a quasi-statistical manifold $(M,g,\nabla,\overset{\ast}{\nabla})$
the Bianchi identity for the dual connection $\overset{\ast}{\nabla}$ reads
$$
\overset{\ast}{\nabla}_i \overset{\ast}{R}\,^{\  h}_{l\ jk}+\overset{\ast}{\nabla}_j\overset{\ast}{R}\,^{\ h}_{l\ ki}+\overset{\ast}{\nabla}_k\overset{\ast}{R}\,^{\ h}_{l\ ij}
+\overset{\ast}{R}\,^{\ h}_{l\ rk}\overset{\ast}{T}\,^{ r}_{\ ij}+\overset{\ast}{R}\,^{\ h}_{l\ ri}\overset{\ast}{T}\,^{ r}_{\ jk}+\overset{\ast}{R}\,^{\ h}_{l\ rj}\overset{\ast}{T}\,^{ r}_{\ ki}=0.
$$

{\it \underline {Step 2. Covariant derivative $\overset{\ast}{\nabla}_k\overset{\ast}{R}$}}

Summing $i$ to $h$ 
implies
$$
\overset{\ast}{\nabla}_h\overset{\ast}{R}\,^{\ h}_{l\ jk}+\overset{\ast}{\nabla}_j\overset{\ast}{R}\,^{\ h}_{l\ kh}+\overset{\ast}{\nabla}_k\overset{\ast}{R}\,^{ h}_{l\ hj}
+\overset{\ast}{R}\,^{\ h}_{l\ rk}\overset{\ast}{T}\,^{ r}_{\ hj}+\overset{\ast}{R}\,^{\ h}_{l\ rh}\overset{\ast}{T}\,^{ r}_{\ jk}+\overset{\ast}{R}\,^{\ h}_{l\ rj}\overset{\ast}{T}\,^{ r}_{\ kh}=0,
$$
i.e.
$$
\overset{\ast}{\nabla}_h\overset{\ast}{R}\,^{ h}_{l\ jk}-\overset{\ast}{\nabla}_j\overset{\ast}{R}_{lk}+\overset{\ast}{\nabla}_k\overset{\ast}{R}_{lj}
+\overset{\ast}{R}\,^{\ h}_{l\ rk}\overset{\ast}{T}\,^{ r}_{\ hj}{-}\overset{\ast}{R}_{lr}\overset{\ast}{T}\,^{ r}_{\ jk}+\overset{\ast}{R}\,^{\ h}_{l\ rj}\overset{\ast}{T}\,^{ r}_{\ kh}=0.
$$

We multiply by $g^{lj}$ and by a similar computation with the torsion-free case, we get

\begin{equation}\label{4.2_2}
g^{lj}\overset{\ast}{\nabla}_h\overset{\ast}{R}\,^{\ h}_{l\ jk}-g^{lj}\overset{\ast}{\nabla}_j\overset{\ast}{R}_{lk}+g^{lj}\overset{\ast}{\nabla}_k\overset{\ast}{R}_{lj}
+g^{lj}\overset{\ast}{R}\,^{\ h}_{l\ rk}\overset{\ast}{T}\,^{ r}_{\ hj}{-}g^{lj}\overset{\ast}{R}_{lr}\overset{\ast}{T}\,^{ r}_{\ jk}+g^{lj}\overset{\ast}{R}\,^{\ h}_{l\ rj}\overset{\ast}{T}\,^{ r}_{\ kh}=0.
\end{equation}

Using now a similar computation as in Remark \ref{rem: some nabla terms} formulated for $\overset{\ast}{\nabla}$ and its curvature, we observe that 
\begin{enumerate}
\item $g^{lj}\overset{\ast}{\nabla}_k\overset{\ast}{R}_{lj}=\overset{\ast}{\nabla}_k(g^{lj}\overset{\ast}{R}_{lj})-(\overset{\ast}{\nabla}_kg^{lj})\overset{\ast}{R}_{lj}$,
\item $g^{lj}\overset{\ast}{\nabla}_h\overset{\ast}{R}\,_{l\ jk}^{\ h}=-(\overset{\ast}{\nabla}_h g^{hs})R_{sk}-g^{hs}\overset{\ast}{\nabla}_hR_{sk}-(\overset{\ast}{\nabla}_hg^{lj})\overset{\ast}{R}\,^{\ h}_{l\ jk}$.
\end{enumerate}

By substitution in \eqref{4.2_2} we get 
\begin{equation*}
\begin{split}
& -(\overset{\ast}{\nabla}_h g^{hs})R_{sk}-g^{hs}\overset{\ast}{\nabla}_hR_{sk}-(\overset{\ast}{\nabla}_hg^{lj})\overset{\ast}{R}\,^{\ h}_{l\ jk}
-g^{lj}\overset{\ast}{\nabla}_j\overset{\ast}{R}_{lk}
+\overset{\ast}{\nabla}_k(g^{lj}\overset{\ast}{R}_{lj})-(\overset{\ast}{\nabla}_kg^{lj})\overset{\ast}{R}_{lj}\\
& +g^{lj}\overset{\ast}{R}^{\ h}_{l\ rk}\overset{\ast}{T}\,^{ r}_{\ hj}{-}g^{lj}\overset{\ast}{R}_{lr}\overset{\ast}{T}\,^{ r}_{\ jk}+g^{lj}\overset{\ast}{R}\,^{\ h}_{l\ rj}\overset{\ast}{T}\,^{r}_{\ kh}=0
\end{split}
\end{equation*}
and from here
\begin{equation*}\label{4.2_3}
\begin{split}
&\overset{\ast}{\nabla}_k(g^{lj}\overset{\ast}{R}_{lj}) =(\overset{\ast}{\nabla}_h g^{hs})R_{sk}+g^{hs}\overset{\ast}{\nabla}_hR_{sk}+(\overset{\ast}{\nabla}_hg^{lj})\overset{\ast}{R}\,^{\ h}_{l\ jk}
+g^{lj}\overset{\ast}{\nabla}_j\overset{\ast}{R}_{lk}
+(\overset{\ast}{\nabla}_kg^{lj})\overset{\ast}{R}_{lj}\\
& -g^{lj}\overset{\ast}{R}\,^{\ h}_{l\ rk}\overset{\ast}{T}\,^{ r}_{\ hj}{+}g^{lj}\overset{\ast}{R}_{lr}\overset{\ast}{T}\,^{r}_{\ jk}-g^{lj}\overset{\ast}{R}\,^{\ h}_{l\ rj}\overset{\ast}{T}\,^{ r}_{\ kh}.
\end{split}
\end{equation*}

{\it \underline {Step 3. Covariant derivative of Einstein tensor $\overset{\ast}{\nabla}_k\overset{\ast}{G}_{ij}$}}

We take the covariant derivative of \eqref{eq_app1_*} with respect to $\overset{\ast}{\nabla}$ and obtain 
$$
\overset{\ast}{\nabla}_k\overset{\ast}{G}_{ij}=\overset{\ast}{\nabla}_k\overset{\ast}{R}_{(ij)}-\frac{1}{2}(\overset{\ast}{\nabla}_kg_{ij})\overset{\ast}{R}-
\frac{1}{2}g_{ij}\overset{\ast}{\nabla}_k\overset{\ast}{R}.
$$

After substituting $\overset{\ast}{\nabla}_k\overset{\ast}{R}$ from \eqref{4.2_3} it results
\begin{equation*}
  \begin{split}
    \overset{\ast}{\nabla}_k\overset{\ast}{G}_{ij}&=\overset{\ast}{\nabla}_k\overset{\ast}{R}_{(ij)}-\frac{1}{2}(\overset{\ast}{\nabla}_kg_{ij})\overset{\ast}{R}\\
                    &  -\frac{1}{2}g_{ij}\left\{
(\overset{\ast}{\nabla}_h g^{hs})R_{sk}+\overset{\ast}{\nabla}\,^{h}R_{hk}+(\overset{\ast}{\nabla}_hg^{ls})\overset{\ast}{R}\,^{\ h}_{l\ sk}
+\overset{\ast}{\nabla}\,^{l}\overset{\ast}{R}_{lk}
+(\overset{\ast}{\nabla}_kg^{ls})\overset{\ast}{R}_{ls}\right.\\
& \left.-g^{ls}\overset{\ast}{R}\,^{\ h}_{l\ rk}\overset{\ast}{T}\,^{r}_{\ hs}{+}g^{ls}\overset{\ast}{R}_{lr}\overset{\ast}{T}\,^{r}_{\ sk}-g^{ls}\overset{\ast}{R}\,^{\ h}_{l\ rs}\overset{\ast}{T}\,^{r}_{\ kh}
    \right\}.
  \end{split}
\end{equation*}

{\it \underline {Step 4. The divergence of Einstein tensor $\overset{\ast}{\nabla}\,^{i}\overset{\ast}{G}_{ij}$}}

We multiply by $g^{pk}$ and  get 

\begin{equation*}
  \begin{split}
   g^{pk} \overset{\ast}{\nabla}_k\overset{\ast}{G}_{ij}&=g^{pk}\overset{\ast}{\nabla}_k\overset{\ast}{R}_{(ij)}-g^{pk}\frac{1}{2}(\overset{\ast}{\nabla}_kg_{ij})\overset{\ast}{R}\\
                    &  -\frac{1}{2}g^{pk}g_{ij}\left\{
(\overset{\ast}{\nabla}_h g^{hs})R_{sk}+\overset{\ast}{\nabla}\,^{h}R_{hk}+(\overset{\ast}{\nabla}_hg^{ls})\overset{\ast}{R}\,^{\ h}_{l\ sk}
+\overset{\ast}{\nabla}\,^{l}\overset{\ast}{R}_{lk}
+(\overset{\ast}{\nabla}_kg^{ls})\overset{\ast}{R}_{ls}\right.\\
& \left.-g^{ls}\overset{\ast}{R}\,^{\ h}_{l\ rk}\overset{\ast}{T}\,^{ r}_{\ hs}{+}g^{ls}\overset{\ast}{R}_{lr}\overset{\ast}{T}\,^{r}_{\ sk}-g^{ls}\overset{\ast}{R}\,^{\ h}_{l\ rs}\overset{\ast}{T}\,^{ r}_{kh}
    \right\}.
  \end{split}
\end{equation*}

Now taking the trace by summing up $p$ to $i$, by a similar computation with the statistical manifolds case, it follows
\begin{equation*}
  \begin{split}
  \overset{\ast}{\nabla}\,^{i}\overset{\ast}{G}_{ij}&=\overset{\ast}{\nabla}\,^{i}\overset{\ast}{R}_{(ij)}-\frac{1}{2}(\overset{\ast}{\nabla}\,^{i}g_{ij})\overset{\ast}{R}\\
                    &  -\frac{1}{2}\left\{
(\overset{\ast}{\nabla}_h g^{hs})R_{sj}+\overset{\ast}{\nabla}\,^{h}R_{hj}+(\overset{\ast}{\nabla}_hg^{ls})\overset{\ast}{R}\,^{\ h}_{l\ sj}
+\overset{\ast}{\nabla}\,^{l}\overset{\ast}{R}_{lj}
+(\overset{\ast}{\nabla}_jg^{ls})\overset{\ast}{R}_{ls}\right.\\
& \left.-g^{ls}\overset{\ast}{R}\,^{\ h}_{l\ rj}\overset{\ast}{T}\,^{ r}_{\ hs}+g^{ls}\overset{\ast}{R}_{lr}\overset{\ast}{T}\,^{ r}_{\ sj}-g^{ls}\overset{\ast}{R}\,^{\ h}_{l\ rs}\overset{\ast}{T}\,^{r}_{\ jh}
  \right\}\\
    &=
-\frac{1}{2}\left\{
\overset{\ast}{\nabla}\,^{i}(R_{ij}-\overset{\ast}{R}_{ji})+(\overset{\ast}{\nabla}\,^{i}g_{ij})\overset{\ast}{R}+
(\overset{\ast}{\nabla}_hg^{ls})\overset{\ast}{R}\,^{\ h}_{l\  sj}+(\overset{\ast}{\nabla}_jg^{ls})\overset{\ast}{R}_{ls}+(\overset{\ast}{\nabla}_hg^{hs})R_{sj}
                              \right\}\\
  &+\frac{1}{2}\left(g^{ls}\overset{\ast}{R}\,^{\ h}_{l\ rj}\overset{\ast}{T}\,^{ r}_{\ hs}-g^{ls}\overset{\ast}{R}_{lr}\overset{\ast}{T}\,^{ r}_{\ sj}{+}g^{ls}\overset{\ast}{R}\,^{\ h}_{l\ rs}\overset{\ast}{T}\,^{ r}_{\ jh}\right),
  \end{split}
\end{equation*}
where $R_{ij}-\overset{\ast}{R}_{ij}$ is given in Corollary 
 \ref{cor_S5} and $R-\overset{\ast}{R}$ can be immediately computed.

We can again consider the mixed divergences of types $\nabla^i \overset{\ast}{G}_{ij}$ and $\overset{\ast}{\nabla}\,^{i}G_{ij}$ as we did in the case of statistical
manifolds. By substracting again formulas \eqref{eq_app1} and \eqref{eq_app1_*}, we get the same formulas as \eqref{nabla star G} and \eqref{nabla G star} and a similar argument as in the case of statistical manifolds leads again to similar formulas.

Summarizing, we formulate

\begin{theorem}\label{thm_app3}
  Let $(M,g,\nabla,\overset{\ast}{\nabla})$ be a quasi-statistical manifold, and $\overset{\ast}{G}_{ij}:=\overset{\ast}{R}_{{(ij)}}-\frac{1}{2}g_{ij}\overset{\ast}{R}$ the Einstein tensor of $\overset{\ast}{\nabla}$. Then the following divergences of the Einstein tensors hold

\begin{equation*}
\begin{split}
\nabla^iG_{ij}&=-\frac{1}{2}\left\{
\nabla^i(\overset{\ast}{R}_{ij}-R_{ji})+(\nabla^ig_{ij})R+(\nabla_hg^{ls})R^{\ h}_{l\ sj}
+(\nabla_jg^{ls})R_{ls}+(\nabla_hg^{hs})\overset{\ast}{R}_{sj}
\right\}\\
\overset{\ast}{\nabla}\,^{i}\overset{\ast}{G}_{ij}&=
-\frac{1}{2}\left\{
\overset{\ast}{\nabla}\,^{ i}(R_{ij}-\overset{\ast}{R}_{ji})+(\overset{\ast}{\nabla}\,^{ i}g_{ij})\overset{\ast}{R}+
(\overset{\ast}{\nabla}_hg^{ls})\overset{\ast}{R}\,^{\ h}_{l\  sj}+(\overset{\ast}{\nabla}_jg^{ls})\overset{\ast}{R}_{ls}+(\overset{\ast}{\nabla}_hg^{hs})R_{sj}
                              \right\}\\
  &+\frac{1}{2}\left(g^{ls}\overset{\ast}{R}\,^{\ h}_{l\ rj}\overset{\ast}{T}\,^{ r}_{\ hs}-g^{ls}\overset{\ast}{R}_{lr}\overset{\ast}{T}\,^{r}_{\ sj}{+}g^{ls}\overset{\ast}{R}\,^{\ h}_{l\ rs}\overset{\ast}{T}\,^{ r}_{\ jh}\right)\\
\overset{\ast}{\nabla}\,^{ i} G_{ij}&=\overset{\ast}{\nabla}\,^{ i} R_{(ij)}-\frac{1}{2}(\overset{\ast}{\nabla}\,^{ i} g_{ij})R
-\frac{1}{2}\overset{\ast}{\nabla}_j R\\
\nabla^i \overset{\ast}{G}_{ij}&=\nabla^i \overset{\ast}{R}_{(ij)}-\frac{1}{2}(\nabla^{i} g_{ij})\overset{\ast}{R}-\frac{1}{2}\nabla_j\overset{\ast}{R}.
\end{split}
\end{equation*}
\end{theorem}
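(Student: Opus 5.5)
The four identities follow the computation already carried out for Theorem \ref{thm_app1}. The only new ingredient is the nonzero torsion $\overset{\ast}{T}$ of $\overset{\ast}{\nabla}$, which enters through the Bianchi identity for $\overset{\ast}{\nabla}$. Throughout I use only the duality relations of Section \ref{sect1}: Lemma \ref{lem_D1} and Theorem \ref{thm: R and R* 1-st 2 ind}, in the form $R_{lsjk}=-\overset{\ast}{R}_{sljk}$. These hold for arbitrary $g$-dual connections and do not need $\overset{\ast}{T}=0$.

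\emph{First identity.} Since $T=0$ on a quasi-statistical manifold, $\nabla$ satisfies the torsion-free Bianchi identity. Steps 1--4 of the statistical case never used the vanishing of $\overset{\ast}{T}$; they used only $T=0$ and the relation between $R$ and $\overset{\ast}{R}$ from Theorem \ref{thm: R and R* 1-st 2 ind}. So I would repeat that computation verbatim:
\begin{enumerate}[(a)]
\item contract the Bianchi identity for $\nabla$ in $i=h$, then with $g^{lj}$;
\item rewrite $g^{lj}\nabla_hR^{\ h}_{l\ jk}$ as in Remark \ref{rem: some nabla terms};
\item solve for $\nabla_kR$ as in \eqref{eq_app3};
\item substitute into $\nabla_kG_{ij}$ and trace with $g^{ik}$.
\end{enumerate}
This gives the first formula unchanged.

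\emph{Second identity.} This is the main work. I would start from the Bianchi identity for $\overset{\ast}{\nabla}$ with its torsion terms $\overset{\ast}{R}\,^{\ h}_{l\ rk}\overset{\ast}{T}\,^{r}_{\ ij}+\dots$, as written in Step 1 of this subsection. Contracting $i=h$ and using the antisymmetry (i) of Theorem \ref{thm_A10} to identify $\overset{\ast}{R}\,^{\ h}_{l\ kh}=-\overset{\ast}{R}_{lk}$ produces \eqref{4.2_2}. The delicate point is the rewriting of $g^{lj}\overset{\ast}{\nabla}_h\overset{\ast}{R}\,^{\ h}_{l\ jk}$. As in \eqref{eq_app2.2}, I pull $g^{lj}$ inside at the cost of $(\overset{\ast}{\nabla}_hg^{lj})$ terms, lower the index, and apply Theorem \ref{thm: R and R* 1-st 2 ind} to turn $\overset{\ast}{R}_{lsjk}$ into $-R_{sljk}$. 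Its contraction is the Ricci tensor $R_{sk}$ of $\nabla$. Solving for $\overset{\ast}{\nabla}_k\overset{\ast}{R}$, substituting into $\overset{\ast}{\nabla}_k\overset{\ast}{G}_{ij}$ (which contains $-\tfrac12(\overset{\ast}{\nabla}_kg_{ij})\overset{\ast}{R}$ because $\overset{\ast}{\nabla}$ is not metric), and tracing $p=i$ then yields two groups of terms:
\begin{itemize}
\item the torsion-free block of Theorem \ref{thm_app1}, after grouping $\overset{\ast}{\nabla}^i\overset{\ast}{R}_{(ij)}-\tfrac12\overset{\ast}{\nabla}^l\overset{\ast}{R}_{lj}-\tfrac12\overset{\ast}{\nabla}^hR_{hj}$ into $-\tfrac12\overset{\ast}{\nabla}^i(R_{ij}-\overset{\ast}{R}_{ji})$;
\item the three torsion contractions carried with the factor $-\tfrac12\cdot(-1)$, which gives the extra bracket with $+\tfrac12$.
\end{itemize}
I expect the main obstacle to be exactly this index bookkeeping. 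One must check that the symmetrized Ricci $\overset{\ast}{R}_{(ij)}$ combines correctly with the unsymmetrized traces, and that the sign of each torsion term survives the contraction and the raising with $g^{ls}$. I would verify these signs term by term against the torsion-free limit $\overset{\ast}{T}\to 0$, which must reproduce Theorem \ref{thm_app1}.

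\emph{Mixed divergences.} These need no Bianchi identity; the argument is purely algebraic. I would write $G-\overset{\ast}{G}=R_{(ij)}-\overset{\ast}{R}_{(ij)}-\tfrac12g_{ij}(R-\overset{\ast}{R})$ and apply $\overset{\ast}{\nabla}^i$ (resp. $\nabla^i$), as in \eqref{nabla star G} and \eqref{nabla G star}. Then I substitute the definition of $\overset{\ast}{G}$ (resp. $G$) and expand $\overset{\ast}{\nabla}^i(g_{ij}f)=(\overset{\ast}{\nabla}^ig_{ij})f+\overset{\ast}{\nabla}_jf$ by the Leibniz rule. The $\overset{\ast}{R}$-terms then cancel telescopically, exactly as in the statistical case. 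Torsion plays no role in this step, so the last two formulas carry over unchanged.
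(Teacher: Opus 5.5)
Your proposal is correct and follows the paper's proof essentially step for step. The first formula is inherited unchanged from the torsion-free $\nabla$. The second comes from contracting the torsionful Bianchi identity for $\overset{\ast}{\nabla}$, rewriting $g^{lj}\overset{\ast}{R}\,^{\ h}_{l\ jk}=-g^{hs}R_{sk}$ via the duality relation between $R$ and $\overset{\ast}{R}$, and tracing. The mixed divergences are the same Leibniz-rule manipulation the paper uses (they even follow directly from applying the Leibniz rule to $G_{ij}=R_{(ij)}-\frac12 g_{ij}R$, without the detour through $G-\overset{\ast}{G}$).
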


\section{\texorpdfstring{$\alpha$}{alpha}-connections}\label{sect_alpha}

In the present Section we present in a detailed and consistent way the basic differential geometric properties of the $\alpha$-connections, with a special emphasis on the Einstein tensor, and its properties.  

\subsection{Definitions and basic properties}

Using a pair of dual connections $\left(\nabla,\overset{\ast}{\nabla}\right)$ on a (pseudo)-Riemannian manifold, one
defines the parametric family of the connections {$\overset{(\alpha)}{\nabla}$, $%
\alpha \in R$, as 
\begin{equation}\label{def of alpha-conn}
\overset{(\alpha)}{\nabla}_XY:=\frac{1+\alpha}{2}\nabla _XY+\frac{1-\alpha}{2}\overset{\ast}{\nabla}_XY.
\end{equation}
}

It is obvious that {$\overset{(1)}{\nabla}_XY=\nabla_XY$, and $\overset{(-1)}{\nabla}_XY=\overset{\ast}{\nabla}
_XY$.}

\begin{proposition}
If $\nabla $ and $\overset{\ast}{\nabla}$ are dual connections
on a (pseudo-) Riemannian manifold $(M,g)$, {then} the following relation holds good: 
\begin{equation}
\left( \overset{(\alpha )}{\nabla }_{X}g\right) (Y,Z)=\alpha C(X,Y,Z),
\end{equation}%
where $C$ is the cubic form of $\nabla $, and {$\overset{(\alpha )}{\nabla }$ is given by \eqref{def of alpha-conn}}.
\end{proposition}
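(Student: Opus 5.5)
The plan is a direct computation from the definition of the non-metricity of $\overset{(\alpha)}{\nabla}$, split into its $\nabla$ and $\overset{\ast}{\nabla}$ parts. Since $\frac{1+\alpha}{2}+\frac{1-\alpha}{2}=1$, the map \eqref{def of alpha-conn} is an affine combination of two affine connections, hence itself an affine connection. Its non-metricity tensor is therefore well defined, and the identity to prove is purely algebraic.

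First, write
\begin{equation*}
(\overset{(\alpha)}{\nabla}_Xg)(Y,Z)=X(g(Y,Z))-g(\overset{(\alpha)}{\nabla}_XY,Z)-g(Y,\overset{(\alpha)}{\nabla}_XZ).
\end{equation*}
Next, split the term $X(g(Y,Z))$ as $\frac{1+\alpha}{2}X(g(Y,Z))+\frac{1-\alpha}{2}X(g(Y,Z))$. Using bilinearity of $g$, this regroups the whole expression as
\begin{equation*}
\frac{1+\alpha}{2}(\nabla_Xg)(Y,Z)+\frac{1-\alpha}{2}(\overset{\ast}{\nabla}_Xg)(Y,Z).
\end{equation*}
In other words, the non-metricity depends affinely on the connection.

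Then invoke the earlier results. By definition \eqref{eq_A1}, $(\nabla_Xg)(Y,Z)=C(X,Y,Z)$. By the Proposition stating $C+\overset{\ast}{C}=0$ (proved via the duality condition \eqref{eq_D1} and Lemma \ref{lem_D1}(i)), $(\overset{\ast}{\nabla}_Xg)(Y,Z)=-C(X,Y,Z)$. Substituting these gives
\begin{equation*}
\left(\tfrac{1+\alpha}{2}-\tfrac{1-\alpha}{2}\right)C(X,Y,Z)=\alpha\, C(X,Y,Z).
\end{equation*}

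There is no real obstacle here. The only point needing care is the regrouping step, since the $X(g(Y,Z))$ term must be split with the same weights as the connections. As a consistency check, the case $\alpha=0$ recovers Proposition \ref{prop_A8}, and the cases $\alpha=\pm1$ recover $\pm C$. One could alternatively verify the identity locally using \eqref{eq_Rel2} and $\overset{(\alpha)}{\Gamma}=\Gamma+\frac{1-\alpha}{2}K$, but the invariant argument is shorter.
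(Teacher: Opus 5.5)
Your proof is correct and follows essentially the same route as the paper. Both arguments split the non-metricity of $\overset{(\alpha)}{\nabla}$ as $\frac{1+\alpha}{2}(\nabla_Xg)(Y,Z)+\frac{1-\alpha}{2}(\overset{\ast}{\nabla}_Xg)(Y,Z)$ and then use $(\overset{\ast}{\nabla}_Xg)(Y,Z)=-C(X,Y,Z)$; the only difference is that the paper re-derives this last identity inline from the two duality conditions, whereas you cite the earlier proposition $C+\overset{\ast}{C}=0$, which rests on the same computation.
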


\begin{proof}
By a straightforward computation we have 
\begin{equation*}
\begin{split}
\left( \overset{(\alpha )}{\nabla }_{X}g\right) (Y,Z) &=
\frac{1+\alpha }{2}\left( \nabla _{X}g\right) (Y,Z)+\frac{1-\alpha }{2}\left( \overset{\ast}{\nabla} _{X}g\right) (Y,Z) \\
&=\frac{1+\alpha }{2}\left\{ Xg(Y,Z)-g\left( \nabla _{X}Y,Z\right) -g\left(Y,\nabla _{X}Z\right) \right\} \\
&+\frac{1-\alpha }{2}\left\{ Xg(Y,Z)-g\left( \overset{\ast}{\nabla} _{X}Y,Z\right)-g\left( Y,\overset{\ast}{\nabla} _{X}Z\right) \right\}   \\
&=\frac{1+\alpha }{2}\left\{ Xg(Y,Z)-g\left( \nabla _{X}Y,Z\right) -g\left(Y,\nabla _{X}Z\right) \right\}  \\
&+\frac{1-\alpha }{2}\Bigg\{ Xg\left( Y,Z\right)   \\
&-Xg(Y,Z)+g\left( Y,\nabla _{X}Z\right) -Xg(Y,Z)+g\left( \nabla_{X}Y,Z\right) \Bigg\}  \\
&=\left( \frac{1+\alpha }{2}-\frac{1-\alpha }{2}\right) \left( \nabla_{X}g\right) (Y,Z)=\alpha C(X,Y,Z), 
\end{split}
\end{equation*}%
where we have used the duality conditions 
\begin{equation*}
Xg\left( Y,Z\right) =g\left( \overset{\ast}{\nabla} _{X}Y,Z\right) +g\left( Y,\nabla
_{X}Z\right) ,
\end{equation*}
\begin{equation*}
Xg\left( Y,Z\right) =g\left( \nabla _{X}Y,Z\right) +g\left( Y,\overset{\ast}{\nabla}
_{X}Z\right) .
\end{equation*}
\end{proof}


\begin{proposition}\label{prop: prop9}
If $\nabla $ and $\overset{\ast}{\nabla}$ are dual connections on a (pseudo-)
Riemannian manifold $\left( M,g\right) $, then
\begin{equation*}
\overset{(\alpha)}{\nabla}_XY=\overset{(0)}{\nabla }_{X}Y-\frac{\alpha }{2}%
K\left( X,Y\right) ,
\end{equation*}%
where $\overset{(0)}{\nabla }$ is the average connection, and $K$ is the
difference tensor.
\end{proposition}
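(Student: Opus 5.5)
The plan is a direct algebraic substitution. We express both $\nabla$ and $\overset{\ast}{\nabla}$ through the average connection $\overset{(0)}{\nabla}$ and the difference tensor $K$, and then insert these expressions into the definition \eqref{def of alpha-conn}. No differentiation is involved. The key point is that \eqref{def of alpha-conn} is an affine combination: the coefficients $\frac{1+\alpha}{2}$ and $\frac{1-\alpha}{2}$ sum to $1$. This is what makes the $\overset{(0)}{\nabla}$ parts recombine into a single $\overset{(0)}{\nabla}_XY$.

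First, from the definition \eqref{eq_Diff1}, $K(X,Y)=\overset{\ast}{\nabla}_XY-\nabla_XY$, together with the definition \eqref{eq_Av1} of the average connection, I recall the two identities already written in the paper before Proposition~\ref{prop: recover nablas}:
$$
\nabla_XY=\overset{(0)}{\nabla}_XY-\frac{1}{2}K(X,Y),\qquad
\overset{\ast}{\nabla}_XY=\overset{(0)}{\nabla}_XY+\frac{1}{2}K(X,Y).
$$
These follow by adding and subtracting \eqref{eq_Diff1} and \eqref{eq_Av1}.

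Second, I substitute these into \eqref{def of alpha-conn}:
$$
\overset{(\alpha)}{\nabla}_XY=\Big(\frac{1+\alpha}{2}+\frac{1-\alpha}{2}\Big)\overset{(0)}{\nabla}_XY+\frac{1}{2}\Big(\frac{1-\alpha}{2}-\frac{1+\alpha}{2}\Big)K(X,Y).
$$
The first bracket equals $1$ and the second equals $-\alpha$. Hence
$$
\overset{(\alpha)}{\nabla}_XY=\overset{(0)}{\nabla}_XY-\frac{\alpha}{2}K(X,Y),
$$
which is the claimed formula.

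There is no genuine obstacle. The only point needing care is the sign convention for $K$: it is $\overset{\ast}{\nabla}-\nabla$, not the reverse, and this convention is what produces the minus sign. As a sanity check, I would verify the endpoints. At $\alpha=1$ the formula gives $\nabla$, at $\alpha=-1$ it gives $\overset{\ast}{\nabla}$, and at $\alpha=0$ it gives the average connection, all consistent with the identities above. Note that no torsion or duality hypothesis is actually used beyond the definitions of $K$ and $\overset{(0)}{\nabla}$, so the formula holds for arbitrary (in particular quasi-statistical) dual pairs.
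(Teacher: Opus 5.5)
Your proof is correct and is essentially the paper's argument: the paper regroups the definition as $\frac{1}{2}(\nabla_XY+\overset{\ast}{\nabla}_XY)+\frac{\alpha}{2}(\nabla_XY-\overset{\ast}{\nabla}_XY)$ and reads off $\overset{(0)}{\nabla}_XY-\frac{\alpha}{2}K(X,Y)$. You instead substitute $\nabla=\overset{(0)}{\nabla}-\frac{1}{2}K$ and $\overset{\ast}{\nabla}=\overset{(0)}{\nabla}+\frac{1}{2}K$, which is the same linear algebra, and your remark that duality is never actually used is accurate.
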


\begin{proof}
By the Definition \ref{def of alpha-conn} we have
\begin{equation*}
\begin{split}
\overset{(\alpha)}{\nabla}_XY &=\frac{1+\alpha }{2}\nabla _{X}Y+\frac{1-\alpha }{2}\overset{\ast}{\nabla} _{X}Y=\frac{1}{2}\left( \nabla _{X}Y+\overset{\ast}{\nabla} _{X}Y\right) +\frac{\alpha }{2}\left( \nabla _{X}Y-\overset{\ast}{\nabla} _{X}Y\right)\\
&=\overset{(0)}{\nabla }_{X}Y-\frac{\alpha }{2}K\left( X,Y\right) .
\end{split}
\end{equation*}
\end{proof}

\begin{remark}
Observe that by putting $\alpha :=1$ and $\alpha :=-1$ in Proposition \ref{prop: prop9}, we obtain that
if $\nabla $ and $\overset{\ast}{\nabla}$ are dual connections on a
Riemannian manifold $\left( M,g\right) $, then
\begin{equation}
\nabla _{X}Y=\overset{(0)}{\nabla }_{X}Y-\frac{1}{2}K(X,Y),  \label{11.1a}
\end{equation}
and 
\begin{equation}
\overset{\ast}{\nabla} _{X}Y=\overset{(0)}{\nabla }_{X}Y+\frac{1}{2}K(X,Y),
\label{11.1b}
\end{equation}
formulas which agree with Proposition \ref{prop: recover nablas}.
\end{remark}

\begin{proposition}\label{prop: alpha torsion}
  If $\nabla $ and $\overset{\ast}{\nabla}$ are dual connections on a (pseudo-)
Riemannian manifold $\left( M,g\right) $, then
\begin{equation*}
\overset{(\alpha)}{T}(X,Y)=\frac{1+\alpha }{2}T(X,Y)+\frac{1-\alpha }{2}\overset{\ast}{T}(X,Y),
\end{equation*}%
where $\overset{(\alpha)}{T}$ and $T$ are the torsions of $\overset{(\alpha)}{\nabla}$ and $%
\nabla $, respectively.
\end{proposition}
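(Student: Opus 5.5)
The plan is to compute directly from the definition of torsion, using the defining formula \eqref{def of alpha-conn} and the fact that the coefficients $\frac{1+\alpha}{2}$ and $\frac{1-\alpha}{2}$ sum to one. By definition \eqref{eq_A2}, applied to $\overset{(\alpha)}{\nabla}$, we have
\begin{equation*}
\overset{(\alpha)}{T}(X,Y)=\overset{(\alpha)}{\nabla}_XY-\overset{(\alpha)}{\nabla}_YX-[X,Y].
\end{equation*}
We will substitute \eqref{def of alpha-conn} into the first two terms. This expresses $\overset{(\alpha)}{\nabla}_XY-\overset{(\alpha)}{\nabla}_YX$ as $\frac{1+\alpha}{2}(\nabla_XY-\nabla_YX)+\frac{1-\alpha}{2}(\overset{\ast}{\nabla}_XY-\overset{\ast}{\nabla}_YX)$.

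The only point needing care is the bracket term. Since $\frac{1+\alpha}{2}+\frac{1-\alpha}{2}=1$, we split it as
\begin{equation*}
[X,Y]=\frac{1+\alpha}{2}[X,Y]+\frac{1-\alpha}{2}[X,Y].
\end{equation*}
Grouping the two parts with the corresponding connection differences gives $\frac{1+\alpha}{2}\bigl(\nabla_XY-\nabla_YX-[X,Y]\bigr)+\frac{1-\alpha}{2}\bigl(\overset{\ast}{\nabla}_XY-\overset{\ast}{\nabla}_YX-[X,Y]\bigr)$. This is exactly $\frac{1+\alpha}{2}T(X,Y)+\frac{1-\alpha}{2}\overset{\ast}{T}(X,Y)$.

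There is no real obstacle; the affine combination is what makes the bracket terms match. As a cross-check, Proposition \ref{prop: prop9} gives $\overset{(\alpha)}{T}(X,Y)=\overset{(0)}{T}(X,Y)-\frac{\alpha}{2}\bigl(K(X,Y)-K(Y,X)\bigr)$. Combining this with $\overset{(0)}{T}=\frac12(T+\overset{\ast}{T})$ and Proposition \ref{prop:K diff vs T diff} yields the same formula.

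In local coordinates the result reads $\overset{(\alpha)}{T}\,^i_{\ kl}=\frac{1+\alpha}{2}T^i_{\ kl}+\frac{1-\alpha}{2}\overset{\ast}{T}\,^i_{\ kl}$, which also follows immediately from \eqref{eq_T2}.
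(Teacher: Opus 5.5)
Your proof is correct and matches the paper's argument: the paper also expands the definition, substitutes $\nabla_XY-\nabla_YX=T(X,Y)+[X,Y]$ and its dual analogue, and notes that the leftover coefficient of $[X,Y]$, namely $\frac{1+\alpha}{2}+\frac{1-\alpha}{2}-1$, vanishes. This is the same bookkeeping as your splitting of the bracket. Your cross-check via Proposition \ref{prop: prop9} and Proposition \ref{prop:K diff vs T diff} is also valid.
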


\begin{proof}
By definition
\begin{equation*}
  \begin{split}
\overset{(\alpha)}{T}(X,Y) &=\overset{(\alpha)}{\nabla}_XY-\overset{(\alpha)}{\nabla}_YX-[X,Y]
\\
&=\frac{1+\alpha }{2}\nabla _{X}Y+\frac{1-\alpha }{2}\overset{\ast}{\nabla} _{X}Y-%
\frac{1+\alpha }{2}\nabla _{Y}X-\frac{1-\alpha }{2}\overset{\ast}{\nabla} _{Y}X-[X,Y]
\\
&=\frac{1+\alpha }{2}\left( T(X,Y)+[X,Y]\right) +\frac{1-\alpha }{2}\left(
\overset{\ast}{T}(X,Y)+[X,Y]\right) -[X,Y] \\
&=\frac{1+\alpha }{2}T(X,Y)+\frac{1-\alpha }{2}\overset{\ast}{T}(X,Y)+\left( \frac{1+\alpha }{2}+\frac{1-\alpha }{2}%
    -1\right) [X,Y]\\
                           &=\frac{1+\alpha }{2}T(X,Y)+\frac{1-\alpha }{2}\overset{\ast}{T}(X,Y).
                             \end{split}
\end{equation*}%

\end{proof}

The basic properties of statistical and quasi-statistical manifolds leads to

\begin{corollary}
  \begin{enumerate}[(i)]
    \item
  If $(M,g,\nabla)$ is a statistical manifold with dual connection $\overset{\ast}{\nabla}$, then
  \begin{equation*}
\overset{(\alpha)}{T}(X,Y)=0.
\end{equation*}%
\item  If $(M,g,\nabla)$ is a quasi-statistical manifold with dual connection $\overset{\ast}{\nabla}$, then
  \begin{equation*}
\overset{(\alpha)}{T}(X,Y)=\frac{1-\alpha }{2}\overset{\ast}{T}(X,Y).
\end{equation*}
\end{enumerate}
  \end{corollary}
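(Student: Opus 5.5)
The corollary is an immediate specialization of Proposition \ref{prop: alpha torsion}. That proposition expresses the torsion of $\overset{(\alpha)}{\nabla}$ as an affine combination of the two torsions,
$$
\overset{(\alpha)}{T}(X,Y)=\frac{1+\alpha}{2}T(X,Y)+\frac{1-\alpha}{2}\overset{\ast}{T}(X,Y),
$$
so in each case I only have to identify $T$ and $\overset{\ast}{T}$ from the definition of the underlying structure and substitute.

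For (i), $(M,g,\nabla)$ is a statistical manifold. By definition $\nabla$ is torsion free and the dual connections have the same torsion. Equivalently, by Proposition \ref{prop_S1} or Theorem \ref{thm_2}, conditions (i) and (ii) hold simultaneously, so $T=\overset{\ast}{T}=0$. Substituting into the formula gives $\overset{(\alpha)}{T}(X,Y)=0$ for every $\alpha$.

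For (ii), $(M,g,\nabla)$ is quasi-statistical. Definition \ref{def_Q1} gives $T=0$, while $\overset{\ast}{T}$ is in general nonzero; by Theorem \ref{thm_3} it equals $2\overset{(0)}{T}$. Substituting $T=0$ kills the first term and leaves $\overset{(\alpha)}{T}(X,Y)=\frac{1-\alpha}{2}\overset{\ast}{T}(X,Y)$. Using Theorem \ref{thm_3} (iii), this can also be written as $(1-\alpha)\overset{(0)}{T}(X,Y)$. As a consistency check, $\alpha=1$ recovers $T=0$ and $\alpha=-1$ recovers $\overset{\ast}{T}$.

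There is no real obstacle here. The only care needed is in (i): I must invoke that $\overset{\ast}{T}=0$ as well, which follows from the equal-torsion requirement in the definition of a (pre-)statistical manifold and not from $T=0$ alone.
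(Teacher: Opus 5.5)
Your proposal is correct and follows the paper's own route: substitute $T=\overset{\ast}{T}=0$ (statistical case, where the equal-torsion requirement gives $\overset{\ast}{T}=0$) and $T=0$ (quasi-statistical case) into the formula of Proposition~\ref{prop: alpha torsion}. Your rewriting of (ii) as $(1-\alpha)\overset{(0)}{T}(X,Y)$ and the checks at $\alpha=\pm1$ are also correct, though not needed for the statement.
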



\subsection{\texorpdfstring{The curvature tensor $\overset{(\alpha )}{R}(X,Y)Z$}{The curvature tensor R^{(\alpha)}(X,Y)Z}}\label{sect3*}


We will compute now the curvature tensor $\overset{(\alpha )}{R}(X,Y)Z$ of
the $\alpha $-connection $\overset{(\alpha )}{\nabla }$. 

By definition we have

\begin{equation*}
\begin{split}
\overset{(\alpha )}{R}(X,Y)Z &=\overset{(\alpha )}{\nabla }_{X}\overset{(\alpha )}{\nabla }_{Y}Z-\overset{(\alpha )}{\nabla }_{Y}\overset{(\alpha )}{\nabla }_{X}Z-\overset{(\alpha )}{\nabla }_{[X,Y]}Z \\
&=\left( \frac{1+\alpha }{2}\nabla _{X}+\frac{1-\alpha }{2}\overset{\ast}{\nabla}_{X}\right) \left( \frac{1+\alpha }{2}\nabla _{Y}+\frac{1-\alpha }{2}\overset{\ast}{\nabla} _{Y}\right) Z \\
&-\left( \frac{1+\alpha }{2}\nabla _{Y}+\frac{1-\alpha }{2}\overset{\ast}{\nabla}_{Y}\right) \left( \frac{1+\alpha }{2}\nabla _{X}+\frac{1-\alpha }{2}\overset{\ast}{\nabla} _{X}\right) Z \\
&-\left( \frac{1+\alpha }{2}\nabla _{[ X,Y]}+\frac{1-\alpha }{2}\overset{\ast}{\nabla} _{[ X,Y]}\right) Z.
\end{split}
\end{equation*}

Using the linearity of the connections $\nabla $, $\overset{\ast}{\nabla}$, it
results

\begin{equation*}
  \begin{split}
& \overset{(\alpha )}{R}(X,Y)Z =\left( \frac{1+\alpha }{2}\right) ^{2}\nabla
_{X}\nabla _{Y}Z+\frac{1-\alpha ^{2}}{4}\nabla _{X}\overset{\ast}{\nabla} _{Y}Z+%
\frac{1-\alpha ^{2}}{4}\overset{\ast}{\nabla} _{X}\nabla _{Y}Z+\left( \frac{1-\alpha 
}{2}\right) ^{2}\overset{\ast}{\nabla} _{X}\overset{\ast}{\nabla} _{Y}Z \\
&-\left( \frac{1+\alpha }{2}\right) ^{2}\nabla _{Y}\nabla _{X}Z-\frac{%
1-\alpha ^{2}}{4}\nabla _{Y}\overset{\ast}{\nabla} _{X}Z-\frac{1-\alpha ^{2}}{4}%
\overset{\ast}{\nabla} _{Y}\nabla _{X}Z-\left( \frac{1-\alpha }{2}\right) ^{2}\overset{\ast}{\nabla}_{Y}\overset{\ast}{\nabla} _{X}Z \\
&-\frac{1+\alpha }{2}\nabla _{\lbrack X,Y]}Z-\frac{1-\alpha }{2}\overset{\ast}{\nabla}
   _{\lbrack X,Y]}Z.
   \end{split}
  \end{equation*}

By grouping together the terms, and using the definitions of the curvature
tensors for the connections $\nabla $ and $\overset{\ast}{\nabla}$it follows

\begin{equation*}
\begin{split}
\overset{(\alpha )}{R}(X,Y)Z &=\left( \frac{1+\alpha }{2}\right)^{2}R(X,Y)Z+\left( \frac{1+\alpha }{2}\right) ^{2}\nabla _{[ X,Y]}Z \\
&+\left( \frac{1-\alpha }{2}\right) ^{2}\overset{\ast}{R}(X,Y)Z+\left( \frac{1-\alpha }{2}\right) ^{2}\overset{\ast}{\nabla} _{[ X,Y]}Z \\
&+\frac{1-\alpha ^{2}}{4}\nabla _{X}\overset{\ast}{\nabla} _{Y}Z+\frac{1-\alpha ^{2}}{4}\overset{\ast}{\nabla} _{X}\nabla _{Y}Z-\frac{1-\alpha ^{2}}{4}\nabla _{Y}\overset{\ast}{\nabla}_{X}Z-\frac{1-\alpha ^{2}}{4}\overset{\ast}{\nabla} _{Y}\nabla _{X}Z \\
&-\frac{1+\alpha }{2}\nabla _{[ X,Y]}Z-\frac{1-\alpha }{2}\overset{\ast}{\nabla}_{[ X,Y]}Z.
\end{split}
\end{equation*}

Observe that the coefficients of $\nabla _{[ X,Y]}Z$ and $\overset{\ast}{\nabla}
_{[ X,Y]}Z$ are $\left( \frac{1+\alpha }{2}\right) ^{2}-\frac{%
1+\alpha }{2}=-\frac{1-\alpha ^{2}}{4}$ and $\left( \frac{1-\alpha }{2}%
\right) ^{2}-\frac{1-\alpha }{2}=-\frac{1-\alpha ^{2}}{4}$, respectively.
Hence we get

\begin{equation}\label{13.1}
\begin{split}
\overset{(\alpha )}{R}(X,Y)Z &=\left( \frac{1+\alpha }{2}\right)^{2}R(X,Y)Z+\left( \frac{1-\alpha }{2}\right) ^{2}\overset{\ast}{R}(X,Y)Z\\
&+\frac{1-\alpha ^{2}}{4}{\left\{ \nabla _{X}\overset{\ast}{\nabla} _{Y}+\overset{\ast}{\nabla}_{X}\nabla _{Y}-\nabla _{Y}\overset{\ast}{\nabla} _{X}-\overset{\ast}{\nabla} _{Y}\nabla _{X}-\nabla _{[X,Y]}-\overset{\ast}{\nabla} _{[ X,Y]}\right\} Z}.
\end{split}
\end{equation}

We will compute now the terms in the parenthesis $${\left\{ \nabla _{X}\overset{\ast}{\nabla} _{Y}+\overset{\ast}{\nabla}_{X}\nabla _{Y}-\nabla _{Y}\overset{\ast}{\nabla} _{X}-\overset{\ast}{\nabla} _{Y}\nabla _{X}-\nabla _{\lbrack X,Y]}-\overset{\ast}{\nabla} _{[ X,Y]}\right\} Z},$$
that is,
\begin{equation*}
\begin{split}
\rho _{X,Y}Z &=\left({ \nabla _{X}\overset{\ast}{\nabla} _{Y}+\overset{\ast}{\nabla} _{X}\nabla _{Y}-\nabla _{Y}\overset{\ast}{\nabla} _{X}-\overset{\ast}{\nabla} _{Y}\nabla _{X}-\nabla _{\lbrack X,Y]}-\overset{\ast}{\nabla}_{[ X,Y]}}\right) Z \\
&=\nabla _{X}\left( \overset{(0)}{\nabla }_{Y}Z+{\frac{1}{2}K(Y,Z)}\right)+\overset{\ast}{\nabla} _{X}\left( \overset{(0)}{\nabla }_{Y}Z-{\frac{1}{2}K(Y,Z)}\right) \\
&-\nabla _{Y}\left( \overset{(0)}{\nabla }_{X}Z+\frac{1}{2}K(X,Z)\right)-\overset{\ast}{\nabla} _{Y}\left( \overset{(0)}{\nabla }_{X}Z-\frac{1}{2}K(X,Z)\right) \\
&-\overset{(0)}{\nabla }_{[X,Y]}Z+\frac{1}{2}K([X,Y],Z)-\overset{(0)}{\nabla }_{[X,Y]}Z-\frac{1}{2}K([X,Y],Z) \\
&=\overset{(0)}{\nabla }_{X}\left( \overset{(0)}{\nabla }_{Y}Z+\frac{1}{2}K(Y,Z)\right) -\frac{1}{2}K\left( X,\overset{(0)}{\nabla }_{Y}Z{+\frac{1}{2}K(Y,Z)}\right) \\
&+\overset{(0)}{\nabla }_{X}\left( \overset{(0)}{\nabla }_{Y}Z-\frac{1}{2}K(Y,Z)\right) +\frac{1}{2}K\left( X,\overset{(0)}{\nabla }_{Y}Z{-\frac{1}{2}K(Y,Z)}\right) \\
&-\overset{(0)}{\nabla }_{Y}\left( \overset{(0)}{\nabla }_{X}Z+\frac{1}{2}K(X,Z)\right) +\frac{1}{2}K\left( Y,\overset{(0)}{\nabla }_{X}Z+\frac{1}{2}K(X,Z)\right) \\
&-\overset{(0)}{\nabla }_{Y}\left( \overset{(0)}{\nabla }_{X}Z-\frac{1}{2}K(X,Z)\right) -\frac{1}{2}K\left( Y,\overset{(0)}{\nabla }_{X}Z-\frac{1}{2}K(X,Z)\right) \\
&{-2\overset{(0) }{\nabla }_{[X,Y]}Z},
\end{split}
\end{equation*}
where we have used consecutively the formulas (\ref{11.1a}) and (\ref{11.1b}%
), respectively.

By linearity we get
\begin{equation*}
\begin{split}
\rho _{X,Y}Z &=\overset{(0)}{\nabla }_{X}\overset{(0)}{\nabla }_{Y}Z+\frac{1}{2}\overset{(0)}{\nabla }_{X}K(Y,Z)-\frac{1}{2}K\left( X,\overset{(0)}{\nabla }_{Y}Z\right) -\frac{1}{4}K\left( X,K\left( Y,Z\right) \right) \\
&+\overset{(0)}{\nabla }_{X}\overset{(0)}{\nabla }_{Y}Z{-\frac{1}{2}\overset{(0)}{\nabla }_{X}K(Y,Z)}+\frac{1}{2}K\left( X,\overset{(0)}{\nabla }_{Y}Z\right) -\frac{1}{4}K\left( X,K\left( Y,Z\right) \right) \\
&-\overset{(0)}{\nabla }_{Y}\overset{(0)}{\nabla }_{X}Z-\frac{1}{2}\overset{(0)}{\nabla }_{Y}K(X,Z)+\frac{1}{2}K\left( Y,\overset{(0)}{\nabla }_{X}Z\right) +\frac{1}{4}K\left( Y,K\left( X,Z\right) \right) \\
&-\overset{(0)}{\nabla }_{Y}\overset{(0)}{\nabla }_{X}Z+\frac{1}{2}\overset{(0)}{\nabla }_{Y}K(X,Z)-\frac{1}{2}K\left( Y,\overset{(0)}{\nabla }_{X}Z\right) +\frac{1}{4}K\left( Y,K\left( X,Z\right) \right) \\
&{-2\overset{(0) }{\nabla }_{[X,Y]}Z}\\
&{=2\overset{(0)}{R}(X,Y)Z-\frac{1}{2}(X,K(Y,Z))+\frac{1}{2}K(Y,K(X,Z))}.
\end{split}
\end{equation*}

By substituting back this relation in (\ref{13.1}) it follows
\begin{equation}\label{15.1}
\begin{split}
\overset{(\alpha )}{R}(X,Y)Z &=\left( \frac{1+\alpha }{2}\right)^{2}R(X,Y)Z+\left( \frac{1-\alpha }{2}\right) ^{2}\overset{\ast}{R}(X,Y)Z\\
&+\frac{1-\alpha ^{2}}{2}\left\{{\overset{(0)}{R}(X,Y)Z}-\frac{1}{4}K\left( X,K(Y,Z)\right)+\frac{1}{4}K\left( Y,K\left( X,Z\right) \right) \right\} .
\end{split}
\end{equation}

In the case $\alpha =0$ this relation reads
\begin{equation*}
\overset{(0)}{R}(X,Y)Z=\frac{1}{4}R(X,Y)Z+{\frac{1}{4}\overset{\ast}{R}(X,Y)Z}+\frac{1}{2}\left\{{\overset{(0)}{R}(X,Y)Z}-\frac{1}{4}K\left( X,K(Y,Z)\right) +\frac{1}{4}K\left(
Y,K\left( X,Z\right) \right) \right\} .
\end{equation*}

Therefore we obtain
\begin{equation*}
\frac{1}{2}\overset{(0)}{R}(X,Y)Z=\frac{1}{4}R(X,Y)Z+\frac{1}{4}\overset{\ast}{R}(X,Y)Z+\frac{1}{8}\left\{ K\left( Y,K\left( X,Z\right) \right) -K\left(
X,K(Y,Z)\right) \right\} ,
\end{equation*}%
i.e.
\begin{equation*}
{\overset{(0)}{R}(X,Y)Z}=\frac{1}{2}R(X,Y)Z+\frac{1}{2}\overset{\ast}{R}(X,Y)Z+\frac{1}{4}%
\left\{ K\left( Y,K\left( X,Z\right) \right) -K\left( X,K(Y,Z)\right)
\right\} .
\end{equation*}

Substituting back to (\ref{15.1}) it results
\begin{equation*}
\begin{split}
\overset{(\alpha )}{R}(X,Y)Z &=\left( \frac{1+\alpha }{2}\right)^{2}R(X,Y)Z+\left( \frac{1-\alpha }{2}\right) ^{2}\overset{\ast}{R}(X,Y)Z \\
&+\frac{1-\alpha ^{2}}{2}\Big\{\frac{1}{2}R(X,Y)Z+\frac{1}{2}\overset{\ast}{R}(X,Y)Z+\frac{1}{4}K\left( Y,K\left( X,Z\right) \right) -\frac{1}{4}K\left(X,K(Y,Z)\right) \\
&+\frac{1}{4}K\left( Y,K\left( X,Z\right) \right) -\frac{1}{4}K\left(X,K(Y,Z)\right) \Big\} \\
&=\frac{1+\alpha }{2}R(X,Y)Z+\frac{1-\alpha }{2}\overset{\ast}{R}(X,Y)Z\\
&+\frac{1-\alpha ^{2}}{4}\left\{ K\left( Y,K\left( X,Z\right) \right){-K\left( X,K(Y,Z)\right) }\right\} .
\end{split}
\end{equation*}

Therefore we obtain the following proposition

\begin{proposition}\label{Prop_R_alpha}
If $\nabla $ and $\overset{\ast}{\nabla}$ are dual connections on a
(pseudo-) Riemannian manifold $\left( M,g\right) $ then
\begin{equation}\label{16.1}
\begin{split}
\overset{(\alpha )}{R}(X,Y)Z&=\frac{1+\alpha }{2}R(X,Y)Z+\frac{1-\alpha }{2}%
                              \overset{\ast}{R}(X,Y)Z\\
  &+\frac{1-\alpha ^{2}}{4}\left\{ K\left( Y,K\left( X,Z\right)
\right) {-K\left( X,K(Y,Z)\right)} \right\} .  
\end{split}
\end{equation}
\end{proposition}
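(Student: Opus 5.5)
Essentially the derivation preceding the statement is the proof; I would organize it as follows, but I would go directly through $\overset{(0)}{\nabla}$ to avoid the slightly awkward back-substitution. By Proposition \ref{prop: prop9}, $\overset{(\alpha)}{\nabla}_XY=\overset{(0)}{\nabla}_XY-\frac{\alpha}{2}K(X,Y)$. This holds for general dual connections, with no assumption on torsion. So $\overset{(\alpha)}{\nabla}$ is $\overset{(0)}{\nabla}$ perturbed by the $(1,2)$-tensor $A:=-\frac{\alpha}{2}K$.

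The first step is the standard expansion of the curvature of a perturbed connection. Expanding $\overset{(\alpha)}{\nabla}_X\overset{(\alpha)}{\nabla}_YZ-\overset{(\alpha)}{\nabla}_Y\overset{(\alpha)}{\nabla}_XZ-\overset{(\alpha)}{\nabla}_{[X,Y]}Z$ by linearity gives
\begin{equation*}
\overset{(\alpha)}{R}(X,Y)Z=\overset{(0)}{R}(X,Y)Z-\frac{\alpha}{2}\Big\{(\overset{(0)}{\nabla}_XK)(Y,Z)-(\overset{(0)}{\nabla}_YK)(X,Z)+K(\overset{(0)}{T}(X,Y),Z)\Big\}+\frac{\alpha^2}{4}[K_X,K_Y]Z .
\end{equation*}
The torsion term arises because $\overset{(0)}{\nabla}_XY-\overset{(0)}{\nabla}_YX-[X,Y]=\overset{(0)}{T}(X,Y)$ when the derivative is pushed inside $K$. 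This is the same computation as the one behind Theorem \ref{thm_C1}(i),(ii), which I take as given; indeed those formulas are exactly the cases $\alpha=\pm1$.

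The second step is to eliminate the derivative terms. Theorem \ref{thm_C1}(iii) gives
\begin{equation*}
(\overset{(0)}{\nabla}_XK)(Y,Z)-(\overset{(0)}{\nabla}_YK)(X,Z)+K(\overset{(0)}{T}(X,Y),Z)=\overset{\ast}{R}(X,Y)Z-R(X,Y)Z,
\end{equation*}
and Theorem \ref{thm_C1}(iv) gives $\overset{(0)}{R}=\frac12(R+\overset{\ast}{R})-\frac14[K_X,K_Y]Z$. Substituting both, the $R$ and $\overset{\ast}{R}$ terms combine to $\frac{1+\alpha}{2}R+\frac{1-\alpha}{2}\overset{\ast}{R}$. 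The commutator term then carries the coefficient $\frac{\alpha^2}{4}-\frac14=-\frac{1-\alpha^2}{4}$, and since $[K_X,K_Y]Z=K(X,K(Y,Z))-K(Y,K(X,Z))$ this is exactly \eqref{16.1}.

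The main obstacle is bookkeeping rather than ideas. The sign conventions for $K$, namely $\overset{\ast}{\nabla}=\overset{(0)}{\nabla}+\frac12K$, and the ordering inside $[K_X,K_Y]$ must be used consistently with Theorem \ref{thm_C1}. As a check, I would verify that $\alpha=\pm1$ returns $R$ and $\overset{\ast}{R}$, and that $\alpha=0$ reproduces Theorem \ref{thm_C1}(iv).
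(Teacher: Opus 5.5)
Your proof is correct, but it is organized differently from the paper's.

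\textbf{The paper's route.} The paper expands $\overset{(\alpha)}{R}$ bilinearly in $\nabla$ and $\overset{\ast}{\nabla}$, obtaining $\left(\frac{1+\alpha}{2}\right)^2R+\left(\frac{1-\alpha}{2}\right)^2\overset{\ast}{R}+\frac{1-\alpha^2}{4}\rho_{X,Y}Z$ with a mixed term $\rho$. Because $\nabla$ and $\overset{\ast}{\nabla}$ enter $\rho$ symmetrically, rewriting them as $\overset{(0)}{\nabla}\mp\frac12K$ makes all $\overset{(0)}{\nabla}K$ terms cancel outright. No torsion term ever appears, and $\rho_{X,Y}Z=2\overset{(0)}{R}(X,Y)Z-\frac12[K_X,K_Y]Z$. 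The paper then removes $\overset{(0)}{R}$ by specializing its own intermediate identity to $\alpha=0$, which re-derives Theorem~\ref{thm_C1}(iv). So it does not depend on Theorem~\ref{thm_C1} or the appendix.

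\textbf{Your route.} You perturb the average connection by $-\frac{\alpha}{2}K$. This does generate $\overset{(0)}{\nabla}K$ and $\overset{(0)}{T}$ terms, which you then eliminate with Theorem~\ref{thm_C1}(iii),(iv). What this buys is a clearer picture: $\overset{(\alpha)}{R}$ is a quadratic polynomial in $\alpha$ with leading coefficient $\frac14[K_X,K_Y]Z$. The stated formula is then just the interpolation of that quadratic through its values $R$ at $\alpha=1$ and $\overset{\ast}{R}$ at $\alpha=-1$.

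You also do not really need to import Theorem~\ref{thm_C1}. Your perturbation formula at $\alpha=\pm1$ is exactly the first form of Theorem~\ref{thm_C1}(i),(ii). Their difference and sum are precisely the two identities you substitute. So your argument is self-contained once the perturbation formula is verified.
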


In a local coordinate system $\left( U,q,R^{n}\right) $ on $M$ with
coordinates $x=\left( x^{1},x^{2},...,x^{n}\right) $

the relation (\ref{16.1}) leads in local coordinates to

\begin{equation*}
  \overset{(\alpha )}{R}\, _{l\ ij}^{\ k}=\frac{1+\alpha }{2}R_{l\ ij}^{\ k}
  +\frac{1-\alpha }{2}\overset{\ast}{R}\,_{l\ ij}^{\ k}+\frac{1-\alpha ^{2}}{4}\left(
K_{\ il}^{m}K_{\ jm}^{k}-K_{\ jl}^{m}K_{\ im}^{k}\right) .
\end{equation*}

The Ricci curvature tensors are

\begin{equation*}
\overset{(\alpha )}{R}_{lj}=\overset{(\alpha )}{R}\, _{l\ ij}^{\ i}=\frac{1+\alpha 
}{2}R_{lj}+\frac{1-\alpha }{2}\overset{\ast}{R}_{lj}+\frac{1-\alpha ^{2}}{4}\left(
K_{\ il}^{m}K_{\ jm}^{i}-K_{\ jl}^{m}K_{\ im}^{i}\right) .
\end{equation*}

By multiplying with $g^{lj}$, we get
\begin{equation*}
\overset{(\alpha )}{R}=\frac{1+\alpha 
}{2}R+\frac{1-\alpha }{2}\overset{\ast}{R}+\frac{1-\alpha ^{2}}{4}g^{lj}\left(
K_{\ il}^{m}K_{\ jm}^{i}-K_{\ jl}^{m}K_{\ im}^{i}\right) .
\end{equation*}

We can conclude
\begin{theorem}\label{alpha-Ricci}
If $\nabla $ and $\overset{\ast}{\nabla}$ are dual connections on a
(pseudo-) Riemannian manifold $\left( M,g\right) $ then the Ricci curvature tensors and the Ricci scalar of
$\overset{(\alpha)}{\nabla}$ are
\begin{equation}
  \overset{(\alpha )}{R}_{lj}
  =\frac{1+\alpha}{2}R_{lj}+\frac{1-\alpha }{2}\overset{\ast}{R}_{lj}+\frac{1-\alpha ^{2}}{4}\mathcal{K}_{lj}
\end{equation}
and
\begin{equation}
\overset{(\alpha )}{R}=\frac{1+\alpha 
}{2}R+\frac{1-\alpha }{2}\overset{\ast}{R}+\frac{1-\alpha ^{2}}{4}\mathcal{K}
\end{equation}
where
\begin{equation}
  \mathcal{K}_{lj}:=K_{\ il}^{m}K_{\ jm}^{i}-K_{\ im}^{i}K_{\ jl}^{m},\quad 
  \mathcal{K}:=g^{lj}\mathcal{K}_{lj}.
  \end{equation}
  \end{theorem}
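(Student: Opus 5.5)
The plan is to start from Proposition \ref{Prop_R_alpha}, which expresses $\overset{(\alpha)}{R}(X,Y)Z$ as the combination $\frac{1+\alpha}{2}R+\frac{1-\alpha}{2}\overset{\ast}{R}$ plus the quadratic correction $\frac{1-\alpha^2}{4}\{K(Y,K(X,Z))-K(X,K(Y,Z))\}$, and then take the two contractions that define the Ricci tensor and the Ricci scalar. Contraction and multiplication by $g^{lj}$ are linear, so the curvature part of each formula comes out directly. The only real work is to identify the contracted $K$-term with $\mathcal{K}_{lj}$.

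The first step is to write (\ref{16.1}) in coordinates. I will take $X=\partial_i$, $Y=\partial_j$, $Z=\partial_l$. By the convention $K(\partial_j,\partial_i)=K^k_{\ ij}\partial_k$ we have $K(\partial_i,\partial_l)=K^m_{\ li}\partial_m$. Hence
\[
K(\partial_j,K(\partial_i,\partial_l))=K^m_{\ li}K^k_{\ mj}\partial_k,\qquad K(\partial_i,K(\partial_j,\partial_l))=K^m_{\ lj}K^k_{\ mi}\partial_k .
\]
Using (\ref{R tensors def}) with $R(\partial_i,\partial_j)\partial_l=R^{\ k}_{l\ ij}\partial_k$, this gives
\[
\overset{(\alpha)}{R}\,^{\ k}_{l\ ij}=\frac{1+\alpha}{2}R^{\ k}_{l\ ij}+\frac{1-\alpha}{2}\overset{\ast}{R}\,^{\ k}_{l\ ij}+\frac{1-\alpha^2}{4}\big(K^m_{\ li}K^k_{\ mj}-K^m_{\ lj}K^k_{\ mi}\big).
\]

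The second step contracts $k=i$, which is the paper's convention $R_{lj}=R^{\ i}_{l\ ij}$. This yields $\frac{1+\alpha}{2}R_{lj}+\frac{1-\alpha}{2}\overset{\ast}{R}_{lj}$ plus $\frac{1-\alpha^2}{4}(K^m_{\ li}K^i_{\ mj}-K^m_{\ lj}K^i_{\ mi})$.

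The main obstacle is this step: K is not symmetric on a quasi-statistical manifold, so the index order must be tracked carefully. The bracket has to be matched with $\mathcal{K}_{lj}=K^m_{\ il}K^i_{\ jm}-K^i_{\ im}K^m_{\ jl}$ exactly as written in the statement. I will redo the index bookkeeping from the paper's own coordinate display preceding Theorem \ref{alpha-Ricci}, whose ordering conventions define $\mathcal{K}_{lj}$, rather than from my reconstruction above. The reason is that the paper's text computation writes the K-term as $K^m_{\ il}K^k_{\ jm}$, with lower indices in the opposite order to mine. If a transposition remains, I will record which traces appear, $Tr_1$ or $Tr_2$ of $K$. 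I will not symmetrize, because that would be invalid in the quasi-statistical case.

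The final step multiplies the Ricci identity by $g^{lj}$ and sums. Since $g^{lj}$ is a function, it passes through linearly, giving $\overset{(\alpha)}{R}=\frac{1+\alpha}{2}R+\frac{1-\alpha}{2}\overset{\ast}{R}+\frac{1-\alpha^2}{4}\mathcal{K}$ with $\mathcal{K}=g^{lj}\mathcal{K}_{lj}$. No metricity of the connections is needed at this stage, because no covariant derivative of $g$ appears.
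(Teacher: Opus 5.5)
Your route is the paper's own: Proposition~\ref{Prop_R_alpha}, then coordinates, then the contraction $k=i$, then multiplication by $g^{lj}$. The curvature terms and the last step are fine, and your Step~1 is correct. The paper fixes the convention $K(\partial_j,\partial_i)=K^k_{\ ij}\partial_k$ in Section~2. It uses the same convention in proving Corollary~\ref{rem_C2}, where $[K_X,K_Y]Z$ with $X=\partial_j$, $Y=\partial_i$, $Z=\partial_m$ becomes $K^l_{\ mi}K^k_{\ lj}-K^l_{\ mj}K^k_{\ li}$. In that convention the contracted correction really is
\[
K^m_{\ li}K^i_{\ mj}-K^i_{\ mi}K^m_{\ lj}.
\]

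The gap is the step you leave open, and your plan for closing it does not work. The display before the theorem, and hence the stated $\mathcal{K}_{lj}$, comes from reading $K^k_{\ ij}$ as the $k$-component of $K(\partial_i,\partial_j)$. That transposes every lower pair relative to Section~2; the paper switches convention there without comment. ``Redoing the bookkeeping from the paper's display'' therefore just imports the transposition that has to be justified. No index manipulation can reconcile the two expressions within Section~2's convention, because they differ whenever $K$ is not symmetric.

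Concretely, take $n=2$, $g_{ij}=\delta_{ij}$, $K^1_{\ 21}=K^2_{\ 11}=1$, and all other components zero. Each $K_X$ is $g$-self-adjoint, so this is an admissible difference tensor. The stated formula gives $\mathcal{K}_{11}=1$, while the true correction $K^m_{\ 1i}K^i_{\ m1}-K^i_{\ mi}K^m_{\ 11}$ vanishes. Already the traces $K^i_{\ im}$ and $K^i_{\ mi}$ differ by a trace of $\overset{\ast}{T}-T$, since $K(X,Y)-K(Y,X)=\overset{\ast}{T}(X,Y)-T(X,Y)$.

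What is missing is therefore a convention statement, not more computation. You have two options:
\begin{itemize}
\item Declare that in this section $K(\partial_i,\partial_j)=K^k_{\ ij}\partial_k$. Then your expression is literally $\mathcal{K}_{lj}$ and the proof is complete.
\item Keep Section~2's convention and state the theorem with $\mathcal{K}_{lj}$ replaced by its transpose $K^m_{\ li}K^i_{\ mj}-K^i_{\ mi}K^m_{\ lj}$.
\end{itemize}
The two readings agree only for symmetric $K$, i.e.\ in the statistical case. They do not agree in the quasi-statistical case this section is aimed at.
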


\subsection{Equiaffine connections on $\alpha$-statistical manifolds}\label{sect4}

\begin{proposition}
Let $\nabla $ and $\overset{\ast}{\nabla}$ be dual affine connections
on $\left( M,g\right) $. If $\nabla $ is equiaffine with parallel volume $%
\omega =\lambda (x)\cdot dx^{1}\wedge dx^{2}\wedge ...\wedge dx^{n}$, then $%
\overset{(\alpha)}{\nabla}$ is also equiaffine with parallel volume $\overset{(\alpha)}{\omega}=\lambda ^{\alpha }|g|^{\frac{1-\alpha }{2}}dx^{1}\wedge
dx^{2}\wedge ...\wedge dx^{n}$.
\end{proposition}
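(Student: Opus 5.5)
The plan is to reduce the statement to two results already established: Proposition \ref{prop: equiaffine dual conn} on the dual connection, and the Lemma on linear combinations of equiaffine connections in Section \ref{sect2}.

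First, assume $\nabla$ is equiaffine with parallel volume $\omega=\lambda(x)\,dx^1\wedge\dots\wedge dx^n$. By Proposition \ref{prop: equiaffine dual conn}, the dual connection $\overset{\ast}{\nabla}$ is then equiaffine with parallel volume
$$\overset{\ast}{\omega}=\frac{|g|}{\lambda}\,dx^1\wedge\dots\wedge dx^n .$$
In the local form used in that proof, this reads
$$\overset{\ast}{\Gamma}\,^j_{\ jk}=\frac{\partial}{\partial x^k}\log\frac{|g|}{\lambda}.$$

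Second, write the $\alpha$-connection as $\overset{(\alpha)}{\nabla}=a\nabla+b\overset{\ast}{\nabla}$ with $a=\frac{1+\alpha}{2}$ and $b=\frac{1-\alpha}{2}$, as in \eqref{def of alpha-conn}. Since $a+b=1$, this is a genuine affine connection. Its coefficients are $a\Gamma+b\overset{\ast}{\Gamma}$, so its relevant trace is
$$a\,\Gamma^k_{\ ki}+b\,\overset{\ast}{\Gamma}\,^k_{\ ki}.$$
Applying the Lemma on combinations of equiaffine connections, $\overset{(\alpha)}{\nabla}$ is equiaffine with parallel volume density
$$\lambda^{a}\Big(\frac{|g|}{\lambda}\Big)^{b}=\lambda^{\frac{1+\alpha}{2}-\frac{1-\alpha}{2}}\,|g|^{\frac{1-\alpha}{2}}=\lambda^{\alpha}|g|^{\frac{1-\alpha}{2}} .$$
This is exactly the claimed $\overset{(\alpha)}{\omega}$. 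The density is positive, since $\lambda>0$ and $|g|>0$.

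The one point to handle with care, and the only real obstacle, is the index convention for the trace in the equiaffine condition. The Lemma and Proposition \ref{prop: equiaffine dual conn} are written with the left trace $\Gamma^k_{\ ki}$, whereas \eqref{equiaffine Gamma right trace cond} involves the right trace plus torsion. Here $\overset{\ast}{\nabla}$ may have torsion, as in the quasi-statistical case, so the two traces need not agree. I will therefore verify the condition directly, without switching traces. By Corollary \ref{cor_E2} and its $\lambda$-version, $\nabla_X\omega=0$ is equivalent to $\partial_i\log\lambda=\Gamma^k_{\ ki}$, and this uses only the left trace. Taking the combination $a(\cdot)+b(\cdot)$ of the two left-trace identities for $\nabla$ and $\overset{\ast}{\nabla}$ gives
$$\partial_i\log\big(\lambda^{a}(|g|/\lambda)^{b}\big)=\overset{(\alpha)}{\Gamma}\,^k_{\ ki}.$$
This is the equiaffine condition for $\overset{(\alpha)}{\nabla}$ with the stated volume. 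Because it is computed purely from the left trace, no torsion term enters.
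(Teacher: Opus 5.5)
Your proposal is correct and takes the same route as the paper. Both arguments use Proposition \ref{prop: equiaffine dual conn} to get $\overset{\ast}{\Gamma}{}^{j}_{\ jk}=\partial_k\log(|g|/\lambda)$, then take the $\frac{1+\alpha}{2},\frac{1-\alpha}{2}$ combination of the two left-trace identities to obtain $\overset{(\alpha)}{\Gamma}{}^{j}_{\ jk}=\partial_k\log\bigl(\lambda^{\alpha}|g|^{\frac{1-\alpha}{2}}\bigr)$. Your decision to work only with the left trace, so that the torsion of $\overset{\ast}{\nabla}$ never enters, matches the convention of Lemma \ref{lem: 2.4} and of the paper's computation.
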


\begin{proof}
We know from Proposition \ref{prop: equiaffine dual conn} that if $\nabla $ is equiaffine with
parallel volume $\omega $ then $\overset{\ast}{\omega}$ is also equiaffine with
parallel volume $\overset{\ast}{\omega}=\frac{|g|}{\lambda }dx^{1}\wedge
dx^{2}\wedge ...\wedge dx^{n}$. It is trivial to observe that
\begin{equation*}
\begin{split}
\frac{1+\alpha }{2}\Gamma _{\ ik}^{i}+\frac{1-\alpha }{2}\overset{\ast}{\Gamma}\, _{\ ik}^{ i}
&=\frac{1+\alpha }{2}\frac{\partial }{\partial x^{k}}\log \lambda +\frac{1-\alpha }{2}\frac{\partial }{\partial x^{k}}\log \frac{|g|}{\lambda } \\
&=\frac{\partial }{\partial x^{k}}\log \lambda ^{\frac{1+\alpha }{2}}\frac{|g|^{\frac{1-\alpha }{2}}}{\lambda ^{\frac{1-\alpha }{2}}}=\frac{\partial }{\partial x^{k}}\log \lambda ^{\alpha }|g|^{\frac{1-\alpha }{2}}.
\end{split}
\end{equation*}
\end{proof}

Let us observe that Theorem \ref{thm_A13.19} combined with Proposition \ref{prop: alpha torsion} implies
\begin{equation}
  \begin{split}\label{alpha nabla T comput}
  \overset{(\alpha)}{\nabla}_k\overset{(\alpha)}{T}\,^k_{\ ji}&=\frac{1+\alpha}{2}{\nabla}_k\overset{(\alpha)}{T}\,^k_{\ ji}+\frac{1-\alpha}{2}\overset{\ast}{\nabla}_k\overset{(\alpha)}{T}\,^k_{\ ji}
  =\frac{1+\alpha}{2}{\nabla}_k\left(\frac{1+\alpha}{2}{T}^k_{\ ji}+\frac{1-\alpha}{2}\overset{\ast}{T}\,^{k}_{\ ji}\right)\\
                                                          &+\frac{1-\alpha}{2}\overset{\ast}{\nabla}_k\left(\frac{1+\alpha}{2}{T}^k_{\ ji}+\frac{1-\alpha}{2}\overset{\ast}{T}\,^{k}_{\ ji}\right)\\
    &=\left(\frac{1+\alpha}{2}\right)^2{\nabla}_k{T}^k_{\ ji}+\frac{1-\alpha^2}{4}\left( \overset{\ast}{\nabla}_k{T}^k_{\ ji}+{\nabla}_k\overset{\ast}{T}\,^{k}_{\ ji}\right)+\left(\frac{1-\alpha}{2}\right)^2\overset{\ast}{\nabla}_k\overset{\ast}{T}\,^{k}_{\ ji}.
    \end{split}
  \end{equation}

  Next, observe that the definition \eqref{def of alpha-conn} gives
  \begin{equation}\label{partial alpha Gammas}
\frac{\partial\overset{(\alpha)}{\Gamma}\,^k_{\ jk}}{\partial x^i}-\frac{\partial\overset{(\alpha)}{\Gamma}\,^k_{\ ik}}{\partial x^j}=\frac{1+\alpha}{2}\left(\frac{\partial{\Gamma}^k_{\ jk}}{\partial x^i}-\frac{\partial{\Gamma}^k_{\ ik}}{\partial x^j}\right)+\frac{1-\alpha}{2}\left(\frac{\partial\overset{\ast}{\Gamma}\,^{k}_{\ jk}}{\partial x^i}-\frac{\partial\overset{\ast}{\Gamma}\,^{k}_{\ ik}}{\partial x^j}\right).
    \end{equation}

    Using now \eqref{alpha nabla T comput} and \eqref{partial alpha Gammas}, we get
    \begin{equation}
      \begin{split}
        \overset{(\alpha)}{\nabla}_k\overset{(\alpha)}{T}\,^k_{\ ji}&+\left(\frac{\partial\overset{(\alpha)}{\Gamma}\,^k_{\ jk}}{\partial x^i}-\frac{\partial\overset{(\alpha)}{\Gamma}\,^k_{\ ik}}{\partial x^j}\right)=\left(\frac{1+\alpha}{2}\right)^2{\nabla}_k{T}^k_{\ ji}+\frac{1-\alpha^2}{4}\left( \overset{\ast}{\nabla}_k{T}^k_{\ ji}+{\nabla}_k\overset{\ast}{T}\,^{k}_{\ ji}\right)\\
        &+\left(\frac{1-\alpha}{2}\right)^2\overset{\ast}{\nabla}_k\overset{\ast}{T}\,^{k}_{\ ji}+\frac{1+\alpha}{2}\left(\frac{\partial{\Gamma}^k_{\ jk}}{\partial x^i}-\frac{\partial{\Gamma}^k_{\ ik}}{\partial x^j}\right)+\frac{1-\alpha}{2}\left(\frac{\partial\overset{\ast}{\Gamma}\,^{k}_{\ jk}}{\partial x^i}-\frac{\partial\overset{\ast}{\Gamma}\,^{k}_{\ ik}}{\partial x^j}\right).
        \end{split}
      \end{equation}

      Therefore, Theorem \ref{thm_A13.19} reads
      \begin{theorem}\label{alpha thm_A13.19}
If $\nabla$ is an affine connection with torsion $T$, then
\begin{equation}
  \begin{split}
\overset{(\alpha)}{R}_{ij}-\overset{(\alpha)}{R}_{ji}&=\left(\frac{1+\alpha}{2}\right)^2{\nabla}_k{T}^k_{\ ji}+\frac{1-\alpha^2}{4}\left( \overset{\ast}{\nabla}_k{T}^k_{\ ji}+{\nabla}_k\overset{\ast}{T}\,^{k}_{ji}\right)\\
        &+\left(\frac{1-\alpha}{2}\right)^2\overset{\ast}{\nabla}_k\overset{\ast}{T}\,^{k}_{ji}+\frac{1+\alpha}{2}\left(\frac{\partial{\Gamma}^k_{\ jk}}{\partial x^i}-\frac{\partial{\Gamma}^k_{\ ik}}{\partial x^j}\right)+\frac{1-\alpha}{2}\left(\frac{\partial\overset{\ast}{\Gamma}\,^{k}_{\ jk}}{\partial x^i}-\frac{\partial\overset{\ast}{\Gamma}\,^{k}_{\ ik}}{\partial x^j}\right).
  \end{split}
  \end{equation}
\end{theorem}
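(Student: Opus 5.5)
The plan is to apply Theorem \ref{thm_A13.19} directly to the affine connection $\overset{(\alpha)}{\nabla}$. Its coefficients are $\overset{(\alpha)}{\Gamma}\,^k_{\ ij}=\frac{1+\alpha}{2}\Gamma^k_{\ ij}+\frac{1-\alpha}{2}\overset{\ast}{\Gamma}\,^k_{\ ij}$, and its torsion is $\overset{(\alpha)}{T}$. Theorem \ref{thm_A13.19} then gives
\begin{equation*}
\overset{(\alpha)}{R}_{ij}-\overset{(\alpha)}{R}_{ji}=\overset{(\alpha)}{\nabla}_k\overset{(\alpha)}{T}\,^k_{\ ji}+\left(\frac{\partial\overset{(\alpha)}{\Gamma}\,^k_{\ jk}}{\partial x^i}-\frac{\partial\overset{(\alpha)}{\Gamma}\,^k_{\ ik}}{\partial x^j}\right).
\end{equation*}
The rest of the argument is to rewrite both terms on the right through $\nabla$, $\overset{\ast}{\nabla}$, $T$ and $\overset{\ast}{T}$.

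\textbf{The torsion term.} First use Proposition \ref{prop: alpha torsion} to replace $\overset{(\alpha)}{T}$ by $\frac{1+\alpha}{2}T+\frac{1-\alpha}{2}\overset{\ast}{T}$. Next split the operator as $\overset{(\alpha)}{\nabla}_k=\frac{1+\alpha}{2}\nabla_k+\frac{1-\alpha}{2}\overset{\ast}{\nabla}_k$. This is legitimate on tensor fields, because a covariant derivative is affine in the connection coefficients: every $\Gamma$-term in $\overset{(\alpha)}{\nabla}_kT^l_{\ ji}$ is linear in $\overset{(\alpha)}{\Gamma}$, and the partial derivative term carries total weight $\frac{1+\alpha}{2}+\frac{1-\alpha}{2}=1$. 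Expanding bilinearly then reproduces \eqref{alpha nabla T comput}: the coefficient of $\nabla_kT^k_{\ ji}$ is $(\frac{1+\alpha}{2})^2$, the mixed terms $\overset{\ast}{\nabla}_kT^k_{\ ji}+\nabla_k\overset{\ast}{T}\,^k_{\ ji}$ come with $\frac{1-\alpha^2}{4}$, and $\overset{\ast}{\nabla}_k\overset{\ast}{T}\,^k_{\ ji}$ comes with $(\frac{1-\alpha}{2})^2$.

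\textbf{The derivative term.} The second bracket is linear in the coefficients, so it splits as in \eqref{partial alpha Gammas} into $\frac{1+\alpha}{2}$ times the corresponding expression for $\Gamma$ plus $\frac{1-\alpha}{2}$ times the one for $\overset{\ast}{\Gamma}$.

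Adding the two expansions gives the stated formula. The only delicate point is the splitting of $\overset{(\alpha)}{\nabla}_k$ applied to a $(1,2)$-tensor, that is, checking that the partial-derivative part is not double counted. This is settled by the weights summing to one.

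Two further remarks. The hypothesis ``$\nabla$ with torsion $T$'' should be read as: $\nabla,\overset{\ast}{\nabla}$ are dual connections with torsions $T,\overset{\ast}{T}$. Duality itself plays no role here, since only the convex combination \eqref{def of alpha-conn} is used.
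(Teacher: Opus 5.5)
Your proposal is correct and follows essentially the same route as the paper: apply Theorem~\ref{thm_A13.19} to $\overset{(\alpha)}{\nabla}$, expand $\overset{(\alpha)}{\nabla}_k\overset{(\alpha)}{T}\,^k_{\ ji}$ bilinearly using Proposition~\ref{prop: alpha torsion} as in \eqref{alpha nabla T comput}, and split the trace-derivative term linearly as in \eqref{partial alpha Gammas}. You also justify $\overset{(\alpha)}{\nabla}_k=\frac{1+\alpha}{2}\nabla_k+\frac{1-\alpha}{2}\overset{\ast}{\nabla}_k$ on tensor fields because the weights sum to one, which fills in a step the paper uses without comment; note that for $|\alpha|>1$ this is an affine rather than a convex combination.
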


\begin{corollary}\label{cor: alpha Ricci diff by torsion}
If $\overset{(\alpha)}{\nabla}$ is an equiaffine connection with volume form 
$\omega=\lambda dx^1\wedge \dots \wedge dx^n$, $\lambda>0$, then  
$$
\overset{(\alpha)}{R}_{ij}-\overset{(\alpha)}{R}_{ji}=\overset{(\alpha)}{\nabla}_k\overset{(\alpha)}{T}\,^k_{\ ji}
-\left(\frac{\partial \overset{(\alpha)}{T}\,^k_{\ jk}}{\partial x^i}-\frac{\partial \overset{(\alpha)}{T}\,^k_{\ ik}}{\partial x^j}\right).
$$
\end{corollary}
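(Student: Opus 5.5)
The plan is to treat this as Corollary \ref{cor: Ricci diff by torsion} applied to the single affine connection $\overset{(\alpha)}{\nabla}$, in exactly the way Corollary \ref{cor: Ricci diff by torsion} was derived from Theorem \ref{thm_A13.19}. Theorem \ref{thm_A13.19} holds for an arbitrary affine connection with torsion, so for $\overset{(\alpha)}{\nabla}$, with coefficients $\overset{(\alpha)}{\Gamma}$ and torsion $\overset{(\alpha)}{T}$, it gives
\begin{equation*}
\overset{(\alpha)}{R}_{ij}-\overset{(\alpha)}{R}_{ji}=\overset{(\alpha)}{\nabla}_k\overset{(\alpha)}{T}\,^k_{\ ji}+\left(\frac{\partial\overset{(\alpha)}{\Gamma}\,^k_{\ jk}}{\partial x^i}-\frac{\partial\overset{(\alpha)}{\Gamma}\,^k_{\ ik}}{\partial x^j}\right).
\end{equation*}
This is also the content of Theorem \ref{alpha thm_A13.19} before its right-hand side is expanded via \eqref{alpha nabla T comput} and \eqref{partial alpha Gammas}. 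For the present corollary no expansion is needed, so I will keep everything in terms of $\overset{(\alpha)}{\nabla}$.

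Next I use equiaffinity. Since $\overset{(\alpha)}{\nabla}$ is equiaffine with parallel volume $\omega=\lambda\, dx^1\wedge\dots\wedge dx^n$, the corollary following Theorem \ref{thm_E1}, item (iii), gives $\partial_i\log\lambda=\overset{(\alpha)}{\Gamma}\,^k_{\ ki}$. I then rewrite this left trace as a right trace plus a torsion trace using \eqref{eq_T2}, which is the content of \eqref{equiaffine Gamma right trace cond}:
\begin{equation*}
\frac{\partial}{\partial x^i}\log\lambda=\overset{(\alpha)}{\Gamma}\,^k_{\ ik}+\overset{(\alpha)}{T}\,^k_{\ ik}.
\end{equation*}
Differentiating with respect to $x^j$, swapping $i$ and $j$, and subtracting, the mixed second partials of $\log\lambda$ cancel. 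The result is
\begin{equation*}
\frac{\partial\overset{(\alpha)}{\Gamma}\,^k_{\ jk}}{\partial x^i}-\frac{\partial\overset{(\alpha)}{\Gamma}\,^k_{\ ik}}{\partial x^j}=-\left(\frac{\partial\overset{(\alpha)}{T}\,^k_{\ jk}}{\partial x^i}-\frac{\partial\overset{(\alpha)}{T}\,^k_{\ ik}}{\partial x^j}\right).
\end{equation*}
Substituting this into the first display yields the stated formula.

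The only delicate step is the index and sign bookkeeping in passing from the left trace $\Gamma^k_{\ ki}$ to the right trace $\Gamma^k_{\ ik}$. With $T^k_{\ ik}=\Gamma^k_{\ ki}-\Gamma^k_{\ ik}$ from \eqref{eq_T2}, I will check that the sign of the torsion term agrees with \eqref{equiaffine Gamma right trace cond}, and that the trace appearing in Theorem \ref{thm_A13.19} really is the right trace $\Gamma^k_{\ jk}$. Everything else is a direct transcription of the proof of Corollary \ref{cor: Ricci diff by torsion} with every quantity decorated by $(\alpha)$.
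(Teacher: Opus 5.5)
Your proposal is correct and follows the paper's intended route. The paper gives no separate proof of this corollary; it is Corollary \ref{cor: Ricci diff by torsion} applied to the single connection $\overset{(\alpha)}{\nabla}$, that is, Theorem \ref{thm_A13.19} for $\overset{(\alpha)}{\nabla}$ combined with the cross-differentiated equiaffinity condition \eqref{equiaffine Gamma right trace cond}. Your sign checks come out as you expect: $T^k_{\ ik}=\Gamma^k_{\ ki}-\Gamma^k_{\ ik}$, and the trace in Theorem \ref{thm_A13.19} is the right trace $\Gamma^k_{\ jk}$. One citation fix: for general $\lambda$, the condition $\partial_i\log\lambda=\Gamma^k_{\ ki}$ is item (iii) of the unlabeled corollary following the theorem on arbitrary volume forms $\omega=\lambda\,dx^1\wedge\dots\wedge dx^n$. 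It is not Corollary \ref{cor_E2}, which only treats $\omega_g$.
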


\begin{corollary}
If $(M,g,\nabla)$ is a statistical manifold with dual connection $\overset{\ast}{\nabla}$, and $Tr_1(\Gamma)=Tr_1(\overset{\ast}{\Gamma})=0$, then the Ricci curvature tensors of $\overset{(\alpha)}{\nabla}$ are symmetric, i.e.
$
\overset{(\alpha)}{R}_{ij}=\overset{(\alpha)}{R}_{ji}.
$
\end{corollary}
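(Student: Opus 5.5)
The plan is to apply Theorem \ref{alpha thm_A13.19} directly, specialised to the statistical case. On a statistical manifold both $\nabla$ and $\overset{\ast}{\nabla}$ are torsion free, so $T=\overset{\ast}{T}=0$. By Proposition \ref{prop: alpha torsion}, $\overset{(\alpha)}{T}=0$ as well. Consequently every covariant-derivative-of-torsion term in Theorem \ref{alpha thm_A13.19} vanishes, namely ${\nabla}_kT^k_{\ ji}$, $\overset{\ast}{\nabla}_kT^k_{\ ji}$, ${\nabla}_k\overset{\ast}{T}\,^k_{\ ji}$ and $\overset{\ast}{\nabla}_k\overset{\ast}{T}\,^k_{\ ji}$. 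What remains is
\begin{equation*}
\overset{(\alpha)}{R}_{ij}-\overset{(\alpha)}{R}_{ji}=\frac{1+\alpha}{2}\left(\frac{\partial\Gamma^k_{\ jk}}{\partial x^i}-\frac{\partial\Gamma^k_{\ ik}}{\partial x^j}\right)+\frac{1-\alpha}{2}\left(\frac{\partial\overset{\ast}{\Gamma}\,^k_{\ jk}}{\partial x^i}-\frac{\partial\overset{\ast}{\Gamma}\,^k_{\ ik}}{\partial x^j}\right).
\end{equation*}

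The hypothesis $Tr_1(\Gamma)=Tr_1(\overset{\ast}{\Gamma})=0$ means $\Gamma^k_{\ ik}=0$ and $\overset{\ast}{\Gamma}\,^k_{\ ik}=0$ for every $i$, identically on the coordinate domain. Their partial derivatives therefore vanish, both brackets are zero, and the Ricci tensor of $\overset{(\alpha)}{\nabla}$ is symmetric.

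As a cross-check, one can avoid Theorem \ref{alpha thm_A13.19} and use Theorem \ref{alpha-Ricci} instead. The Corollary following Theorem \ref{thm_A13.19}, applied to the torsion-free connections $\nabla$ and $\overset{\ast}{\nabla}$ with zero right trace, gives that $R_{lj}$ and $\overset{\ast}{R}_{lj}$ are symmetric. It then remains to check that $\mathcal{K}_{lj}$ is symmetric. Since $K$ is symmetric in its lower indices (Theorem \ref{thm_2}(iv)), the first term $K^m_{\ il}K^i_{\ jm}$ is symmetric in $l,j$ after relabelling the dummy indices. The second term $K^i_{\ im}K^m_{\ jl}$ is symmetric because $K^m_{\ jl}=K^m_{\ lj}$.

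The only real subtlety is bookkeeping of index conventions: the paper defines $Tr_1$ as the right trace $\Gamma^k_{\ ik}$, and this must match the trace appearing in Theorem \ref{thm_A13.19}, which it does. There is also a minor check that $\alpha$-connection Christoffel symbols combine linearly, which is immediate from \eqref{def of alpha-conn} and is already encoded in \eqref{partial alpha Gammas}. I expect no genuine obstacle beyond confirming these conventions.
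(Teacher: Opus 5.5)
Your proposal is correct and follows the route the paper intends. The paper gives no separate proof; the corollary is placed as an immediate consequence of Theorem \ref{alpha thm_A13.19}, where on a statistical manifold all torsion terms vanish and the hypothesis $Tr_1(\Gamma)=Tr_1(\overset{\ast}{\Gamma})=0$ kills the trace-derivative terms.

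Your cross-check via Theorem \ref{alpha-Ricci} is also valid. It shows that $\mathcal{K}_{lj}$ is symmetric on any statistical manifold, so in that route the trace hypothesis is needed only for the symmetry of $R_{lj}$ and $\overset{\ast}{R}_{lj}$.
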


\begin{corollary}\label{cor_A13.21*}
  If $(M,g,\nabla)$ is a statistical manifold with dual connection $\overset{\ast}{\nabla}$, and $\overset{(\alpha)}{\nabla}$ 
is equiaffine with volume form 
$\omega=\lambda dx^1\wedge \dots \wedge dx^n$, $\lambda>0$, then the Ricci curvature tensors  of $\overset{(\alpha)}{\nabla}$ are symmetric, i.e.
$
\overset{(\alpha)}{R}_{ij}=\overset{(\alpha)}{R}_{ji}.
$
\end{corollary}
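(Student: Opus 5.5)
The plan is to apply Corollary \ref{cor: alpha Ricci diff by torsion} to $\overset{(\alpha)}{\nabla}$ and show that its right-hand side vanishes. The key input is the vanishing of the torsion of every $\alpha$-connection on a statistical manifold.

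First, on a statistical manifold both $\nabla$ and $\overset{\ast}{\nabla}$ are torsion free. By Proposition \ref{prop: alpha torsion}, or equivalently by part (i) of the Corollary following it, this gives $\overset{(\alpha)}{T}(X,Y)=0$ for every $\alpha$. In coordinates this reads $\overset{(\alpha)}{T}\,^k_{\ ji}=0$, and in particular its trace $\overset{(\alpha)}{T}\,^k_{\ jk}=0$ vanishes identically. Both the covariant divergence $\overset{(\alpha)}{\nabla}_k\overset{(\alpha)}{T}\,^k_{\ ji}$ and the partial derivatives $\partial_i\overset{(\alpha)}{T}\,^k_{\ jk}$ are therefore zero.

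Second, the hypothesis that $\overset{(\alpha)}{\nabla}$ is equiaffine with volume form $\omega=\lambda\, dx^1\wedge\dots\wedge dx^n$ is exactly the assumption of Corollary \ref{cor: alpha Ricci diff by torsion}. That corollary is the specialization of Theorem \ref{thm_A13.19} (equivalently Corollary \ref{cor: Ricci diff by torsion}) to $\overset{(\alpha)}{\nabla}$, which is itself an affine connection. It gives
$$\overset{(\alpha)}{R}_{ij}-\overset{(\alpha)}{R}_{ji}=\overset{(\alpha)}{\nabla}_k\overset{(\alpha)}{T}\,^k_{\ ji}-\Big(\partial_i \overset{(\alpha)}{T}\,^k_{\ jk}-\partial_j \overset{(\alpha)}{T}\,^k_{\ ik}\Big).$$
Substituting the vanishing torsion from the first step makes the right-hand side zero, so $\overset{(\alpha)}{R}_{ij}=\overset{(\alpha)}{R}_{ji}$.

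As a cross-check, one can bypass the corollary. Since $\overset{(\alpha)}{\nabla}$ is torsion free and equiaffine, Corollary \ref{cor_A13.21} applies to it directly and gives the same conclusion. Alternatively, in Theorem \ref{thm_A13.19} the equiaffine condition (iii) gives $\overset{(\alpha)}{\Gamma}\,^k_{\ ki}=\partial_i\log\lambda$. Torsion-freeness makes this equal to $\overset{(\alpha)}{\Gamma}\,^k_{\ ik}$, so the term $\partial_i\overset{(\alpha)}{\Gamma}\,^k_{\ jk}-\partial_j\overset{(\alpha)}{\Gamma}\,^k_{\ ik}$ becomes a difference of mixed second partials of $\log\lambda$ and vanishes.

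The only point needing care is the index convention: Theorem \ref{thm_A13.19} is stated with right traces $\Gamma^k_{\ ik}$, whereas the equiaffine condition is phrased with left traces $\Gamma^k_{\ ki}$. The two agree precisely because $\overset{(\alpha)}{T}=0$, so there is no genuine obstacle.
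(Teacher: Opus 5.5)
Your proposal is correct and follows the paper's intended argument. The paper gives no separate proof, but it places this corollary directly after Corollary \ref{cor: alpha Ricci diff by torsion}, mirroring how Corollary \ref{cor_A13.21} is derived from Corollary \ref{cor: Ricci diff by torsion}. As you do, one notes that $\overset{(\alpha)}{T}=0$ on a statistical manifold (part (i) of the corollary after Proposition \ref{prop: alpha torsion}), so the right-hand side vanishes.
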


\subsection{The Einstein tensor}\label{sect5}

We define the Einstein tensor as
\begin{equation}
\overset{(\alpha)}{G}_{ij}=\overset{(\alpha)}{R}_{(ij)}-\frac{1}{2}g_{ij}\overset{(\alpha)}{R}. 
  \end{equation}

By using Theorem \ref{alpha-Ricci} we can now calculate the expression of the Einstein tensor
\begin{equation*}
\begin{split}
\overset{(\alpha)}{R}_{(ij)}-\frac{1}{2}g_{ij}\overset{(\alpha)}{R} 
&=\frac{1+\alpha }{2}R_{(ij)}+\frac{1-\alpha }{2}\overset{\ast}{R}_{(ij)}+\frac{1-\alpha ^{2}}{4}\mathcal{K}_{(ij)}  \\
&-\frac{1}{2}g_{ij}\left[ \frac{1+\alpha }{2}R+\frac{1-\alpha }{2}\overset{\ast}{R}+\frac{1-\alpha ^{2}}{4}{\mathcal{K}}\right]  \\
&=\frac{1+\alpha }{2}\left( R_{(ij)}-\frac{1}{2}g_{ij}R\right) +\frac{1-\alpha }{2}\left( \overset{\ast}{R}_{(ij)}-\frac{1}{2}g_{ij}\overset{\ast}{R}\right)  \\
&+\frac{1-\alpha ^{2}}{4}\left(\mathcal{ K}_{(ij)}-\frac{1}{2}g_{ij}{\mathcal{K}}\right) ,
\end{split}
\end{equation*}
where  
\begin{equation*}
\mathcal{K}_{(ij)}:=\frac{1}{2}\left(\mathcal{K}_{ij}+\mathcal{K}_{ji}\right)=\left(K^l_{\ mi}K^m_{\ jl}+K^m_{\ lj}K^l_{\ im}   \right)-K^l_{\ lm}\left( K^m_{\ ij}+K^m_{\ ji}  \right).
\end{equation*}

Hence we obtain the following 

\begin{theorem}
For the dual connections $\nabla $ and $\overset{\ast}{\nabla}$the
Einstein equations in vacuum of the $\alpha$-connection $\overset{(\alpha)}\nabla$ are given by
\begin{equation*}
\overset{(\alpha)}{G}_{ij}=\frac{1+\alpha }{2}G_{ij}+\frac{1-\alpha }{2}\overset{\ast}{G}_{ij}+\frac{1-\alpha ^{2}}{4}H_{ij},
\end{equation*}
where $G_{ij}$ and $\overset{\ast}{G}_{ij}$ are the Einstein tensors of $\nabla$ and $\overset{\ast}{\nabla}$, respectively,
and we have denoted
\begin{equation*}
H_{ij}:=\mathcal{K}_{(ij)}-\frac{1}{2}g_{ij}{\mathcal{K}}.
\end{equation*}%
\end{theorem}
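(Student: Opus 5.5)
The plan is to obtain the identity directly from Theorem \ref{alpha-Ricci}, by symmetrizing the Ricci tensor of $\overset{(\alpha)}{\nabla}$ and subtracting half of $g_{ij}$ times its Ricci scalar. No new geometric input is needed. The argument is purely algebraic and uses only that the coefficients $\frac{1+\alpha}{2}$, $\frac{1-\alpha}{2}$ and $\frac{1-\alpha^2}{4}$ are constants, so that symmetrization and the contraction with $g^{lj}$ are linear operations commuting with them.

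First, I will symmetrize the formula for $\overset{(\alpha)}{R}_{lj}$ in Theorem \ref{alpha-Ricci} over its two indices. This gives
\[
\overset{(\alpha)}{R}_{(ij)}=\frac{1+\alpha}{2}R_{(ij)}+\frac{1-\alpha}{2}\overset{\ast}{R}_{(ij)}+\frac{1-\alpha^2}{4}\mathcal{K}_{(ij)} .
\]
Next, I will use the scalar identity $\overset{(\alpha)}{R}=\frac{1+\alpha}{2}R+\frac{1-\alpha}{2}\overset{\ast}{R}+\frac{1-\alpha^2}{4}\mathcal{K}$ from the same theorem, multiplied by $-\frac12 g_{ij}$. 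Adding the two expressions and regrouping by coefficient yields
\[
\frac{1+\alpha}{2}\Bigl(R_{(ij)}-\tfrac12 g_{ij}R\Bigr)+\frac{1-\alpha}{2}\Bigl(\overset{\ast}{R}_{(ij)}-\tfrac12 g_{ij}\overset{\ast}{R}\Bigr)+\frac{1-\alpha^2}{4}\Bigl(\mathcal{K}_{(ij)}-\tfrac12 g_{ij}\mathcal{K}\Bigr).
\]
The grouping works because the two weights sum to one, $\frac{1+\alpha}{2}+\frac{1-\alpha}{2}=1$, so that $\frac12 g_{ij}\overset{(\alpha)}{R}$ distributes over the pieces without leftover terms. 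By the definitions \eqref{eq_app1} and \eqref{eq_app1_*}, the first two brackets are exactly $G_{ij}$ and $\overset{\ast}{G}_{ij}$, and the third bracket is $H_{ij}$ by definition.

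The only point needing care, and the place where I expect the main obstacle, is the bookkeeping of $\mathcal{K}$. Its symmetric part must be written correctly in terms of $K$, and one must check that the trace $\mathcal{K}=g^{lj}\mathcal{K}_{lj}$ equals $g^{lj}\mathcal{K}_{(lj)}$. The latter holds because $g^{lj}$ is symmetric, so contraction with it annihilates the antisymmetric part of $\mathcal{K}_{lj}$. This makes $H_{ij}$ a well-defined symmetric tensor. Since $K$ need not be symmetric in the quasi-statistical case, I will keep the index order in $K^m_{\ il}K^i_{\ jm}$ and $K^i_{\ im}K^m_{\ jl}$ explicit when forming $\frac12(\mathcal{K}_{ij}+\mathcal{K}_{ji})$, rather than simplifying via any symmetry of $K$.
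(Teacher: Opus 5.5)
Your proposal is correct and follows the paper's proof exactly: symmetrize the Ricci formula of Theorem \ref{alpha-Ricci}, subtract $\frac12 g_{ij}$ times the scalar formula, and regroup. Two minor remarks: the regrouping is pure linearity (the scalar $\overset{(\alpha)}{R}$ carries the same three coefficients as $\overset{(\alpha)}{R}_{lj}$), so $\frac{1+\alpha}{2}+\frac{1-\alpha}{2}=1$ plays no role, and your caution about expanding $\mathcal{K}_{(ij)}$ is justified, since the paper's explicit expansion of $\frac12(\mathcal{K}_{ij}+\mathcal{K}_{ji})$ in terms of $K$ actually equals $\mathcal{K}_{ij}+\mathcal{K}_{ji}$, i.e.\ it drops the factor $\frac12$.
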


We will consider now the divergence $\overset{(\alpha)}{\nabla}{}^{\!i}(G_{ij})$ of the $\alpha$-Einstein tensor $\overset{(\alpha)}{G}_{ij}$. We have
\begin{equation}
  \begin{split}
  \overset{(\alpha)}{\nabla}{}^{\! i}\overset{(\alpha)}{G}_{\!ij}&=\frac{1+\alpha}{2}\nabla^i\overset{(\alpha)}{G}_{ij}+
  \frac{1-\alpha}{2}{\overset{\ast}{\nabla}\,^i}\overset{(\alpha)}{G}_{ij}\\
  &=\frac{1+\alpha}{2}\nabla^i\left[ \frac{1+\alpha }{2}G_{ij}+\frac{1-\alpha }{2}\overset{\ast}{G}_{ij}+\frac{1-\alpha ^{2}}{4}H_{ij}  \right]\\
    &+
  \frac{1-\alpha}{2}{\overset{\ast}{\nabla}\,^i}\left[ \frac{1+\alpha }{2}G_{ij}+\frac{1-\alpha }{2}\overset{\ast}{G}_{ij}+\frac{1-\alpha ^{2}}{4}H_{ij}  \right]\\
                                                                 &=\left(\frac{1+\alpha}{2}\right)^2\nabla^iG_{ij}+\frac{1-\alpha^2}{4}\left(\overset{\ast}{\nabla}\,^{i}G_{ij}+\nabla^i\overset{\ast}{G}_{ij} \right)
                                                                   +\left(\frac{1-\alpha}{2}\right)^2\overset{\ast}{\nabla}\,^{i}\overset{\ast}{G}_{ij}\\
    &+\frac{1-\alpha^2}{4}\left(\nabla^{i}H_{ij}+\overset{\ast}{\nabla}\,^{i}H_{ij} \right).
    \end{split}
  \end{equation}
  
\section{Discussions and final remarks}\label{sect6}

In the present paper, we have presented in detail the mathematical properties of the statistical and quasi-statistical manifolds. In this regard our main goal is to provide some geometric results that could be helpful from the point of view of the physical applications. In particular we have studied in detail the Einstein tensor as defined on statistical and quasi-statistical manifolds, and its basic geometric properties. Hopefully, the presented results could find some applications in the field of gravitational theories, leading to the formulation and exploration of new perspective on the gravitational interaction.  

Quasi-statistical manifolds are important geometrical and mathematical structures, which are the generalizations of the geometric concept of Statistical Manifolds. Statistical Manifolds  were first introduced for the geometric description of stochastic processes \cite{Amari}. However, after being reformulated in a rigorous mathematical and  differential geometric structure, Statistical Manifolds have found a large number of applications in various branches of mathematics, engineering, and physics, thus becoming an active and dynamic field of research. 

Quasi-statistical manifolds are also very interesting mathematical objects, which extend and generalize in a non-trivial way the properties of the Statistical Manifolds, by significantly enlarging the geometric framework of the theory. However,  they have been less studied from the point of view of their  applications in natural sciences. The preliminary investigations performed in \cite{Iosifidis2} and \cite{Csillag} have shown that the presence of torsion in the Statistical Manifolds may have very important implications from the point of view of the description of the gravitational interaction and of the cosmological phenomena.

In \cite{Iosifidis2} a gravitational theory based on the action 
  \begin{equation}
  S=\frac{1}{4\kappa}\int{\left(R^{(1)}+R^{(2)}+K\right)\sqrt{-g}d^4x}, 
  \end{equation}
 was proposed,  where $K:=K^{\lambda \mu \nu}K_{\mu \nu \lambda}-K^{\lambda \mu}_{\;\;\;\;\mu}K^{\lambda \nu}_{\;\;\;\nu}$ is the difference scalar. For mathematical and phsyical consistency this theory assumes the existence of two hypermomenta, defined according to 
  \begin{equation}
  \Delta _\lambda ^{\;\;\;\mu \nu (1)}=-\frac{2}{\sqrt{-g}}\frac{ \delta S_m}{\delta \Gamma ^{\lambda\;\;\;\;\;(1)}_{\;\;\;\mu \nu}}=\Xi _\lambda ^{\;\;\mu \nu}, 
  \end{equation}
  and 
  \begin{equation}
  \Delta _\lambda ^{\;\;\;\mu \nu (2)}=-\frac{2}{\sqrt{-g}}\frac{ \delta S_m}{\delta \Gamma ^{\lambda\;\;\;\;\;(2)}_{\;\;\;\mu \nu}}=-\Xi _\lambda ^{\;\;\mu \nu}, 
  \end{equation}
  respectively. 
  
  By assuming that the hypermomentum tensor $\Xi_{\alpha \mu \nu}$ is traceless and totally symmetric, the connection coefficients can be represented as 
  \begin{equation}
   \Gamma ^{\lambda\;\;\;\;(1)} _{\;\;\; \mu \nu}=\tilde{\Gamma}_{\;\;\mu \nu}^\lambda +\kappa \Xi ^{\lambda}_{\;\;\mu \nu}, 
  \end{equation}
  and 
  \begin{equation}
  \Gamma ^{\lambda\;\;\;\;(2)} _{\;\;\; \mu \nu}=\tilde{\Gamma}_{\;\;\mu \nu}^\lambda -\kappa \Xi ^{\lambda}_{\;\;\mu \nu}, 
  \end{equation}
  respectively. The gravitational field equations for this gravitational theory take the form \cite{Iosifidis2}
 \begin{equation}
    \tilde{R}_{\mu \nu}-\frac{1}{2}g_{\mu \nu}\tilde{R}=\kappa T_{\mu \nu}-\kappa ^2\left(\Xi ^{\alpha \beta}_{\;\;\mu}\Xi _{\alpha \beta \mu}-\frac{1}{2}\Xi^{\alpha \beta \gamma}\Xi_{\alpha \beta \gamma}g_{\mu \nu}\right).
\end{equation}

For the case of a quasi-static manifold the Einstein gravitational field equations can be formulated as
\begin{equation}
\overset{(\alpha)}{G}_{\!ij}=\frac{1+\alpha}{2}G_{ij}+\frac{1-\alpha}{2}G^\ast_{ij}+\frac{1-\alpha^2}{4}H_{ij}=\frac{8\pi G}{c^4}T_{ij},
\end{equation}
where  the matter energy momentum tensor $T_{ij}$ has been defined in a way similar to standard general relativity. These field equations can be reformulated as effective Einstein equations, given by
\begin{equation}
G_{ij}=\frac{8\pi G}{c^4}T_{ij}^{(eff)},
\end{equation}
where 
\begin{equation}
T_{ij}^{(eff)}=\frac{2}{1+\alpha}T_{ij}-\frac{c^4}{8\pi G}\left(\left(1-\alpha ^2\right)G^\ast _{ij}-\frac{(1+\alpha)^2(1-\alpha)}{2}H_{ij}\right).
\end{equation}

Hence, the modified effective energy-momentum tensor $T_{ij}^{(eff)}$ contains some extra geometric contributions coming from the quasi-statistical manifold structure of the theory, which can be interpreted as describing a geometric dark energy and, presumably, a dark matter component. It is interesting to note that while $G_{ij}^\ast $ is the Einstein tensor defined with the help of the dual connection $\nabla ^\ast$, the $H_{ij}$ tensor is defined with the help of the difference tensor $K$. Therefore, the Einstein field equations defined on a quasi-statistical manifold  could give a geometric explanation of the origin of the two fundamental components of the Universe, dark energy and dark matter, respectively. 

In the investigations of the mathematical properties of the Statistical Manifolds one routinely assumes the condition of the vanishing of the torsion tensors $T$ and $T^*$ of the connection $\nabla$ and of its dual $\nabla ^\ast$, respectively. In the present work, we have presented in detail the mathematical foundations of the quasi- statistical manifolds, or of the information geometries with torsion. These types of mathematical structure, the quasi-statistical manifolds \cite{Kurose}, significantly enlarge the field of information geometry, and they also open some new and important  perspectives for mathematical, physical or engineering applications.  In our study  we have calculated the curvature tensors, and the curvature scalars of the quasi-statistical manifolds, thus obtaining all the mathematical and theoretical tools necessary to build novel gravitational field theories.

In our study we have proved a number of Theorems that refer to geometries consisting of a connection and its torsion dual.  If $\nabla$ and its  dual $\nabla ^\ast$ are given, then one can introduce a parametric family of connections $\nabla^{(\alpha)}$, which are given as a linear combination of the connections $\nabla$ and $\nabla ^\ast$. These connections generate an interesting but complex mathematical structure. Moreover, similarly to the case of the Statistical Manifolds, a totally symmetric tensor, called in information geometry the cubic tensor, can also be introduced. On quasi-statistical manifolds the cubic tensor is related to the torsion tensor,  an important property that significantly distinguishes statistical and quasi-statistical manifolds.  

The main  reason for studying different geometric models in gravity is to understand more about the specific phases of the history of the Universe. The direct astrophysical and cosmological applications of the present work could offer a new perspective on the geometric description of the gravitational interaction. In the present study we have provided some of the basic mathematical concepts and results necessary for the in depth physical exploration of the role of torsion and complex geometric structures in the understanding of the dynamics and evolution of the Universe.

\section*{Acknowledgments}

This research was partially supported financially by the Prince of Songkla University, Surat Thani Campus Collaborative Research Fund 005/2568.

\section*{ORCID}

\noindent Rattanasak Hama - \url{https://orcid.org/0000-0002-7303-1495}

\noindent Tiberiu Harko - \url{https://orcid.org/0000-0002-1990-9172}

\noindent Sorin V. Sabau - \url{https://orcid.org/0000-0003-4620-2620}

\section{Appendix}\label{App}

In the present Appendix we present the explicit  calculations of the results of the curvature  

\subsection{Curvature computations}\label{App1}

\subsubsection{The proof of Theorem \ref{thm_C1}}

From relation \eqref{eq_Diff1} and \eqref{eq_Av1} we get
$$
\nabla_XY=\overset{(0)}{\nabla}_XY-\frac{1}{2}K(X,Y)
$$
which implies
\begin{equation*}
\begin{split}
R(X,Y)Z&=\nabla_X\nabla_YZ-\nabla_Y\nabla_XZ-\nabla_{[X,Y]}Z\\
&=\nabla_X(\overset{(0)}{\nabla}_YZ-\frac{1}{2}K(Y,Z))-\nabla_Y(\overset{(0)}{\nabla}_XZ-\frac{1}{2}K(X,Z))-\overset{(0)}{\nabla}_{[X,Y]}Z+\frac{1}{2}K([X,Y],Z)\\
&=\overset{(0)}{\nabla}_X(\overset{(0)}{\nabla}_YZ-\frac{1}{2}K(Y,Z))-\frac{1}{2}K(X,\overset{(0)}{\nabla}_YZ-\frac{1}{2}K(Y,Z))\\
&-\overset{(0)}{\nabla}_Y(\overset{(0)}{\nabla}_XZ-\frac{1}{2}K(X,Z))+\frac{1}{2}K(Y,\overset{(0)}{\nabla}_XZ-\frac{1}{2}K(X,Z))\\
&-\overset{(0)}{\nabla}_{[X,Y]}Z+\frac{1}{2}K([X,Y],Z)\\
&=\overset{(0)}{R}(X,Y)Z-\frac{1}{2}\overset{(0)}{\nabla}_X(K(Y,Z))-\frac{1}{2}K(X,\overset{(0)}{\nabla}_YZ)+\frac{1}{4}K(X,K(Y,Z))\\
&+\frac{1}{2}\overset{(0)}{\nabla}_Y(K(X,Z))+\frac{1}{2}K(Y,\overset{(0)}{\nabla}_XZ)-\frac{1}{4}K(Y,K(X,Z))+\frac{1}{2}K([X,Y],Z).
\end{split}
\end{equation*}

Taking into account that
$$
(\overset{(0)}{\nabla}_XK)(Y,Z)=\overset{(0)}{\nabla}_X(K(Y,Z))-K(\overset{(0)}{\nabla}_XY,Z)-K(Y,\overset{(0)}{\nabla}_XZ)
$$
and
$$
(\overset{(0)}{\nabla}_YK)(X,Z)=\overset{(0)}{\nabla}_Y(K(X,Z))-K(\overset{(0)}{\nabla}_YX,Z)-K(X,\overset{(0)}{\nabla}_YZ)
$$
it results
\begin{equation}\label{eq_App_A1}
\begin{split}
R(X,Y)Z&=\overset{(0)}{R}(X,Y)Z-\frac{1}{2}(\overset{(0)}{\nabla}_XK)(Y,Z)-\frac{1}{2}K(\overset{(0)}{\nabla}_XY,Z)\\
&+\frac{1}{2}(\overset{(0)}{\nabla}_YK)(X,Z)+\frac{1}{2}K(\overset{(0)}{\nabla}_YX,Z)\\
&+\frac{1}{4}[K(X,K(Y,Z))-K(Y,K(X,Z))]+\frac{1}{2}K([X,Y],Z)\\
&=\overset{(0)}{R}(X,Y)Z-\frac{1}{2}\{(\overset{(0)}{\nabla}_XK)(Y,Z)-(\overset{(0)}{\nabla}_YK)(X,Z)\}\\
&-\frac{1}{2}K(\overset{(0)}{\nabla}_XY-\overset{(0)}{\nabla}_YX-[X,Y],Z)+\frac{1}{4}[K_X,K_Y]Z\\
&=\overset{(0)}{R}(X,Y)Z-\frac{1}{2}\{(\overset{(0)}{\nabla}_XK)(Y,Z)-(\overset{(0)}{\nabla}_YK)(X,Z)-\frac{1}{2}[K_X,K_Y]Z\}\\
&-\frac{1}{2}K(\overset{(0)}{T}(X,Y),Z),
\end{split}
\end{equation}
where we used $[K_X,K_Y]Z=K_X(K(Y,Z))-K_Y(K(X,Z))$, hence \underline{the first part of (i) is proved}.

\bigskip

Likewise, by using now
$$
\overset{(0)}{\nabla}_XY=\nabla_XY+\frac{1}{2}K(X,Y),
$$
we obtain
\begin{equation*}
\begin{split}
{(\overset{(0)}{\nabla}_XK)(Y,Z)}&=\overset{(0)}{\nabla}_X(K(Y,Z))-K(\overset{(0)}{\nabla}_XY,Z)-K(Y,\overset{(0)}{\nabla}_XZ)\\
&=\nabla_X(K(Y,Z))+\frac{1}{2}K(X,K(Y,Z))\\
&-K(\nabla_XY+\frac{1}{2}K(X,Y),Z)-K(Y,\nabla_XZ+\frac{1}{2}K(X,Z))\\
&=(\nabla_XK)(Y,Z)+\frac{1}{2}\{
K(X,K(Y,Z))-K(K(X,Y),Z)-K(Y,K(X,Z))
\}\\
&=(\nabla_XK)(Y,Z)+\frac{1}{2}[K_X,K_Y]Z-\frac{1}{2}K(K(X,Y),Z),
\end{split}
\end{equation*}
i.e. we have obtained
\begin{equation}\label{eq_App_A2}
(\overset{(0)}{\nabla}_XK)(Y,Z)=(\nabla_XK)(Y,Z)+\frac{1}{2}[K_X,K_Y]Z-\frac{1}{2}K(K(X,Y),Z),
\end{equation}
using this formula, we observe that
\begin{equation*}
\begin{split}
&(\overset{(0)}{\nabla}_XK)(Y,Z)-(\overset{(0)}{\nabla}_YK)(X,Z)-\frac{1}{2}[K_X,K_Y]Z\\
&=(\nabla_XK)(Y,Z)+\cancel{\frac{1}{2}[K_X,K_Y]Z}-\frac{1}{2}K(K(X,Y),Z)\\
&-(\nabla_YK)(X,Z)-\frac{1}{2}[K_Y,K_X]Z+\frac{1}{2}K(K(Y,X),Z)-\cancel{\frac{1}{2}[K_X,K_Y]Z}\\
&=(\nabla_XK)(Y,Z)-(\nabla_YK)(X,Z)-\frac{1}{2}K(K(X,Y)-K(Y,X),Z)+\frac{1}{2}[K_X,K_Y]Z\\
&=(\nabla_XK)(Y,Z)-(\nabla_YK)(X,Z)+\frac{1}{2}[K_X,K_Y]Z-\frac{1}{2}K(\overset{\ast}{T}(X,Y)-T(X,Y),Z),
\end{split}
\end{equation*}
where we have used Proposition \ref{prop:K diff vs T diff}.

From substituting this last formula in \eqref{eq_App_A1} it results
\begin{equation*}
\begin{split}
R(X,Y)Z&=\overset{(0)}{R}(X,Y)Z\\
&-\frac{1}{2}\left\{
(\nabla_XK)(Y,Z)-(\nabla_YK)(X,Z)+\frac{1}{2}[K_X,K_Y]Z
-\frac{1}{2}K(\overset{\ast}{T}(X,Y)-T(X,Y),Z)
\right\}\\
&-\frac{1}{2}K(\overset{(0)}{T}(X,Y),Z)\\
&=\overset{(0)}{R}(X,Y)Z
-\frac{1}{2}\left\{
(\nabla_XK)(Y,Z)-(\nabla_YK)(X,Z)+\frac{1}{2}[K_X,K_Y]Z
\right\}\\
&+\frac{1}{4}K(\overset{\ast}{T}(X,Y)-T(X,Y)-2\overset{(0)}{T}(X,Y),Z)\\
&=\overset{(0)}{R}(X,Y)Z
-\frac{1}{2}\left\{
(\nabla_XK)(Y,Z)-(\nabla_YK)(X,Z)+\frac{1}{2}[K_X,K_Y]Z
\right\}\\
&-\frac{1}{2}K(T(X,Y),Z),
\end{split}
\end{equation*}
where we use that $\overset{(0)}{T}=\frac{1}{2}(T+\overset{\ast}{T})$, hence \underline{(i) is now completely proved}.

\bigskip

Next, using
$$
\overset{\ast}{\nabla}_XY=\overset{(0)}{\nabla}_XY+\frac{1}{2}K(X,Y),
$$
we have
\begin{equation}
\begin{split}\label{eq_App_A3}
\overset{\ast}{R}(X,Y)Z&=\overset{\ast}{\nabla}_X\overset{\ast}{\nabla}_YZ-\overset{\ast}{\nabla}_Y\overset{\ast}{\nabla}_XZ-\overset{\ast}{\nabla}_{[X,Y]}Z\\
&=\overset{\ast}{\nabla}_X(\overset{(0)}{\nabla}_YZ+\frac{1}{2}K(Y,Z))-\overset{\ast}{\nabla}_Y(\overset{(0)}{\nabla}_XZ+\frac{1}{2}K(X,Z))\\
&-\overset{(0)}{\nabla}_{[X,Y]}Z-\frac{1}{2}K([X,Y],Z)\\
&=\overset{(0)}{\nabla}_X(\overset{(0)}{\nabla}_YZ+\frac{1}{2}K(Y,Z))+\frac{1}{2}K(X,\overset{(0)}{\nabla}_YZ+\frac{1}{2}K(Y,Z))\\
&-\overset{(0)}{\nabla}_Y(\overset{(0)}{\nabla}_XZ+\frac{1}{2}K(X,Z))-\frac{1}{2}K(Y,\overset{(0)}{\nabla}_XZ+\frac{1}{2}K(X,Z))\\
&-\overset{(0)}{\nabla}_{[X,Y]}Z-\frac{1}{2}K([X,Y],Z)\\
&=\overset{(0)}{R}(X,Y)Z
+\frac{1}{2}\overset{(0)}{\nabla}_X(K(Y,Z))+\frac{1}{2}K(X,\overset{(0)}{\nabla}_YZ)+\frac{1}{4}K(X,K(Y,Z))\\
&-\frac{1}{2}\overset{(0)}{\nabla}_Y(K(X,Z))-\frac{1}{2}K(Y,\overset{(0)}{\nabla}_XZ)-\frac{1}{4}K(Y,K(X,Z))\\
&-\frac{1}{2}K([X,Y],Z)\\
&=\overset{(0)}{R}(X,Y)Z+\frac{1}{2}(\overset{(0)}{\nabla}_XK)(Y,Z)+\frac{1}{2}K(\overset{(0)}{\nabla}_XY,Z)\\
&-\frac{1}{2}(\overset{(0)}{\nabla}_YK)(X,Z)-\frac{1}{2}K(\overset{(0)}{\nabla}_YX,Z)\\
&+\frac{1}{4}[K(X,K(Y,Z))-K(Y,K(X,Z))]-\frac{1}{2}K([X,Y],Z)\\
&=\overset{(0)}{R}(X,Y)Z+\frac{1}{2}
\{
(\overset{(0)}{\nabla}_XK)(Y,Z)-(\overset{(0)}{\nabla}_YK)(X,Z)
\}\\
&+\frac{1}{4}[K(X,K(Y,Z))-K(Y,K(X,Z))]
{+\frac{1}{2}K(\overset{(0)}{\nabla}_XY-\overset{(0)}{\nabla}_YX-[X,Y],Z)}\\
&=\overset{(0)}{R}(X,Y)Z+\frac{1}{2}
\{
(\overset{(0)}{\nabla}_XK)(Y,Z)-(\overset{(0)}{\nabla}_YK)(X,Z)+\frac{1}{2}[K_X,K_Y]Z
\}\\
&{+}\frac{1}{2}K(\overset{(0)}{T}(X,Y),Z),
\end{split}
\end{equation}
hence \underline{the first part of (ii) is proved}.

Next, by using again \eqref{eq_App_A2} we have
\begin{equation*}
\begin{split}
&(\overset{(0)}{\nabla}_XK)(Y,Z)-(\overset{(0)}{\nabla}_YK)(X,Z)+\frac{1}{2}[K_X,K_Y]Z\\
&=(\nabla_XK)(Y,Z)+\frac{1}{2}[K_X,K_Y]Z-\frac{1}{2}K(K(X,Y),Z)\\
&-(\nabla_YK)(X,Z)-\frac{1}{2}[K_Y,K_X]Z+\frac{1}{2}K(K(Y,X),Z)+\frac{1}{2}[K_X,K_Y],Z\\
&=(\nabla_XK)(Y,Z)-(\nabla_YK)(X,Z)-\frac{1}{2}K(K(X,Y)-K(Y,X),Z)+\frac{3}{2}[K_X,K_Y]Z\\
&=(\nabla_XK)(Y,Z)-(\nabla_YK)(X,Z)-\frac{1}{2}K(\overset{\ast}{T}(X,Y)-T(X,Y),Z)+\frac{3}{2}[K_X,K_Y]Z.
\end{split}
\end{equation*}

From \eqref{eq_App_A3}
\begin{equation*}
\begin{split}
\overset{\ast}{R}(X,Y)Z&=\overset{(0)}{R}(X,Y)Z+\frac{1}{2}\Bigg\{
(\nabla_XK)(Y,Z)-(\nabla_YK)(X,Z)
-\frac{1}{2}K(\overset{\ast}{T}(X,Y)-T(X,Y),Z)\\
&+\frac{3}{2}[K_X,K_Y]Z
\Bigg\}
+\frac{1}{2}K(\overset{(0)}{T}(X,Y),Z)\\
&=\overset{(0)}{R}(X,Y)Z+\frac{1}{2}\left\{
(\nabla_XK)(Y,Z)-(\nabla_YK)(X,Z)+\frac{3}{2}[K_X,K_Y]Z
\right\}\\
&-\frac{1}{4}K(\overset{\ast}{T}(X,Y)-T(X,Y)-2\overset{(0)}{T}(X,Y),Z)\\
&=\overset{(0)}{R}(X,Y)Z+\frac{1}{2}\left\{
(\nabla_XK)(Y,Z)-(\nabla_YK)(X,Z)+\frac{3}{2}[K_X,K_Y]Z
\right\}\\
&+\frac{1}{2}K(T(X,Y),Z),
\end{split}
\end{equation*}
hence \underline{(ii) is completely proved}.

\bigskip

Moreover, by subtracting the first equalities in (i), (ii) we get
\begin{equation*}
\begin{split}
&R(X,Y)Z-\overset{\ast}{R}(X,Y)Z\\
&=
-\frac{1}{2}\{
(\overset{(0)}{\nabla}_XK)(Y,Z)-(\overset{(0)}{\nabla}_YK)(X,Z)-\cancel{\frac{1}{2}[K_X,K_Y]Z}-\frac{1}{2}K(\overset{(0)}{T}(X,Y),Z)
\}\\
&-\frac{1}{2}\{(\overset{(0)}{\nabla}_XK)(Y,Z)-(\overset{(0)}{\nabla}_YK)(X,Z)+\cancel{\frac{1}{2}[K_X,K_Y]}Z\}-\frac{1}{2}K(\overset{(0)}{T}(X,Y),Z)\\
&=-\{(\overset{(0)}{\nabla}_XK)(Y,Z)-(\overset{(0)}{\nabla}_YK)(X,Z)\}-K(\overset{(0)}{T}(X,Y),Z),
\end{split}
\end{equation*}
hence \underline{the first part of (iii) is proved}.

\bigskip

By using equalities in (i), (ii) we get
\begin{equation*}
\begin{split}
&R(X,Y)Z-\overset{\ast}{R}(X,Y)Z\\
&=-\frac{1}{2}
\{
(\nabla_XK)(Y,Z)-(\nabla_YK)(X,Z)+\frac{1}{2}[K_X,K_Y]Z
\}
-\frac{1}{2}K(T(X,Y),Z)\\
&-\frac{1}{2}
\{
(\nabla_XK)(Y,Z)-(\nabla_YK)(X,Z)+\frac{3}{2}[K_X,K_Y]Z
\}
-\frac{1}{2}K(T(X,Y),Z)\\
&=-\{
(\nabla_XK)(Y,Z)-(\nabla_YK)(X,Z)+[K_X,K_Y]Z
\}-K(T(X,Y),Z),
\end{split}
\end{equation*}
i.e. \underline{the second part of (iii) is proved}.

\bigskip

Next, we add these formulas and obtain
\begin{equation*}
\begin{split}
R(X,Y)Z+\overset{\ast}{R}(X,Y)Z&=2\overset{(0)}{R}(X,Y)Z-\frac{1}{2}
\{
\cancel{(\overset{(0)}{\nabla}_XK)(Y,Z)}-\cancel{(\overset{(0)}{\nabla}_YK)(X,Z)}-\frac{1}{2}[K_X,K_Y]Z
\}\\
&-\cancel{\frac{1}{2}K(\overset{(0)}{T}(X,Y),Z)}+\frac{1}{2}\{
\cancel{(\overset{(0)}{\nabla}_XK)(Y,Z)}
-\cancel{(\overset{(0)}{\nabla}_YK)(X,Z)}
+\frac{1}{2}[K_X,K_Y]Z
\}\\
&+\cancel{\frac{1}{2}K(\overset{(0)}{T}(X,Y),Z)}\\
&=2\overset{(0)}{R}(X,Y)Z+\frac{1}{2}[K_X,K_Y]Z,
\end{split}
\end{equation*}
and
\begin{equation*}
\begin{split}
R(X,Y)Z+\overset{\ast}{R}(X,Y)Z&=2\overset{(0)}{R}(X,Y)Z\\
&-\frac{1}{2}\{
\cancel{(\nabla_XK)(Y,Z)}-\cancel{(\nabla_YK)(X,Z)}+\frac{1}{2}[K_X,K_Y]
\}-\cancel{\frac{1}{2}K(T(X,Y),Z)}\\
&+\frac{1}{2}\{
\cancel{(\nabla_XK)(Y,Z)}-\cancel{(\nabla_YK)(X,Z)}+\frac{3}{2}[K_X,K_Y]
\}+\cancel{\frac{1}{2}K(T(X,Y),Z)}\\
&=2\overset{(0)}{R}(X,Y)Z+\frac{1}{2}[K_X,K_Y]Z,
\end{split}
\end{equation*}
i.e. we get the same formula as previously, hence (iv) is also completely proved.

\subsection{The proof of Corollary \ref{rem_C2}}

We take $X,Y,Z$ as $\dfrac{\partial}{\partial x^j},\dfrac{\partial}{\partial x^i},\dfrac{\partial}{\partial x^m}$ and rewrite (i) in Theorem \ref{thm_C1} as
\begin{equation*}
\begin{split}
R\left(\frac{\partial}{\partial x^j},\frac{\partial}{\partial x^i}\right)\frac{\partial}{\partial x^m}&=\overset{(0)}{R}\left(\frac{\partial}{\partial x^j},\frac{\partial}{\partial x^i}\right)\frac{\partial}{\partial x^m}\\
&-\frac{1}{2}\Bigg\{
\left(\overset{(0)}{\nabla}_{\dfrac{\partial}{\partial x^j}}{K}\left(\frac{\partial}{\partial x^i},\frac{\partial}{\partial x^m}\right)\right)
-\left(\overset{(0)}{\nabla}_{\dfrac{\partial}{\partial x^i}}{K}\left(\frac{\partial}{\partial x^j},\frac{\partial}{\partial x^m}\right)\right)\\
&-\frac{1}{2}\left[
K_{\dfrac{\partial}{\partial x^j}}\left(K_{\dfrac{\partial}{\partial x^i}}\frac{\partial}{\partial x^m}\right)
-K_{\dfrac{\partial}{\partial x^i}}\left(K_{\dfrac{\partial}{\partial x^j}}\frac{\partial}{\partial x^m}\right)
\right]
\Bigg\}\\
&-\frac{1}{2}{K}\left(\overset{(0)}{T}\left(\frac{\partial}{\partial x^j},\frac{\partial}{\partial x^i}\right),\frac{\partial}{\partial x^m}\right),
\end{split}
\end{equation*}
that is
\begin{equation*}
\begin{split}
R^{\ k}_{m\ ji}&=\overset{(0)}{R}\,^{\ k}_{m\ ji}-\frac{1}{2}(K^{\ k}_{m\ i\overset{(0)}{|}j}-K^{\ k}_{m\ j\overset{(0)}{|}i})\\
&+\frac{1}{4}(K^l_{\ mi}K^k_{\ lj}-K^l_{\ mj}K^k_{\ li})
-\frac{1}{2}\overset{(0)}{T}\,^l_{\ ji}K^k_{\ ml}.
\end{split}
\end{equation*}

Likewise, we have
\begin{equation*}
\begin{split}
R\left(\frac{\partial}{\partial x^j},\frac{\partial}{\partial x^i}\right)\frac{\partial}{\partial x^m}&=\overset{(0)}{R}\left(\frac{\partial}{\partial x^j},\frac{\partial}{\partial x^i}\right)\frac{\partial}{\partial x^m}\\
&-\frac{1}{2}\left\{
\left(\nabla_{\dfrac{\partial}{\partial x^j}}K\right)\left(\frac{\partial}{\partial x^i},\frac{\partial}{\partial x^m}\right)
-\left(\nabla_{\dfrac{\partial}{\partial x^i}}K\right)\left(\frac{\partial}{\partial x^j},\frac{\partial}{\partial x^m}\right)
\right\}\\
&-\frac{1}{4}\left\{
K_{\dfrac{\partial}{\partial x^j}}\left(K_{\dfrac{\partial}{\partial x^i}}\frac{\partial}{\partial x^m}\right)
-K_{\dfrac{\partial}{\partial x^i}}\left(K_{\dfrac{\partial}{\partial x^j}}\frac{\partial}{\partial x^m}\right)
\right\}\\
&-\frac{1}{2}K\left(T\left(\frac{\partial}{\partial x^j},\frac{\partial}{\partial x^i}\right),\frac{\partial}{\partial x^m}\right),
\end{split}
\end{equation*}
i.e. in components
\begin{equation*}
\begin{split}
R^{\ k}_{m\ ji}&=\overset{(0)}{R}\,^{\ k}_{m\ ji}-\frac{1}{2}(K^{\ k}_{m\ i|j}-K^{\ k}_{m\ j|i})\\
&-\frac{1}{4}(K^l_{\ mi}K^k_{\ lj}-K^l_{\ mj}K^k_{\ li})
-\frac{1}{2}T^l_{\ ji}K^k_{\ ml},
\end{split}
\end{equation*}
hence (i) in {Corollary} \ref{rem_C2} is proved.

Likewise (ii), (iii) and (iv) follows from Theorem \ref{thm_C1} by substituting $X,Y,Z$ as above.

\end{document}